\definecolor{lightgreen}{RGB}{180,255,150}
\definecolor{lightblue}{RGB}{180,180,255}
\newcommand{\remie}[1]{#1}
\newcommand{\remiee}[1]{#1}
\newcommand{\yuki}[1]{#1}
\newtheorem{theorem}{Theorem}
\newtheorem{lemma}{Lemma}
\newtheorem{corollary}{Corollary}
\newtheorem{proposition}{Proposition}
\newtheorem{observation}{Observation}
\newtheorem{remark}{Remark}
\newtheorem{conjecture}{Conjecture}
\theoremstyle{definition}
\newtheorem{definition}{Definition}
\title{On Cherry-picking and Network Containment}
\author{Remie Janssen \footnote{Delft Institute of Applied Mathematics, Delft University of Technology, Van Mourik Broekmanweg 6,
2628 XE, Delft, The Netherlands, \{R.Janssen-2, Y.Murakami\}@tudelft.nl. Research funded by the Netherlands Organization for Scientific Research (NWO), with the Vidi grant 639.072.602.} \and Yukihiro Murakami \footnotemark[1]}
\date{\today}
\begin{document}

\maketitle

\begin{abstract}
Phylogenetic networks are used to represent evolutionary scenarios in biology and linguistics. To find the most probable scenario, it may be necessary to compare candidate networks, to distinguish different networks, and to see when one network is contained in another. 
%We show that the tree-child sequences introduced by Linz and Semple characterize when a tree is embedded in a tree-child network. 
%We take this one step further and show that the sequences can be used to characterize when a tree-child network is embedded in another tree-child network ({\sc Network Containment}), and that this can be decided in linear time.
\yuki{In this paper, we introduce cherry-picking networks, a class of networks that can be reduced by a sequence of two graph operations.
We show that some networks are uniquely determined by the sequences that reduce them---we call these the reconstructible cherry-picking networks, and further show that given two cherry-picking networks within the same reconstructible class, one is contained in the other if a sequence for the latter network reduces the former network.
%Following this, we show that it is possible to decide in linear time if two cherry-picking networks of the same class are isomorphic.
By restricting our scope to tree-child networks, we show that the converse of the above statement holds, thereby showing that {\sc Network Containment}, the problem of checking whether a network is contained in another, can be solved in linear time for tree-child networks.}
% We show that the tree-child sequences introduced by Linz and Semple characterize when a tree-child network is contained in another tree-child network ({\sc Network Containment}), and that this can be decided in linear time.
We implement this algorithm in Python and show that the linear-time theoretical bound on the input size is achievable in practice.
Lastly, we provide a linear time algorithm for deciding whether two tree-child networks are isomorphic.
% We also generalize tree-child sequences to cherry-picking sequences, and consequently define the class of cherry-picking networks---the networks that can be reduced by cherry-picking sequences.
%  and following this result, we show that it is possible to decide in linear time if two cherry-picking networks are isomorphic.
\end{abstract}

\section{Introduction}\label{sec:Introduction}
Phylogenetic networks are gaining popularity in the study of the evolutionary history of species \citep{morrison2005networks, bapteste2013networks}. However, small stretches of DNA (e.g., pieces of DNA coding for protein domains) mostly evolve tree-like. Therefore, the network representing the species' evolution must contain the trees for such pieces of DNA. This leads to the following mathematical problem. For a given network $N$ and a tree $T$ on the same set of species, decide whether $N$ contains~$T$. 
%For a given set of trees, decide whether there exists a ``simple'' network that contains all the trees.

% These two problems clearly go hand in hand, as it is imperative to understand when a tree is contained in a network, to find a network which contains a set of trees.
% In this paper we solve a generalization of the first problem, with tools that were developed for tackling the second.

% \todoRemie{Shorten the two paragraphs about tree-containment a bit? YM:Done}
This problem, called {\sc Tree Containment}, is NP-complete for general rooted phylogenetic networks \citep{kanj2008seeing}. 
%The problem remains NP-complete for certain network classes (networks with particular topological restrictions), such as tree-sibling, time-consistent, and regular networks~\citep{van2010locating}. However, for other network classes, the problem becomes easier. 
Many have overcome this computational challenge by considering inputs of topologically restricted networks.
It was shown initially that {\sc Tree Containment} can be solved in polynomial time for non-binary normal networks, binary tree-child networks, and binary level-$k$ networks \citep{van2010locating}.
Stronger results have been proven for genetically stable networks (quadratic time) \citep{gambette2015solving}, binary nearly-stable networks (linear time) \citep{gambette2018solving}, and for binary tree-child networks \citep{gambette2018solving,gunawan2017solving}.

% There are even stronger results for some network classes: deciding whether a tree is contained in a genetically stable network can be done in quadratic time \citep{gambette2015solving}, and making this decision for a binary nearly-stable network takes linear time \citep{gambette2018solving}. %also builds on \citep{gambette2015locating, fakcharoenphol2015faster}
% % Tree-child networks are particularly interesting as they have been attracting growing attention. 
% % \citep{Vincent_said_so}. 
% For the class of tree-child networks, {\sc Tree Containment} is known to be linear time solvable \citep{gambette2018solving,gunawan2017solving}. %as each tree-child network is reticulation visible and the problem is linear time solvable for reticulation visible networks

From a biological and a computational perspective, there is no reason why we should restrict ourselves to inputs of a tree and a network.
Indeed, while small stretches of DNA evolve tree-like, it is possible for larger parts of the genome to evolve as a network.
In such instances, it is of great interest to consider a more general version of {\sc Tree Containment}, which we call {\sc Network Containment}: For given networks~$N$ and~$N'$ on the same set of species, decide whether~$N$ contains~$N'$.
%Computationally, this problem---which we appropriately name {\sc Network Containment}---provides a framework for determining whether network classes that can be solved fast for {\sc Tree Containment} can still be solved fast for the more general case.
Computationally, it is natural to wonder whether we can also {\sc Network Containment} efficiently in all network classes where {\sc Tree Containment} can be solved efficiently as well.
To date, no study has ever considered this problem, and we take the first steps in this endeavour.
Furthermore, previous studies have focused primarily on binary networks.
In this paper, we keep our results as general as possible, mentioning in statements of lemmas whether the networks have node degree restrictions on them.

We solve the {\sc Network Containment} problem for tree-child networks by considering \emph{cherry-picking sequences}.
These sequences were developed to tackle the problem of finding a ``simple'' network that displays a given set of trees~\citep{docker2019deciding, linz2019attaching}.
Two leaves of a tree form a \emph{cherry} if they share a common parent---by successively `picking' cherries (removing one of the leaves in a cherry) from the set of input trees, we obtain a sequence of cherries that ultimately reduce each input tree to a tree on a single leaf.
This sequence of cherries then corresponds to some network that contains the set of all input trees.

Although it was mentioned how one can construct a network from a cherry-picking sequence, no further characterizations for the type of networks that can be obtained from such sequences have been investigated~\citep{docker2019deciding, linz2019attaching}.
Furthermore, actions of cherry-picking sequences were defined only on trees, and not on networks.
In this paper, we fill this gap by first defining the action of a cherry-picking sequence on a network, and introduce the class of \emph{cherry-picking networks}: networks that can be reduced to a single leaf by a cherry-picking sequence.
We investigate the correspondence between cherry-picking networks and the sequences that reduce them, and ultimately show that within a particular \emph{reconstructible} class, cherry-picking networks are characterized uniquely by their \emph{smallest} cherry-picking sequences (Theorem~\ref{thm:UniqueCPNConstruction}).

By carefully defining what it means for a network to be a subnetwork of and to be contained in another network, we derive results that connect these notions to cherry-picking reductions.
Roughly speaking, a network~$N'$ is a \emph{subnetwork} of another network~$N$ if~$N'$ can be obtained from~$N$ by deleting reticulation edges and suppressing degree-$2$ vertices.
A network~$N'$ is \emph{contained} in another network~$N$ if~$N'$ can be obtained from~$N$ by deleting reticulation edges, suppressing degree-$2$ vertices, and by contracting edges.
We show that within particular classes of cherry-picking networks, if a sequence for a network~$N$ reduces another network~$N'$, then~$N'$ is contained in~$N$ (Lemma~\ref{lem:reductionImpliesContainmentSF}).
Unfortunately the converse does not hold (Theorem~\ref{thm:CPNTCFails}), unless the two input networks are tree-child.

It turns out that the class of tree-child networks is contained in the class of cherry-picking networks, as each tree-child network has a special type of cherry-picking sequence---a \emph{tree-child sequence}---that reduces it.
We examine how these sequences can be used to solve {\sc Network Containment} for tree-child networks.
In particular, within some CPN classes, we show that a tree-child sequence for a tree-child network~$N$ reduces another tree-child network~$N'$ if and only if~$N'$ is contained in~$N$ (Theorem~\ref{the:SubnetworkIffTCSReduces}).
Following this, we provide a linear-time algorithm for {\sc Network Containment} for inputs of tree-child networks (Algorithm~\ref{alg:Subnetwork}).
A summary of all results that relate reduction of a network and subnetwork / containment can be found in Table~\ref{tab:Results}.

% \todoRemie{Question: do we have counterexamples for all of the directions not shown in the table?
% Yes. First 3 can be seen from Figure 10. 4th one can be seen from two tree-child networks of different classes that can be reduced by the same sequence.}
\begin{table}[h]
\caption{\remiee{Main results of the paper relating network containment and CPSs. 
The two networks $N$ and $N'$ are assumed to have the same leaf-sets.
The $N$ and $N'$ columns show the assumptions on both networks, where TC stands for tree-child. The direction column shows whether containment of $N'$ in $N$ implies~($\implies$), is implies by~($\impliedby$), or is equivalent to~($\iff$) reduction of $N'$ by any CPS for $N$. The subnetwork column contains a \checkmark when $N'$ can be assumed to be a subnetwork instead of a contained network of $N$. For results in which only one direction is presented, we have counter-examples showing that the converse does not hold.}}
\label{tab:Results}
\def\arraystretch{1.5}
\centering
\begin{tabular}{|c||c|c|c|c|}
\hline
\textbf{Result} &
  \textbf{$N$} &
  \textbf{$N'$} &
  \textbf{Direction} &
  \multicolumn{1}{l|}{\textbf{Subnetwork}} \\ \hline\hline
Lemma~\ref{lem:reductionImpliesContainment}     & \multicolumn{2}{c|}{Binary}                       & $\impliedby$  & \checkmark     \\ \hline
Theorem~\ref{thm:BNBRedimpliesCon}   & Binary               & Non-binary                            & $\impliedby$  &                \\ \hline
Lemma~\ref{lem:reductionImpliesContainmentSF}     & \multicolumn{2}{c|}{Same reconstructible class} & $\impliedby$  &                \\ \hline
Corollary~\ref{cor:ContainmentImpliesReductionTC} & \multicolumn{2}{c|}{Non-binary TC}              & $\implies$    & \checkmark     \\ \hline
Theorem~\ref{the:SubnetworkIffTCSReduces}   & \multicolumn{2}{c|}{Same reconstructible TC class}    & $\iff$        &                \\ \hline
Theorem~\ref{the:SubnetworkIffTCSReduces3}   & \multicolumn{2}{c|}{Semi-binary TC}                  & $\iff$        & \checkmark     \\ \hline
\end{tabular}
\end{table}

\paragraph{Structure of the paper}  
In Section~\ref{sec:Preliminaries}, we recall all relevant definitions and %generalize some concepts introduced in previous papers. 
outline how to construct networks from cherry-picking sequences. In this construction, one can choose from multiple options.
We show that, upon choosing one of these options to consistently use in the construction, only some networks are defined by their sequences.
That is, there exist networks that cannot be constructed from the sequences that reduce them (Corollary~\ref{cor:Unique(A,B)construction}).
In Section~\ref{sec:CPNproperties}, we investigate properties of cherry-picking networks, and show that the order in which cherries are picked does not matter (Theorem~\ref{thm:OrderDoesn'tMatter}).
Furthermore, we show that networks are unique up to a particular minimal cherry-picking sequence that reduces them, given an order on the set of species.
In Section~\ref{sec:reductioncontainment}, we show that if a sequence for a network~$N$ reduces another network~$N'$, then~$N'$ is contained in~$N$ (Lemma~\ref{lem:SBSFdispaysIsBinaryResolvedSubnetwork}).
We also give a counter-example for why the converse does not hold (Theorem~\ref{thm:CPNTCFails}).
% each cherry-picking network has a \emph{minimal} cherry-picking sequence,
% we look at cherry-picking networks in general, and what correspondences there are between cherry-picking networks and their cherry-picking sequences. This includes a short discussion of their distinguishability. 
In Section~\ref{sec:TCNetworkContainment}, we restrict our attention to tree-child networks.%, which form a subclass of cherry-picking networks. 
We show that a sequence for a tree-child network~$N$ reduces another network~$N'$ if and only if~$N'$ is contained in~$N$ (Theorem~\ref{the:SubnetworkIffTCSReduces}).
In Section~\ref{sec:Computation}, we use this characterization in an algorithm for {\sc Network Containment} for tree-child networks, and show that its running time is linear.
% To make this algorithm efficient, we prove additional statements about tree-child sequences.
%, the cherry-picking sequences that nicely correspond to tree-child networks. These statements loosely translate to ``cherries in a tree-child network can be picked in any order to get a minimal CPS.'' 
% We give the algorithm and prove that its running time is linear.
We also show that, by defining an ordering on the leaves, it is possible to check whether two cherry-picking networks are isomorphic in polynomial time (Theorem~\ref{thm:CPNIsomPolyTime}).
In Section~\ref{sec:Implementation}, we present an efficient implementation for solving {\sc Network Containment} in Python, and show that the theoretical linear running time is achievable in practice.
We test our implementation on simulated data, and show that even for large data sets (1000 leaves and 1000 reticulations), the software outputs the solution within a tenth of a second.
In Section~\ref{sec:Discussion}, we conclude with open problems and future directions for the use of cherry-picking strategies.
%elaborate on extensions to problems where input networks or trees are allowed to have different sets of species, and on the possible use of cherry-picking sequences in a version of {\sc Hybridization} where the input consists of networks. 

\section{Preliminaries}\label{sec:Preliminaries}

\begin{definition}
A \emph{phylogenetic network} $N$ is a directed acyclic multigraph with one \emph{root} (the outdegree-1 source), a set $L(N)$ of \emph{leaves} (indegree-1 sinks) bijectively labelled with a set $X$, and all other nodes are either \emph{tree nodes} (indegree-1 outdegree at least 2) or \emph{reticulations} (indegree at least 2, outdegree-1).

A phylogenetic network is \emph{semi-binary} if each tree node has outdegree $2$, and it is \emph{binary} if it is semi-binary and each reticulation has indegree $2$.
\end{definition}

In the rest of this paper, we drop the `phylogenetic' term as each network in this paper is a phylogenetic network. 
To make the assumptions on the degrees of the network nodes clear, we always mention in the statement of a claim whether a network has to be binary, semi-binary, or there are no assumptions on the degrees. In the last case, we call the network \emph{non-binary} even though it may be semi-binary or even binary.
The following definition gives us a way of relating two networks that are not of the same nature.

\begin{definition}
Let $N$ and $N'$ be non-binary networks on $X$. 
Then $N$ is a \emph{refinement} of $N'$ if $N'$ can be obtained from $N$ by contracting some edges.
\end{definition}

An edge feeding into a reticulation is called a \emph{reticulation edge}.
% We write~$v\in N$ to denote that~$v$ is a node in~$N$.
Given an edge~$uv$ in~$N$, we say that~$u$ is a \emph{parent} of~$v$ and that~$v$ is a \emph{child} of~$u$.
The node~$u$ is \emph{above}~$v$, or~$v$ is \emph{below}~$u$ if there is a directed path from~$u$ to~$v$ in~$N$.
We also call~$u$ and~$v$ the \emph{tail} and \emph{head} of the edge~$uv$, respectively.
We call an edge~$uv$ an \emph{rr-edge} if~$u$ and~$v$ are both reticulations, a \emph{tr-edge} if~$u$ is a tree vertex and~$v$ a reticulation, an \emph{rt-edge} if~$u$ is a reticulation and~$v$ a tree-vertex, and a \emph{tt-edge} if~$u$ and~$v$ are both tree-vertices.
We call a directed path~$u_1u_2\ldots u_n$ an \emph{rr-path} if~$u_i$ is a reticulation for all~$i\in[n]=\{1,\ldots,n\}$, an \emph{rtr-path} if~$u_1$ and~$u_n$ are reticulations and~$u_i$ is a tree vertex for at least one~$2\le i\le n-1$, a \emph{trt-path} if~$u_1$ and~$u_n$ are tree vertices and~$u_i$ is a reticulation for at least one~$2\le i\le n-1$, and a \emph{tt-path} if~$u_i$ is a tree vertex for all~$i\in[n]$.

A network~$N$ is \emph{stack-free} if~$N$ contains no rr-edges.
A network~$N$ is \emph{tree-child} if it is stack-free and every tree node in~$N$ is a parent of a tree node or a leaf.
The \emph{reticulation number} of a network is the total number of reticulation edges minus the total number of reticulations.

Finally, we introduce the idea of adding vertices to a network.
Let~$x$ be a leaf in a network with a parent vertex~$p$.
\emph{Adding a vertex~$q$ directly above~$x$} is the action of deleting the edge~$px$ and adding two edges~$pq$ and~$qx$.

\subsection{Cherry-picking sequences}
In this subsection, we introduce cherry-picking sequences and their action on networks. This starts with definitions 
of specific structures within the networks called cherries and reticulated cherries. 
We define what it means to \emph{reduce} such structures from networks, and show that reversing such reductions---called \emph{adding} pairs to networks---can be done in many ways.
We show that these additions can be applied to some sequence of ordered pairs of leaves to obtain a network. 
We impose conditions on the sequences to ensure that these additions are well-defined, and, in doing so, we formally define cherry-picking sequences and cherry-picking networks.
See Figure~\ref{fig:Definition} for an illustration of the terms defined in this subsection.

\subsubsection{Reducible pairs}

\begin{definition}\label{def:ReduciblePair}
Let $(x,y)$ be an ordered pair of leaves in a non-binary network $N$, and let $p_x,p_y$ denote the parents of $x,y$ respectively.
We call $(x,y)$ a \emph{cherry} if $p_x = p_y$, that is, if $x$ and $y$ share a common parent.
We call $(x,y)$ a \emph{reticulated cherry} if $p_x$ is a reticulation,~$p_y$ is a tree vertex, and $p_y$ is a parent of $p_x$.
If~$(x,y)$ is a cherry or a reticulated cherry in~$N$, we say $(x,y)$ is a \emph{reducible pair}.
\end{definition}

We may \emph{reduce} cherries and reticulated cherries from a network to obtain a network of smaller size.

\begin{definition}\label{def:Reducing}
Let $N$ be a network and let $(x,y)$ be an ordered pair of leaves. 
\emph{Reducing} $(x,y)$ in $N$ is the action of
\begin{itemize}
    \item deleting $x$ and suppressing its parent node in $N$ if $(x,y)$ is a cherry in $N$;
    \item deleting the reticulation edge between the parents of~$x$ and~$y$ and subsequently suppressing degree-$2$ nodes, if~$(x,y)$ is a reticulated cherry in~$N$;
    \item doing nothing to~$N$ otherwise.
\end{itemize}
In all cases, the resulting network is denoted~$N(x,y)$. We sometimes refer to this as \emph{picking a cherry or pair~$(x,y)$ from~$N$}.
\end{definition}

Given a network~$N$ and a sequence of ordered pairs~$S$, we denote by~$NS$ the network obtained by repeatedly reducing~$N$ with each element of~$S$ in order.
We say that~$S$ reduces~$N$ if~$NS$ is a network with a single leaf (for any leaf in~$N$), a root, and no other vertices.
In particular, we call these networks \emph{single-leaf networks}.

\subsubsection{Adding pairs to networks}

As each reduction makes a simple change to a network, it is natural to attempt to reverse this change. 
Such reversals can be done by adding a leaf to obtain a new cherry in the network, or by adding a reticulation edge to create a new reticulated cherry. 
If the reduction involved the pair $(x,y)$, then we call the reverse action \emph{adding~$(x,y)$ to the network}.
Since we allow for non-binary networks, it is possible to reduce reticulated cherries with a \emph{multi-reticulation} (a reticulation with indegree at least 3).
Because of this, there may not be a unique way to add the reticulation edge back: we have the option of choosing an existing reticulation vertex or a newly created reticulation vertex as the head of this reticulation edge.
% either add the edge to a newly created reticulation node, or we can add the edge to an already existing one. 

A similar observation can be made for tree nodes. 
Just like multi-reticulations, reductions may pick cherries or reticulated cherries that contain \emph{multifurcations} (tree nodes of outdegree more than~$2$).
Here, we have the option of choosing an existing tree vertex or a newly created tree vertex as the tail of the inserted edge.
% Since the tail of the inserted edge (for both cherries and reticulated cherries) is always a tree vertex,
% instead of creating new tree vertices each time, we may append these edges to existing tree vertices.
% The construction where an edge is connected to an existing tree node whenever possible, results in a network with many \emph{multifurcations} (tree nodes of outdegree more than 2). 
%Multifurcations generally arise as a result of not knowing the order in how consecutive speciation events have unfolded in the past.
%We do not consider this here, and instead focus on the case where each tree node corresponds to speciation events which have resulted in the production of two species.
%we would like to find networks where the tree signal is clear, hence we do not consider this construction type.
%Therefore, our two construction methods, which we introduce below, will yield either a semi-binary network with no stacks or a binary network.

With this in mind, there are~$6$ ways of adding~$(x,y)$ to a network:~$2$ ways of adding cherries and~$4$ ways of adding reticulated cherries.
\begin{definition}\label{def:AddingPairs}
Let~$N$ be a non-binary network with a reducible pair~$(x,y)$.
Let~$p_x$ and~$p_y$ denote the parents of~$x$ and~$y$ in~$N(x,y)$, respectively (note that~$x$ and~$p_x$ may not be nodes in~$N(x,y)$ if~$(x,y)$ is a cherry in~$N$).
Then we may \emph{add~$(x,y)$ to~$N(x,y)$} to obtain~$N$ by using one of the following six \emph{constructions}:
\begin{enumerate}
    \item If~$x$ is not a leaf in~$N(x,y)$ (i.e., if~$(x,y)$ is a cherry in~$N$), then add a labelled node~$x$, add a node~$q$ directly above~$y$, and add an edge~$qx$.
    \begin{enumerate}
        \item Do not contract any edges; or\label{Const:CherryResolved}
        \item If~$p_y$ is a tree node, then contract~$p_yq$.\label{Const:CherryUnresolved}
    \end{enumerate}
    
    \item If~$x$ is a leaf in~$N(x,y)$ (i.e., if~$(x,y)$ is a reticulated cherry in~$N$), then add nodes~$p,q$ directly above~$x,y$, respectively, and add an edge~$qp$.
    \begin{enumerate}
        \item Do not contract any edges;\label{Const:RCherryResolved}
        \item If~$p_x$ is a reticulation, then contract~$p_xp$;\label{Const:RCherrySF}
        \item If~$p_y$ is a tree vertex, then contract~$p_yq$; or\label{Const:RCherryStack}
        \item If~$p_x$ is a reticulation, contract~$p_xp$; and, if~$p_y$ is a tree vertex, contract~$p_yq$.\label{Const:RCherryUnresolved} 
    \end{enumerate}
    
    % \begin{enumerate}
    %     \item add a labelled node~$x$, insert a node~$q$ directly above~$y$, and add an edge~$(q,x)$; or
    %     \item if~$p_y$ is a tree node, then add a labelled node~$x$ and add an edge~$(p_y,x)$.
    % \end{enumerate}
    % \item If~$x$ is a leaf in~$N(x,y)$ (i.e., if~$(x,y)$ is a reticulated cherry in~$N$), then
    % \begin{enumerate}
    %     \item add nodes~$p,q$ directly above~$x,y$, respectively, and add edges~$(q,p)$; or\label{Const:RCherryResolved}
    %     \item if~$p_x$ is a reticulation, insert a node~$q$ directly above~$y$ and add an edge~$(q,p_x)$; or\label{Const:RCherrySF}
    %     \item if~$p_y$ is a tree vertex, insert a node~$p$ directly above~$x$ and add an edge~$(p_y,p)$; or\label{Const:RCherryStack}
    %     \item if~$p_x$ is a reticulation and if~$p_y$ is a tree vertex, then add an edge~$(p_y,p_x)$.\label{Const:RCherryUnresolved}
    % \end{enumerate}
\end{enumerate}
\end{definition}

Since all tree vertices have indegree-$1$ and all reticulations have outdegree-$1$, there are no other ways of adding a reducible pair to a network other than the six ways mentioned above.
Note that the constructions~\ref{Const:CherryUnresolved},~\ref{Const:RCherrySF},~\ref{Const:RCherryStack}, and~\ref{Const:RCherryUnresolved} may only be used if the `if' conditions are satisfied.
Also note that the above actions are only well-defined if~$y$ is a leaf in~$N(x,y)$.
In the setting of Definition~\ref{def:AddingPairs}, this is not an issue: since we assume that~$(x,y)$ is a reducible pair of~$N$, it is indeed the case that~$y$ is a leaf in~$N(x,y)$.

On the other hand, if we were to start with any sequence of ordered pairs and sought to construct a network by successively adding ordered pairs backwards through the sequence, the story would be a little different.
That is, we may come across a case where, upon trying to add a reducible pair~$(x,y)$ to a network,~$y$ does not already exist in the network as a leaf.
Let~$S = S_1S_2\cdots S_{|S|} = (x_1,y_1),(x_2,y_2),\ldots (x_{|S|},y_{|S|})$ be a sequence of ordered pairs.
Starting with a network on a single leaf~$y_{|S|}$, we may iteratively add~$S_i$ to the network for~$i=|S|,|S|-1,\ldots,1$ (i.e., backwards through the sequence~$S$), choosing a suitable construction for each ordered pair, to obtain some network.
We call this \emph{a network obtained from~$S$}.
Now, if~$y_i$ was not a leaf in the network when adding~$S_i$, then such a construction would not be well-defined.
Fortunately, we can fix this by imposing a simple condition on the sequences.
This motivates the following definition.

\begin{definition}\label{def:CPS}
A \emph{cherry-picking sequence (CPS)} on a set~$X$ is a sequence of ordered pairs on distinct elements from~$X$, such that the second coordinate of each ordered pair occurs as a first coordinate in some ordered pair in the rest of the sequence, or as the second coordinate of the last pair. 
\end{definition}
%Descriptive names for first and second coordinates?

Returning to the example that we had before, we observe if~$S$ was a CPS, then~$y_i$ must already have been a leaf in the network when adding~$S_i=(x_i,y_i)$.
By definition of CPSs,~$y_i$ appears as a first coordinate in some ordered pair that appears after~$S_i$, or~$y_i$ appears as the second coordinate of the final ordered pair, which implies that the network contains the leaf~$y_i$ in both cases when adding~$S_i=(x_i,y_i)$.
Therefore, this construction is well-defined, and we can always obtain a network from a CPS.
This brings us to the definition of a cherry-picking network.

\begin{definition}
A network on~$X$ is a \emph{cherry-picking network (CPN)} if it can be obtained from some CPS~$S$.
Equivalently, a CPN is a network that can be reduced by some CPS.
\end{definition}

In particular, single-leaf networks are also CPNs, since these can be reduced by the empty CPS.
By definition, a CPN with at least two leaves contains either a cherry or a reticulated cherry.
Intuitively, reducing these structures returns a network of smaller size that is a CPN; we may repeatedly reduce cherries and reticulated cherries until the network has been reduced.

A \emph{subsequence} of a CPS refers to any sequence of ordered pairs that can be obtained by deleting some elements from the CPS.
Note that a subsequence need not be a CPS.
In what follows, we will often have to reduce a network by a subsequence of a CPS. These subsequences are most often the initial parts of the sequence, and hence we introduce notation for them.
% Hence, we introduce notation that makes it easy to refer to the initial part of a sequence. 
Let~$S = (x_1,y_1),(x_2,y_2), \ldots (x_n,y_n)$ be a CPS.
For~$i\in[n]$, we use the following notations to denote some subsequence of~$S$.
The~$i$th ordered pair of~$S$ is $S_i = (x_i,y_i)$.
The first~$i$ ordered pairs in~$S$ is denoted by~$S_{[:i]} = (x_1,y_1),\dots, (x_i, y_i)$.
The subsequence of~$S$ without the first~$i$ ordered pairs is denoted by~$S_{[i+1:]} = (x_{i+1},y_{i+1}), (x_{i+2},y_{i+2}), \dots, (x_n,y_n)$.
We let~$S_{[:0]}$ denote the empty sequence.

A CPS~$S$ is \emph{minimal} for a CPN~$N$ if~$S$ reduces~$N$ and each ordered pairs~$S_i$ of~$S$ reduces something in~$NS_{[:i-1]}$ for all~$i\in[|S|]$.
In other words,~$NS_{[:i-1]}\neq NS_{[:i]}$ for all~$i\in [|S|]$.
We often write \emph{a CPS of/for a network~$N$} to refer to a minimal CPS that for~$N$.

A \emph{partial CPS}~$S'$ of length~$i$ is a sequence of ordered pairs such that there exists a CPS~$S$ where~$S_{[:i]} = S'$.
If $S$ and $S'$ are partial CPSs and $N$ is a non-binary network, then applying $S$ and then $S'$ is the same as appending $S'$ to $S$, denoted $SS'$, and applying the whole sequence. In notation, we write
\[(NS)S'=N(SS'),\]
and hence we denote this network without brackets as $NSS'$.

\begin{observation}\label{obs:CPNAfterReducingfirstielementsremainCPN}
Let~$N$ be a non-binary CPN that can be reduced by a CPS~$S$.
Then the network~$NS_{[:i]}$ is a CPN for all~$i=1,\dots,|S|$.
\end{observation}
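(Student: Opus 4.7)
The plan is to show directly from the definitions that the suffix $S_{[i+1:]}$ is itself a cherry-picking sequence that reduces $NS_{[:i]}$ to a single-leaf network. By the definition of a CPN, this would immediately yield the observation.

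First I would handle the reduction part. Since $S$ reduces $N$, we have that $NS$ is a single-leaf network. Writing $NS = N(S_{[:i]}S_{[i+1:]}) = (NS_{[:i]})S_{[i+1:]}$ (using the bracket-free notation established just before the observation), it follows that applying $S_{[i+1:]}$ to $NS_{[:i]}$ yields exactly the same single-leaf network. Thus $S_{[i+1:]}$ reduces $NS_{[:i]}$.

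Next I would verify that $S_{[i+1:]}$ is itself a CPS. Write $S = (x_1,y_1), \ldots, (x_n,y_n)$ with $n = |S|$. I need to check the defining property: for each $j$ with $i < j \le n$, the second coordinate $y_j$ occurs as a first coordinate in a later pair of $S_{[i+1:]}$, or $y_j$ is the second coordinate of the last pair of $S_{[i+1:]}$. Since $S$ is a CPS, $y_j$ either equals $y_n$ (in which case it is the second coordinate of the last pair of $S_{[i+1:]}$, which is still $(x_n, y_n)$), or $y_j = x_k$ for some $k > j$; since $k > j > i$, the pair $(x_k, y_k)$ lies in $S_{[i+1:]}$. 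The distinctness of elements in ordered pairs is inherited from $S$. Hence $S_{[i+1:]}$ satisfies the CPS conditions.

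The edge case $i = |S|$ is immediate: $S_{[i+1:]}$ is the empty sequence and $NS_{[:|S|]} = NS$ is a single-leaf network, which is a CPN by the remark following Definition~\ref{def:CPS} (reduced by the empty CPS). No step seems to be a genuine obstacle here; the observation is essentially a bookkeeping consequence of how the CPS property transfers to suffixes, and the only thing worth being careful about is ensuring the "or as the second coordinate of the last pair" clause is applied to the correct last pair, which is the same for both $S$ and $S_{[i+1:]}$.
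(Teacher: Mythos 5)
Your proposal is correct: the paper states this as an observation without an explicit proof, and your argument---that the suffix $S_{[i+1:]}$ inherits the CPS property (its last pair being the same as that of $S$) and reduces $NS_{[:i]}$ to the same single-leaf network via $(NS_{[:i]})S_{[i+1:]}=NS$---is precisely the intended bookkeeping justification. Nothing is missing, including the $i=|S|$ edge case, which you handle correctly via the empty CPS.
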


By choosing a suitable construction, we may obtain a CPN from any of its minimal CPSs.

\begin{observation}
Every non-binary CPN can be obtained from a minimal CPS that reduces it.
\end{observation}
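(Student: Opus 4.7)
Given a non-binary CPN $N$, the plan is to exhibit a minimal CPS $S$ reducing $N$ together with a choice of constructions from Definition~\ref{def:AddingPairs} whose iterated application to the single-leaf network at the end returns $N$. I would build $S$ greedily: set $N^0 := N$, and while $N^{i-1}$ has more than one leaf, pick any reducible pair $(x_i, y_i)$ of $N^{i-1}$ and set $N^i := N^{i-1}(x_i, y_i)$. Such a pair exists because $N^{i-1}$ is a CPN by Observation~\ref{obs:CPNAfterReducingfirstielementsremainCPN} and, as noted just before that observation, every CPN with at least two leaves contains a cherry or a reticulated cherry. The procedure terminates because each cherry reduction strictly decreases the number of leaves and each reticulated cherry reduction strictly decreases the reticulation number. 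By construction each $(x_i, y_i)$ strictly changes $N^{i-1}$, so the resulting sequence $S = (x_1, y_1), \ldots, (x_n, y_n)$ is minimal.

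To check that $S$ satisfies Definition~\ref{def:CPS}, I would use the fact that no reduction step deletes its own second coordinate: reducing a reticulated cherry deletes no leaf, and reducing a cherry deletes only the first coordinate. Hence $y_i$ persists as a leaf of $N^i$, and is either later the first coordinate of a cherry reduction, so that $y_i$ reappears as a first coordinate in $S_{[i+1:]}$, or it survives as the unique leaf of $N^n$. In the latter case we must have $y_i = y_n$, since a reticulated cherry reduction cannot be the final step (it would leave two leaves). Either way, the CPS condition for $(x_i, y_i)$ is met.

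Finally, I would invert each reduction by choosing at step $i$ the construction from Definition~\ref{def:AddingPairs} matching the configuration of $(x_i, y_i)$ in $N^{i-1}$. A reducible pair appears in exactly one of six structural configurations: a cherry whose common parent has outdegree $2$ or outdegree exceeding $2$, or a reticulated cherry where each of the reticulation $p_{x_i}$ and the tree vertex $p_{y_i}$ is either of minimum degree or a multi-reticulation/multifurcation. These cases correspond bijectively to constructions~\ref{Const:CherryResolved},~\ref{Const:CherryUnresolved},~\ref{Const:RCherryResolved},~\ref{Const:RCherrySF},~\ref{Const:RCherryStack}, and~\ref{Const:RCherryUnresolved}, distinguished precisely by which of the newly introduced vertices $p, q$ is contracted back into an existing tree vertex or reticulation. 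This mirrors exactly which degree-$2$ vertices are suppressed during the reduction, so the matching construction applied to $N^i$ recovers $N^{i-1}$. Iterating backwards from $N^n$ then returns $N = N^0$. I expect the main work to lie in the mechanical six-way case check verifying this reduction-to-construction correspondence; the remainder of the argument follows directly from the greedy construction and the definitions.
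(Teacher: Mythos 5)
The heart of your argument---that the six constructions of Definition~\ref{def:AddingPairs} are precisely the inverses of the six possible local configurations of a reduction (parent of the cherry binary or multifurcating; reticulation parent and tree-vertex parent of the reticulated cherry each binary or not), so that replaying a minimal CPS backwards with the matching construction recovers $N$---is correct, and this is exactly the content the paper leaves as an unproved observation. Your verification of the CPS condition (a reduction never deletes its own second coordinate, so $y_i$ either reappears later as a first coordinate or is the surviving leaf of the final cherry reduction) is also sound.

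The gap is in how you produce the minimal CPS. You reduce an \emph{arbitrary} reducible pair at each step and justify that $N^{i-1}$ is again a CPN (hence has a reducible pair, hence the process ends at a single-leaf network) by citing Observation~\ref{obs:CPNAfterReducingfirstielementsremainCPN}; but that observation only concerns networks of the form $NS_{[:i]}$ for a CPS $S$ already known to reduce $N$, and says nothing about an arbitrarily chosen pair. The statement you actually need---$Nc$ is a CPN for every $c\in\mathcal{C}(N)$---is Proposition~\ref{prop:CPSorder}, which the paper only establishes later via Lemmas~\ref{lem:reverseCherry}--\ref{lem:ReduciblePairMayAppear}; without it, your greedy process could a priori stall at a network with two or more leaves and no reducible pair, so as written the justification is circular. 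The repair is easy and stays within what is available at this point: take any CPS $S$ reducing $N$ (it exists by the definition of a CPN) and let $S'$ be the subsequence of elements of $S$ that actually change the network. The intermediate networks are then exactly the $NS_{[:i]}$ covered by Observation~\ref{obs:CPNAfterReducingfirstielementsremainCPN}, $S'$ is minimal by construction, and your second-paragraph argument (noting that the last changing element must be a cherry reduction whose second coordinate is the surviving leaf) shows $S'$ is a CPS; your reversal argument then applies to $S'$ unchanged.
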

% \begin{proof}
% Let~$N$ be a non-binary CPN with a minimal CPS~$S$.
% We may construct the network~$N$ from~$S$ by constructing~$NS_{[:i-1]}$ from~$NS_{[:i]}$ for~$i=1,\ldots, |S|$ using Definition~\ref{def:AddingPairs}.
% By choosing a suitable construction from Definition~\ref{def:AddingPairs} for each reducible pair in~$S$, we obtain~$N$.
% Since~$S$ is a CPS, the construction is well-defined.
% \end{proof}

\subsection{CPN Classes}

Using different combination of the six constructions from Definition~\ref{def:AddingPairs} can yield different CPNs from the same CPS.
These differences could be due to the nature of the network vertex degrees (binary, semi-binary, non-binary) or from their topological features (stack-free, tree-child).
One way of categorizing these CPNs is to choose and stay consistent with one particular construction for adding cherries and reticulated cherries to networks.
That is, we construct networks from CPSs with a chosen construction~$A$ to add cherries and a chosen construction~$B$ to add reticulated cherries.

% We first show that it is possible to obtain two networks from the same CPS, for which one network is a binary refinement of the other.
% Later on, we show that choosing an construction for cherries and reticulated cherries and staying consistent with it will return a unique network within that class.
% Some may be more refined than others, and others may exclude certain topological features.
% We first focus on the refinedness of the constructed CPNs.

There are two motivations to do so.
Firstly, this categorizes CPNs into classes defined by their topological restrictions.
We may specify classes of CPNs that contain only binary networks, those that contain only semi-binary networks, those without stacks, and many more.
Secondly, and more importantly, we can introduce some notion of a correspondence between CPNs and minimal CPSs that reduce them.
Within some CPN classes, it turns out that if a sequence is minimal for two networks, then the networks must be isomorphic.
% As shown in Section~\ref{subsubsec:Refinement}, different networks may be obtained from the same CPS whenever different constructions are applied.
% We show here that within a class of CPNs, each CPS gives rise to a unique networks.

% We note that the CPN classes that we mention here differs to the network classes that are widely used in phylogenetics.
% Phylogenetic network classes refer to topologically restricted classes of networks such as tree-child, stack-free, tree-based, and level-$k$.
% CPN classes categorize CPNs into those that can be obtained from a CPS via a chosen cherry construction, and a chosen reticulated cherry construction.\todoYuki{maybe delete}

\begin{definition}
Let~$A$ and~$B$ be a cherry construction and a reticulated cherry construction, respectively.
We let~$(A,B)$ denote the class of all CPNs that can be obtained from CPSs by using the suitable constructions~$A$ or~$B$.
\end{definition}

Within the CPN class~$(A,B)$, we say that we use the~\emph{$(A,B)$-construction} to obtain CPNs from CPSs.
We write~$N\in(A,B)$ or say that~$N$ is an~$(A,B)$-CPN to mean that~$N$ is a CPN in the CPN class~$(A,B)$.

Since there are two cherry constructions and four reticulated cherry constructions, there are in total eight CPN classes.
For example, the CPN class~$(\ref{Const:CherryResolved}, \ref{Const:RCherryResolved})$ contains all binary CPNs (see Figure~\ref{fig:Definition} for an example of obtaining a~$(\ref{Const:CherryResolved},\ref{Const:RCherryResolved})$-class CPN from a CPS).
We note that it is possible obtain the same CPN from the same CPS within different CPN classes.
Indeed, a CPS corresponding to a tree will give the same network in all the CPN classes that use the \ref{Const:CherryResolved} cherry construction, and the same can be said for CPN classes that use the \ref{Const:CherryUnresolved} cherry construction.
A CPS that is long enough (with enough leaves and reticulations) returns a network that is distinct amongst the different CPN classes.
An example of this is shown in Figure~\ref{fig:CPNClass}.

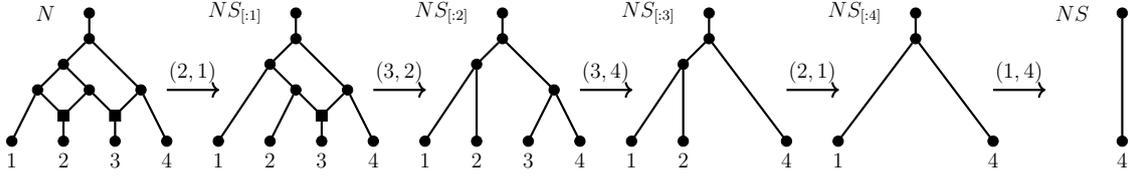
\begin{figure}
    \centering
    $S=(2,1),(3,2),(3,4),(2,1),(1,4)$
    
    \vspace{0.5cm}
    \resizebox{\columnwidth}{!}{
    \begin{tikzpicture}[every node/.style = {draw, circle, fill, inner sep = 0pt, minimum size = 2mm},
    square/.style = {regular polygon, regular polygon sides = 4, minimum size = 3 mm}]
    \tikzset {edge/.style = {very thick, shorten >= -0.5 pt}}
        %Nodes

        \node[] (-1) at (0.0,-0.0){};
        \node[] (0) at (0.0,-0.5){};
        \node[] (5) at (-0.5,-1.0){};
        \node[] (8) at (1,-1.5){};
        \node[] (6) at (-1.0,-1.5){};
        \node[] (7) at (0.0,-1.5){};
        \node[square] (9) at (-0.5,-2.0){};
        \node[square] (10) at (0.5,-2.0){};

        \node[draw=none, fill=none, left = 5mm of -1] {\large{$N$}};
        %Leaves

        \node[] (1) at (-1.5,-2.5){};
        \node[draw=none, fill=none, below=1mm of 1] (leaf_1){\large $1$};
        \node[] (2) at (-0.5,-2.5){};
        \node[draw=none, fill=none, below=1mm of 2] (leaf_2){\large $2$};
        \node[] (3) at (0.5,-2.5){};
        \node[draw=none, fill=none, below=1mm of 3] (leaf_3){\large $3$};
        \node[] (4) at (1.5,-2.5){};
        \node[draw=none, fill=none, below=1mm of 4] (leaf_4){\large $4$};

        %Edges

        \draw[edge] (-1) edge (0);
        \draw[edge] (0) edge (5);
        \draw[edge] (0) edge (8);
        \draw[edge] (5) edge (6);
        \draw[edge] (5) edge (7);
        \draw[edge] (6) edge (1);
        \draw[edge] (6) edge (9);
        \draw[edge] (9) edge (2);
        \draw[edge] (7) edge (9);
        \draw[edge] (7) edge (10);
        \draw[edge] (10) edge (3);
        \draw[edge] (8) edge (10);
        \draw[edge] (8) edge (4);

        \begin{scope}[shift = {(4,0)}]
        %Nodes

        \node[] (-1) at (0.0,-0.0){};
        \node[] (0) at (0.0,-0.5){};
        \node[] (5) at (-0.5,-1.0){};
        \node[] (8) at (1,-1.5){};
        \node[] (7) at (0.0,-1.5){};
        \node[square] (10) at (0.5,-2.0){};

        \node[draw=none, fill=none, left = 5mm of -1] {\large{$NS_{[:1]}$}};
        %Leaves

        \node[] (1) at (-1.5,-2.5){};
        \node[draw=none, fill=none, below=1mm of 1] (leaf_1){\large $1$};
        \node[] (2) at (-0.5,-2.5){};
        \node[draw=none, fill=none, below=1mm of 2] (leaf_2){\large $2$};
        \node[] (3) at (0.5,-2.5){};
        \node[draw=none, fill=none, below=1mm of 3] (leaf_3){\large $3$};
        \node[] (4) at (1.5,-2.5){};
        \node[draw=none, fill=none, below=1mm of 4] (leaf_4){\large $4$};

        %Edges

        \draw[edge] (-1) edge (0);
        \draw[edge] (0) edge (5);
        \draw[edge] (0) edge (8);
        \draw[edge] (5) edge (1);
        \draw[edge] (5) edge (7);
        \draw[edge] (7) edge (2);
        \draw[edge] (7) edge (10);
        \draw[edge] (10) edge (3);
        \draw[edge] (8) edge (10);
        \draw[edge] (8) edge (4);
        \end{scope}
        
        \begin{scope}[shift = {(8,0)}]
        %Nodes

        \node[] (-1) at (0.0,-0.0){};
        \node[] (0) at (0.0,-0.5){};
        \node[] (5) at (-0.5,-1.0){};
        \node[] (8) at (1,-1.5){};

        \node[draw=none, fill=none, left = 5mm of -1] {\large{$NS_{[:2]}$}};
        %Leaves

        \node[] (1) at (-1.5,-2.5){};
        \node[draw=none, fill=none, below=1mm of 1] (leaf_1){\large $1$};
        \node[] (2) at (-0.5,-2.5){};
        \node[draw=none, fill=none, below=1mm of 2] (leaf_2){\large $2$};
        \node[] (3) at (0.5,-2.5){};
        \node[draw=none, fill=none, below=1mm of 3] (leaf_3){\large $3$};
        \node[] (4) at (1.5,-2.5){};
        \node[draw=none, fill=none, below=1mm of 4] (leaf_4){\large $4$};

        %Edges

        \draw[edge] (-1) edge (0);
        \draw[edge] (0) edge (5);
        \draw[edge] (0) edge (8);
        \draw[edge] (5) edge (1);
        \draw[edge] (5) edge (2);
        \draw[edge] (8) edge (3);
        \draw[edge] (8) edge (4);
        \end{scope}
        
        \begin{scope}[shift = {(12,0)}]
        %Nodes

        \node[] (-1) at (0.0,-0.0){};
        \node[] (0) at (0.0,-0.5){};
        \node[] (5) at (-0.5,-1.0){};

        \node[draw=none, fill=none, left = 5mm of -1] {\large{$NS_{[:3]}$}};
        %Leaves

        \node[] (1) at (-1.5,-2.5){};
        \node[draw=none, fill=none, below=1mm of 1] (leaf_1){\large $1$};
        \node[] (2) at (-0.5,-2.5){};
        \node[draw=none, fill=none, below=1mm of 2] (leaf_2){\large $2$};
        \node[] (4) at (1.5,-2.5){};
        \node[draw=none, fill=none, below=1mm of 4] (leaf_4){\large $4$};

        %Edges

        \draw[edge] (-1) edge (0);
        \draw[edge] (0) edge (5);
        \draw[edge] (0) edge (4);
        \draw[edge] (5) edge (1);
        \draw[edge] (5) edge (2);
        \end{scope}
        
        \begin{scope}[shift = {(16,0)}]
        %Nodes

        \node[] (-1) at (0.0,-0.0){};
        \node[] (0) at (0.0,-0.5){};

        \node[draw=none, fill=none, left = 5mm of -1] {\large{$NS_{[:4]}$}};
        %Leaves

        \node[] (1) at (-1.5,-2.5){};
        \node[draw=none, fill=none, below=1mm of 1] (leaf_1){\large $1$};
        \node[] (4) at (1.5,-2.5){};
        \node[draw=none, fill=none, below=1mm of 4] (leaf_4){\large $4$};

        %Edges

        \draw[edge] (-1) edge (0);
        \draw[edge] (0) edge (1);
        \draw[edge] (0) edge (4);
        \end{scope}
        
        \begin{scope}[shift = {(20,0)}]
        %Nodes

        \node[] (-1) at (0.0,-0.0){};

        \node[draw=none, fill=none, left = 5mm of -1] {\large{$NS$}};
        %Leaves

        \node[] (4) at (0,-2.5){};
        \node[draw=none, fill=none, below=1mm of 4] (leaf_4){\large $4$};

        %Edges

        \draw[edge] (-1) edge (4);
        \end{scope}
        
        \draw[edge] (1.5,-1.5) edge [->] node[draw = none, fill = none, midway, yshift=10pt]{\large $(2,1)$} (2.5,-1.5);
        
        \draw[edge] (5.5,-1.5) edge [->] node[draw = none, fill = none, midway, yshift=10pt]{\large $(3,2)$} (6.5,-1.5);
        
        \draw[edge] (9.5,-1.5) edge [->] node[draw = none, fill = none, midway, yshift=10pt]{\large $(3,4)$} (10.5,-1.5);
        
        \draw[edge] (13.5,-1.5) edge [->] node[draw = none, fill = none, midway, yshift=10pt]{\large $(2,1)$} (14.5,-1.5);
        
        \draw[edge] (17.5,-1.5) edge [->] node[draw = none, fill = none, midway, yshift=10pt]{\large $(1,4)$} (18.5,-1.5);
        
    \end{tikzpicture}
    }
    \caption{A binary cherry-picking network~$N$ reduced to a leaf~$4$ by a CPS~$S$. 
    The reduction is shown as a sequence of networks~$NS_{[:i]}$ for~$i=0,1,\ldots,5$ from left to right, in which an element of~$S$ is applied to the network successively.
    This sequence is minimal for the network, as every element of the sequence reduces either a cherry or a reticulated cherry of the network.
    An example of a cherry~$(3,4)$ can be seen in the network~$NS_{[:2]}$, and a reticulated cherry~$(2,1)$ can be seen in the network~$N$.
    The reduction of both reducible pairs is carried out as in Definition~\ref{def:ReduciblePair}.
    Observe that this sequence is not a tree-child sequence, as the element~$2$ appears as a first coordinate in~$S_1$ and as a second coordinate in~$S_2$.
    Constructing a network from a sequence~$S$ in the class~$(\ref{Const:CherryResolved}, \ref{Const:RCherryResolved})$ can be seen by moving through the six networks in reverse order (from right to left).}
    \label{fig:Definition}
\end{figure}

% \begin{observation}\label{obs:UniqueConstruction}
% Let~$S$ be a CPS.
% Using the~$(A,B)$-construction returns a unique network~$N$, for which~$S$ is a minimal CPS.
% \end{observation}

% The properties of the networks within these CPN classes differ due to their construction methods.
% For example, networks in classes that use the reticulated cherry constructions~\ref{Const:RCherrySF} or~\ref{Const:RCherryUnresolved} do not contain rr-edges.
% Networks in classes~$(\ref{Const:CherryResolved},\ref{Const:RCherryStack})$ and~$(\ref{Const:CherryResolved},\ref{Const:RCherryUnresolved})$ have the property that the grandparents of all reticulations are again reticulations or 

% We briefly comment on the characteristics of networks contained in each of these classes with Table~\ref{tab:CPNClass}.

% \begin{table}[]
% \caption{Reconstructible CPN classes and their characterizations.
% For each CPN class, a checkmark (\checkmark) is given if all networks within the class satisfies the condition.}
% \label{tab:CPNClass}
% \begin{tabular}{|c|c|c|c|c|}
% \hline
%           & Binary      & Semi-binary   & No rr-paths   & No tt-paths   \\ \hline
% $(1a,2a)$ & \checkmark  & \checkmark    &               &               \\ \hline
% $(1a,2a)$ &             & \checkmark    & \checkmark    &               \\ \hline
% $(1a,2a)$ &             &               &               & \checkmark    \\ \hline
% $(1a,2a)$ &             &               & \checkmark    & \checkmark    \\ \hline
% \end{tabular}
% \end{table}

\begin{figure}
    \centering
    $S = (3,2),(1,2),(3,2),(3,4),(2,4)$
    
    \vspace{0.5cm}
    \begin{subfigure}[b]{.24\textwidth}
    \begin{tikzpicture}[every node/.style = {draw, circle, fill, inner sep = 0pt, minimum size = 2mm},
    square/.style = {regular polygon, regular polygon sides = 4, minimum size = 3 mm}]
    \tikzset {edge/.style = {very thick, shorten >= -0.5 pt}}

        %Nodes

        \node[] (-1) at (0.0,-0.0) {};
        \node[] (0) at (0.0,-0.5) {};
        \node[] (5) at (-0.5,-1.0) {};
        \node[] (6) at (0.5,-1.0) {};
        \node[] (7) at (-1.0,-1.5) {};
        \node[square] (8) at (0.0,-1.5) {};
        \node[] (9) at (-0.5,-2.0) {};
        \node[square] (10) at (0.5,-2.5) {};

        \node[draw=none, fill=none, left = 3mm of 0] {\large{$(\ref{Const:CherryResolved}, \ref{Const:RCherryResolved})$}};
        %Leaves

        \node[] (1) at (-1.5,-3.0) {};
        \node[draw=none, fill=none, below=1mm of 1] (leaf_1) {\large $1$};
        \node[] (2) at (-0.5,-3.0) {};
        \node[draw=none, fill=none, below=1mm of 2] (leaf_2) {\large $2$};
        \node[] (3) at (0.5,-3.0) {};
        \node[draw=none, fill=none, below=1mm of 3] (leaf_3) {\large $3$};
        \node[] (4) at (1.5,-3.0) {};
        \node[draw=none, fill=none, below=1mm of 4] (leaf_4) {\large $4$};

        %Edges

        \draw[edge] (-1) edge (0);
        \draw[edge] (0) edge (5);
        \draw[edge] (0) edge (6);
        \draw[edge] (5) edge (7);
        \draw[edge] (5) edge (8);
        \draw[edge] (6) edge (8);
        \draw[edge] (6) edge (4);
        \draw[edge] (7) edge (1);
        \draw[edge] (7) edge (9);
        \draw[edge] (8) edge (10);
        \draw[edge] (9) edge (10);
        \draw[edge] (9) edge (2);
        \draw[edge] (10) edge (3);
    \end{tikzpicture}
    \end{subfigure}
    \begin{subfigure}[b]{.24\textwidth}
    \begin{tikzpicture}[every node/.style = {draw, circle, fill, inner sep = 0pt, minimum size = 2mm},
    square/.style = {regular polygon, regular polygon sides = 4, minimum size = 3 mm}]
    \tikzset {edge/.style = {very thick, shorten >= -0.5 pt}}

        %Nodes

        \node[] (-1) at (0.0,-0.0) {};
        \node[] (0) at (0.0,-0.5) {};
        \node[] (5) at (-0.5,-1.0) {};
        \node[] (6) at (0.5,-1.0) {};
        \node[] (7) at (-1.0,-1.5) {};
        \node[] (9) at (-0.5,-2.0) {};
        \node[square] (10) at (0.5,-2.5) {};

        \node[draw=none, fill=none, left = 3mm of 0] {\large{$(\ref{Const:CherryResolved}, \ref{Const:RCherrySF})$}};
        %Leaves

        \node[] (1) at (-1.5,-3.0) {};
        \node[draw=none, fill=none, below=1mm of 1] (leaf_1) {\large $1$};
        \node[] (2) at (-0.5,-3.0) {};
        \node[draw=none, fill=none, below=1mm of 2] (leaf_2) {\large $2$};
        \node[] (3) at (0.5,-3.0) {};
        \node[draw=none, fill=none, below=1mm of 3] (leaf_3) {\large $3$};
        \node[] (4) at (1.5,-3.0) {};
        \node[draw=none, fill=none, below=1mm of 4] (leaf_4) {\large $4$};

        %Edges

        \draw[edge] (-1) edge (0);
        \draw[edge] (0) edge (5);
        \draw[edge] (0) edge (6);
        \draw[edge] (5) edge (7);
        \draw[edge] (5) edge (10);
        \draw[edge] (6) edge (10);
        \draw[edge] (6) edge (4);
        \draw[edge] (7) edge (1);
        \draw[edge] (7) edge (9);
        \draw[edge] (9) edge (10);
        \draw[edge] (9) edge (2);
        \draw[edge] (10) edge (3);
    \end{tikzpicture}
    \end{subfigure}
    \begin{subfigure}[b]{.24\textwidth}
    \begin{tikzpicture}[every node/.style = {draw, circle, fill, inner sep = 0pt, minimum size = 2mm},
    square/.style = {regular polygon, regular polygon sides = 4, minimum size = 3 mm}]
    \tikzset {edge/.style = {very thick, shorten >= -0.5 pt}}

        %Nodes

        \node[] (-1) at (0.0,-0.0) {};
        \node[] (0) at (0.0,-0.5) {};
        \node[] (6) at (0.5,-1.0) {};
        \node[] (7) at (-1.0,-1.5) {};
        \node[square] (8) at (0.0,-1.5) {};
        \node[square] (10) at (0.5,-2.5) {};

        \node[draw=none, fill=none, left = 3mm of 0] {\large{$(\ref{Const:CherryResolved}, \ref{Const:RCherryStack})$}};
        %Leaves

        \node[] (1) at (-1.5,-3.0) {};
        \node[draw=none, fill=none, below=1mm of 1] (leaf_1) {\large $1$};
        \node[] (2) at (-0.5,-3.0) {};
        \node[draw=none, fill=none, below=1mm of 2] (leaf_2) {\large $2$};
        \node[] (3) at (0.5,-3.0) {};
        \node[draw=none, fill=none, below=1mm of 3] (leaf_3) {\large $3$};
        \node[] (4) at (1.5,-3.0) {};
        \node[draw=none, fill=none, below=1mm of 4] (leaf_4) {\large $4$};

        %Edges

        \draw[edge] (-1) edge (0);
        \draw[edge] (0) edge (7);
        \draw[edge] (0) edge (6);
        \draw[edge] (0) edge (8);
        \draw[edge] (6) edge (8);
        \draw[edge] (6) edge (4);
        \draw[edge] (7) edge (1);
        \draw[edge] (8) edge (10);
        \draw[edge] (7) edge (10);
        \draw[edge] (7) edge (2);
        \draw[edge] (10) edge (3);
    \end{tikzpicture}
    \end{subfigure}
    \begin{subfigure}[b]{.24\textwidth}
    \begin{tikzpicture}[every node/.style = {draw, circle, fill, inner sep = 0pt, minimum size = 2mm},
    square/.style = {regular polygon, regular polygon sides = 4, minimum size = 3 mm}]
    \tikzset {edge/.style = {very thick, shorten >= -0.5 pt}}

        %Nodes

        \node[] (-1) at (0.0,-0.0) {};
        \node[] (0) at (0.0,-0.5) {};
        \node[] (6) at (0.5,-1.0) {};
        \node[] (7) at (-1.0,-1.5) {};
        \node[square] (10) at (0.5,-2.5) {};

        \node[draw=none, fill=none, left = 3mm of 0] {\large{$(\ref{Const:CherryResolved}, \ref{Const:RCherryUnresolved})$}};
        %Leaves

        \node[] (1) at (-1.5,-3.0) {};
        \node[draw=none, fill=none, below=1mm of 1] (leaf_1) {\large $1$};
        \node[] (2) at (-0.5,-3.0) {};
        \node[draw=none, fill=none, below=1mm of 2] (leaf_2) {\large $2$};
        \node[] (3) at (0.5,-3.0) {};
        \node[draw=none, fill=none, below=1mm of 3] (leaf_3) {\large $3$};
        \node[] (4) at (1.5,-3.0) {};
        \node[draw=none, fill=none, below=1mm of 4] (leaf_4) {\large $4$};

        %Edges

        \draw[edge] (-1) edge (0);
        \draw[edge] (0) edge (7);
        \draw[edge] (0) edge (6);
        \draw[edge] (0) edge (10);
        \draw[edge] (6) edge (10);
        \draw[edge] (6) edge (4);
        \draw[edge] (7) edge (1);
        \draw[edge] (7) edge (10);
        \draw[edge] (7) edge (2);
        \draw[edge] (10) edge (3);
    \end{tikzpicture}
    \end{subfigure}
    
    \vspace{1cm}
    
    \begin{subfigure}[b]{.24\textwidth}
    \begin{tikzpicture}[every node/.style = {draw, circle, fill, inner sep = 0pt, minimum size = 2mm},
    square/.style = {regular polygon, regular polygon sides = 4, minimum size = 3 mm}]
    \tikzset {edge/.style = {very thick, shorten >= -0.5 pt}}

        %Nodes

        \node[] (-1) at (0.0,-0.0) {};
        \node[] (0) at (0.0,-0.5) {};
        \node[] (5) at (-0.5,-1.0) {};
        \node[square] (8) at (0.0,-1.5) {};
        \node[] (9) at (-0.5,-2.0) {};
        \node[square] (10) at (0.5,-2.5) {};

        \node[draw=none, fill=none, left = 3mm of 0] {\large{$(\ref{Const:CherryUnresolved}, \ref{Const:RCherryResolved})$}};
        %Leaves

        \node[] (1) at (-1.5,-3.0) {};
        \node[draw=none, fill=none, below=1mm of 1] (leaf_1) {\large $1$};
        \node[] (2) at (-0.5,-3.0) {};
        \node[draw=none, fill=none, below=1mm of 2] (leaf_2) {\large $2$};
        \node[] (3) at (0.5,-3.0) {};
        \node[draw=none, fill=none, below=1mm of 3] (leaf_3) {\large $3$};
        \node[] (4) at (1.5,-3.0) {};
        \node[draw=none, fill=none, below=1mm of 4] (leaf_4) {\large $4$};

        %Edges

        \draw[edge] (-1) edge (0);
        \draw[edge] (0) edge (5);
        \draw[edge] (5) edge (8);
        \draw[edge] (0) edge (8);
        \draw[edge] (0) edge (4);
        \draw[edge] (5) edge (1);
        \draw[edge] (5) edge (9);
        \draw[edge] (8) edge (10);
        \draw[edge] (9) edge (10);
        \draw[edge] (9) edge (2);
        \draw[edge] (10) edge (3);
    \end{tikzpicture}
    \end{subfigure}
    \begin{subfigure}[b]{.24\textwidth}
    \begin{tikzpicture}[every node/.style = {draw, circle, fill, inner sep = 0pt, minimum size = 2mm},
    square/.style = {regular polygon, regular polygon sides = 4, minimum size = 3 mm}]
    \tikzset {edge/.style = {very thick, shorten >= -0.5 pt}}

        %Nodes

        \node[] (-1) at (0.0,-0.0) {};
        \node[] (0) at (0.0,-0.5) {};
        \node[] (5) at (-0.5,-1.0) {};
        \node[] (9) at (-0.5,-2.0) {};
        \node[square] (10) at (0.5,-2.5) {};

        \node[draw=none, fill=none, left = 3mm of 0] {\large{$(\ref{Const:CherryUnresolved}, \ref{Const:RCherrySF})$}};
        %Leaves

        \node[] (1) at (-1.5,-3.0) {};
        \node[draw=none, fill=none, below=1mm of 1] (leaf_1) {\large $1$};
        \node[] (2) at (-0.5,-3.0) {};
        \node[draw=none, fill=none, below=1mm of 2] (leaf_2) {\large $2$};
        \node[] (3) at (0.5,-3.0) {};
        \node[draw=none, fill=none, below=1mm of 3] (leaf_3) {\large $3$};
        \node[] (4) at (1.5,-3.0) {};
        \node[draw=none, fill=none, below=1mm of 4] (leaf_4) {\large $4$};

        %Edges

        \draw[edge] (-1) edge (0);
        \draw[edge] (0) edge (5);
        \draw[edge] (5) edge (10);
        \draw[edge] (0) edge (10);
        \draw[edge] (0) edge (4);
        \draw[edge] (5) edge (1);
        \draw[edge] (5) edge (9);
        \draw[edge] (9) edge (10);
        \draw[edge] (9) edge (2);
        \draw[edge] (10) edge (3);
    \end{tikzpicture}
    \end{subfigure}
    \begin{subfigure}[b]{.24\textwidth}
    \begin{tikzpicture}[every node/.style = {draw, circle, fill, inner sep = 0pt, minimum size = 2mm},
    square/.style = {regular polygon, regular polygon sides = 4, minimum size = 3 mm}]
    \tikzset {edge/.style = {very thick, shorten >= -0.5 pt}}

        %Nodes

        \node[] (-1) at (0.0,-0.0) {};
        \node[] (0) at (0.0,-0.5) {};
        \node[square] (8) at (0.0,-1.5) {};
        \node[square] (10) at (0.5,-2.5) {};

        \node[draw=none, fill=none, left = 3mm of 0] {\large{$(\ref{Const:CherryUnresolved}, \ref{Const:RCherryStack})$}};
        %Leaves

        \node[] (1) at (-1.5,-3.0) {};
        \node[draw=none, fill=none, below=1mm of 1] (leaf_1) {\large $1$};
        \node[] (2) at (-0.5,-3.0) {};
        \node[draw=none, fill=none, below=1mm of 2] (leaf_2) {\large $2$};
        \node[] (3) at (0.5,-3.0) {};
        \node[draw=none, fill=none, below=1mm of 3] (leaf_3) {\large $3$};
        \node[] (4) at (1.5,-3.0) {};
        \node[draw=none, fill=none, below=1mm of 4] (leaf_4) {\large $4$};

        %Edges

        \draw[edge] (-1) edge (0);
        \draw[edge, bend right=10] (0) edge (8);
        \draw[edge, bend left=10] (0) edge (8);
        \draw[edge] (0) edge (4);
        \draw[edge] (0) edge (1);
        \draw[edge] (8) edge (10);
        \draw[edge] (0) edge (10);
        \draw[edge] (0) edge (2);
        \draw[edge] (10) edge (3);
    \end{tikzpicture}
    \end{subfigure}
    \begin{subfigure}[b]{.24\textwidth}
    \begin{tikzpicture}[every node/.style = {draw, circle, fill, inner sep = 0pt, minimum size = 2mm},
    square/.style = {regular polygon, regular polygon sides = 4, minimum size = 3 mm}]
    \tikzset {edge/.style = {very thick, shorten >= -0.5 pt}}

        %Nodes

        \node[] (-1) at (0.0,-0.0) {};
        \node[] (0) at (0.0,-0.5) {};
        \node[square] (10) at (0.5,-2.5) {};

        \node[draw=none, fill=none, left = 3mm of 0] {\large{$(\ref{Const:CherryUnresolved}, \ref{Const:RCherryUnresolved})$}};
        %Leaves

        \node[] (1) at (-1.5,-3.0) {};
        \node[draw=none, fill=none, below=1mm of 1] (leaf_1) {\large $1$};
        \node[] (2) at (-0.5,-3.0) {};
        \node[draw=none, fill=none, below=1mm of 2] (leaf_2) {\large $2$};
        \node[] (3) at (0.5,-3.0) {};
        \node[draw=none, fill=none, below=1mm of 3] (leaf_3) {\large $3$};
        \node[] (4) at (1.5,-3.0) {};
        \node[draw=none, fill=none, below=1mm of 4] (leaf_4) {\large $4$};

        %Edges

        \draw[edge] (-1) edge (0);
        \draw[edge, bend right=10] (0) edge (10);
        \draw[edge, bend left = 10] (0) edge (10);
        \draw[edge] (0) edge (4);
        \draw[edge] (0) edge (1);
        \draw[edge] (0) edge (10);
        \draw[edge] (0) edge (2);
        \draw[edge] (10) edge (3);
    \end{tikzpicture}
    \end{subfigure}
    \caption{A CPS~$S$ and the unique networks obtained from it within the eight CPN classes.
    Observe that the eight networks are distinct.}
    \label{fig:CPNClass}
\end{figure}
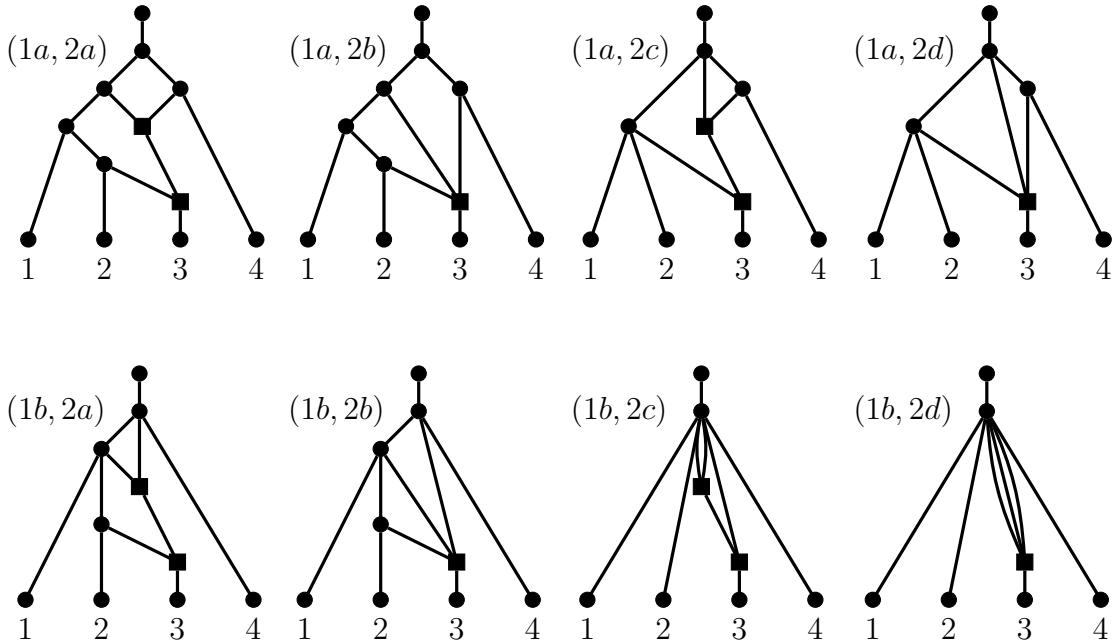

Suppose that we are given a CPN~$N$ within a CPN class~$(A,B)$, and let~$S$ be a minimal CPS that reduces~$N$.
To form some notion of correspondence between CPNs and the sequences that reduce them, we pose the following question:
is it always the case that applying the~$(A,B)$ construction on~$S$ returns the network~$N$?
It turns out that this is true only for half of the CPN classes.
We start by defining what it means for a CPN class to be \emph{reconstructible}.

\begin{definition}
A CPN class~$C=(A,B)$ is called \emph{reconstructible} if for any two networks~$N,N'\in C$ with a common minimal CPS, we have that~$N$ and~$N'$ are isomorphic.
\end{definition}

Since the construction is fixed, each CPS gives rise to a unique network within each of the CPN classes.
Then if two distinct networks~$N$ and~$N'$ have a common minimal CPS~$S$, at most one of these networks, say~$N$, can be constructed from the sequence.
This means that although~$S$ is a minimal CPS of~$N'$, it cannot be used to construct the network~$N'$.
Indeed, there does exist some minimal CPS of~$N'$ which can be used to construct~$N'$. 
Reconstructible CPN classes have the nice property that for a given CPN~$N$, \emph{any} minimal CPS for~$N$ can be used to construct~$N$.

\begin{lemma}\label{lem:Unique(A,B)Construction}
Let~$(A,B) \in \{(\ref{Const:CherryResolved},\ref{Const:RCherryResolved}); (\ref{Const:CherryResolved},\ref{Const:RCherrySF}); (\ref{Const:CherryUnresolved},\ref{Const:RCherryStack});
(\ref{Const:CherryUnresolved},\ref{Const:RCherryUnresolved})\}$.
Let~$N$ be a CPN in the~$(A,B)$-class, and let~$(x,y)$ be a reducible pair in~$N$.
Then adding~$(x,y)$ to~$N(x,y)$ using the~$(A,B)$ construction results in~$N$.
\end{lemma}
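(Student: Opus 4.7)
The plan is to argue by case analysis on whether~$(x,y)$ is a cherry or a reticulated cherry in~$N$, and for each of the four reconstructible classes. Since reduction and the~$(A,B)$-addition both only modify the local neighborhood of~$(x,y)$, it suffices to characterize the local structure of~$N$ at~$(x,y)$ and check that the~$(A,B)$-construction applied to~$N(x,y)$ reproduces it exactly.

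For the cherry case, let~$p$ be the common parent of~$x$ and~$y$ in~$N$. I would establish a structural invariant on the classes: if $A=$\ref{Const:CherryResolved}, then~$p$ has outdegree exactly~$2$ with children~$\{x,y\}$, because~\ref{Const:CherryResolved} inserts~$p$ as a fresh binary node and the other constructions paired with it (\ref{Const:RCherryResolved} and~\ref{Const:RCherrySF}) insert new nodes only on existing edges, never adding new children to an already-formed cherry parent. Reduction of~$(x,y)$ then suppresses~$p$, and re-addition by~\ref{Const:CherryResolved} inserts a new binary node in the same position, recovering~$N$. If $A=$\ref{Const:CherryUnresolved}, the invariant splits into two sub-cases---$p$ was either a fresh binary tree node with a reticulation parent (no contraction occurred at insertion) or an older tree node of outdegree~$\ge 3$ (contraction occurred)---and in each sub-case the contraction condition of~\ref{Const:CherryUnresolved} applied to~$N(x,y)$ triggers exactly when~$p$'s prior configuration was the contracted one.

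For the reticulated cherry case one must track both~$p_x$'s indegree and~$p_y$'s outdegree in~$N$. I would show that in each of the four reconstructible classes, the contraction conditions in~$B$ on~$p_xp$ and~$p_yq$ line up with whether~$p_x$ is a multi-reticulation or~$p_y$ is a multifurcation in~$N$, which in turn dictates whether~$p_x$ (respectively~$p_y$) survives reduction. The re-addition then rebuilds the edge~$p_yp_x$ via the appropriate combination of contractions, recovering~$N$ in each of the eight combinations (four classes times two pair types). The main obstacle is the bookkeeping required to establish the structural invariants: one must verify that later pair additions in the CPS never alter the local neighborhood of an earlier-formed reducible pair, and that the cherry and reticulated cherry constructions agree on when to contract~$p_yq$---this agreement is precisely what singles out the four reconstructible classes from the four non-reconstructible ones.
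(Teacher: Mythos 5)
Your proposal is correct and takes essentially the same route as the paper: both arguments come down to the structural properties that characterize the four reconstructible classes (the classes using construction~\ref{Const:CherryResolved} are semi-binary, the class~$(\ref{Const:CherryResolved},\ref{Const:RCherrySF})$ has no rr-edges, the classes using construction~\ref{Const:CherryUnresolved} have no tt-edges, etc.), which pin down the local structure of~$N$ at the reducible pair and force the~$(A,B)$-addition to~$N(x,y)$ to rebuild exactly~$N$; the paper simply states these characterizations as an observation, whereas you re-derive them by induction on the construction before doing the local case check. One caution: your bookkeeping claim that ``later pair additions never alter the local neighbourhood of an earlier-formed reducible pair'' is not literally true (contractions do add parents to multi-reticulations and children to multifurcations, and a reducible pair of~$N$ need never have been added as a pair at all), so what you should verify instead is precisely the class-wide degree/edge-type invariants above---your per-class sub-case analysis already uses exactly these, so the plan goes through.
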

\begin{proof}
%\todoRemie{The proof seems slightly incomplete: you have to note somewhere that when adding a pair, no rr-edge is subdivided with a tree node, and no tt-edge is subdivided with a reticulation node. Otherwise, the order in which you add and contract stuff matters. \textbf{TODO YUKI}}
Observe that these four classes are characterised by the following properties.
The networks in~$(\ref{Const:CherryResolved},\ref{Const:RCherryResolved})$ are binary; the networks in~$(\ref{Const:CherryResolved},\ref{Const:RCherrySF})$ do not contain rr-edges; the networks in~$(\ref{Const:CherryResolved},\ref{Const:RCherrySF})$ do not contain tt-edges; and
the networks in~$(\ref{Const:CherryUnresolved},\ref{Const:RCherryStack})$ do not contain rr-edges nor tt-edges.
Since~$N$ is a network of one of these classes,~$N$ must also have these properties.
\yuki{Furthermore, the network~$N(x,y)$ also has these properties, since deleting edges and potentially suppressing vertices does not create new vertices, which may subdivide existing rr-edges or tt-edges.}
This means that upon inserting an edge to the network (as a result of adding a cherry or a reticulated cherry~$(x,y)$), one should either do nothing; contract all rr-edges; contract all tt-edges; or contract all rr-edges and tt-edges, depending on which class of CPNs are being considered.
\yuki{Note that these contractions, should they occur, only involve vertices that have just been added as a result of adding the reducible pair, since~$N(x,y)$ also has the properties.}
This is precisely what happens when we add~$(x,y)$ back to the network~$N(x,y)$ using the respective constructions, and it follows immediately that the constructions defined in these CPN classes returns the original network~$N$.
\end{proof}

Lemma~\ref{lem:Unique(A,B)Construction} states that four of the eight CPN classes have the property that adding back a reduced pair to the network returns the original network. 
By applying this lemma in the construction of a network from a CPS, we obtain the following corollary.

\begin{corollary}\label{cor:Unique(A,B)construction}
Let~$(A,B) \in \{(\ref{Const:CherryResolved},\ref{Const:RCherryResolved}); (\ref{Const:CherryResolved},\ref{Const:RCherrySF}); (\ref{Const:CherryUnresolved},\ref{Const:RCherryStack});
(\ref{Const:CherryUnresolved},\ref{Const:RCherryUnresolved})\}$.
Then~$(A,B)$ is reconstructible.
\end{corollary}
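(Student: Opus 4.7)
The plan is induction on the length of a common minimal CPS $S$ reducing two networks $N, N' \in (A,B)$. In the base case $|S| = 0$, both $N$ and $N'$ are single-leaf networks on the same leaf (determined by the setup), so they are trivially isomorphic. For the inductive step, let $S_1 = (x_1, y_1)$ be the first pair of $S$. Since $S$ is minimal for both networks, $(x_1, y_1)$ must be a reducible pair of each of $N$ and $N'$. By Observation~\ref{obs:CPNAfterReducingfirstielementsremainCPN} the reduced networks $NS_{[:1]}$ and $N'S_{[:1]}$ are again $(A,B)$-CPNs, and the truncation $S_{[2:]}$ is a common minimal CPS reducing both. The induction hypothesis then yields an isomorphism $\varphi\colon NS_{[:1]} \to N'S_{[:1]}$.

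To close the inductive step, I would apply Lemma~\ref{lem:Unique(A,B)Construction} twice: adding $(x_1, y_1)$ to $NS_{[:1]}$ via the $(A,B)$-construction recovers $N$, and adding the same pair to $N'S_{[:1]}$ via the $(A,B)$-construction recovers $N'$. It therefore suffices to show that the $(A,B)$-construction respects isomorphism of inputs. For each of the four listed classes this construction is uniquely determined by the current network and the pair being added: the ``if'' conditions prescribe exactly which of the newly introduced edges must be contracted in order to avoid creating a forbidden rr-edge or tt-edge. Hence the isomorphism $\varphi$ can be extended canonically to the newly created vertices, giving an isomorphism $N \to N'$.

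The step I expect to require the most care is the verification that the $(A,B)$-construction actually commutes with isomorphism, since this is not explicitly packaged by Lemma~\ref{lem:Unique(A,B)Construction}. The verification reduces to a short structural check: because $\varphi$ preserves leaf labels, it maps the leaf $y_1$ (and, in the reticulated cherry case where $x_1$ is already a leaf of $NS_{[:1]}$, also the leaf $x_1$) to its counterpart on the other side, and therefore sends their parents to the corresponding parents in $N'S_{[:1]}$. The vertices $p, q$ newly inserted by the $(A,B)$-construction on the two sides thus sit in mirrored positions, and any class-specific contraction of an rr- or tt-edge on one side has a unique mirror on the other. Extending $\varphi$ by sending each added vertex on the left to its right-hand counterpart then yields the desired isomorphism $N \to N'$, completing the induction.
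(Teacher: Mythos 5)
Your proposal is correct and follows essentially the same route as the paper: the paper obtains the corollary by iterating Lemma~\ref{lem:Unique(A,B)Construction} along the sequence (so that the unique network the $(A,B)$-construction builds from $S$ is simultaneously $N$ and $N'$), and your induction on $|S|$ with the explicit isomorphism extension is just the step-by-step version of that argument. The only small inaccuracy is that Observation~\ref{obs:CPNAfterReducingfirstielementsremainCPN} gives only that $NS_{[:1]}$ and $N'S_{[:1]}$ are CPNs, not that they remain in the class $(A,B)$; for that you should invoke the structural characterisation of the four classes (binary, no rr-edges, no tt-edges, or neither) and the fact that reduction preserves it, which is exactly what is established inside the proof of Lemma~\ref{lem:Unique(A,B)Construction}.
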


% The implications of Corollary~\ref{cor:Unique(A,B)construction} can be rephrased as follows.
% Let~$N$ be a CPN in the~$(A,B)$-class, and let~$S = (x_1,y_1),\ldots, (x_n,y_n)$ be a minimal CPS for~$N$.
% Then, adding each pair from~$S$ to the CPN with the single leaf~$y_n$, using the~$(A,B)$ construction, results in~$N$.

\begin{figure}
    \centering
    \begin{subfigure}[b]{.49\textwidth}
    \centering
    \begin{tikzpicture}[every node/.style = {draw, circle, fill, inner sep = 0pt, minimum size = 2mm},
square/.style = {regular polygon, regular polygon sides = 4, minimum size = 3 mm}]
    \tikzset {edge/.style = {very thick, shorten >= -0.5 pt}}

        %Nodes

        \node[] (-1) at (0.0,-0.0) {};
        \node[] (0) at (0.0,-0.5) {};
        \node[] (5) at (0.5,-1.0) {};
        \node[square] (4) at (0,-1.5) {};
        \node[square] (6) at (0.0,-2.0) {};

        \node[draw=none, fill=none, left = 5mm of 0] {\large{$N$}};
        %Leaves

        \node[] (1) at (-1.0,-2.5) {};
        \node[draw=none, fill=none, below=1mm of 1] (leaf_1) {\large $1$};
        \node[] (2) at (0.0,-2.5) {};
        \node[draw=none, fill=none, below=1mm of 2] (leaf_2) {\large $2$};
        \node[] (3) at (1.0,-2.5) {};
        \node[draw=none, fill=none, below=1mm of 3] (leaf_3) {\large $3$};

        %Edges

        \draw[edge] (-1) edge (0);
        \draw[edge] (0) edge (1);
        \draw[edge] (0) edge (4);
        \draw[edge] (0) edge (5);
        \draw[edge] (4) edge (6);
        \draw[edge] (5) edge (4);
        \draw[edge] (5) edge (6);
        \draw[edge] (5) edge (3);
        \draw[edge] (6) edge (2);
        
        \begin{scope}[shift={(3,0)}]
        %Nodes

        \node[] (-1) at (0.0,-0.0) {};
        \node[] (0) at (0.0,-0.5) {};
        \node[] (4) at (-0.5,-1.0) {};
        \node[square] (5) at (-0.25,-1.5) {};
        \node[square] (6) at (0.0,-2.0) {};

        \node[draw=none, fill=none, left = 5mm of 0] {\large{$N'$}};
        %Leaves

        \node[] (1) at (-1.0,-2.5) {};
        \node[draw=none, fill=none, below=1mm of 1] (leaf_1) {\large $1$};
        \node[] (2) at (0.0,-2.5) {};
        \node[draw=none, fill=none, below=1mm of 2] (leaf_2) {\large $2$};
        \node[] (3) at (1.0,-2.5) {};
        \node[draw=none, fill=none, below=1mm of 3] (leaf_3) {\large $3$};

        %Edges

        \draw[edge] (-1) edge (0);
        \draw[edge] (0) edge (4);
        \draw[edge] (0) edge (5);
        \draw[edge] (0) edge (6);
        \draw[edge] (0) edge (3);
        \draw[edge] (4) edge (1);
        \draw[edge] (4) edge (5);
        \draw[edge] (5) edge (6);
        \draw[edge] (6) edge (2);
        \end{scope}
    \end{tikzpicture}
    \caption{$(\ref{Const:CherryResolved}, \ref{Const:RCherryStack}) : (2,3),(2,1),(2,3),(1,3)$}
    \end{subfigure}
    \begin{subfigure}[b]{.49\textwidth}
    \centering
    \begin{tikzpicture}[every node/.style = {draw, circle, fill, inner sep = 0pt, minimum size = 2mm},
square/.style = {regular polygon, regular polygon sides = 4, minimum size = 3 mm}]
    \tikzset {edge/.style = {very thick, shorten >= -0.5 pt}}

        %Nodes

        \node[] (-1) at (0.0,-0.0) {};
        \node[] (0) at (0.0,-0.5) {};
        \node[] (5) at (0.5,-1.0) {};
        \node[square] (6) at (0.0,-2.0) {};

        \node[draw=none, fill=none, left = 5mm of 0] {\large{$N$}};
        %Leaves

        \node[] (1) at (-1.0,-2.5) {};
        \node[draw=none, fill=none, below=1mm of 1] (leaf_1) {\large $1$};
        \node[] (2) at (0.0,-2.5) {};
        \node[draw=none, fill=none, below=1mm of 2] (leaf_2) {\large $2$};
        \node[] (3) at (1.0,-2.5) {};
        \node[draw=none, fill=none, below=1mm of 3] (leaf_3) {\large $3$};

        %Edges

        \draw[edge] (-1) edge (0);
        \draw[edge] (0) edge (1);
        \draw[edge] (0) edge (6);
        \draw[edge] (0) edge (5);
        \draw[edge, bend left = 20] (5) edge (6);
        \draw[edge, bend right = 20] (5) edge (6);
        \draw[edge] (5) edge (3);
        \draw[edge] (6) edge (2);
        
        \begin{scope}[shift={(3,0)}]
        %Nodes

        \node[] (-1) at (0.0,-0.0) {};
        \node[] (0) at (0.0,-0.5) {};
        \node[] (4) at (-0.5,-1.0) {};
        \node[square] (6) at (0.0,-2.0) {};

        \node[draw=none, fill=none, left = 5mm of 0] {\large{$N'$}};
        %Leaves

        \node[] (1) at (-1.0,-2.5) {};
        \node[draw=none, fill=none, below=1mm of 1] (leaf_1) {\large $1$};
        \node[] (2) at (0.0,-2.5) {};
        \node[draw=none, fill=none, below=1mm of 2] (leaf_2) {\large $2$};
        \node[] (3) at (1.0,-2.5) {};
        \node[draw=none, fill=none, below=1mm of 3] (leaf_3) {\large $3$};

        %Edges

        \draw[edge] (-1) edge (0);
        \draw[edge] (0) edge (4);
        \draw[edge, bend left = 20] (0) edge (6);
        \draw[edge, bend right = 20] (0) edge (6);
        \draw[edge] (0) edge (3);
        \draw[edge] (4) edge (1);
        \draw[edge] (4) edge (6);
        \draw[edge] (6) edge (2);
        \end{scope}
    \end{tikzpicture}
    \caption{$(\ref{Const:CherryResolved}, \ref{Const:RCherryUnresolved}) : (2,3),(1,3),(2,3),(1,3)$}
    \end{subfigure}
    
    \vspace{1cm}
    
    \begin{subfigure}[b]{.49\textwidth}
    \centering
    \begin{tikzpicture}[every node/.style = {draw, circle, fill, inner sep = 0pt, minimum size = 2mm},
square/.style = {regular polygon, regular polygon sides = 4, minimum size = 3 mm}]
    \tikzset {edge/.style = {very thick, shorten >= -0.5 pt}}

        %Nodes

        \node[] (-1) at (0.0,-0.0) {};
        \node[] (0) at (0.0,-0.5) {};
        \node[] (4) at (-0.5,-1.0) {};
        \node[square] (5) at (0,-1.5) {};

        \node[draw=none, fill=none, left = 5mm of 0] {\large{$N$}};
        %Leaves

        \node[] (1) at (-1.0,-2.0) {};
        \node[draw=none, fill=none, below=1mm of 1] (leaf_1) {\large $1$};
        \node[] (2) at (0.0,-2.0) {};
        \node[draw=none, fill=none, below=1mm of 2] (leaf_2) {\large $2$};
        \node[] (3) at (1.0,-2.0) {};
        \node[draw=none, fill=none, below=1mm of 3] (leaf_3) {\large $3$};

        %Edges

        \draw[edge] (-1) edge (0);
        \draw[edge] (0) edge (4);
        \draw[edge] (0) edge (5);
        \draw[edge] (0) edge (3);
        \draw[edge] (4) edge (1);
        \draw[edge] (4) edge (5);
        \draw[edge] (5) edge (2);
        
        \begin{scope}[shift={(3,0)}]
        %Nodes

        \node[] (-1) at (0.0,-0.0) {};
        \node[] (0) at (0.0,-0.5) {};
        \node[] (4) at (0.5,-1.0) {};
        \node[square] (5) at (0,-1.5) {};

        \node[draw=none, fill=none, left = 5mm of 0] {\large{$N'$}};
        %Leaves

        \node[] (1) at (-1.0,-2.0) {};
        \node[draw=none, fill=none, below=1mm of 1] (leaf_1) {\large $1$};
        \node[] (2) at (0.0,-2.0) {};
        \node[draw=none, fill=none, below=1mm of 2] (leaf_2) {\large $2$};
        \node[] (3) at (1.0,-2.0) {};
        \node[draw=none, fill=none, below=1mm of 3] (leaf_3) {\large $3$};

        %Edges

        \draw[edge] (-1) edge (0);
        \draw[edge] (0) edge (4);
        \draw[edge] (0) edge (5);
        \draw[edge] (4) edge (3);
        \draw[edge] (0) edge (1);
        \draw[edge] (4) edge (5);
        \draw[edge] (5) edge (2);
        \end{scope}
        
    \end{tikzpicture}
    \caption{$(\ref{Const:CherryUnresolved}, \ref{Const:RCherryResolved}): (2,1),(2,1),(1,3)$}
    \end{subfigure}
    \begin{subfigure}[b]{.49\textwidth}
    \centering
    \begin{tikzpicture}[every node/.style = {draw, circle, fill, inner sep = 0pt, minimum size = 2mm},
square/.style = {regular polygon, regular polygon sides = 4, minimum size = 3 mm}]
    \tikzset {edge/.style = {very thick, shorten >= -0.5 pt}}

        %Nodes

        \node[] (-1) at (0.0,-0.0) {};
        \node[] (0) at (0.0,-0.5) {};
        \node[] (4) at (-0.5,-1.0) {};
        \node[] (6) at (0.5,-1.0) {};
        \node[square] (5) at (0.0,-1.5) {};

        \node[draw=none, fill=none, left = 5mm of 0] {\large{$N$}};
        %Leaves

        \node[] (1) at (-1.0,-2.0) {};
        \node[draw=none, fill=none, below=1mm of 1] (leaf_1) {\large $1$};
        \node[] (2) at (0.0,-2.0) {};
        \node[draw=none, fill=none, below=1mm of 2] (leaf_2) {\large $2$};
        \node[] (3) at (1.0,-2.0) {};
        \node[draw=none, fill=none, below=1mm of 3] (leaf_3) {\large $3$};

        %Edges

        \draw[edge] (-1) edge (0);
        \draw[edge] (0) edge (4);
        \draw[edge] (0) edge (5);
        \draw[edge] (0) edge (6);
        \draw[edge] (4) edge (1);
        \draw[edge] (4) edge (5);
        \draw[edge] (5) edge (2);
        \draw[edge] (6) edge (5);
        \draw[edge] (6) edge (3);
        
        \begin{scope}[shift = {(3,0)}]
        %Nodes

        \node[] (-1) at (0.0,-0.0) {};
        \node[] (0) at (0.0,-0.5) {};
        \node[] (4) at (0.25,-0.75) {};
        \node[] (6) at (0.5,-1.0) {};
        \node[square] (5) at (0.0,-1.5) {};

        \node[draw=none, fill=none, left = 5mm of 0] {\large{$N'$}};
        %Leaves

        \node[] (1) at (-1.0,-2.0) {};
        \node[draw=none, fill=none, below=1mm of 1] (leaf_1) {\large $1$};
        \node[] (2) at (0.0,-2.0) {};
        \node[draw=none, fill=none, below=1mm of 2] (leaf_2) {\large $2$};
        \node[] (3) at (1.0,-2.0) {};
        \node[draw=none, fill=none, below=1mm of 3] (leaf_3) {\large $3$};

        %Edges

        \draw[edge] (-1) edge (0);
        \draw[edge] (0) edge (4);
        \draw[edge] (4) edge (5);
        \draw[edge] (4) edge (6);
        \draw[edge] (0) edge (1);
        \draw[edge] (0) edge (5);
        \draw[edge] (5) edge (2);
        \draw[edge] (6) edge (5);
        \draw[edge] (6) edge (3);
        \end{scope}
    \end{tikzpicture}
    \caption{$(\ref{Const:CherryUnresolved}, \ref{Const:RCherrySF}): (2,1),(2,1),(2,3),(1,3)$}
    \end{subfigure}
    \caption{Two distinct networks~$N$ and~$N'$ that can be reduced by the same minimal CPS for the $(\ref{Const:CherryResolved},\ref{Const:RCherryStack}), (\ref{Const:CherryResolved},\ref{Const:RCherryUnresolved}), (\ref{Const:CherryUnresolved},\ref{Const:RCherryResolved}), (\ref{Const:CherryUnresolved},\ref{Const:RCherrySF})$-classes.
    The networks obtained by using the respective constructions are~$N$.
    This means that given a network and a minimum sequence that reduces it, the sequence cannot always be used to construct the original network (let~$N'$ be the original network in these four cases).}
    \label{fig:CPNNonUniqueConstruction}
\end{figure}
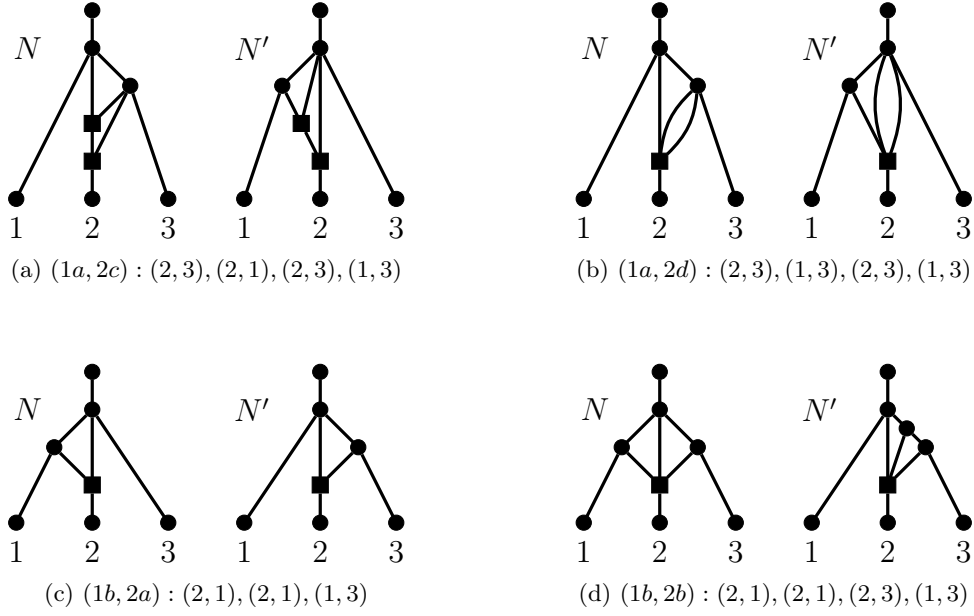

% The reason why these particular four classes are reconstructible lies in the nature of two special edges within the networks.
% Recall that rr-edges and tt-edges denote edges of the network whose endvertices are both reticulations and tree vertices, respectively.
% We consider networks on at least~$3$ leaves.
% These properties allow for adding reticulated cherries to be well-defined.
% Since contracting particular edges (tt-edges or rr-edges) are fixed in these four classes, the construction is well-defined.

% The reason why these particular four CPN classes are not reconstructible lies in the nature of the networks themselves.
To show that the above lemma and the corollary do not hold for the other four CPN classes, we present two networks with a common minimal CPS for each of the CPN classes in Figure~\ref{fig:CPNNonUniqueConstruction}.
Unlike their reconstructible counter-parts, the networks in these four classes can contain both tt-edges and rr-edges whilst also containing multifurcations and multi-reticulations. 
When constructing networks from CPSs, this allows for a mixture of choosing to contract some tt-edges and some rr-edges, but not all. 
This can make adding reticulated cherries problematic.
Take the~$(\ref{Const:CherryResolved},\ref{Const:RCherryStack})$ class for example.
Since there can exist tt-edges that are binary, we may, in particular, assume that a network in the class contains a reticulated cherry~$(x,y)$ where the parent of~$y$ is a head of a binary tt-edge~$e$.
But this means that upon reducing~$(x,y)$ and adding back the reticulated cherry using the~\ref{Const:RCherryStack} construction, we essentially contract this tt-edge, which returns a different network (see Figure~\ref{fig:(1a,2c)NonUnique}).

\begin{figure}
    \centering
    \begin{tikzpicture}[every node/.style = {draw, circle, fill, inner sep = 0pt, minimum size = 2mm},
    square/.style = {regular polygon, regular polygon sides = 4, minimum size = 3 mm}]
    \tikzset {edge/.style = {very thick, shorten >= -0.5 pt}}

        %Nodes
        \node[] (7) at (-0.5,0){};
        \node[] (4) at (0.5,0){};
        \node[] (5) at (0.5,-0.5){};
        \node[] (6) at (0.5,-1.0){};
        \node[square] (3) at (-0.5,-1.5){};

        %Leaves

        \node[] (1) at (-0.5,-2.0){};
        \node[draw=none, fill=none, below=1mm of 1] (leaf_1){\large $x$};
        \node[] (2) at (0.5,-2.0){};
        \node[draw=none, fill=none, below=1mm of 2] (leaf_2){\large $y$};

        %Edges

        \draw[edge] (7) edge (3);
        \draw[edge] (3) edge (1);
        \draw[edge] (4) edge (5);
        \draw[edge] (5) edge (1,-1);
        \draw[edge] (5) edge (6);
        \draw[edge] (6) edge (3);
        \draw[edge] (6) edge (2);
        
        \begin{scope}[shift = {(5,0)}]
        %Nodes
        
        \node[] (7) at (-0.5,0){};
        \node[] (4) at (0.5,0){};
        \node[] (5) at (0.5,-0.5){};

        %Leaves

        \node[] (1) at (-0.5,-2.0){};
        \node[draw=none, fill=none, below=1mm of 1] (leaf_1){\large $x$};
        \node[] (2) at (0.5,-2.0){};
        \node[draw=none, fill=none, below=1mm of 2] (leaf_2){\large $y$};

        %Edges

        \draw[edge] (7) edge (1);
        \draw[edge] (4) edge (5);
        \draw[edge] (5) edge (1,-1);
        \draw[edge] (5) edge (2);
        \end{scope}
        
        \begin{scope}[shift = {(10,0)}]
        %Nodes
        \node[] (7) at (-0.5,0){};
        \node[] (4) at (0.5,0){};
        \node[] (5) at (0.5,-0.5){};
        \node[square] (3) at (-0.5,-1.5){};

        %Leaves
        
        \node[] (1) at (-0.5,-2.0){};
        \node[draw=none, fill=none, below=1mm of 1] (leaf_1){\large $x$};
        \node[] (2) at (0.5,-2.0){};
        \node[draw=none, fill=none, below=1mm of 2] (leaf_2){\large $y$};

        %Edges

        \draw[edge] (7) edge (3);
        \draw[edge] (3) edge (1);
        \draw[edge] (4) edge (5);
        \draw[edge] (5) edge (1,-1);
        \draw[edge] (5) edge (3);
        \draw[edge] (5) edge (2);
        \end{scope}
        
        \draw[edge] (1.5,-1) edge [->] 
        node[draw = none, fill = none, midway, yshift=10pt]{reduce $(x,y)$} 
        (3.5,-1);
        
        \draw[edge] (6.5,-1) edge [->] 
        node[draw = none, fill = none, midway, yshift=10pt]{add $(x,y)$} 
        node[draw = none, fill = none, midway, yshift=-10pt]{with $(\ref{Const:CherryResolved}, \ref{Const:RCherryStack})$}
        (8.5,-1);
    \end{tikzpicture}
    \caption{Reducing a reticulated cherry~$(x,y)$ and adding it back using the construction~$(\ref{Const:CherryResolved}, \ref{Const:RCherryStack})$ can return a different network.}
    \label{fig:(1a,2c)NonUnique}
\end{figure}
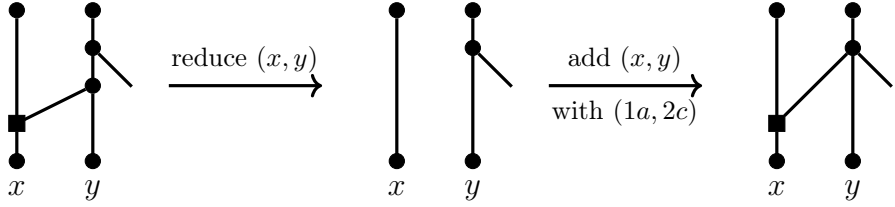

% The CPN classes that are of particular interest to us are the~$(\ref{Const:CherryResolved}, \ref{Const:RCherryResolved})$-class and the~$(\ref{Const:CherryResolved}, \ref{Const:RCherrySF})$-class.
% The~$(\ref{Const:CherryResolved}, \ref{Const:RCherryResolved})$-class contains all binary CPNs, whereas the~$(\ref{Const:CherryResolved}, \ref{Const:RCherrySF})$-class contains all stack-free semi-binary CPNs.
% In particular, the stack-free property of the networks in this CPN class follows from the fact that every time a new reticulation vertex is added, an edge connecting two reticulation vertices, should they exist, is suppressed.

% The former construction may introduce a \emph{stack} to the network: two adjacent reticulation nodes.
% In the latter construction, our networks never contain stacks. Therefore, with this construction, we retrieve \emph{stack-free} networks (sometimes called \emph{compressed}~\citep{huber2016folding}), which are networks without stacks.

\subsubsection{Refinement of constructed networks}\label{subsubsec:Refinement}

The six constructions that were introduced in Definition~\ref{def:AddingPairs} can be rephrased as follows.
When adding~$(x,y)$ to a network, check if~$x$ is a leaf in the network. 
If~$x$ is not a leaf in the network, then add a labelled leaf~$x$ and an edge from the (newly added) parent of~$y$ to~$x$ (add a cherry).
If~$x$ is a leaf in the network, then add an edge between the newly added parents of~$y$ and~$x$ (add a reticulated cherry).
Decide whether or not to contract some of the edges incident to the parent of~$x$ and edges incident to the parent of~$y$.
This means that given some CPS~$S$, the binary network~$N$ in the~$(\ref{Const:CherryResolved},\ref{Const:RCherryResolved})$-class constructed from~$S$ is a refinement of all networks that can be constructed from~$S$, using any combination of the constructions.
This gives the following observation.

% \begin{observation}\label{obs:ResolvedConstruction}
% Let~$N$ be a non-binary CPN.
% Then there exists a binary refinement of~$N$ that is a CPN.
% \end{observation}

% In particular, for every minimal CPS~$S$ of~$N$, there is a corresponding binary refinement of~$N$ that can be obtained by applying the~$(\ref{Const:CherryResolved},\ref{Const:RCherryResolved})$ construction on~$S$.

\begin{lemma}\label{lem:SBSFBinaryrefinement}
Given a \remie{non-binary} %semi-binary stack-free 
CPN~$N$ and a minimal CPS~$S$ for~$N$, there exists a binary refinement~$N_b$ of~$N$ such that~$S$ is a minimal CPS for~$N_b$.
\end{lemma}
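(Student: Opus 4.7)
The plan is to take $N_b$ to be the binary network in the $(\ref{Const:CherryResolved},\ref{Const:RCherryResolved})$-class constructed from $S$, and then to verify the two required properties: that $S$ is minimal for $N_b$, and that $N_b$ is a refinement of~$N$.

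First I would handle minimality. Since $N_b$ is constructed from~$S$ using the $(\ref{Const:CherryResolved},\ref{Const:RCherryResolved})$-construction, each backward step adds either a fresh labelled leaf (for a cherry) or a fresh reticulation edge between two newly created degree-two vertices (for a reticulated cherry). Neither of these operations is a ``no-op'', so walking forward through~$S$, each pair~$S_i$ either picks the leaf just added or deletes the reticulation edge just added. Hence $N_bS_{[:i-1]} \ne N_bS_{[:i]}$ for every~$i$, which is exactly the definition of $S$ being minimal for~$N_b$.

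The main work is showing that $N_b$ refines~$N$. I would argue this by induction on~$|S|$, running backwards through the sequence. The base case is the single-leaf network, where $N_b$ and $N$ coincide. For the inductive step, suppose the network $N_b'$ obtained from~$S_{[2:]}$ via the $(\ref{Const:CherryResolved},\ref{Const:RCherryResolved})$-construction refines the network $N'$ obtained from~$S_{[2:]}$ via whatever constructions built~$N$; equivalently, there is a set~$F$ of edges of~$N_b'$ whose contraction yields~$N'$. Now add $S_1 = (x_1,y_1)$ to both networks: to~$N_b'$ via construction~\ref{Const:CherryResolved} or~\ref{Const:RCherryResolved}, and to $N'$ via whichever of the six constructions was used when building~$N$. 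As noted in Section~\ref{subsubsec:Refinement}, each of the constructions~\ref{Const:CherryUnresolved},~\ref{Const:RCherrySF},~\ref{Const:RCherryStack},~\ref{Const:RCherryUnresolved} can be realised by first performing~\ref{Const:CherryResolved} or~\ref{Const:RCherryResolved} and then contracting one or two of the newly created edges incident to the new parents of~$x_1$ and~$y_1$. Contracting precisely those same newly created edges in~$N_b$, together with the edges in~$F$ (which survive the addition step unchanged since addition only inserts vertices and edges), produces~$N$. This gives the inductive refinement.

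The main obstacle is checking that the inductive step genuinely preserves the refinement. Specifically, one must verify that contracting the edges of~$F$ in the enlarged network $N_b$ commutes with the extra ``new'' contractions, and that the parent of~$y_1$ in~$N_b'$ really maps to the parent of~$y_1$ in~$N'$ under the contraction of~$F$ (so that the locations of the added edges correspond). This follows from the fact that contracting edges in~$F$ does not delete any leaf and preserves the parent relation to leaves, but it is the only delicate bookkeeping in the argument; everything else is a direct application of the characterisation of the six constructions given in Section~\ref{subsubsec:Refinement}.
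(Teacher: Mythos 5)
Your proposal is correct and follows essentially the same route as the paper: the paper likewise takes $N_b$ to be the unique binary network obtained from $S$ via the $(\ref{Const:CherryResolved},\ref{Const:RCherryResolved})$-construction, invoking the discussion in Section~\ref{subsubsec:Refinement} for the refinement property and observing that minimality of $S$ for $N_b$ holds by construction. Your backward induction simply spells out the detail the paper delegates to that discussion (each non-binary construction being the binary one followed by contracting the newly added edges), so nothing further is needed.
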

\begin{proof}
The unique binary network $N_b$ obtained by using construction~$(\ref{Const:CherryResolved},\ref{Const:RCherryResolved})$ on $S$ is a refinement of $N$, and $S$ is a minimal CPS for $N_b$ by definition of this network.
\end{proof}

Finally, the following lemma shows how general refinements of CPNs (not necessarily binary) are related to the CPNs.

\begin{lemma}\label{lem:CPNrefinement}
Let~$N_r$ be a refinement of a non-binary network~$N$ that is a CPN.
Then~$N$ is a CPN, and every minimal CPS of~$N_r$ is also a minimal CPS of~$N$.
\end{lemma}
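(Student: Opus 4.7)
The plan is to induct on the length $n$ of a minimal CPS $S$ for $N_r$. For the base case $n=0$, $N_r$ is a single-leaf network; since a refinement preserves the leaf set and must itself be a phylogenetic network, $N$ can only be a single-leaf network as well, and the empty sequence reduces it. For the inductive step, let $(x_1,y_1)=S_1$. I would first show that $(x_1,y_1)$ is a reducible pair of the same type in $N$, and then that $N_r(x_1,y_1)$ is a refinement of $N(x_1,y_1)$, so that the inductive hypothesis applied to $S_{[2:]}$ closes the argument.

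The key structural observation underlying the first step is that a refinement may only contract tt-edges and rr-edges: contracting an edge whose endpoints are of mixed type would produce a vertex that is simultaneously a tree vertex and a reticulation, which is not allowed in a phylogenetic network. Consequently, the contraction map $\phi: V(N_r) \to V(N)$ sends tree vertices to tree vertices and reticulations to reticulations. If $(x_1,y_1)$ is a cherry in $N_r$, then $p_{x_1}(N_r)=p_{y_1}(N_r)$, and their common image under $\phi$ is $p_{x_1}(N)=p_{y_1}(N)$, so $(x_1,y_1)$ is a cherry in $N$ as well. If $(x_1,y_1)$ is a reticulated cherry in $N_r$, then the edge from $p_{y_1}(N_r)$ to $p_{x_1}(N_r)$ is a tr-edge and cannot be contracted, so $p_{y_1}(N)$ remains a tree-vertex parent of the reticulation $p_{x_1}(N)$, making $(x_1,y_1)$ a reticulated cherry in $N$.

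For the second step, I would show that the set of contracted edges realizing $N$ as a refinement of $N_r$ restricts naturally to a set of edges in $N_r(x_1,y_1)$ realizing $N(x_1,y_1)$. Reducing the same pair in both networks deletes corresponding edges (the edge $p_{x_1}x_1$ in the cherry case; the reticulation edge between the parents in the reticulated cherry case) and suppresses corresponding degree-$2$ vertices (only $p_{x_1}$ in the cherry case; both parents in the reticulated cherry case). By the type-preservation established above, these deletions and suppressions are compatible with $\phi$: the restriction of $\phi$ to the vertices that survive in $N_r(x_1,y_1)$ is still a valid contraction map onto $N(x_1,y_1)$. Minimality of $S_{[2:]}$ for $N_r(x_1,y_1)$ is immediate from minimality of $S$ for $N_r$, so by the inductive hypothesis $S_{[2:]}$ is a minimal CPS for $N(x_1,y_1)$, and prepending $S_1$ yields that $S$ is a minimal CPS for $N$.

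The main obstacle is the compatibility verification in the second step: one must track carefully how the edges incident to the suppressed vertices behave under $\phi$, particularly in the reticulated cherry case, where two degree-$2$ vertices are suppressed and may lie in nontrivial contraction fibres of $\phi$. Once this bookkeeping is set up, the commutation of reduction and contraction follows by a routine edge-by-edge check, and the induction goes through cleanly.
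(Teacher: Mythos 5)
Your proposal is correct and takes essentially the same route as the paper's proof: induct, observe that the first pair of $S$ is a reducible pair of the same type in $N$, note that $N_rS_1$ is then a refinement of $NS_1$, and apply the induction hypothesis to conclude. The only differences are cosmetic—the paper inducts on the number of edges of $N$ rather than on the length of $S$, and it states the type-preservation and commutation-with-reduction steps without proof, whereas you spell out the tt-/rr-edge contraction argument in more detail.
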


\begin{proof}
We prove by induction on~$|N|$, the number of edges in~$N$.
For the base case take the single-leaf network.
So suppose that for every network of size at most~$|N|-1$, the claim is true.

Let~$S$ be a minimal CPS of~$N_r$, and let~$S_1=(x,y)$ be the first element of~$S$.
Since~$N_r$ can be obtained from~$N$ by refining vertices, it must be the case that~$S_1$ is also a reducible pair in~$N$.
Furthermore, if~$S_1$ is a cherry in~$N_r$ then~$S_1$ is also a cherry in~$N$; if~$S_1$ is a reticulated cherry in~$N_r$ then~$S_1$ is also a reticulated cherry in~$N$.
Now it is easy to see that~$N_rS_1$ is a refinement of~$NS_1$
Note that~$|NS_1|<|N|$ since every reduction reduces the size of the network.
The network~$N_rS_1$ is a CPN by Observation~\ref{obs:CPNAfterReducingfirstielementsremainCPN}.
By induction hypothesis,~$NS_1$ is a CPN and every minimal CPS of~$N_rS_1$ is also a minimal CPS of~$NS_1$.
Then in particular,~$S_{[:2]}$ is a minimal CPS of~$NS_1$.
It follows then that~$S$ is a minimal CPS of~$N$.
\end{proof}

Note that the converse of Lemma~\ref{lem:CPNrefinement} does not hold in general.
Consider the tree~$T$ on three leaves~$\{x,y,z\}$ that all share a common parent (the claw graph with a root).
Let~$T_r$ be a binary refinement of~$T$ in which~$x$ and~$y$ form a cherry.
Then the CPS~$(y,z),(x,z)$ is minimal for~$T$ but not for~$T_r$.

\section{Properties of cherry-picking networks}\label{sec:CPNproperties}

In this section, we investigate properties of cherry-picking networks. First, we continue where we left off in the previous section: we inspect the relation between CPSs and CPNs. This includes the reticulation number defined by a CPS, changes in the sets of reducible pairs that are ready for picking after picking a pair, and the order in which we can reduce a network. The last of these allows us to consider distinguishability of two CPNs by their CPSs. 
Then, we use this to investigate the relation between embedded networks of a CPN and its CPSs. 
% For tree-child networks and TCSs, tree containment has been studied implicitly by \citet{linz2019attaching}. 
%\todoYuki{Move this sentence to TCS section}
% We provide explicit results in this domain, and generalize some of these results to CPNs.
%---not all results can be generalized.}\todoYuki{But they haven't? Maybe we can say that their results imply that tree containment is possible with TCSs but its not explicitly stated in their paper.}

\subsection{Why CPNs are nice: order doesn't matter}\label{sec:OrderCPS}
%reducing sequences are CPSs
%implies networks with a reducing sequence have a CPS, as a reducing sequence has a minimal reducing subsequence.
\begin{lemma}\label{lem:OptimumCPN}
Let $S$ be a minimal length sequence of ordered pairs of leaves that reduces a non-binary network $N$. Then $S$ is a CPS.
Furthermore, $|S| = n+r-1$, where~$n$ and~$r$ denote the number of leaves and the reticulation number of~$N$, respectively.
\end{lemma}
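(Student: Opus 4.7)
The plan is to track how the number of leaves $n$ and the reticulation number $r$ change under a single step, then argue separately that the resulting sequence must satisfy Definition~\ref{def:CPS}. First I would exploit the minimality of $|S|$ to rule out ``null'' steps: if some $S_i=(x_i,y_i)$ were not a reducible pair of $NS_{[:i-1]}$, then reducing by $S_i$ would do nothing (Definition~\ref{def:Reducing}), so deleting $S_i$ would give a strictly shorter sequence still reducing $N$, contradicting minimality. Hence every $S_i$ is genuinely a cherry or a reticulated cherry in the current intermediate network.

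Next I would do a short bookkeeping calculation on a single reduction. Reducing a cherry deletes exactly one leaf and only touches non-reticulation edges when its parent is suppressed, so $n$ drops by $1$ and $r$ is unchanged. Reducing a reticulated cherry removes the reticulation edge $p_yp_x$; a quick case split on whether $p_x$ becomes degree-$2$ shows that either (i) $p_x$ remains a reticulation and we simply lose one reticulation edge, or (ii) $p_x$ is suppressed and we lose two reticulation edges and one reticulation. In both subcases $r$ drops by exactly $1$, and the possible suppression of $p_y$ only touches non-reticulation edges and changes neither count. Since the single-leaf target network has $n=1$ and $r=0$, $S$ must contain exactly $n-1$ cherry reductions and exactly $r$ reticulated cherry reductions, giving $|S|=n+r-1$.

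Finally, I would check that $S$ satisfies Definition~\ref{def:CPS}. The last step must be a cherry reduction, since a reticulated cherry reduction preserves both leaves $x_i$ and $y_i$, which is incompatible with the target having a single leaf. Hence $y_{|S|}$ is the unique surviving leaf of $NS$. For any $i<|S|$, after reducing $(x_i,y_i)$ the element $y_i$ is still a leaf of $NS_{[:i]}$, and can only later cease to be a leaf by being deleted as a first coordinate of a subsequent cherry reduction. If $y_i$ is never picked as a first coordinate of some $S_j$ with $j>i$, then $y_i$ persists to the end of the reduction and must coincide with $y_{|S|}$; this is precisely the second disjunct in the CPS definition. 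Therefore $S$ is a CPS.

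The main obstacle is the reticulation-number bookkeeping in the reticulated-cherry case, since $N$ is non-binary and $p_x$, $p_y$ may have arbitrary degree; once that case analysis is cleanly laid out, the length formula and the CPS property both fall out immediately.
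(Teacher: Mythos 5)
Your proof is correct, and it differs from the paper's in a meaningful way on the length formula. For the CPS property the two arguments are essentially rearrangements of one another: the paper argues directly by contradiction (if some second coordinate $y_i$ never reappears as required, then $y_i$ cannot be a leaf of $NS_{[:i-1]}$, so step $i$ is a null step and a shorter reducing sequence exists), whereas you first use minimality to exclude null steps and then run the persistence argument showing each $y_i$ either is picked later as a first coordinate or survives as the unique leaf $y_{|S|}$; both hinge on the same observation that only a cherry reduction with $y_i$ as first coordinate can remove it. For the formula $|S|=n+r-1$, however, the paper takes a detour through the construction machinery: it builds the binary network $M$ from $S$ via the $(\ref{Const:CherryResolved},\ref{Const:RCherryResolved})$ construction, notes that each step of the reverse construction adds either a leaf or a reticulation, and invokes Lemma~\ref{lem:SBSFBinaryrefinement} to transfer the counts from $M$ back to $N$ (using that a refinement preserves the leaf set and the reticulation number). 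You instead count directly on $N$ itself, showing that every cherry reduction decreases $n$ by one and leaves $r$ unchanged, and every reticulated-cherry reduction decreases $r$ by exactly one (with the two subcases depending on whether $p_x$ is suppressed) and leaves $n$ unchanged. Your route is more elementary and self-contained, at the price of the careful degree bookkeeping in the non-binary reticulated-cherry case, which you handle correctly; the paper's route is shorter given that Lemma~\ref{lem:SBSFBinaryrefinement} and the construction framework are already in place.
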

\begin{proof}
Suppose for a contradiction that~$S$ is not a CPS.
Then, there is an~$i<|S|$ with $S_i=(x,y)$ such that $y$ is not a first coordinate in any of the elements of $S_{[i+1:]}$ or the second coordinate of $S_{|S|}$. This means $y$ cannot be a leaf in $NS_{[:i-1]}$ (if it were, then $S$ does not reduce $N$). This implies $NS_{[:i-1]} = NS_{[:i]}$, and there is a shorter sequence $S_{[:i-1]}S_{[i+1:]}$ that reduces $N$, a contradiction. We conclude that $S$ is a CPS.

We now prove the second part of the lemma.
Let $S_i = (x,y)$.
We first construct a binary network~$M$ from~$S$ using the~$(\ref{Const:CherryResolved},\ref{Const:RCherryResolved})$ construction.
Upon constructing $MS_{[:i-1]}$ from $MS_{[:i]}$, 
%\todoLeo{Lemma 1 and 2 only say that this is possible when the network is either binary or stack free. I guess it works more generally for networks in which each stack is fully binary. I don't think it works for networks in which there are stacks with mutibrizations? Maybe say in the preliminaries that you will always assume that all networks are either binary or stack free.}
%\todoRemie{It does work in general, I just do not know how to quickly fix this...}
a new leaf $x$ is added if $x$ is not a leaf in $MS_{[:i]}$, and a reticulation is added otherwise.
By Lemma~\ref{lem:SBSFBinaryrefinement},~$M$ is a binary refinement of~$N$, and therefore~$N$ has the same leaf set and it has the same reticulation number as that of~$M$.
Since~$S$ is a minimal CPS for~$N$, it follows that $|S| = n+r-1$.
\end{proof}

% %remove redundant from sequence
% \begin{lemma}
% Let $S$ be a CPS, and $N$ a network. Suppose the leaves of $s_i$ do not form a cherry or a reticulated cherry in $NS_{[0,i-1]}$, then $NS_{[:i-1]} S_{[i+1:]}=NS$.
% \end{lemma}
% \begin{proof}
% Note that $NS_{[0,i-1]}S_{i}S_{[i+1:]}=NS$, and that $NS_{[0,i-1]}S_{i}=NS_{[0,i-1]}$. Putting these two together, we get 
% \[NS_{[:i-1]}S_{[i+1:]}=NS_{[0,i-1]}S_{i}S_{[i+1:]}=NS.\]
% \todo{It might not be a CPS anymore, but if we remove all the redundant pairs, it definitely is because it becomes a minimal reducing sequence for $N$.}
% \end{proof}

%sets of cherries definitions
\begin{definition}
Let $N$ be a non-binary network. Denote with $\mathcal{C}_c(N)$ the set of cherries of $N$, and with $\mathcal{C}_r(N)$ the set of reticulated cherries of $N$. The set of all reducible pairs is denoted $\mathcal{C}(N)=\mathcal{C}_c(N)\cup \mathcal{C}_r(N)$.
\end{definition}

The following lemma states that all new reducible pairs after picking a pair $(x,y)$ must involve either $x$ or $y$.

\begin{lemma}\label{lem:PossibleCherriesAfterReduction}
Let $N$ be a non-binary network on a taxa set~$X$, and let $(x,y)$ be a reducible pair of $N$. 
Then we have the following inclusion:
\[\mathcal{C}(N(x,y))\setminus \mathcal{C}(N)\subseteq \left(\{x,y\}\times X\right) \cup \left(X\times \{x,y\}\right).\]
% Furthermore, we have that
% \[\mathcal{C}_r(N(x,y))\setminus \mathcal{C}_r(N)\subseteq \left(\{x,y\}\times X\right) \cup \left(X\times \{x,y\}\right)\]
% and
% \[\mathcal{C}_c(N(x,y))\setminus \mathcal{C}_c(N)\subseteq \left(\{x,y\}\times X\right) \cup \left(X\times \{x,y\}\right).\]
% \todoRemie{Should we also do these for cherries and reticulated cherries separately? What about when the network is stack-free/tree-child (we kind of use that later).
% \textbf{YM:} I think it's fine, since the set on the right is exactly the same for both. I've added this in without proof for now.
% In part., if~$(x,y)$ was a ret. cherry in~$N$, then~$x$ can be contained in a cherry in~$N(x,y)$. \textbf{RJ:} I think I meant that when reducing cherries, we get a smaller set than when we reduce a reticulated cherry. But this seems fine as well.}
\end{lemma}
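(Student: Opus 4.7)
The plan is to prove the contrapositive: assuming $(u,v)$ is a reducible pair in $N(x,y)$ with $\{u,v\}\cap\{x,y\}=\emptyset$, I will show that $(u,v)$ is already a reducible pair in $N$. The guiding intuition is that reducing $(x,y)$ only modifies $N$ in a small neighbourhood of $\{x,y,p_x,p_y\}$, so a pair of leaves outside this neighbourhood cannot ``notice'' the operation.

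First, I would verify that the parent in $N(x,y)$ of any leaf $w\notin\{x,y\}$ coincides with its parent in $N$. The only vertices possibly deleted or suppressed by the reduction are among $\{x,p_x,p_y\}$. Since $p_x$ is a reticulation its unique child is $x$, so no leaf $w\neq x$ has $p_x$ as a parent. If $p_y$ is suppressed during the reduction, then by Definition~\ref{def:Reducing} its outdegree in $N$ must have been exactly~$2$, with children $\{x,y\}$ in the cherry case and $\{p_x,y\}$ in the reticulated cherry case; in either situation no leaf outside $\{x,y\}$ has $p_y$ as parent. Consequently $p_u$ and $p_v$ are identical in $N$ and $N(x,y)$.

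Next, I would check that the type (tree vertex versus reticulation) of $p_u$ and $p_v$, and the edge relation ``$p_v$ is a parent of $p_u$'', are both preserved. A vertex changes type only if its indegree or outdegree changes, and the only such vertices are $p_x$ (indegree decreases in the reticulated cherry case) and $p_y$ (outdegree decreases); by the previous step these are not equal to $p_u$ or $p_v$. The only edge removed outright is $p_y\to p_x$, and the rerouting caused by any suppression only produces new edges whose head is $x$ or $y$; none of these affects any edge between $p_u$ and $p_v$. Combining the three observations, $(u,v)$ is a cherry (respectively, a reticulated cherry) in $N(x,y)$ precisely when the same holds in $N$, proving the claim.

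The main delicate point will be the non-binary suppression bookkeeping: one must carefully verify that whenever $p_x$ or $p_y$ is suppressed, the suppressed vertex's children in $N$ are exactly the expected pair, so that no leaf outside $\{x,y\}$ silently has its parent re-assigned. Everything else is a straightforward case distinction on whether $(x,y)$ is a cherry or a reticulated cherry.
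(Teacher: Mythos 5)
Your proof takes essentially the same route as the paper's: both arguments rest on the locality of the operation around $x$, $y$ and their parents — the paper phrases it through the inverse operation (adding $(x,y)$ back only subdivides the pendant edges of $x$ and $y$), while you track the reduction directly — and both conclude that a reducible pair avoiding $x$ and $y$ survives unchanged. One justification in your second step is incorrect as stated, although the conclusion it serves still holds: in a non-binary network $p_u$ or $p_v$ \emph{can} coincide with $p_y$ (or with the common cherry parent $p_x=p_y$), for instance when $x,y,u,v$ are all children of one multifurcating vertex, so you cannot claim that the vertices whose degrees change are distinct from $p_u,p_v$. The repair is immediate within your own setup: if $p_y$ (resp.\ $p_x$) survives the reduction it still has outdegree at least $2$ (resp.\ indegree at least $2$) and indegree (resp.\ outdegree) unchanged, hence keeps its type, and since the only deleted edge has head $p_x$ and all newly created edges have head $x$ or $y$, the edge $p_vp_u$ and the equality $p_u=p_v$ are preserved exactly as you argue.
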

% \todoYuki{Shouldn't~$\mathcal{N}$ be $\mathcal{C}(N) \setminus (x,y)$?}
% \todoRemie{Yes, I changed it to something that's more intuitive I hope}
\begin{proof}
Note that the LHS of the containment relation represents the reducible pairs in $N(x,y)$ that were not present in $N$. Suppose, for contradiction, that this set contains a pair $(z,w)$ not involving $x$ or $y$. Then, this pair is not reducible in $N$, but it is in $N(x,y)$. Adding the pair $(x,y)$ back into $N(x,y)$ may only subdivide the pendant edges leading to $x$ and $y$. This implies that this action will not change the fact that $z$ and $w$ form a reducible pair. Therefore, $(z,w)$ is a reducible pair in $N$ as well, a contradiction. Hence, all new cherries and reticulated cherries of~$N(x,y)$ involve~$x$ or~$y$.
\end{proof}

We also have similar inclusions for looking at reducible pairs in the original network that are not reducible pairs in the new network.
The argument follows in a similar manner as the one presented in the proof of Lemma~\ref{lem:PossibleCherriesAfterReduction}, so we include it as an observation.
Roughly speaking, the following observation states that reducing a network by the element~$(x,y)$ preserves the other reducible pairs. 
%sets of cherries after reductions
\begin{observation}\label{obs:ChangingReducibleCherries}
Let $N$ be a network on~$X$, and $(x,y)$ a reducible pair of $N$. 
Then, if $N$ is non-binary, we have the inclusion $\mathcal{C}(N)\setminus \mathcal{C}(N(x,y))\subseteq\{(y,x)\}\cup \{x\}\times X$, and in particular
\[\mathcal{C}_r(N)\setminus \mathcal{C}_r(N(x,y)) \subseteq\{x\}\times X,\]
and
\[\mathcal{C}_c(N)\setminus \mathcal{C}_c(N(x,y)) \subseteq\{(y,x)\}\cup \{x\}\times X.\]
If $N$ is semi-binary, the inclusions can be sharpened to $\mathcal{C}(N)\setminus \mathcal{C}(N(x,y))\subseteq\{(x,y),(y,x)\}$, with
\[\mathcal{C}_r(N)\setminus \mathcal{C}_r(N(x,y))\subseteq\{(x,y)\},\]
and
\[\mathcal{C}_c(N)\setminus \mathcal{C}_c(N(x,y))\subseteq\{(x,y),(y,x)\}.\]
%\todoYuki{Shouldn't these sets be equal rather than contained for semi-binary stack-free?\\ \textbf{RJ:} No, reducing $(x,y)$ can introduce new cherries that aren't in $C(N)$ yet.}
\end{observation}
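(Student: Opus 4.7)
The plan is to mirror the contrapositive argument used in the proof of Lemma~\ref{lem:PossibleCherriesAfterReduction}. I take a pair $(z,w) \in \mathcal{C}(N)\setminus \mathcal{C}(N(x,y))$ and show that $(z,w)$ must lie in the stated exceptional set. The key structural point is that each reduction is local: in the cherry case it deletes $x$ and possibly suppresses $p_x=p_y$, and in the reticulated cherry case it deletes the edge $p_y\to p_x$ and suppresses any resulting degree-$2$ vertex among $\{p_x,p_y\}$. All other vertices, edges, and parent-child incidences of $N$ are preserved in $N(x,y)$.

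For the non-binary inclusions I would case-split first on whether $(x,y)$ is a cherry or a reticulated cherry of $N$, and then on where $z$ and $w$ sit relative to $\{x,y\}$. If $\{z,w\}\cap\{x,y\}=\emptyset$, the local neighbourhoods of $z$ and $w$ are identical in $N$ and $N(x,y)$, so $(z,w)$ remains reducible of the same type. Pairs in $\{x\}\times X$ are allowed outright. For the remaining sub-cases $w=y$ or $z=y$, I would argue that the reducible-pair condition on $(z,y)$ or $(y,z)$ forces $p_y$ to carry an additional child beyond those required by $(x,y)$ (either $z$ itself as a further child of $p_y$, or the reticulation $p_z$ as a further child of $p_y$), so that $p_y$ is not suppressed and the relevant configuration persists. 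The only destroyed pair not covered by $\{x\}\times X$ is then $(y,x)$, which is witnessed by the same common parent whose child $x$ is deleted. To split the inclusion into the $\mathcal{C}_c$ and $\mathcal{C}_r$ versions, one uses that in the cherry case $p_y$ is a tree vertex, which rules out reticulated cherries with second coordinate $y$ whose definition would require $p_y$ to be a reticulation.

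For the semi-binary sharpening, the outdegree-$2$ condition on tree vertices rules out the multifurcation scenarios above. In the cherry case $p_x=p_y$ has only children $x$ and $y$, so the only reducible pairs of $N$ involving $x$ are $(x,y)$ and $(y,x)$. In the reticulated cherry case $p_x$ is a reticulation with $x$ as its unique child and $p_y$ has only $y$ and $p_x$ as its children, so a separate sub-argument rules out all reducible pairs of $N$ involving $x$ other than $(x,y)$ itself. The main obstacle to watch is exactly how the suppressions of $p_x$ or $p_y$ rewire the parents of $x$ or $y$ to grandparents: in the sub-cases where $(z,y)$ or $(y,z)$ is a reducible pair of $N$, I have to verify that this rewiring does not silently destroy the pair in $N(x,y)$, which reduces to checking that $p_y$ always has enough residual children to survive reduction whenever the relevant pair involves $y$.
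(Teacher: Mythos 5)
The paper itself gives no worked-out proof here---it records the statement as an observation and points to the argument of Lemma~\ref{lem:PossibleCherriesAfterReduction}---and your plan follows that intended route (locality of the reduction plus a case split on how the destroyed pair meets $\{x,y\}$). However, your case analysis has a genuine hole: after disposing of pairs disjoint from $\{x,y\}$ and pairs in $\{x\}\times X$, you only treat the sub-cases $w=y$ or $z=y$, so pairs of the form $(z,x)$ with $z\notin\{x,y\}$ are never examined. In the reticulated-cherry case these cannot occur (the parent of $x$ is a reticulation, so $x$ is never the second coordinate of a reducible pair), but in the non-binary cherry case they can: if $p_x=p_y$ is a multifurcation with a third leaf child $z$, then $(z,x)$ is a cherry of $N$ that is destroyed because $x$ is deleted, and it lies in neither $\{(y,x)\}$ nor $\{x\}\times X$. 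So this sub-case cannot be completed as stated; the non-binary inclusion really needs the extra term $X\times\{x\}$, which is also the form in which the observation is effectively applied later (e.g.\ in Lemma~\ref{lem:ReduciblePairMayAppear}, where reducing $(y,z)$ is admitted as a way of destroying $(x,y)$). Silently skipping this case hides exactly the spot where the statement is delicate.

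Second, your semi-binary argument contains a false sub-claim: in the reticulated-cherry case you assert that all reducible pairs of $N$ involving $x$ other than $(x,y)$ can be ruled out. They cannot: $p_x$ may have further parents that are tree nodes with leaf children, giving reticulated cherries $(x,w)$---this happens already in binary networks. What saves the sharpened inclusion for $\mathcal{C}(N)\setminus\mathcal{C}(N(x,y))$ is not their nonexistence but the fact that they are not destroyed: they persist in $N(x,y)$, though when $p_x$ has indegree two and is suppressed they change type from reticulated cherry to cherry. Your sketch never confronts this type change, and it matters for the refined inclusions: in that situation $(x,w)\in\mathcal{C}_r(N)\setminus\mathcal{C}_r(N(x,y))$ with $(x,w)\neq(x,y)$, so the sharpened $\mathcal{C}_r$-statement only goes through if the difference is read against pairs that stop being reducible altogether (i.e.\ against $\mathcal{C}(N(x,y))$). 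A correct write-up should, for each of the two reduction types, enumerate every reducible pair of $N$ meeting $\{x,y\}$ in either coordinate and state explicitly whether it is destroyed, persists with the same type, or persists with a changed type.
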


We now start our investigation into the order in which pairs can be reduced. We start with a lemma that \remie{implies} a cherry on two leaves~$x$ and~$y$ can be reduced either as $(x,y)$ or as $(y,x)$. Then we show that reducing an arbitrary pair in a CPN gives a new CPN.

%swap a cherry
\begin{lemma}\label{lem:reverseCherry}
Let $S$ be a minimal CPS for a non-binary CPN~$N$ and suppose $S_i=(x,y)$ reduces a cherry when applying the sequence. \remie{Let $z$ and $w$ be distinct leaves (not necessarily different from $x$ and $y$) that have a common parent, equal to the parent of $x$ and $y$. Let $S'$ be the sequence $S_{[i+1:]}$ where each occurrence of $z$ is replaced by $x$.
Then $S_{[:i-1]}(z,w)S'$ is a minimal CPS for $N$.}
\end{lemma}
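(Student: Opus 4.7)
The plan is to show that the new sequence $S_{[:i-1]}(z,w)S'$ reduces $N$ to a single-leaf network and has exactly the same length as $S$. Since $|S|=n+r-1$ by Lemma~\ref{lem:OptimumCPN}, this length is the minimum among all sequences reducing $N$; the lemma then immediately yields that the new sequence is a CPS, and being of minimum length it is automatically minimal.

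The central step is to show that $N'(z,w)$ and $N'(x,y)=NS_{[:i]}$ are isomorphic as phylogenetic networks under the relabeling $\sigma$ defined by $\sigma(z)=x$ when $z\neq x$ and $\sigma(\ell)=\ell$ for every other leaf, where $N':=NS_{[:i-1]}$. Observe first that $(z,w)$ is a cherry in $N'$ since $z$ and $w$ are distinct leaves sharing the parent $p$ of $x$ and $y$. Both reductions delete exactly one child of $p$, and both suppress $p$ precisely when $p$ had only two children in $N'$. If $p$ has at least three children, then the two resulting networks are obtained from $N'$ by removing different pendant edges at $p$, and the vertex map swapping the two corresponding leaves realises $\sigma$. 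If $p$ has exactly two children, then $\{z,w\}\subseteq\{x,y\}$ forces $\{z,w\}=\{x,y\}$; the only nontrivial case is $(z,w)=(y,x)$, where both reductions suppress $p$ and attach its remaining child directly to the former parent of $p$, with labels $y$ and $x$ respectively, consistent with $\sigma$.

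Next, I would argue that the substitution $z\to x$ in $S_{[i+1:]}$ is exactly the coordinatewise action of $\sigma$. The leaf $x$ cannot appear anywhere in $S_{[i+1:]}$, because $x$ is deleted at step~$i$ and any later occurrence would be a no-op, contradicting the minimality of~$S$; hence the substitution is well defined and coincides with $\sigma$. Propagating $\sigma$ through each subsequent reduction is then routine, since each reduction operation is specified purely by leaf labels and local structure around their parents. It follows that $NS_{[:i-1]}(z,w)S'$ is $\sigma$-isomorphic to $NS_{[:i-1]}(x,y)S_{[i+1:]}=NS$, so the new final network is also a single-leaf network, establishing that $S_{[:i-1]}(z,w)S'$ reduces $N$.

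The main obstacle is the case analysis when $z$ or $w$ coincides with $x$ or $y$, particularly when $p$ has exactly two children so that $p$ is suppressed during both reductions; here one must verify that $\sigma$ continues to realise an isomorphism across the suppression step. A secondary subtlety is checking that substituting $z$ by $x$ in $S_{[i+1:]}$ does not clash with a pre-existing occurrence of~$x$, which again follows directly from the minimality of~$S$.
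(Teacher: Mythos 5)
Your proposal is correct and follows essentially the same route as the paper's (much terser) proof: the key point in both is that $N'(z,w)$ is the relabelled copy of $N'(x,y)=NS_{[:i]}$ under $z\mapsto x$, so the substituted tail $S'$ performs the corresponding reductions. Your additional bookkeeping---the case analysis on the degree of the common parent, the observation that $x$ cannot reappear in $S_{[i+1:]}$ by minimality, and deducing that the new sequence is a minimal CPS from its length via Lemma~\ref{lem:OptimumCPN}---is sound and merely makes explicit what the paper leaves implicit.
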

\begin{proof}
\remie{Because $(x,y)$ forms a cherry in $N'=NS_{[:i-1]}$, and $x$ and $y$ share their parents with $z$ and $w$, the reduced network $N'(x,y)$ is equal to the network $N'(z,w)$ when $z$ is replaced by $x$. Hence, if we switch the roles of $x$ and $z$ in the remaining part of the sequence, the result after reduction by both sequences is the same modulo the $x\leftrightarrow z$ replacement.}
\end{proof}

%Move forward one by one
\begin{lemma}\label{lem:MoveForward}
Let $N$ be a non-binary CPN that can be reduced by a minimal CPS $S=S_1,S_2,\ldots,S_{|S|}$ such that $S_2\in\mathcal{C}(N)$. Then $NS_2$ is a CPN. 
\end{lemma}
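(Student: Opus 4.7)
The plan is to construct a CPS $T$ for $N$ that begins with $T_1 = S_2$; once such a $T$ is in hand, the network $NS_2 = NT_1$ is a CPN by Observation~\ref{obs:CPNAfterReducingfirstielementsremainCPN} applied to $T$. Writing $S_1 = (x,y)$ and $S_2 = (u,v)$, my first candidate is the swap $T = S_2, S_1, S_3, \ldots, S_{|S|}$.

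To check this is a CPS reducing $N$, I would establish three things: that $S_1 \in \mathcal{C}(N(u,v))$; that $N(u,v)(x,y) = N(x,y)(u,v) = NS_{[:2]}$, so that $S_{[3:]}$ still reduces the result to a single leaf; and that the CPS condition of Definition~\ref{def:CPS} still holds after the swap. For the first, Observation~\ref{obs:ChangingReducibleCherries} guarantees that the only reducible pairs lost by reducing $(u,v)$ are $(v,u)$ and pairs with first coordinate $u$. For the second, cherry and reticulated-cherry reductions are local operations on the parents of the leaves involved, and when these local neighborhoods do not interfere the two reductions commute. For the third, swapping two consecutive pairs only affects the CPS requirements on those pairs, and these are inherited from $S$ since the later appearances of their second coordinates in $S_{[3:]}$ are unchanged.

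The main obstacle is a careful case analysis on the overlap between $\{x,y\}$ and $\{u,v\}$ and on whether each of $S_1, S_2$ is a cherry or a reticulated cherry. Several cases are excluded immediately: $S_1 = S_2$ contradicts the minimality of $S$; the case $S_1 = (v,u)$ is also excluded, since then reducing $S_1$ deletes $v$ and $S_2 = (u,v)$ cannot reduce anything in $NS_1$, again contradicting minimality; and mixed cherry / reticulated-cherry configurations that would force one vertex to be simultaneously a tree vertex and a reticulation cannot occur. Across all remaining cases except one, the direct swap $T = S_2, S_1, S_3, \ldots$ satisfies the three conditions above, and I would verify commutativity and the CPS condition by tracking local changes at the shared or neighbouring parents.

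The one subtle case is $y = u$, i.e.\ $S_1 = (x,u)$ and $S_2 = (u,v)$: here $y = u$ is deleted by $T_1$, so $S_1$ is not a reducible pair of $N(u,v)$. The simultaneous existence of both pairs in $\mathcal{C}(N)$ forces a specific local structure, either $p_x = p_u = p_v$ in the all-cherry sub-case or an analogous configuration involving a multi-reticulation in the mixed sub-case. In both sub-cases $(x,v)$ is a reducible pair of $N(u,v)$ and reducing it yields the same network as $N(x,u)(u,v) = NS_{[:2]}$. I would therefore replace $T_2$ by $(x,v)$, following the relabeling idea used in Lemma~\ref{lem:reverseCherry}; the CPS condition then holds because $v$ still appears later in $S_{[3:]}$ and because $u$, which is deleted by $T_1$, does not appear in $S_{[3:]}$ either.
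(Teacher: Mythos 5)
Your plan is, at its core, the paper's own argument: a case analysis on how $S_1$ and $S_2$ overlap, the observation that in most cases the two reductions commute ($NS_1S_2=NS_2S_1$), the special treatment of the case $y=u$ (where the paper shows $NS_2(x,v)=NS_1S_2$, exactly your substitution of $(x,v)$ in the spirit of Lemma~\ref{lem:reverseCherry}), and then Observation~\ref{obs:CPNAfterReducingfirstielementsremainCPN} applied to a CPS for $N$ beginning with $S_2$. That said, two of your justifications are wrong as stated. First, ``$S_1=S_2$ contradicts the minimality of $S$'' is false: networks here are multigraphs, and parallel reticulation edges make consecutive equal pairs possible in a minimal CPS (take a tree node with a leaf child $y$ and two parallel edges to a reticulation whose child is $x$; the minimal CPS is $(x,y),(x,y)$). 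The paper keeps this case, and it is harmless for you too, since then the swapped sequence is $S$ itself and $NS_2=NS_1$ is a CPN; but the exclusion you give does not hold. Second, Observation~\ref{obs:ChangingReducibleCherries} does \emph{not} guarantee that $S_1$ survives the reduction of $S_2$ in the case where the pairs share their first coordinate, $S_1=(x,y)$ and $S_2=(x,z)$: the pairs that may be lost include all of $\{x\}\times X$, so $S_1$ is precisely of the endangered form, and you would need the direct structural argument (both are reticulated cherries through the common reticulation above $x$; after deleting one incoming reticulation edge, $(x,y)$ persists either as a reticulated cherry or, if that reticulation is suppressed, as a cherry).

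The good news is that your condition (1) is not actually needed. Once you know that the swapped sequence is a CPS (your condition (3)) and that $NS_2S_1S_{[3:]}=NS_1S_2S_{[3:]}=NS$ is a single-leaf network (your condition (2)), Observation~\ref{obs:CPNAfterReducingfirstielementsremainCPN} already yields that $NS_2$ is a CPN, because reducing a pair that is not reducible is simply a no-op and minimality of the new sequence is never used; this is how the paper argues, and it sidesteps the Observation issue entirely. One last terminological slip: in the $y=u$ case, the mixed sub-case ($(x,u)$ a reticulated cherry, $(u,v)$ a cherry) involves a multifurcation at the common tree-node parent of $u$, $v$ and the reticulation, not a multi-reticulation; your key claim there, that $(x,v)$ is reducible in $N(u,v)$ with $N(u,v)(x,v)=NS_{[:2]}$ and that the modified sequence is still a CPS, is correct.
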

\begin{proof}
Note that $S_1,S_2\in\mathcal{C}(N)$ by assumption. We distinguish several cases and prove in every case that $NS_2$ is a CPN.
\begin{itemize}
    \item \textbf{The leaves in $\bm{S_1}$ and $\bm{S_2}$ are the same.} Then either $S_1=S_2$, or $S_1=(x,y)$ and $S_2=(y,x)$ for some pair of leaves $x,y$. In the first case $NS_2=NS_1$, which is a CPN. 
    In the second case, as $(x,y)$ and $(y,x)$ are both present in $N$, $(x,y)$ must be a cherry in~$N$.
    \yuki{This means that~$NS_1S_2 = NS_1$, and thus~$S$ is not a minimal CPS for~$N$.
    This case is not possible.}
    % , and, by Lemma~\ref{lem:reverseCherry}, $NS_2$ is a CPN.
\end{itemize}
Let $S_1:=(x,y)$.
\begin{itemize}
%    \item \textbf{$\bm{S_2}$ is not equal to either $\bm{(x,y)}$ or $\bm{(y,x)}$.} Then $S_1\in\mathcal{C}(NS_2)$ and $S_2\in\mathcal{C}(NS_1)$ by Observation~\ref{obs:ChangingReducibleCherries}.
%    \begin{itemize}
        \item \textbf{The pairs $\bm{S_1}$ and $\bm{S_2}$ have exactly one leaf in common.} 
        \begin{itemize}
            \item $\bm{S_2 = (x,z).}$ The common leaf~$x$ is below the reticulation common to the two reticulated cherries.
            Applying $S_1$ and $S_2$ in any order removes these two reticulation edges, so clearly $NS_1S_2=NS_2S_1$. 
            By Observation~\ref{obs:CPNAfterReducingfirstielementsremainCPN},~$NS_1S_2$ is a CPN.
            This implies $NS_2S_1$ is a CPN and, therefore, that $NS_2$ is also a CPN.
            \item $\bm{S_2 = (z,x).}$
            Observe first that~$(x,y)$ cannot form a reticulated cherry, as otherwise the first coordinate of every reducible pair that involves~$x$ is~$x$, which contradicts our assumption that~$S_2 = (z,x)\in\mathcal{C}(N)$.
            Therefore~$(x,y)$ must be a cherry.
            Then the network~$NS_1 = N(x,y)$ does not have the leaf~$x$, which implies that~$S_2 = (z,x)$ is not a reducible pair of~$NS_1$. This contradicts the fact that~$S$ was a minimal CPS for~$N$, and therefore this case is not possible.
            \item $\bm{S_2 = (y,z).}$
            The two possibilities for this case are either that~$x,y,z$ all share the same parent, or that~$(x,y)$ form a reticulated cherry in~$N$ and~$z$ shares a common parent with~$y$.
            In the former case,~$NS_1S_2$ is the CPN obtained by deleting the leaves~$x$ and~$y$ and suppressing all degree-$2$ vertices.
            We obtain the same CPN by picking the cherries~$S_2 = (y,z)$ and~$(x,z)$ in succession, that is,~$NS_2(x,z) = NS_1S_2$.
            This implies that~$NS_2$ is also a CPN.
            A similar argument can be done for the reticulated cherry case---it is easy to see that~$NS_2(x,z) = NS_1S_2$.
            \item $\bm{S_2 = (z,y).}$
            This is the case where either~$y$ and~$z$ share a common parent, or~$(z,y)$ forms a reticulated cherry.
            In both of these cases, the leaf~$x$ could share a common parent with~$y$, or~$(x,y)$ could be a reticulated cherry (there are in total~$4$ possible cases).
            In all cases, reducing~$N$ by~$S_1$ first or by~$S_2$ first has no real difference, and so~$NS_1S_2 = NS_2S_1$.
            For the same reason as before,~$NS_2$ is a CPN.
        \end{itemize}
        
        % This implies the leaf in common must be the first element of both $S_1$ and $S_2$, and it is below the reticulation common to the two reticulated cherries. 
        % Otherwise we would have that~$S_2\notin\mathcal{C}(N)$, which contradicts our original assumption.
        
        \item \textbf{The pairs $\bm{S_1}$ and $\bm{S_2}$ have no leaf in common.} Then obviously, $S_1$ and $S_2$ independently remove edges in $N$, not influenced by the order of $S_1$ and $S_2$. Hence we get $NS_1S_2=NS_2S_1$ and for the same reason as before, $NS_2$ is a CPN.
%    \end{itemize}
\end{itemize}
In all cases, we have concluded that $NS_2$ is a CPN, so the result follows.
\end{proof}

\begin{lemma}\label{lem:ReduciblePairMayAppear}
\remie{Let $N$ be a non-binary network, and $(x,y)\in \mathcal{C}(N)$. Then, there exists a minimal CPS $S$ for $N$ such that $S_i=(x,y)$ or $S_i=(y,x)$ for some $i$, and $(x,y)$ is reducible until that point, i.e., $(x,y)\in\mathcal{C}(NS_{[:j]})$ for all $j<i$.}
\end{lemma}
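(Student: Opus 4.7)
My plan is to proceed by strong induction on $|N|$, the number of edges of $N$. The base case is trivial: if $N$ has only two leaves then the unique minimal CPS consists of the single pair $(x,y)$ or $(y,x)$, and the lemma follows with $i=1$. For the inductive step I fix any minimal CPS $S'$ for $N$ and examine $S'_1$. If $S'_1 \in \{(x,y), (y,x)\}$ I take $S := S'$ and $i := 1$, with the reducibility condition vacuously satisfied. Otherwise I split based on whether $(x,y)$ remains reducible after one step.

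In the preservation case $(x,y) \in \mathcal{C}(NS'_1)$, Observation~\ref{obs:CPNAfterReducingfirstielementsremainCPN} tells me that $NS'_1$ is a CPN, and it has strictly fewer edges than $N$. Invoking the inductive hypothesis on $NS'_1$ with the pair $(x,y)$ yields a minimal CPS $T$ for $NS'_1$ together with an index $k$ such that $T_k \in \{(x,y),(y,x)\}$ and $(x,y) \in \mathcal{C}(NS'_1 T_{[:\ell]})$ for every $\ell < k$. I then concatenate to get $S := S'_1, T$, which by Lemma~\ref{lem:OptimumCPN} is a minimal CPS for $N$, and take $i := k+1$; the requisite reducibility at position $j < i$ follows from the inductive guarantee for $j \ge 2$, the case hypothesis for $j = 1$, and the hypothesis of the lemma for $j = 0$.

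In the destruction case $(x,y) \notin \mathcal{C}(NS'_1)$, a careful analysis shows that $S'_1$ must have $x$ or $y$ as its first coordinate and falls into one of three subcases. First, if $S'_1 = (x,z)$ is either a cherry with $x,y,z$ sharing a tree parent or a reticulated cherry whose associated tree-to-reticulation edge coincides with the one defining the reticulated cherry $(x,y)$, then $NS'_1 = N(x,y)$ (both reductions delete the same leaf or the same edge), so I take $S := (x,y), S'_{[2:]}$ with $i := 1$. Second, if $S'_1 = (y,z)$ is a cherry with $x,y,z$ sharing a tree parent (which forces $(x,y)$ to be a cherry), then analogously $NS'_1 = N(y,x)$ and I take $S := (y,x), S'_{[2:]}$ with $i := 1$. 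Third, if $S'_1 = (y,z)$ is a cherry while $(x,y)$ is a reticulated cherry, direct substitution fails and $(y,x)$ is not even a reducible pair of $N$; here I would apply Lemma~\ref{lem:reverseCherry} to exchange $(y,z)$ for $(z,y)$ in $S'$, obtaining a modified minimal CPS $S^* = (z,y), S^{**}$. Since deleting $z$ does not disturb the reticulation structure around $(x,y)$, we have $(x,y) \in \mathcal{C}(NS^*_1)$, bringing us back to the preservation case applied to $S^*$.

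I expect the main obstacle to be the case analysis in the destruction case, particularly verifying that the three subcases above are exhaustive and that the claimed equalities $NS'_1 = N(x,y)$ and $NS'_1 = N(y,x)$ in the first two subcases really hold at the level of networks (not just leaf sets). The analysis rests on two structural rigidities forced by $(x,y) \in \mathcal{C}(N)$: if $(x,y)$ is a cherry then neither $x$ nor $y$ can participate in a reticulated cherry (their shared parent is a tree vertex), and if $(x,y)$ is a reticulated cherry then $x$ cannot participate in any cherry (its parent is a reticulation with outdegree one). Combined with the observation that only reductions deleting $x$, deleting $y$, or removing the specific edge underlying $(x,y)$'s reducibility can destroy the pair, these rigidities pin down exactly the three subcases above.
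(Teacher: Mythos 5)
Your argument is correct, and it reaches the conclusion by a somewhat different scaffolding than the paper. The paper fixes one minimal CPS $S$, locates the \emph{first} index $i$ at which $(x,y)$ ceases to be reducible, classifies $S_i$ (it must be $(x,z)$ or $(y,z)$ with $(y,z)$ a cherry at that stage), and then repairs the sequence in place—substituting $(x,y)$ directly when the two reductions coincide, or invoking Lemma~\ref{lem:reverseCherry} (the $y\leftrightarrow z$ role switch) otherwise—arguing that the failure index strictly increases, so the repair process terminates. You instead induct on the number of edges and analyse only the first element of the sequence: if it preserves $(x,y)$ you recurse on $NS'_1$ and re-certify minimality of the concatenation via Lemma~\ref{lem:OptimumCPN}; if it destroys $(x,y)$ you use exactly the same two repair moves (coincidence of reductions, or the cherry swap of Lemma~\ref{lem:reverseCherry} followed by the preservation case). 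The underlying case analysis is identical—your "two structural rigidities" are precisely what makes the paper's classification of $S_i$ exhaustive—but your induction trades the paper's "failure point moves strictly later, hence termination" bookkeeping for a recursion on network size, which is arguably cleaner, at the modest cost of rebuilding the tail of the sequence through the induction hypothesis rather than keeping the original CPS largely intact. One small slip: your base case is misstated, since a two-leaf network may have reticulations and then no single-pair sequence reduces it; this is harmless, because for such networks $\mathcal{C}(N)\subseteq\{(x,y),(y,x)\}$, so any minimal CPS begins with $(x,y)$ or $(y,x)$ and your first case (or the inductive step, taking the true base case to be the plain cherry) already covers them.
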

\begin{proof}
\remie{Let $S$ be a minimal CPS for $N$. If $S$ contains $(x,y)$ or $(y,x)$ as $S_i$, and $(x,y)$ is a reducible pair in $NS_{[:j]}$ for all $j<i$, we are done, so assume that this is not the case.
Let $i>0$ be minimal such that $(x,y)\not\in\mathcal{C}(NS_{[:i]})$. 
Then $S_i=(x,z)$ or $S_i=(y,z)$ for some $z\neq x,y$ by Observation~\ref{obs:ChangingReducibleCherries}. Because $(x,y)\in\mathcal{C}(NS_{[:i-1]})$, $(x,y)\not\in\mathcal{C}(NS_{[:i]})$, and the second element of $S_i$ is $z$, $(y,z)$ must form a cherry in $NS_{[:i-1]}$.
}

%Ret cherry
First, suppose that~$(x,y)$ forms a reticulated cherry in $NS_{[:i-1]}$, and that~$S_i=(x,z)$. In that case, $NS_{[:i]}=NS_{[:i-1]}(x,y)$, so replacing $S_{i}$ with $(x,y)$ in $S$ gives a new minimal CPS for $N$ that contains $(x,y)$.
\yuki{Next, suppose that~$(x,y)$ forms a reticulated cherry in $NS_{[:i-1]}$, and that~$S_i=(y,z)$. Then, upon switching the roles of~$y$ and~$z$, we have~$NS_{[:i]}=NS_{[:i-1]}(z,y)$. 
Letting~$S'$ denote the sequence~$S_{[i:]}$ where each occurrence of~$z$ is replaced by~$y$, we obtain a minimal CPS~$S^{new} = S_{[:i-1]}(z,y)S'$ for~$N$.
In this sequence, we have that the minimal value~$k>0$ for which~$(x,y)\notin \mathcal{C}(N^{new}_{[:k]})$ satisfies~$k>i$.
We may repeat this until we enter the first case; such a process must terminate as the length of~$S$ is finite.
}\remie{
%Cherry
On the other hand if $(x,y)$ forms a cherry in $NS_{[:i-1]}$, then $x$, $y$ and $z$ share a common parent. Therefore, by Lemma~\ref{lem:reverseCherry}, there is a minimal CPS for $N$ that starts with $S_{[:i-1]}(x,y)$.}
\end{proof}

\begin{proposition}\label{prop:CPSorder}
Let $N$ be a \remie{non-binary} CPN with $c\in\mathcal{C}(N)$. Then $Nc$ is a CPN.
That is, there exists a CPS $S$ such that $cS$ is a CPS reducing $N$.
\end{proposition}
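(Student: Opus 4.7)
The plan is to exhibit a minimal CPS for $N$ that begins with $c$; its tail is then a CPS reducing $Nc$, witnessing that $Nc$ is a CPN. To start, I would invoke Lemma~\ref{lem:ReduciblePairMayAppear} with the pair $c = (x,y)$ to obtain a minimal CPS $S$ for $N$ together with an index $i$ such that $S_i \in \{(x,y),(y,x)\}$ and $(x,y)$ remains in $\mathcal{C}(NS_{[:j]})$ for every $j<i$. If $c$ is a reticulated cherry, then $(y,x)$ is never a reducible pair, so $S_i = c$ automatically. If $c$ is a cherry and $S_i = (y,x)$, then Lemma~\ref{lem:reverseCherry} (applied to the cherry at position $i$, with the roles of the two leaves involved swapped) produces a minimal CPS for $N$ in which the $i$-th pair is $c$ itself, while leaving $S_{[:i-1]}$ unaltered. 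In either case, we may assume $S_i = c$ and that $c$ is reducible at every prefix before position $i$.

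I then proceed by induction on $i$. If $i = 1$, the suffix $S_{[2:]}$ is itself a CPS, since the defining CPS property is inherited when the first pair is deleted, and it reduces $Nc = NS_{[:1]}$, so $Nc$ is a CPN. For $i \geq 2$, set $N' := NS_{[:i-2]}$ and view $S_{i-1}, S_i = c, S_{i+1}, \ldots$ as a minimal CPS reducing $N'$, in which $c$ sits at position $2$ and belongs to $\mathcal{C}(N')$ by our reducibility condition. Lemma~\ref{lem:MoveForward} then yields that $N'c$ is a CPN, so there is a minimal CPS $S^{\star}$ reducing $N'c$. The concatenated sequence $\widetilde{S} := S_{[:i-2]}\, c\, S^{\star}$ reduces $N$, and Lemma~\ref{lem:OptimumCPN} applied to $N'c$ gives $|\widetilde{S}| = (i-2) + 1 + |S^{\star}| = n+r-1$, matching the minimum length of any CPS reducing $N$. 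Hence $\widetilde{S}$ is itself a minimal CPS for $N$ in which $c$ now occupies position $i-1$, and $c$ remains reducible at every earlier prefix because $\widetilde{S}_{[:j]} = S_{[:j]}$ for $j \leq i-2$. The induction hypothesis then delivers that $Nc$ is a CPN.

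The main technical point is verifying that the spliced sequence $\widetilde{S}$ is a bona fide minimal CPS rather than merely a reducing sequence; this is settled cleanly by the length identity of Lemma~\ref{lem:OptimumCPN}, which equates minimum-length reducing sequences with minimal CPSs. A secondary subtlety is the cherry reversal in the first paragraph: Lemma~\ref{lem:reverseCherry} performs a consistent relabeling of the tail of $S$ but never touches the prefix that carries the reducibility hypothesis, so the inductive condition survives intact. Beyond these two points no further case analysis is needed, since the combinatorial work of exchanging two adjacent reductions has already been absorbed into Lemma~\ref{lem:MoveForward}.
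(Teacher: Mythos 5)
Your overall route is the paper's: invoke Lemma~\ref{lem:ReduciblePairMayAppear} to obtain a minimal CPS in which $(x,y)$ or $(y,x)$ appears while $(x,y)$ stays reducible, normalise to $(x,y)$ via Lemma~\ref{lem:reverseCherry}, and then move the pair to the front with Lemma~\ref{lem:MoveForward}; your induction on the position $i$, with the spliced sequence $\widetilde{S}=S_{[:i-2]}\,c\,S^{\star}$ certified to be a minimal CPS by the length count of Lemma~\ref{lem:OptimumCPN}, is a careful (and welcome) expansion of what the paper compresses into ``apply Lemma~\ref{lem:MoveForward} $i$ times''.

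One step in your first paragraph is wrong as stated: the claim that if $c$ is a reticulated cherry then $(y,x)$ is never a reducible pair, so $S_i=c$ automatically. The reticulated-cherry status of $c$ in $N$ need not persist along the sequence: if, say, the reticulation above $x$ also supports a reticulated cherry $(x,z)$, reducing $(x,z)$ first can suppress the parent of $x$ and turn $(x,y)$ into a cherry in $NS_{[:j]}$, after which $(y,x)$ is reducible and Lemma~\ref{lem:ReduciblePairMayAppear} may well deliver $S_i=(y,x)$. The correct dichotomy is therefore on $S_i$ itself (equivalently, on the status of $(x,y)$ in $NS_{[:i-1]}$), not on the status of $c$ in $N$: whenever $S_i=(y,x)$, the fact that $(x,y)$ is also reducible in $NS_{[:i-1]}$ forces $(x,y)$ to be a cherry there, and your Lemma~\ref{lem:reverseCherry} swap applies verbatim, leaving the prefix, and hence the reducibility hypothesis, untouched. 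With the cases recast this way your argument is complete, and this is exactly how the paper's proof organises the case distinction.
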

\begin{proof}
\remie{
Let $c=(x,y)$.
By Lemma~\ref{lem:ReduciblePairMayAppear}, there is a CPS $S'$ for $N$ that contains either $(x,y)$ or $(y,x)$, and $(x,y)$ is reducible untill that point in the sequence. If $S'_1=(x,y)$, %\todoYuki{This used to be~$S'_0$, but I've changed it to~$S'_1$. I think this is the only place where we start at 0.} 
then set $S:=S'_{[2:]}$ and we are done. Now suppose $S'_1$ is not equal to $(x,y)$. Note that there must be a smallest $i\geq1$ with $S'_i=(x,y)$ or $S'_i=(y,x)$.
\begin{itemize}
    \item \textbf{Suppose $\bm{S'_{i}=(x,y)}$.} Recall that we have $(x,y)\in\mathcal{C}(NS'_{[:j]})$ for all $j< i$. 
    Hence, by applying Lemma~\ref{lem:MoveForward} $i$~times, $N(x,y)$ is a CPN.
    \item \textbf{Suppose $\bm{S'_{i}=(y,x)}$.} Again, we have $(x,y)\in\mathcal{C}(NS'_{[:j]})$ for all $j<i$. Hence, $NS'_{[:i-1]}$ has both reducible pairs $(x,y)$ and $(y,x)$, and it must contain the cherry $(x,y)$. By Lemma~\ref{lem:reverseCherry} there is a CPS of~$N$ starting with $S'_{[:i-1]}(x,y)$. Redefining $S'$ as this sequence, we are in the previous case and thus $N(x,y)$ is a CPN. 
\end{itemize}
}
We conclude that $Nc$ is a CPN.
\end{proof}

The following theorem is a corollary of the previous proposition. It essentially states that a network can be cherry picked in any order.

% \todoLeo{Theorem 1 does not say ANYTHING about the order in which cherries can be picked. The definition of partial CPS is that S is a partial CPS if there exists a CPS starting with S. So if S is minimal, it is clear that it can be extended to a minimal CPS. However, this is probably not what you want to say! There are two problems here:\\
% 1 the definition of partial CPS already says that you can start with S\\
% 2 the restriction that each element reduces a cherry or reticulated cherry does not mean that this reduction changes the network, so also not that the full CPS is minimal.}
% \todoRemie{1 Yes, you can start some CPS with $S$, but not necessarily in a minimal CPS for $N$; this is what we prove. \\ 2 Changed so that the network has to change each step}
\begin{theorem}\label{thm:OrderDoesn'tMatter}
Let $N$ be a \remie{non-binary} CPN, and $S$ a partial CPS. If in each step of the reduction of $N$ by $S$, the network is changed, then there exists a minimal CPS $S'$ starting with $S$ that reduces $N$.
\end{theorem}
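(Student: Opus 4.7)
The plan is to reduce this directly to Proposition~\ref{prop:CPSorder} by induction on $|S|$. The hypothesis that every step of the reduction of $N$ by $S$ changes the network means precisely that $S_i \in \mathcal{C}(NS_{[:i-1]})$ for every $i \in [|S|]$. Starting from the CPN $N$, a single application of Proposition~\ref{prop:CPSorder} to the reducible pair $S_1$ shows that $NS_1$ is a CPN. Iterating this argument, each $NS_{[:i]}$ is a CPN; in particular, the network $NS$ reached after applying all of $S$ is a CPN.

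Since $NS$ is a CPN, there exists a minimal CPS $S''$ that reduces $NS$ to a single-leaf network. Define $S' := S S''$, the concatenation of $S$ and $S''$. Then $NS' = (NS)S''$ is a single-leaf network, so $S'$ reduces $N$. Minimality is immediate: each of the first $|S|$ steps changes the network by hypothesis, and each of the remaining steps changes the network because $S''$ is minimal for $NS$.

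It remains to verify that $S'$ is itself a CPS, i.e., that for every pair in $S'$, the second coordinate appears as a first coordinate later in $S'$ or is the second coordinate of the last pair. For pairs lying in $S''$, this is immediate from $S''$ being a CPS. For a pair $S_i = (x_i, y_i)$ in $S$, either $y_i$ has been removed as a leaf by the time we reach $NS$---in which case $y_i$ occurred as a first coordinate in some $S_j$ with $j > i$ and the condition is met inside $S$ itself---or $y_i$ is still a leaf of $NS$, in which case $y_i$ must appear as a first coordinate in $S''$ or as the second coordinate of the final pair of $S''$, since $S''$ reduces $NS$. In either case, the CPS condition holds for $S_i$ within $S'$. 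Hence $S'$ is a minimal CPS starting with $S$ that reduces $N$.

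The only real content is Proposition~\ref{prop:CPSorder}, which has already been established; the remaining work is bookkeeping to confirm that concatenating $S$ with a CPS for $NS$ yields a genuine CPS, and I expect no technical obstacles beyond checking the CPS condition for the second coordinates of pairs in $S$.
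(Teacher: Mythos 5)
Your proof is correct and follows essentially the route the paper intends: the paper states this theorem as a direct corollary of Proposition~\ref{prop:CPSorder}, namely iterating that proposition to conclude $NS$ is a CPN and then appending a minimal CPS $S''$ for $NS$, and your extra bookkeeping that $SS''$ satisfies the CPS condition (leaves are only deleted when they occur as first coordinates, and the surviving leaf is the second coordinate of the final cherry reduction) is exactly the detail the paper leaves implicit. The only cosmetic point is the degenerate case where $NS$ is already a single-leaf network, so $S''$ is empty and has no final pair; there the CPS condition for a pair of $S$ whose second coordinate survives is supplied by the last pair of $S$ itself, which your argument covers with a one-line adjustment.
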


\subsection{Distinguishability}\label{subsec:Distinguishability}

% As the sets of CPSs for all CPNs of the two categories (semi-binary and binary) are disjoint, there is a clear notion of distinguishability for these classes of CPNs.\todoYuki{they are not disjoint.} 

By Theorem~\ref{thm:OrderDoesn'tMatter}, any order of picking reducible pairs gives a minimal CPS for a CPN.
This inherently implies that for a given CPN, there could be many CPSs that reduces it.
However, given a class $(A,B)$, every CPS uniquely constructs a CPN in that class by Lemma~\ref{lem:SBSFBinaryrefinement}.

\begin{remark}\label{rem:Distinguishability}
Within a CPN class, exactly one CPN can be constructed for each CPS.
On the other hand, a CPN can have more than one minimal CPS that reduces it.
\end{remark}

While this remark holds true for all eight of the CPN classes, only the classes that are reconstructible are interesting to examine.
The aim of this subsection is to set up some distinguishability notion of CPNs using their minimal CPSs.
That is, we would like to encode each CPN by delegating one of its minimal CPSs to be its representative, such that the sequence can be used to reconstruct the CPN.
Since there could be more than one CPN that can be reduced by the same minimal CPS within CPN classes that are not reconstructible, it makes no sense to consider these classes.
Therefore, we define a distinguishability notion only for the classes that are reconstructible.
% To reach any distinguishability notion of the CPNs using their CPSs, we would like a standardized way of comparing the CPSs of one network to that of another.
% From a practical standpoint, this seems infeasible: by Remark~\ref{rem:Distinguishability}, we know that many distinct CPSs can be obtained from the same network.

Within a reconstructible CPN class, each network can have many minimal CPSs that reduce it by Remark~\ref{rem:Distinguishability}.
To choose a representative from these minimal CPSs, we introduce an ordering on the CPSs.
Doing so allows us to prescribe a unique \emph{smallest} CPS to each CPN.
% This chooses a `representative' CPS for every CPN, and we show that comparing these CPSs gives a way of distinguishing two CPNs.
So let us take an arbitrary ordering on the leaves, and let us define a lexicographical ordering on the reducible pairs as follows. 
We say that~$(a,b) < (c,d)$ if and only if~$a<c$ or if~$a=c$ and~$b<d$.
We naturally extend this ordering to minimal CPSs.
Let~$S$ and~$S'$ be CPSs such that~$|S|\neq|S'|$.
If $|S|<|S'|$, then $S<S'$---this ensures the smallest CPS is minimal.
% and if $|S|>|S'|$ then $S>S'$. 
Now suppose $|S|=|S'|$ and let $i$ be the smallest index such that $S_i\neq S'_i$. If no such $i$ exists, then $S=S'$; otherwise, $S<S'$ if and only if $S_i<S'_i$.

By Theorem~\ref{thm:OrderDoesn'tMatter}, we may pick a CPN in any order.
We define a \emph{smallest} CPS as one that is obtained by picking the smallest reducible pair at each iteration (see Figure~\ref{fig:SmallestCPS}).
Such a sequence is naturally a minimal CPS.
By the following theorem, distinguishing two CPNs of the same reconstructible class comes down to finding their smallest CPS and checking whether these are the same.

\begin{figure}
    \centering
    \begin{tikzpicture}[every node/.style = {draw, circle, fill, inner sep = 0pt, minimum size = 2mm},
    square/.style = {regular polygon, regular polygon sides = 4, minimum size = 3 mm}]
    \tikzset {edge/.style = {very thick, shorten >= -0.5 pt}}

        %Nodes

        \node[] (-1) at (0.0,-0.0) {};
        \node[] (0) at (0.0,-0.5) {};
        \node[] (6) at (-0.5,-1.0) {};
        \node[] (8) at (0.5,-1.0) {};
        \node[] (7) at (-1.125,-2) {};
        \node[] (9) at (0.25,-1.25) {};
        \node[square] (10) at (0,-1.5) {};

        \node[draw=none, fill=none, left = 5mm of 0] {\large{$N$}};
        %Leaves

        \node[] (1) at (-1.5,-2.5) {};
        \node[draw=none, fill=none, below=1mm of 1] (leaf_1) {\large $1$};
        \node[] (2) at (-0.75,-2.5) {};
        \node[draw=none, fill=none, below=1mm of 2] (leaf_2) {\large $2$};
        \node[] (3) at (0.0,-2.5) {};
        \node[draw=none, fill=none, below=1mm of 3] (leaf_3) {\large $3$};
        \node[] (4) at (0.75,-2.5) {};
        \node[draw=none, fill=none, below=1mm of 4] (leaf_4) {\large $4$};
        \node[] (5) at (1.5,-2.5) {};
        \node[draw=none, fill=none, below=1mm of 5] (leaf_5) {\large $5$};

        %Edges

        \draw[edge] (-1) edge (0);
        \draw[edge] (0) edge (6);
        \draw[edge] (0) edge (8);
        \draw[edge] (6) edge (7);
        \draw[edge] (6) edge (10);
        \draw[edge] (7) edge (1);
        \draw[edge] (7) edge (2);
        \draw[edge] (10) edge (3);
        \draw[edge] (8) edge (9);
        \draw[edge] (8) edge (5);
        \draw[edge] (9) edge (4);
        \draw[edge] (9) edge (10);
    \end{tikzpicture}

    \caption{The smallest CPS for this network is~$(1,2),(3,2),(3,4),(4,5),(2,5)$. Initially we have the choice of picking either~$(1,2), (2,1),$ or~$(3,4)$. For the smallest CPS we pick the smallest reducible pair~$(1,2)$.}
    \label{fig:SmallestCPS}
\end{figure}
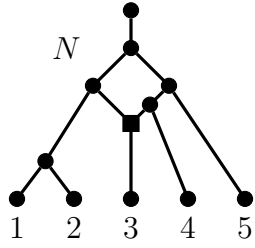

\begin{theorem}\label{thm:UniqueCPNConstruction}
Suppose we are given an ordering on the taxa set~$X$.
Every CPN on~$X$ has a unique smallest CPS.
In particular within a reconstructible CPN class, these CPSs can be used to reconstruct the CPN.
Every CPS can be used to construct a unique CPN within each of the eight CPN classes.
\end{theorem}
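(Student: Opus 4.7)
The plan is to prove the three assertions in turn: existence of a smallest CPS for any CPN, its uniqueness, and the two reconstruction statements. Existence proceeds by induction on the number of leaves of $N$. The base case is the single-leaf network, whose only minimal CPS is the empty sequence. For the inductive step, let $N$ be a CPN with at least two leaves. Since every CPN with at least two leaves contains a reducible pair, the set $\mathcal{C}(N)$ is nonempty; let $c$ denote its lexicographically smallest element. By Proposition~\ref{prop:CPSorder}, $Nc$ is again a CPN, so by the inductive hypothesis it has a unique smallest CPS $S'$, and I claim that $cS'$ is a smallest CPS of $N$.

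To verify this, fix any minimal CPS $T$ of $N$. By Lemma~\ref{lem:OptimumCPN} we have $|T| = |cS'| = n+r-1$, so the comparison is purely lexicographic. The pair $T_1$ lies in $\mathcal{C}(N)$, hence $c \le T_1$. If $c < T_1$, then $cS' < T$ immediately. If $c = T_1$, then $NT_1 = Nc$ and $T_{[2:]}$ is a minimal CPS of $Nc$, so $S' \le T_{[2:]}$ by induction, giving $cS' \le T$. Uniqueness of the smallest CPS then follows at once: the greedy procedure that selects the smallest reducible pair at each stage is deterministic, and any smallest CPS must agree with it at every position by applying the same comparison at the first index of disagreement.

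For the reconstruction claim, fix a reconstructible class $(A,B)$ and a CPN $N$ in this class with smallest CPS $S$. Since $S$ is a minimal CPS for $N$, I would apply Lemma~\ref{lem:Unique(A,B)Construction} iteratively: starting from $NS$ (a single-leaf network) and walking backward through $S$, each application of the $(A,B)$-construction adds back the corresponding reducible pair exactly as it appeared in the reduction, so the end result is $N$. The final assertion---that every CPS yields a unique CPN within any fixed class---is then immediate: the $(A,B)$-construction is a deterministic procedure, and the CPS condition in Definition~\ref{def:CPS} guarantees at every backward step that the second coordinate of the pair being added is already present as a leaf in the partial network, so each construction step is well-defined.

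The only delicate point is the inductive comparison in the existence step: we need Proposition~\ref{prop:CPSorder} to guarantee that picking the greedy pair first leaves a CPN on which the inductive hypothesis applies. Everything else reduces to deterministic bookkeeping or a direct appeal to Lemma~\ref{lem:Unique(A,B)Construction}.
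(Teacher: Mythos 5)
Your argument is essentially the paper's: the paper defines a smallest CPS operationally as the greedy sequence (pick the least reducible pair at every step, which is legitimate by Theorem~\ref{thm:OrderDoesn'tMatter}/Proposition~\ref{prop:CPSorder}), gets uniqueness from the total order, and gets the two reconstruction claims from reconstructibility (Corollary~\ref{cor:Unique(A,B)construction}) and Remark~\ref{rem:Distinguishability}. What you add is a genuine verification, by a greedy-exchange induction, that the greedy sequence is in fact the lexicographic minimum among all minimal CPSs --- a point the paper's proof leaves implicit in its definition of ``smallest'' --- and you re-derive the reconstruction claim by iterating Lemma~\ref{lem:Unique(A,B)Construction} backwards through the sequence, which is exactly how the paper obtains Corollary~\ref{cor:Unique(A,B)construction}; for that iteration you should note (as the lemma's proof does, via Observation~\ref{obs:CPNAfterReducingfirstielementsremainCPN} and the structural characterisation of the four classes) that every intermediate network $NS_{[:i]}$ remains in the class, so the lemma applies at each step.

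One concrete fix is needed: the induction cannot be on the number of leaves. If the greedy pair $c$ is a reticulated cherry, then $Nc$ has exactly as many leaves as $N$, so your inductive hypothesis does not apply to $Nc$. Induct instead on $n+r$ (equivalently on the length $n+r-1$ of a minimal CPS, or on the number of edges); every reduction strictly decreases this quantity, and the rest of your argument --- $c\le T_1$ for any minimal CPS $T$ since $T_1\in\mathcal{C}(N)$, equal lengths via Lemma~\ref{lem:OptimumCPN}, and recursion on $Nc$ with $T_{[2:]}$ when $c=T_1$ --- goes through unchanged. With that adjustment the proposal is correct.
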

\begin{proof}
 Let~$N$ be a CPN on~$X$.
 Since we have a total ordering on the cherries of~$N$, we have that if there exists a smallest CPS then it is unique.
 Furthermore, we know that a smallest CPS exists: simply pick a smallest cherry at every iteration.
 Therefore every CPN on~$X$ has a unique smallest CPS.
 Within reconstructible CPN classes, no two networks have the same minimal CPSs.
 It then follows that a smallest CPS for a network can be used to construct said network.
 
 By Remark~\ref{rem:Distinguishability}, we have that every CPS gives rise to a unique CPN.
\end{proof}

The following corollary is a direct consequence of Theorem~\ref{thm:UniqueCPNConstruction}.

\begin{corollary}\label{cor:CPNIsomorphism}
Suppose we are given an ordering on the taxa set~$X$.
Within a \yuki{reconstructible} CPN class, two CPNs on~$X$ are isomorphic if and only if they have the same smallest CPS.
\end{corollary}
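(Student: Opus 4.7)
The plan is to prove the two directions separately, with both falling out quickly from Theorem~\ref{thm:UniqueCPNConstruction} together with the definition of a reconstructible CPN class.

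For the forward direction, I would assume $N \cong N'$ and argue that any isomorphism between labelled networks must fix leaf labels, since leaves are bijectively labelled by $X$. Consequently the sets of cherries and reticulated cherries coincide, i.e.\ $\mathcal{C}(N) = \mathcal{C}(N')$. In particular, the smallest reducible pair under the given lexicographic ordering on $X$ is the same pair $c$ in both networks. Since reduction is a purely local operation (removing a leaf or a reticulation edge and suppressing degree-$2$ vertices), applying $c$ to isomorphic networks yields isomorphic networks, so $Nc \cong N'c$. Iterating this argument, the greedy procedure that picks the smallest reducible pair at each step makes identical choices for $N$ and $N'$, so their smallest CPSs are identical.

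For the backward direction, I would suppose $N$ and $N'$ share the same smallest CPS $S$. A smallest CPS is minimal for the network that produced it, since at each step the chosen reducible pair actually changes the network. Thus $S$ is a common minimal CPS for both $N$ and $N'$, and because the class is reconstructible, the defining property of reconstructibility immediately yields $N \cong N'$.

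There is no real obstacle here: the corollary is essentially a bookkeeping consequence of Theorem~\ref{thm:UniqueCPNConstruction}. The only point that requires a moment's care is the claim that isomorphic networks reduce to isomorphic networks under a common reducible pair, but this is clear from the locality of the reduction rules in Definition~\ref{def:Reducing}.
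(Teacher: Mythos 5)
Your proposal is correct and follows essentially the same route as the paper, which derives the corollary directly from Theorem~\ref{thm:UniqueCPNConstruction}: the forward direction from the uniqueness of the smallest CPS (your greedy/locality argument just spells out why isomorphic networks on $X$ produce identical greedy choices), and the backward direction from the fact that a smallest CPS is minimal plus the definition of a reconstructible class.
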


This leads to a polynomial-time algorithm for checking whether two CPNs of a reconstructible CPN class on the same set of taxa are isomorphic, which we describe in Section~\ref{sec:Computation}.

\section{Reduction and containment}\label{sec:reductioncontainment}
In this section, we prove that, within reconstructible CPN classes, the reduction of a network by a CPS for another network implies `containment' of the former in the latter. 
We also show that the converse does not always hold: containment of a network $N'$ in another network $N$ does not imply that there exists a minimal CPS of $N$ that reduces $N'$.
% We prove this in the following lemma using induction on the length of the sequence in reverse direction (i.e., from long to short). 

\subsection{Reduction implies containment}\label{subsec:reductionimpliescontainment}

We first formally define what it means for a network to contain another network.

% \begin{definition}
% Let~$N$ be a binary network on the set of taxa~$X$.
% A binary network~$N'$ on~$X$ is a \emph{subnetwork} of~$N$ if 
% %some subgraph~$M$ of~$N$ is a subdivision of~$N'$.
% $N'$ can be obtained from~$N$ by deleting reticulation edges and suppressing all degree-$2$ nodes in the resulting subgraph.
% \todoRemie{Now the rest of the definition does not work anymore \textbf{ym:} how bout now.}
% The \emph{embedding} of~$N'$ in~$N$ maps the nodes of~$N'$ to a subset of the nodes of~$N$, and it maps the edges of~$N'$ to edge disjoint paths of~$N$.%, such that all edges in~$M$ are covered.
% If~$N'$ is a subnetwork of~$N$, then we alternatively say that~$N$ \emph{contains} (or \emph{contains})~$N'$.
% A semi-binary stack-free network~$N'$ is a \emph{subnetwork} of another semi-binary stack-free network~$N$ if some binary refinement~$N'^b$ of~$N'$ is a subnetwork of some binary refinement~$N^b$ of~$N$.
% \end{definition}

\begin{definition}
\remie{Let~$N$ be a non-binary network on the set of taxa~$X$.
A non-binary network~$N'$ on~$X'\subseteq X$ is a \emph{subnetwork} of~$N$ if 
%some subgraph~$M$ of~$N$ is a subdivision of~$N'$.
$N'$ can be obtained from~$N$ by deleting reticulation edges, and then \emph{cleaning up w.r.t. $X'$}, i.e., applying the following changes until a network on $X'$ is obtained:
\begin{itemize}
    \item removing outdegree-0 nodes not labelled by $X'$, together with their incoming edges;
    \item suppressing all degree-$2$ nodes.
%    \item \todo[inline]{the other paper also has "remove parallel edges". I don't think we need it.}
\end{itemize}
Equivalently, $N'$ is a subnetwork of $N$ if there is an embedding of $N'$ in $N$: an injective map of the nodes of~$N'$ to a subset of the nodes of~$N$, and of the edges of~$N'$ to edge disjoint paths of~$N$, such that the mapping of the edges respects the mapping of the nodes.
A non-binary network~$N$ \emph{contains} another non-binary network~$N'$ if some refinement~$N'_b$ of~$N'$ is a subnetwork of~$N$, \remie{i.e., if $N'$ can be obtained from a subnetwork of $N$ by contracting edges.}}
\end{definition}

As seen above, a subnetwork of a network can be defined by deleting reticulation edges and cleaning up, but also with embeddings.
The equivalence of these two definitions has been shown for when the two networks are binary and on the same leaf-sets (Lemma~$1$ of \cite{murakami2019reconstructing}).
It is easy to extend this equivalence to non-binary networks on different leaf-sets (in which one leaf-set is a subset of the other), so we do not include this here. %\todoRemie{TODO: Seems like we should include it, it has changed quite a bit, and it also includes the other direction (very rough proof though)}
% Also note that the order in which we clean up a graph does not matter.

%%%%%%%%%%%%%%%%%%%%%%%%%%%%%%%%%%%%%%%%%%%
%%%%%%%%%%%%%%%%%%%%%%%%%%%%%%%%%%%%%%%%%%%
%%%%%%%%%%%%%%%%%%%%%%%%%%%%%%%%%%%%%%%%%%%
%%%%%%%%%%%%%%%%%%%%%%%%%%%%%%%%%%%%%%%%%%%
%%%%%%%%%%%%%%%%%%%%%%%%%%%%%%%%%%%%%%%%%%%
%%%%%%%%%%%%%%%%%%%%%%%%%%%%%%%%%%%%%%%%%%%
%%%%%%%%%%%%%%%%%%%%%%%%%%%%%%%%%%%%%%%%%%%
%%%%%%%%%%%%%%%%%%%%%%%%%%%%%%%%%%%%%%%%%%%

\begin{lemma}\label{obs:cleanup}
\remiee{Let $N$ and $N'$ be non-binary networks on $X$ and $X'\subseteq X$. 
$N'$ can be embedded into~$N$ if and only if $N'$ can be obtained from~$N$ by deleting a set of reticulation edges and then cleaning up w.r.t. $X'$.}
\end{lemma}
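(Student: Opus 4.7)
The plan is to prove the two directions separately, each via an explicit canonical construction, and to lean on the analogous statement for binary networks on common leaf-sets (Lemma 1 of Murakami 2019) only for intuition, since the argument is constructive and extends directly.

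For the $(\Leftarrow)$ direction I would proceed by induction on the number of operations in the cleanup procedure. Let $M$ be the subgraph of $N$ obtained by deleting the prescribed set of reticulation edges, before any cleanup. Define a canonical map $\phi$ from nodes and edges of the cleaned-up network to nodes and paths of $M$ (equivalently, of $N$) by reversing each cleanup step: suppressing a degree-$2$ node $v$ with incoming edge $uv$ and outgoing edge $vw$ corresponds to mapping the resulting edge $uw$ to the directed path $u \to v \to w$; removing an outdegree-$0$ unlabelled node has no effect on the map because neither the node nor its incoming edge survives. After all cleanup operations, $\phi$ sends nodes injectively to nodes of $N$ and edges to edge-disjoint directed paths of $N$, with heads and tails matching. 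This is exactly an embedding of $N'$ into $N$.

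For the $(\Rightarrow)$ direction, suppose $\phi$ embeds $N'$ into $N$. Let $H$ be the subgraph of $N$ that is the union of the images $\phi(v)$ (nodes) and $\phi(e)$ (edge-disjoint directed paths). Let $R$ be the set of reticulation edges of $N$ that do not lie on any of these paths. I would then argue that deleting $R$ and cleaning up w.r.t.\ $X'$ yields a network isomorphic to $N'$. The cleanup proceeds in two phases which I would analyse in order: first, iteratively remove outdegree-$0$ nodes not in $X'$ together with their incoming edges, and second, suppress all degree-$2$ nodes. After removing $R$, every node of $N$ that is not in the image of $\phi$ either has no directed path to a leaf in $X'$ (its reticulation outlets having been cut) or becomes a degree-$2$ internal node on an image path, so the first phase of the cleanup cascades upward and deletes exactly the nodes outside $\phi(V(N'))$; the second phase then contracts each image path $\phi(e)$ back to a single edge. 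Checking that no node of $\phi(V(N'))$ is accidentally suppressed uses that such a node has the same in- and out-degree in the cleaned-up graph as the corresponding node of $N'$, which is at least three unless it is a leaf of $N'$ or the root.

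The main obstacle is the bookkeeping for the first cleanup phase: one must verify that the cascading deletion of outdegree-$0$ unlabelled nodes removes precisely the portion of $N$ lying outside $\phi(H)$, and in particular that no leaf in $X'$ is ever orphaned. This uses two facts: the leaves in $X'$ are exactly the images under $\phi$ of leaves of $N'$ and therefore survive by construction, and every other subgraph of $N \setminus H$ that is rooted at a tree vertex or reticulation of $N$ has all of its directed downward paths eventually leaving $H$ through a deleted reticulation edge in $R$, so iterated removal of outdegree-$0$ nodes eliminates it entirely. Once these two phases are justified, the resulting network has exactly the nodes of $\phi(V(N'))$ and exactly one edge for each image path, giving an isomorphism to $N'$ and completing the proof.
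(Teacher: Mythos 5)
Your overall strategy is the same as the paper's: for one direction you reverse the clean-up operations to build the embedding, and for the other you take $R$ to be the reticulation edges not used by the embedding and argue that deleting $R$ and cleaning up prunes exactly the part of $N$ outside the image. The backward direction is fine and matches the paper essentially verbatim. For the forward direction the paper argues via a lowest edge of the cleaned-up network not used by the embedding, while you argue via a cascading deletion of everything outside the image; these are the same structural fact dressed differently, namely that an edge entering the image from outside must enter at a reticulation and hence lie in $R$.

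The gap is in that structural fact as you state it. Your claim that every directed path from a node outside $H=\phi(N')$ to a leaf of $X'$ must cross an edge of $R$ (and hence that phase one ``deletes exactly the nodes outside $\phi(V(N'))$'' and that no image node is ever suppressed) is only valid for entry points of $H$ that are reticulations or that have their unique incoming edge inside $H$. The one node of $H$ with no incoming $H$-edge is the image of the root of $N'$, and nothing in the definition of an embedding forces it to be the root of $N$. If the root of $N'$ is mapped to a non-root tree node $w$ of $N$, then the incoming tree edge of $w$ is neither in $H$ nor in $R$, paths from above can still reach $X'$ through $w$, ancestors of $w$ survive your first phase, and $w$ itself becomes a degree-$2$ node and is suppressed in the second phase --- so both of your intermediate claims fail as stated. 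The lemma still comes out true (the surviving part above $w$ is necessarily a chain, because every reticulation outside $H$ loses all its incoming edges, and that chain collapses into the root edge), but your proposal contains neither this extra argument nor the normalisation the paper uses instead, namely the explicit assumption that the root of $N'$ is mapped to the root of $N$ (which itself deserves a one-line justification by rerouting the image of the root edge of $N'$ along a path from the root of $N$). Adding that normalisation, with its justification, would close the gap and make your cascade argument correct.
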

\begin{proof}
First suppose there is an embedding of~$N'$ into $N$. \yuki{Note that in this embedding, we may assume that the root of~$N'$ is mapped to the root of~$N$.} The image of this map (a subgraph of~$N$) is a subdivision of~$N'$. Let $R$ be the set of reticulation edges of $N$ not used in the embedding. We will show that $N'$ can be obtained from $N$ by removing $R$ and cleaning up w.r.t $X'$. To show this, we prove that the edges that are removed in the clean-up are exactly the edges not used by the embedding of $N'$ into $N$. 

Let $M$ be the network obtained from $N$ by removing $R$ and cleaning up w.r.t. $X'$. First note that no edges used by the embedding are removed in the process of removing $R$ and cleaning up: indeed, for each such edge, there is a path to a leaf of $X'$ using only edges of the embedding, which cannot be removed by cleaning up. Now suppose $M$ has an edge that is not used in the embedding of~$N'$ into~$N$. Consider a lowest such edge~$xy$.

Node~$y$ cannot be a leaf of~$N$, because all leaves of~$N$ are in the embedding of~$N'$ into~$N$; or they are removed in the clean-up because they are not in $X'$, in which case they cannot be part of $M$.

Now suppose that~$y$ is a tree node of~$N$. It is impossible for an outgoing edge of~$y$ to be in the embedding, because the root of the embedding is the root of~$N$.
Hence, the outgoing edges of~$y$ are not in the embedding. At least one of these outgoing edges of~$y$ is in~$M$, because, otherwise,~$y$ would have been deleted by cleaning up outdegree-0 nodes. Hence, at least one outgoing edge of~$y$ is in~$M$ but not in the embedding of~$N'$ into~$N$, contradicting the assumption that~$xy$ is a lowest such edge.

Lastly, suppose that~$y$ is a reticulation. If none of the other incoming edges of the reticulation are in the embedding, it follows, similarly to the previous case, that the outgoing edge of~$y$ is in~$M$ but not in the embedding. This contradicts the assumption that~$xy$ is a lowest such edge. Hence, at least one incoming edge of~$y$ is used by the embedding. This implies $xy$ is an element of $R$, and has been deleted, contradicting the assumption that~$xy$ is an edge of~$M$.

For the other direction, suppose $N'$ can be obtained from $N$ by removing a set of reticulation edges $R$ and cleaning up. By reversing the operations used to clean up, we get an embedding of $N'$ into $N$. Indeed, this only introduces reticulation edges not used by the embedding, and subdivides edges. When subdividing an edge used by the embedding, adapt the embedding accordingly, by mapping the edge of $N'$ to the resulting path.
\end{proof}

%%%%%%%%%%%%%%%%%%%%%%%%%%%%%%%%%%%%%%%%%%%
%%%%%%%%%%%%%%%%%%%%%%%%%%%%%%%%%%%%%%%%%%%
%%%%%%%%%%%%%%%%%%%%%%%%%%%%%%%%%%%%%%%%%%%
%%%%%%%%%%%%%%%%%%%%%%%%%%%%%%%%%%%%%%%%%%%
%%%%%%%%%%%%%%%%%%%%%%%%%%%%%%%%%%%%%%%%%%%
%%%%%%%%%%%%%%%%%%%%%%%%%%%%%%%%%%%%%%%%%%%
%%%%%%%%%%%%%%%%%%%%%%%%%%%%%%%%%%%%%%%%%%%

In what follows, in settings where we consider whether~$N'$ is a subnetwork of/contained in~$N$, we informally refer to~$N$ as the larger network and~$N'$ as the smaller network.
Note that the notions of a network being a subnetwork and a network being contained are not always synonymous.
If~$N'$ is a subnetwork of~$N$, then~$N'$ is contained in~$N$.
However, if~$N'$ is contained in~$N$, then it does not immediately follow that~$N'$ is a subnetwork of~$N$.
They are synonymous when the smaller network (i.e.,~$N'$) is binary.
The following lemma shows that for any non-binary network (not necessarily a CPN), the network obtained by reducing a pair is a subnetwork of the original network.

\begin{lemma}\label{lem:SubnetworkAfterReduction}
\remie{Let $N$ be a non-binary network, and $c$ a pair of leaves. Then $Nc$ is a subnetwork of $N$.}
\end{lemma}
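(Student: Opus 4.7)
The plan is to apply Lemma~\ref{obs:cleanup}, which tells us that proving $Nc$ is a subnetwork of $N$ is equivalent to exhibiting a set of reticulation edges whose deletion, followed by a cleanup with respect to $L(Nc)$, yields $Nc$. I would then perform a case analysis on $c$ matching the three cases of Definition~\ref{def:Reducing}.

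The trivial case is when $c$ is neither a cherry nor a reticulated cherry in $N$: then $Nc = N$ by definition, and $N$ is a subnetwork of itself (delete no edges, and no cleanup is needed since $L(Nc) = L(N)$).

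Next I would handle $c = (x,y)$ a cherry. Here the set of reticulation edges to delete is empty, and I clean up with respect to $X' = L(N) \setminus \{x\}$. Since every leaf of $N$ other than $x$ is still a leaf of $N$, the only outdegree-$0$ node not in $X'$ is $x$; removing it leaves its parent $p_x = p_y$ with one fewer outgoing edge. If $p_x$ had outdegree exactly $2$, it now has indegree $1$ and outdegree $1$, so the cleanup suppresses it, exactly as prescribed by Definition~\ref{def:Reducing}; if $p_x$ had outdegree at least $3$, it retains outdegree at least $2$ and is not suppressed, again matching the reduction. Either way the resulting graph is $Nc$.

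For the remaining case, $c = (x,y)$ a reticulated cherry, I delete the single reticulation edge $p_y p_x$ and clean up with respect to $X' = L(N)$ (so no outdegree-$0$ nodes are removed; only degree-$2$ nodes are suppressed). The only two vertices whose degree is affected by the deletion are $p_x$ and $p_y$: $p_x$ loses an incoming edge, so it becomes degree-$2$ iff its original indegree was $2$; $p_y$ loses an outgoing edge, so it becomes degree-$2$ iff its original outdegree was $2$. In both sub-cases the cleanup suppresses exactly the vertices that Definition~\ref{def:Reducing} prescribes, yielding $Nc$.

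The argument is essentially a careful bookkeeping exercise; the main point to be attentive about is that the ``suppressing'' step in the reduction of a cherry at a multifurcation leaves the parent vertex in place, and that this matches the cleanup, which only removes genuine degree-$2$ nodes. No deeper obstacle is anticipated.
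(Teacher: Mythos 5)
Your proof is correct, but it takes a different route from the paper. You work with the edge-deletion characterization: delete an (empty or singleton) set of reticulation edges and verify, case by case, that the cleanup w.r.t.\ $L(Nc)$ removes and suppresses exactly the vertices that Definition~\ref{def:Reducing} prescribes, including the observation that a multifurcating parent (or a multi-reticulation) is \emph{not} suppressed because it never drops to degree $2$. The paper instead argues directly on the embedding side of the equivalence: the nodes of $Nc$ map identically to the surviving nodes of $N$, and each edge of $Nc$ maps to the corresponding edge of $N$, or to a path of two edges where an endpoint was suppressed; this is immediately an embedding since nodes map injectively and no edge of $N$ lies on two image paths. The paper's argument is shorter and uniform over the cases (no case distinction between cherry, reticulated cherry, and the trivial case), whereas yours makes the cleanup bookkeeping explicit and spells out the multifurcation/multi-reticulation subtleties; note also that invoking Lemma~\ref{obs:cleanup} is not strictly necessary for your direction, since the deletion-plus-cleanup formulation is already part of the definition of subnetwork, but using it does no harm.
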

\begin{proof}
\remie{For the embedding of $Nc$ into $N$, note that there is a natural map of the nodes of $Nc$ to the nodes of $N$. Each edge of $N$, corresponds naturally to an edge of $Nc$, or is part of a path (of two edges) if an endpoint got suppressed. These mappings form an embedding of $Nc$ into $N$, as the mapping of the nodes is respected, and no edge of $Nc$ is part of more than one corresponding path of $N$.%So the paths are edge disjoint.
}
\end{proof}

% In this section, we focus only on the reconstructible CPN classes.
% It is easy to see that the containment results will not hold for non-reconstructible CPN classes.
% The networks in Figure~\ref{fig:CPNNonUniqueConstruction} shows two networks that are not contained in each other, which can be reduced by the same CPS.
To show the relation between subsequences and containment, we first focus on the binary CPN class, $(\ref{Const:CherryResolved}, \ref{Const:RCherryResolved})$, for which the definitions of subnetwork and containment are synonymous.

\subsubsection{Subnetworks}

Intuitively, when a CPS~$S$ for a binary network~$N$ also reduces another binary network~$N'$, the embedding of the network~$N'$ in~$N$ can be found as follows.
Reconstruct the network $N$ from $S$, and we annotate the edges used by $N'$ in the process.
Let~$S'$ denote the CPS of ordered pairs in~$S$ that is used in the reduction of~$N'$.
Let $S_i = (x,y)$ be an ordered pair that appears in $S'$.
Then in $NS_{[:i]}$, label the paths $p_yy$ and $p_yx$ as `used'.
Upon reconstructing $N$, the embedding of $N'$ into $N$ can be seen as the subnetwork of $N$ which uses all labelled edges.

% \todo{check if still use this def.}
% \begin{definition}\label{def:EmbeddedNodes}
% Let~$v$ be a node in some network~$M$. If~$M$ is a subnetwork of~$M'$ then we say that~$v$ exists in~$M'$, and refer to~$v$ in~$M'$ by~$v^{M'}$.
% \end{definition}

% \todoRemie{Make a smaller version of this figure, like in the conference version. (or just use that one?)
% YM: I think we should make a smaller version of this one, since the one in the conf version is tree-child.}
\begin{figure}
    \centering
    $S=(2,1),(3,2),(3,4),(2,1),(1,4)$
    
    \vspace{0.5cm}
    \resizebox{\columnwidth}{!}{
    \begin{tikzpicture}[every node/.style = {draw, circle, fill, inner sep = 0pt, minimum size = 2mm},
    square/.style = {regular polygon, regular polygon sides = 4, minimum size = 3 mm}]
    \tikzset {edge/.style = {very thick, shorten >= -0.5 pt}}
        %Nodes

        \node[] (-1) at (0.0,-0.0){};
        \node[] (0) at (0.0,-0.5){};
        \node[] (5) at (-0.5,-1.0){};
        \node[opacity=0.2] (8) at (1,-1.5){};
        \node[] (6) at (-1.0,-1.5){};
        \node[opacity=0.2] (7) at (0.0,-1.5){};
        \node[square] (9) at (-0.5,-2.0){};
        \node[square, opacity=0.2] (10) at (0.5,-2.0){};

        \node[draw=none, fill=none, left = 5mm of -1] {\large{$N$}};
        %Leaves

        \node[] (1) at (-1.5,-2.5){};
        \node[draw=none, fill=none, below=1mm of 1] (leaf_1){\large $1$};
        \node[] (2) at (-0.5,-2.5){};
        \node[draw=none, fill=none, below=1mm of 2] (leaf_2){\large $2$};
        \node[opacity=0.2] (3) at (0.5,-2.5){};
        \node[draw=none, fill=none, below=1mm of 3, opacity=0.2] (leaf_3){\large $3$};
        \node[] (4) at (1.5,-2.5){};
        \node[draw=none, fill=none, below=1mm of 4] (leaf_4){\large $4$};

        %Edges

        \draw[edge] (-1) edge (0);
        \draw[edge] (0) edge (5);
        \draw[edge] (0) edge (8);
        \draw[edge] (5) edge (6);
        \draw[edge] (5) edge (7);
        \draw[edge] (6) edge (1);
        \draw[edge] (6) edge (9);
        \draw[edge] (9) edge (2);
        \draw[edge] (7) edge (9);
        \draw[edge, opacity=0.2] (7) edge (10);
        \draw[edge, opacity=0.2] (10) edge (3);
        \draw[edge, opacity=0.2] (8) edge (10);
        \draw[edge] (8) edge (4);

        \begin{scope}[shift = {(4,0)}]
        %Nodes

        \node[] (-1) at (0.0,-0.0){};
        \node[] (0) at (0.0,-0.5){};
        \node[] (5) at (-0.5,-1.0){};
        \node[opacity=0.2] (8) at (1,-1.5){};
        \node[opacity=0.2] (7) at (0.0,-1.5){};
        \node[square, opacity=0.2] (10) at (0.5,-2.0){};

        \node[draw=none, fill=none, left = 5mm of -1] {\large{$NS_{[:1]}$}};
        %Leaves

        \node[] (1) at (-1.5,-2.5){};
        \node[draw=none, fill=none, below=1mm of 1] (leaf_1){\large $1$};
        \node[] (2) at (-0.5,-2.5){};
        \node[draw=none, fill=none, below=1mm of 2] (leaf_2){\large $2$};
        \node[opacity=0.2] (3) at (0.5,-2.5){};
        \node[draw=none, fill=none, below=1mm of 3, opacity=0.2] (leaf_3){\large $3$};
        \node[] (4) at (1.5,-2.5){};
        \node[draw=none, fill=none, below=1mm of 4] (leaf_4){\large $4$};

        %Edges

        \draw[edge] (-1) edge (0);
        \draw[edge] (0) edge (5);
        \draw[edge] (0) edge (8);
        \draw[edge] (5) edge (1);
        \draw[edge] (5) edge (7);
        \draw[edge] (7) edge (2);
        \draw[edge, opacity=0.2] (7) edge (10);
        \draw[edge, opacity=0.2] (10) edge (3);
        \draw[edge, opacity=0.2] (8) edge (10);
        \draw[edge] (8) edge (4);
        \end{scope}
        
        \begin{scope}[shift = {(8,0)}]
        %Nodes

        \node[] (-1) at (0.0,-0.0){};
        \node[] (0) at (0.0,-0.5){};
        \node[] (5) at (-0.5,-1.0){};
        \node[opacity=0.2] (8) at (1,-1.5){};

        \node[draw=none, fill=none, left = 5mm of -1] {\large{$NS_{[:2]}$}};
        %Leaves

        \node[] (1) at (-1.5,-2.5){};
        \node[draw=none, fill=none, below=1mm of 1] (leaf_1){\large $1$};
        \node[] (2) at (-0.5,-2.5){};
        \node[draw=none, fill=none, below=1mm of 2] (leaf_2){\large $2$};
        \node[opacity=0.2] (3) at (0.5,-2.5){};
        \node[draw=none, fill=none, below=1mm of 3, opacity=0.2] (leaf_3){\large $3$};
        \node[] (4) at (1.5,-2.5){};
        \node[draw=none, fill=none, below=1mm of 4] (leaf_4){\large $4$};

        %Edges

        \draw[edge] (-1) edge (0);
        \draw[edge] (0) edge (5);
        \draw[edge] (0) edge (8);
        \draw[edge] (5) edge (1);
        \draw[edge] (5) edge (2);
        \draw[edge, opacity=0.2] (8) edge (3);
        \draw[edge] (8) edge (4);
        \end{scope}
        
        \begin{scope}[shift = {(12,0)}]
        %Nodes

        \node[] (-1) at (0.0,-0.0){};
        \node[] (0) at (0.0,-0.5){};
        \node[] (5) at (-0.5,-1.0){};

        \node[draw=none, fill=none, left = 5mm of -1] {\large{$NS_{[:3]}$}};
        %Leaves

        \node[] (1) at (-1.5,-2.5){};
        \node[draw=none, fill=none, below=1mm of 1] (leaf_1){\large $1$};
        \node[] (2) at (-0.5,-2.5){};
        \node[draw=none, fill=none, below=1mm of 2] (leaf_2){\large $2$};
        \node[] (4) at (1.5,-2.5){};
        \node[draw=none, fill=none, below=1mm of 4] (leaf_4){\large $4$};

        %Edges

        \draw[edge] (-1) edge (0);
        \draw[edge] (0) edge (5);
        \draw[edge] (0) edge (4);
        \draw[edge] (5) edge (1);
        \draw[edge] (5) edge (2);
        \end{scope}
        
        \begin{scope}[shift = {(16,0)}]
        %Nodes

        \node[] (-1) at (0.0,-0.0){};
        \node[] (0) at (0.0,-0.5){};

        \node[draw=none, fill=none, left = 5mm of -1] {\large{$NS_{[:4]}$}};
        %Leaves

        \node[] (1) at (-1.5,-2.5){};
        \node[draw=none, fill=none, below=1mm of 1] (leaf_1){\large $1$};
        \node[] (4) at (1.5,-2.5){};
        \node[draw=none, fill=none, below=1mm of 4] (leaf_4){\large $4$};

        %Edges

        \draw[edge] (-1) edge (0);
        \draw[edge] (0) edge (1);
        \draw[edge] (0) edge (4);
        \end{scope}
        
        \begin{scope}[shift = {(20,0)}]
        %Nodes

        \node[] (-1) at (0.0,-0.0){};

        \node[draw=none, fill=none, left = 5mm of -1] {\large{$NS$}};
        %Leaves

        \node[] (4) at (0,-2.5){};
        \node[draw=none, fill=none, below=1mm of 4] (leaf_4){\large $4$};

        %Edges

        \draw[edge] (-1) edge (4);
        \end{scope}
        
        \draw[edge] (1.5,-1.5) edge [<->] node[draw = none, fill = none, midway, yshift=10pt]{\large $(2,1)$} (2.5,-1.5);
        
        \draw[edge, opacity=0.2] (5.5,-1.5) edge [<->] node[draw = none, fill = none, midway, yshift=10pt, opacity=0.2]{\large $(3,2)$} (6.5,-1.5);
        
        \draw[edge, opacity=0.2] (9.5,-1.5) edge [<->] node[draw = none, fill = none, midway, yshift=10pt, opacity=0.2]{\large $(3,4)$} (10.5,-1.5);
        
        \draw[edge] (13.5,-1.5) edge [<->] node[draw = none, fill = none, midway, yshift=10pt]{\large $(2,1)$} (14.5,-1.5);
        
        \draw[edge] (17.5,-1.5) edge [<->] node[draw = none, fill = none, midway, yshift=10pt]{\large $(1,4)$} (18.5,-1.5);
        
    \end{tikzpicture}
    }
    \caption{A visualization of Lemma~\ref{lem:reductionImpliesContainment}.
    The binary CPN~$N$ from Figure~\ref{fig:Definition} (grey and black), together with one of its minimal CPSs~$S$.
    The subnetwork of~$N$ (black) is also reduced by $S$, and the embedding can be constructed by building both networks simultaneously and keeping track of the edges added by the pairs that change the subnetwork (black pairs and arrows).}
    \label{fig:SubsequenceToEmbedding}
\end{figure}
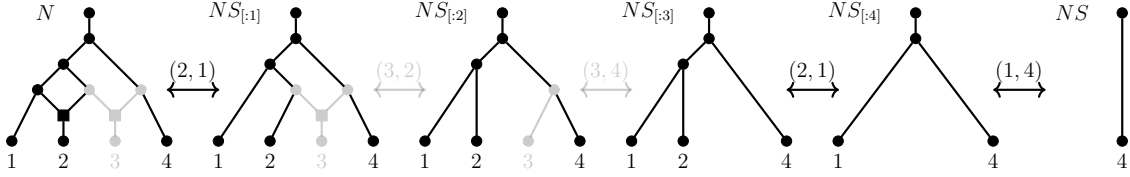

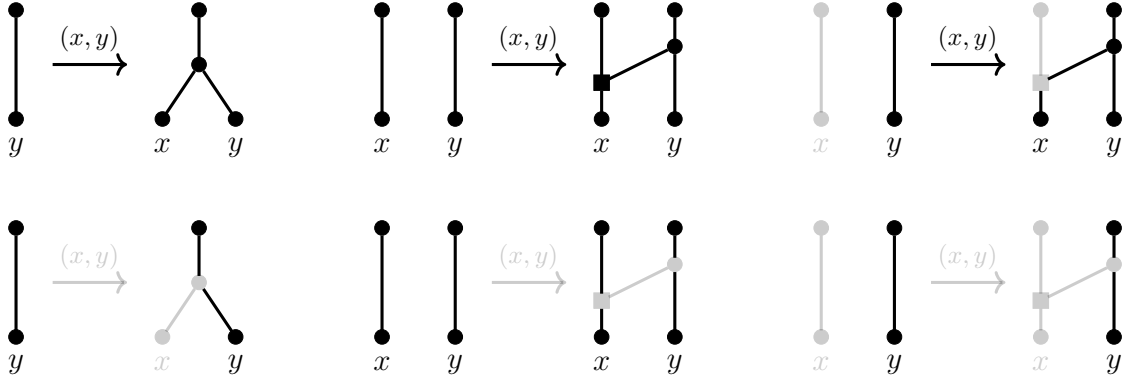
\begin{figure}
    \centering
    \resizebox{\columnwidth}{!}{
    \begin{tikzpicture}[every node/.style = {draw, circle, fill, inner sep = 0pt, minimum size = 2mm},
    square/.style = {regular polygon, regular polygon sides = 4, minimum size = 3 mm}]
    \tikzset {edge/.style = {very thick, shorten >= -0.5 pt}}
        \begin{scope}[shift = {(-6,0)}]
        %Nodes
        
        \node[] (5) at (0.5,-0.5){};

        %Leaves

        \node[] (2) at (0.5,-2.0){};
        \node[draw=none, fill=none, below=1mm of 2] (leaf_2){\large $y$};

        %Edges

        \draw[edge] (5) edge (2);
        \draw[edge] (1,-1.25) edge [->] 
        node[draw = none, fill = none, midway, yshift=10pt]{$(x,y)$}
        (2,-1.25);
        \end{scope}
        \begin{scope}[shift = {(-3,0)}]
        %Nodes
        
        \node[] (4) at (0,-0.5){};
        \node[] (5) at (0, -1.25){};

        %Leaves

        \node[] (1) at (-0.5,-2.0){};
        \node[draw=none, fill=none, below=1mm of 1] (leaf_1){\large $x$};
        \node[] (2) at (0.5,-2.0){};
        \node[draw=none, fill=none, below=1mm of 2] (leaf_2){\large $y$};

        %Edges

        \draw[edge] (4) edge (5);
        \draw[edge] (5) edge (1);
        \draw[edge] (5) edge (2);
        \end{scope}
        
        %%%%%%%%%%%%%%%%%%%%%%%%%%%%%%%%%%%%%%%%%%%%%%%%%%%%%%%%
        
        \begin{scope}[shift = {(0,0)}]
        %Nodes
        
        \node[] (4) at (-0.5,-0.5){};
        \node[] (5) at (0.5,-0.5){};

        %Leaves

        \node[] (1) at (-0.5,-2.0){};
        \node[draw=none, fill=none, below=1mm of 1] (leaf_1){\large $x$};
        \node[] (2) at (0.5,-2.0){};
        \node[draw=none, fill=none, below=1mm of 2] (leaf_2){\large $y$};

        %Edges

        \draw[edge] (4) edge (1);
        \draw[edge] (5) edge (2);
        
        \draw[edge] (1,-1.25) edge [->] 
        node[draw = none, fill = none, midway, yshift=10pt]{$(x,y)$}
        (2,-1.25);
        \end{scope}
        \begin{scope}[shift = {(3,0)}]
        %Nodes
        
        \node[] (4) at (-0.5,-0.5){};
        \node[] (5) at (0.5,-0.5){};
        \node[] (6) at (0.5,-1.0){};
        \node[square] (3) at (-0.5,-1.5){};

        %Leaves

        \node[] (1) at (-0.5,-2.0){};
        \node[draw=none, fill=none, below=1mm of 1] (leaf_1){\large $x$};
        \node[] (2) at (0.5,-2.0){};
        \node[draw=none, fill=none, below=1mm of 2] (leaf_2){\large $y$};

        %Edges

        \draw[edge] (4) edge (3);
        \draw[edge] (3) edge (1);
        \draw[edge] (5) edge (6);
        \draw[edge] (6) edge (3);
        \draw[edge] (6) edge (2);
        \end{scope}

        %%%%%%%%%%%%%%%%%%%%%%%%%%%%%%%%%%%%%%%%%%%%%%%%%%%%%%%%%%%%
        
        \begin{scope}[shift = {(6,0)}]
        %Nodes
        
        \node[opacity=.2] (4) at (-0.5,-0.5){};
        \node[] (5) at (0.5,-0.5){};

        %Leaves

        \node[opacity=.2] (1) at (-0.5,-2.0){};
        \node[draw=none, fill=none, below=1mm of 1, opacity=.2] (leaf_1){\large $x$};
        \node[] (2) at (0.5,-2.0){};
        \node[draw=none, fill=none, below=1mm of 2] (leaf_2){\large $y$};

        %Edges

        \draw[edge, opacity=.2] (4) edge (1);
        \draw[edge] (5) edge (2);
        \draw[edge] (1,-1.25) edge [->] 
        node[draw = none, fill = none, midway, yshift=10pt]{$(x,y)$}
        (2,-1.25);
        \end{scope}
        \begin{scope}[shift = {(9,0)}]
        %Nodes
        
        \node[opacity=.2] (4) at (-0.5,-0.5){};
        \node[] (5) at (0.5,-0.5){};
        \node[] (6) at (0.5,-1.0){};
        \node[square, opacity=.2] (3) at (-0.5,-1.5){};

        %Leaves

        \node[] (1) at (-0.5,-2.0){};
        \node[draw=none, fill=none, below=1mm of 1] (leaf_1){\large $x$};
        \node[] (2) at (0.5,-2.0){};
        \node[draw=none, fill=none, below=1mm of 2] (leaf_2){\large $y$};

        %Edges

        \draw[edge, opacity=.2] (4) edge (3);
        \draw[edge] (3) edge (1);
        \draw[edge] (5) edge (6);
        \draw[edge] (6) edge (3);
        \draw[edge] (6) edge (2);
        \end{scope}
        
        %%%%%%%%%%%%%%%%%%%%%%%%%%%%%%%%%%%%%%%%%%%%%%%%%%%
        %%%%%%%%%%%%%%%%%%%%%%%%%%%%%%%%%%%%%%%%%%%%%%%%%%%
        
        \begin{scope}[shift = {(-6,-3)}]
        %Nodes
        
        \node[] (5) at (0.5,-0.5){};

        %Leaves

        \node[] (2) at (0.5,-2.0){};
        \node[draw=none, fill=none, below=1mm of 2] (leaf_2){\large $y$};

        %Edges

        \draw[edge] (5) edge (2);
        \draw[edge, opacity=.2] (1,-1.25) edge [->] 
        node[draw = none, fill = none, midway, yshift=10pt, opacity=.2]{$(x,y)$}
        (2,-1.25);
        \end{scope}
        \begin{scope}[shift = {(-3,-3)}]
        %Nodes
        
        \node[] (4) at (0,-0.5){};
        \node[opacity=.2] (5) at (0, -1.25){};

        %Leaves

        \node[opacity=.2] (1) at (-0.5,-2.0){};
        \node[draw=none, fill=none, below=1mm of 1, opacity=.2] (leaf_1){\large $x$};
        \node[] (2) at (0.5,-2.0){};
        \node[draw=none, fill=none, below=1mm of 2] (leaf_2){\large $y$};

        %Edges

        \draw[edge] (4) edge (5);
        \draw[edge, opacity=.2] (5) edge (1);
        \draw[edge] (5) edge (2);
        \end{scope}
        
        %%%%%%%%%%%%%%%%%%%%%%%%%%%%%%%%%%%%%%%%%%%%%%%%%%%%%%%%
        
        \begin{scope}[shift = {(0,-3)}]
        %Nodes
        
        \node[] (4) at (-0.5,-0.5){};
        \node[] (5) at (0.5,-0.5){};

        %Leaves

        \node[] (1) at (-0.5,-2.0){};
        \node[draw=none, fill=none, below=1mm of 1] (leaf_1){\large $x$};
        \node[] (2) at (0.5,-2.0){};
        \node[draw=none, fill=none, below=1mm of 2] (leaf_2){\large $y$};

        %Edges

        \draw[edge] (4) edge (1);
        \draw[edge] (5) edge (2);
        
        \draw[edge, opacity=.2] (1,-1.25) edge [->] 
        node[draw = none, fill = none, midway, yshift=10pt, opacity=.2]{$(x,y)$}
        (2,-1.25);
        \end{scope}
        \begin{scope}[shift = {(3,-3)}]
        %Nodes
        
        \node[] (4) at (-0.5,-0.5){};
        \node[] (5) at (0.5,-0.5){};
        \node[opacity=.2] (6) at (0.5,-1.0){};
        \node[square, opacity=.2] (3) at (-0.5,-1.5){};

        %Leaves

        \node[] (1) at (-0.5,-2.0){};
        \node[draw=none, fill=none, below=1mm of 1] (leaf_1){\large $x$};
        \node[] (2) at (0.5,-2.0){};
        \node[draw=none, fill=none, below=1mm of 2] (leaf_2){\large $y$};

        %Edges

        \draw[edge] (4) edge (3);
        \draw[edge] (3) edge (1);
        \draw[edge] (5) edge (6);
        \draw[edge, opacity=.2] (6) edge (3);
        \draw[edge] (6) edge (2);
        \end{scope}

        %%%%%%%%%%%%%%%%%%%%%%%%%%%%%%%%%%%%%%%%%%%%%%%%%%%%%%%%%%%%
        
        \begin{scope}[shift = {(6,-3)}]
        %Nodes
        
        \node[opacity=.2] (4) at (-0.5,-0.5){};
        \node[] (5) at (0.5,-0.5){};

        %Leaves

        \node[opacity=.2] (1) at (-0.5,-2.0){};
        \node[draw=none, fill=none, below=1mm of 1, opacity=.2] (leaf_1){\large $x$};
        \node[] (2) at (0.5,-2.0){};
        \node[draw=none, fill=none, below=1mm of 2] (leaf_2){\large $y$};

        %Edges

        \draw[edge, opacity=.2] (4) edge (1);
        \draw[edge] (5) edge (2);
        \draw[edge, opacity=.2] (1,-1.25) edge [->] 
        node[draw = none, fill = none, midway, yshift=10pt, opacity=.2]{$(x,y)$}
        (2,-1.25);
        \end{scope}
        \begin{scope}[shift = {(9,-3)}]
        %Nodes
        
        \node[opacity=.2] (4) at (-0.5,-0.5){};
        \node[] (5) at (0.5,-0.5){};
        \node[opacity=.2] (6) at (0.5,-1.0){};
        \node[square, opacity=.2] (3) at (-0.5,-1.5){};

        %Leaves

        \node[opacity=.2] (1) at (-0.5,-2.0){};
        \node[draw=none, fill=none, below=1mm of 1, opacity=.2] (leaf_1){\large $x$};
        \node[] (2) at (0.5,-2.0){};
        \node[draw=none, fill=none, below=1mm of 2] (leaf_2){\large $y$};

        %Edges

        \draw[edge, opacity=.2] (4) edge (3);
        \draw[edge, opacity=.2] (3) edge (1);
        \draw[edge] (5) edge (6);
        \draw[edge, opacity=.2] (6) edge (3);
        \draw[edge] (6) edge (2);
        \end{scope}
    \end{tikzpicture}
    }
    \caption{Each of the cases of Lemma~\ref{lem:reductionImpliesContainment}. The subnetwork consists of all black edges and nodes. The network that contains it consists of the black and grey parts. A step indicated with a grey arrow and pair is one where the subnetwork was not reduced by this pair of the sequence.}
    \label{fig:EmbeddingSteps}
\end{figure}

\begin{lemma}\label{lem:ReduceInBothSubnetworks}
\remie{
Let $N$ and $N'$ be binary networks, and $c$ a reducible pair in $N$ and $N'$. If $N'c$ is a subnetwork of $Nc$, then $N'$ is a subnetwork of $N$.}
\end{lemma}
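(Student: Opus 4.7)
The plan is to construct an embedding $N' \hookrightarrow N$ directly from the given embedding $\phi \colon N'c \hookrightarrow Nc$. By Lemma~\ref{lem:SubnetworkAfterReduction} there is a natural embedding $\iota \colon Nc \hookrightarrow N$, so the composition $\iota \circ \phi$ already embeds $N'c$ into $N$. What remains is to extend this map to absorb the extra structure that $N'$ carries over $N'c$---namely, an extra leaf $x$ with its pendant edge (if $c$ is a cherry in $N'$), or an extra reticulation edge between the two parents (if $c$ is a reticulated cherry in $N'$).

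First I would split into cases based on whether $c = (x,y)$ is a cherry or a reticulated cherry in each of $N$ and $N'$. The subcase where $c$ is a cherry in $N$ but a reticulated cherry in $N'$ can be ruled out immediately: in this situation $x$ is a leaf of $N'c$ but not of $Nc$, contradicting that the leaf set of a subnetwork must be contained in that of the ambient network.

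In the cherry--cherry case, the edge entering $y$ in $N'c$ embeds via $\phi$ to a path in $Nc$ ending at $y$; lifting through $\iota$ yields a path in $N$ ending at $y$ that must traverse the node $p_x^N$ (the common parent of the cherry in $N$, which is suppressed when forming $Nc$). Splitting this path at $p_x^N$ provides images for the two edges of $N'$ incident to $p_x^{N'}$ from above and to $y$, and mapping $x \mapsto x$ together with its pendant edge handles the new leaf. The reticulated--reticulated case is analogous, splitting the paths ending at $y$ and at $x$ at the suppressed nodes $p_y^N$ and $p_x^N$ respectively; the deleted reticulation edge $p_y^N \to p_x^N$---which is absent from $Nc$ and hence unused by $\iota \circ \phi$---serves as the image of the corresponding reticulation edge of $N'$. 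In the remaining subcase (reticulated in $N$, cherry in $N'$), I would map the common parent $p^{N'}$ of the cherry in $N'$ to $p_y^N$ in $N$, and route the pendant edge $p^{N'} x$ through the path $p_y^N \to p_x^N \to x$ in $N$; this is collision-free because $x \notin L(N'c)$, so no portion of $\iota \circ \phi$ terminates at $x$.

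The main obstacle will be verifying edge-disjointness of the extended embedding in each case. The key observation making every verification routine is that the edges of $N$ that are not present in $Nc$---the edges incident to the tree/reticulation nodes suppressed by the reduction, together with the deleted reticulation edge in the reticulated cases---are exactly the edges demanded by the extra structure of $N'$, so no collision with the image of $\iota \circ \phi$ can occur.
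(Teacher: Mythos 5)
Your proposal is correct and follows essentially the same route as the paper: it builds the embedding of $N'$ into $N$ by transporting the given embedding of $N'c$ into $Nc$ through the natural embeddings into the unreduced networks, splits into the same cases (cherry/cherry, reticulated/reticulated, reticulated in $N$ with cherry in $N'$, the fourth case being impossible), and handles the extra pendant or reticulation edge explicitly, with the same disjointness argument that the needed edges of $N$ are exactly those absent from $Nc$ (in particular, $p_xx$ is unused because $x\notin L(N'c)$ in the mixed case).
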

\begin{proof}
First note that there are three cases: neither $Nc$ nor $N'c$ contains $x$; only $Nc$ contains $x$; or both $Nc$ and $N'c$ contain $x$. To find an embedding~$\iota$ of~$N'$ into~$N$, we extend the embedding~$\iota_c$ of $N'c$ into $Nc$ in each of these cases. We denote the natural embeddings of $N'c$ into $N'$ and of $Nc$ into $N$ by $h'$ and $h$ (see Figure~\ref{fig:CDReducImpliesContain}).
\begin{itemize}
    \item \textbf{The leaf~$\bm{x}$ is in neither~$\bm{Nc}$ nor~$\bm{N'c}$.} Let $p$ and $p'$ be the parent of $y$ in $N$ and $N'$, and $g$ and $g'$ the grandparent of $y$ in $N$ and $N'$. An embedding $\iota$ can be constructed from $\iota_c$ by setting $\iota(e)=h(\iota_c(h'^{-1}e))$ for all edges of $N'$ other than $g'p'$, $p'y$, and $p'x$---$h'$ maps each edge to a path of length one, except for the edge incident to $y$. Then, we map the remaining edges by setting $\iota(g'p')=h(\iota(h'^{-1}(g')y))\setminus\{py\}$, $\iota(p'x)=px$, and $\iota(p'y)=py$---the node $h'^{-1}(g')$ is well defined because the embedding $h'$ is injective on the nodes, and there are exactly two more nodes in $N'$ than in $N'c$, which cannot be the image of any node as the edge incident to $x$ is not part of the embedding. This mapping gives an embedding, because the corresponding node mapping is still injective, no edge is in more than one image-path of an edge, and the endpoint relations are respected.
    \item \textbf{The leaf~$\bm{x}$ is only in~$\bm{Nc}$.} This case is almost the same as the previous case, except that $\iota(p'x)=p_yp_xx$. The only edge of $N$ that may be in the $\iota$-image of more than one edge, is $p_xx$. However, the only edge in $Nc$ that maps to $p_xx$ is the edge incident to $x$, and this edge is not in the image of $\iota_c$. Hence, as the image of each edge other than $g'p'$, $p'y$, and $p'x$ is defined by $\iota(e)=h(\iota_c(h'^{-1}e))$, no other edge of $N'$ is mapped to a path of $N$ that contains $p_xx$. Furthermore, the endpoint relations are still respected, so the map $\iota$ is an embedding of $N'$ into $N$.
    \item \textbf{The leaf~$\bm{x}$ is in both~$\bm{Nc}$ and~$\bm{N'c}$.} Let $p_x$ and $p_x'$ be the parent of $x$, $p_y$ and $p_y'$ the parent of $y$, $g_x\neq p_y$ and $g_x'\neq p_y'$ the other grandparent of $x$, and $g_y$ and $g_y'$ the grandparent of $y$ in $N$ and $N'$. An embedding $\iota$ can be constructed from $\iota_c$ by setting $\iota(e)=h(\iota_c(h'^{-1}e))$ for all edges of $N'$ other than $g_y'p_y'$, $p_y'y$, $p_y'p_x'$, $g_x'p_x'$ and $p_x'x$---$h'$ maps each edge to a path of length one, except for the edges incident to $x$ and $y$. Then, we map the remaining edges by setting $\iota(g_y'p_y')=h(\iota(h'^{-1}(g_y')y))\setminus\{py\}$, $\iota(p_y'y)=p_yy$, $\iota(p_y'p_x')=p_yp_x$, $\iota(g_x'p_x')=h(\iota(h'^{-1}(g_x')x))\setminus\{p_xx\}$, and $\iota(p_x'x)=p_xx$---the nodes $h'^{-1}(g_x')$ and $h'^{-1}(g_y')$ are well defined because the embedding $h'$ is injective on the nodes, and there are exactly two more nodes in $N'$ than in $N'c$, which cannot be the image of any node as the reticulation edge $p_y'p_x'$ is not part of the embedding. This mapping gives an embedding, because the corresponding node mapping is still injective, no edge is in more than one image-path of an edge, and the endpoint relations are respected.
\end{itemize}
\end{proof}

\begin{figure}
	\centering
	\begin{tikzcd}
        N' \arrow[r,red, "\iota"]
        & N \\
        N'c \arrow[u, "h'"] \arrow[r, "\iota_c"]
        & Nc \arrow[u, "h"]
    \end{tikzcd}
    \caption{The commutative diagram for the networks~$N, N', Nc$, and~$N'c$ as in the setting of Lemma~\ref{lem:ReduceInBothSubnetworks}.
    Given the maps~$h, h'$, and~$\iota_c$, we wish to produce the map~$\iota$.}
    \label{fig:CDReducImpliesContain}
\end{figure}

\begin{lemma}\label{lem:reductionImpliesContainment}
\remie{
Let $N$ and $N'$ be binary CPNs on taxa set~$X$ and~$X'\subseteq X$ respectively, and suppose that a CPS $S$ reduces both~$N$ and~$N'$,
such that all elements of~$S$ that reduce something in~$N'$ also reduce something in~$N$.
Then $N'$ is a subnetwork of $N$.}
\end{lemma}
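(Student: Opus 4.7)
The plan is to proceed by induction on $|S|$, using Lemmas~\ref{lem:ReduceInBothSubnetworks} and~\ref{lem:SubnetworkAfterReduction} as the main engines. For the base case $|S|=0$, both $N$ and $N'$ are already single-leaf networks; since $X'\subseteq X$ and both leaf sets must be singletons, the unique leaves coincide, so $N=N'$ and the conclusion is immediate.

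For the inductive step, let $c=S_1$ and split into two cases. First suppose $c$ reduces something in $N'$. By the hypothesis of the lemma, $c$ then also reduces something in $N$, so both $Nc$ and $N'c$ are strictly smaller binary networks. The tail $S_{[2:]}$ reduces both $Nc$ and $N'c$ (yielding the single-leaf networks $NS$ and $N'S$), and the controlling hypothesis is inherited verbatim: for every $i\geq 2$ we have $(Nc)S_{[2:i-1]}=NS_{[:i-1]}$ and $(N'c)S_{[2:i-1]}=N'S_{[:i-1]}$, so any $S_i$ that reduces the latter automatically reduces the former. By the inductive hypothesis, $N'c$ is a subnetwork of $Nc$, and Lemma~\ref{lem:ReduceInBothSubnetworks} then lifts this to the subnetwork relation $N'\subseteq N$ we want.

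Now suppose $c$ does not reduce anything in $N'$, so $N'c=N'$. Then $S_{[2:]}$ reduces both $Nc$ and $N'$, again with the hypothesis preserved. The inductive hypothesis yields that $N'$ is a subnetwork of $Nc$. Since $Nc$ is a subnetwork of $N$ by Lemma~\ref{lem:SubnetworkAfterReduction}, transitivity of the subnetwork relation (which follows from composing the corresponding embeddings) completes the argument.

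The main obstacle is the leaf-set bookkeeping needed in this last case: the inductive hypothesis requires the leaf set of $N'$ to be contained in that of $Nc$, and this could fail only if $c=(x,y)$ is a cherry in $N$ with $x\in X'$ while $(x,y)$ is not reducible in $N'$. I will rule this out directly from the controlling hypothesis. Because $S$ reduces $N'$ to a single leaf, either $x$ is that surviving leaf, or some later $S_j=(x,z)$ eventually removes $x$ from $N'S_{[:j-1]}$. The surviving leaves of $NS$ and $N'S$ are determined by $S$ and hence must agree, so the former alternative contradicts the fact that $x$ was already deleted from $N$ at step~$1$. The latter alternative forces $S_j$ to reduce $NS_{[:j-1]}$ as well, which is impossible because $x$ has been absent from $N$ ever since $c$ was applied. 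Hence this pathological configuration cannot arise, the leaf-set containment holds, and the induction goes through.
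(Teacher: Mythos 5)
Your induction skeleton (peel off $S_1$, use Lemma~\ref{lem:ReduceInBothSubnetworks} when the pair acts on both networks and Lemma~\ref{lem:SubnetworkAfterReduction} plus transitivity when it acts only on $N$) is essentially the paper's argument run forwards instead of backwards, but the step you flag as "the main obstacle" is exactly where your proof breaks. The claim that \emph{the surviving leaves of $NS$ and $N'S$ are determined by $S$ and hence must agree} is false under the hypotheses of the lemma. Take $N$ to be the cherry on $\{1,2\}$, $N'$ the single-leaf network on $\{1\}$, and $S=(1,2)$: this is a CPS reducing both networks, no element of $S$ reduces anything in $N'$ (so the controlling hypothesis holds vacuously), yet $NS$ is the single leaf $2$ while $N'S$ is the single leaf $1$. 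In this very instance the configuration you tried to exclude occurs ($c=(1,2)$ is a cherry in $N$, $1\in X'$, $c$ does not reduce $N'$), and the intermediate statement your Case~2 needs, namely that $N'$ is a subnetwork of $Nc$, is simply false ($\{1\}\not\subseteq L(Nc)=\{2\}$), so the induction cannot proceed as written. A second, unaddressed failure of the leaf-set bookkeeping sits in Case~1: your claim that containment of leaf sets "could fail only if $(x,y)$ is not reducible in $N'$" overlooks the possibility that $(x,y)$ is a cherry in $N$ but a reticulated cherry in $N'$, in which case $x$ is deleted from $N$ but not from $N'$ and the inductive hypothesis again does not apply.

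Both holes are repairable by arguments in the spirit of your treatment of the "later $S_j=(x,z)$" alternative, but applied to pairs with $x$ as \emph{second} coordinate. If $x\in X'$ is deleted from $N$ at step~$1$ and $N'$ has at least two leaves, then the last element of $S$ that changes $N'$ must be a cherry reduction $(w,x)$ (it turns a two-leaf network into the single leaf $x$ if $x$ survives, and otherwise some $(x,z)$ handles it as you argued); by the controlling hypothesis $(w,x)$ would also have to reduce something in $N$ at that stage, which is impossible since $x$ is no longer a leaf of $N$. Hence in the genuinely problematic branch $N'$ must be the single-leaf network on $x$, and that network embeds into $N$ directly via the root-to-$x$ path, so you conclude without the inductive hypothesis. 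The same argument rules out the cherry-in-$N$/reticulated-cherry-in-$N'$ configuration in Case~1. For comparison, the paper avoids this bookkeeping by inducting backwards along the subsequence $S'$ of pairs that actually change $N'$, anchoring the base case at the single-leaf stage where only the survival of the leaf $y$ in $N$ needs to be observed.
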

\begin{proof}
\remie{
Let $S'$ denote the CPS of ordered pairs in $S$ such that every ordered pair in $S'$ is used in the reduction of $N'$.
Due to this, we have that for every~$i\in\{1,\dots,|S'|\}$ there exists a~$\pi(i)\in\{1,\dots,|S|\}$ such that~$S'_i = S_{\pi(i)}$.
By definition of~$S'$,~$\pi: \{1,\dots,|S'|\} \rightarrow \{1,\dots,|S|\}$ is a strictly increasing function (i.e., if~$i<j$ then~$\pi(i) < \pi(j)$).
Recall that~$S'_{[:i]}$ denotes the sequence obtained by taking the first~$i$ ordered pairs from~$S'$.
We show by induction on~$i$, for~$i = |S'|,\dots,0$, that the CPN~$N'S'_{[:i]}$ is a subnetwork of the CPN~$NS_{[:\pi(i)]}$, where~$N'S'_{[:0]} = N'$, and~$\pi(0) = \pi(1) - 1$.
\\
%%BASE:
For the base case, we prove the claim for~$i = |S'|$.
Let $S'_{|S'|}=(x,y)$. Then~$N'S'$ is the tree with one leaf~$y$. 
Since $(x,y)$ also reduces something in $NS_{[:\pi(|S'|)-1]}$, the network $NS_{[:\pi(|S'|)]}$ must still contain $y$.
As there is a path from the root to any leaf in a phylogenetic network, $N'S'=N'S'_{[:|S'|]}$ is a subnetwork of~$NS_{[:\pi(|S'|)]}$.
\\
%%Step:
So now assume~$0\leq i<|S'|$ and suppose we have proven that~$N'S'_{[:j]}$ is a subnetwork of~$NS_{[:\pi(j)]}$ for every~$j>i$.
As $i\geq 0$, there is an element $S'_{i+1}=S_{\pi(i+1)}$ in each sequence, and $N'S'_{[:i+1]}$ is a subnetwork of $NS_{[:\pi(i+1)]}$ by the induction hypothesis. By Lemma~\ref{lem:ReduceInBothSubnetworks}, $N'S'_{[:i]}$ is a subnetwork of $NS_{[:\pi(i+1)-1]}$ because $S'_{i+1}=S_{\pi(i+1)}$ acts on both networks. Applying Lemma~\ref{lem:SubnetworkAfterReduction} to $N'S'_{[:i]}$, $NS_{[:j]}$ and $NS_{[:j+1]}$ for all $j=\pi(i+1)-1,\ldots,\pi(i)$, we get that $N'S'_{[:i]}$ is a subnetwork of $NS_{[:j]}$ for all $j=\pi(i),\ldots,\pi(i+1)-1$. Hence, in particular, $N'S'_{[:i]}$ is a subnetwork of $NS_{[:\pi(i)]}$.
\\
Therefore,~$N'S'_{[:i]}$ is a subnetwork of~$NS_{[:\pi(i)]}$ for all~$i\in\{0,\dots,|S'|\}$.
In particular, $N' = N'S'_{[:0]}$ is a subnetwork of~$N=NS_{[:0]}$.
}
\end{proof}

Note that Lemma~\ref{lem:reductionImpliesContainment} was only shown for the class of binary CPNs ($(\ref{Const:CherryResolved},\ref{Const:RCherryResolved})$ CPN class).
This result generalizes to non-binary CPNs if we consider the notion of containment instead of subnetwork.
% easily for when the larger network is binary, and also by using the notion of containment rather than subnetwork.

\subsubsection{Containment results}

Recall that a network~$N'$ is contained in another network~$N$ if there exists a refinement of~$N'$ that is a subnetwork of~$N$.
We first show a result that follows immediately from Lemma~\ref{lem:reductionImpliesContainment} given that the larger network is binary.

\begin{theorem}\label{thm:BNBRedimpliesCon}
Let $N$ be a binary CPN, and $N'$ a non-binary CPN \remie{on $X$ and $X'\subseteq X$}, respectively. 
If a minimal CPS~$S$ for $N$ also reduces $N'$, then $N'$ is contained in $N$.
\end{theorem}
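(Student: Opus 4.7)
The plan is to reduce the statement to the already-proven binary case (Lemma~\ref{lem:reductionImpliesContainment}) by passing to a carefully chosen binary refinement of $N'$. Since containment of $N'$ in $N$ is defined as ``some refinement of $N'$ is a subnetwork of $N$'', it suffices to produce a binary refinement $N'_b$ of $N'$ that is a subnetwork of $N$, and Lemma~\ref{lem:reductionImpliesContainment} is designed to establish exactly that from a common CPS.

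The construction proceeds in three steps. First, let $S''$ denote the subsequence of $S$ consisting of those pairs that actually reduce something in $N'$. Every element of $S''$ reduces $N'$, so $S''$ is itself a minimal CPS of $N'$. Applying Lemma~\ref{lem:SBSFBinaryrefinement} to $N'$ and $S''$ produces a binary refinement $N'_b$ of $N'$ for which $S''$ is a minimal CPS. Second, I would establish the intermediate claim that the full sequence $S$ also reduces $N'_b$, and moreover that the elements of $S$ that actually change $N'_b$ are exactly those in $S''$. I would prove this by induction on $k\in\{0,1,\ldots,|S|\}$, showing simultaneously that $N'_b S_{[:k]}$ is a refinement of $N' S_{[:k]}$. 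In the inductive step, if $S_{k+1}\notin S''$, then $S_{k+1}$ is not a reducible pair of $N' S_{[:k]}$, and since contraction of tt- and rr-edges preserves (and can only add to) the set of reducible pairs, $S_{k+1}$ is not reducible in the finer network $N'_b S_{[:k]}$ either; both reductions act trivially and the refinement relation persists. If $S_{k+1}\in S''$, the fact that $S''$ minimally reduces $N'_b$ guarantees that $S_{k+1}$ is reducible in $N'_b S_{[:k]}$, and one checks that reducing a pair which is a cherry or reticulated cherry in both a network and one of its refinements yields two networks still in the refinement relation.

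With the intermediate claim in hand, the rest is immediate: apply Lemma~\ref{lem:reductionImpliesContainment} to the two binary CPNs $N$ and $N'_b$ using the CPS $S$. The hypothesis that every element of $S$ reducing something in $N'_b$ also reduces something in $N$ is automatic, because $S$ is a minimal CPS of $N$ and hence every element of it reduces something in $N$. The conclusion is that $N'_b$ is a subnetwork of $N$. Since $N'_b$ is a binary refinement of $N'$, this is by definition the statement that $N'$ is contained in $N$.

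The principal obstacle is the inductive step of the intermediate claim when $S_{k+1}\in S''$, i.e., verifying that ``reducing a pair in both a network and a refinement preserves the refinement''. Care is needed to track how the contraction map from $N'_b S_{[:k]}$ to $N' S_{[:k]}$ interacts with the removal of reticulation edges and the subsequent suppression of degree-$2$ vertices. This forces a case analysis split along the type of reducible pair (cherry vs.\ reticulated cherry), and within each type along whether the tree-node parent of $y$ and/or the reticulation parent of $x$ in $N'_b S_{[:k]}$ got merged with adjacent nodes of the same type under the contraction producing $N' S_{[:k]}$; in every case, the contractions after reduction either become trivial (because the merged endpoint has been suppressed) or are replaced by analogous contractions on the reduced network, yielding the desired refinement relation $N'_b S_{[:k+1]}\to N' S_{[:k+1]}$.
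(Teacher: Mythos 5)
Your proposal is correct and follows essentially the same route as the paper: take the subsequence of $S$ consisting of the pairs that act on $N'$, build the binary refinement $N'_b$ from it via Lemma~\ref{lem:SBSFBinaryrefinement}, apply Lemma~\ref{lem:reductionImpliesContainment} to $N$ and $N'_b$ with the sequence $S$, and conclude that $N'$ is contained in $N$. The only difference is that you explicitly verify, by the refinement induction, that $S$ reduces $N'_b$ with exactly the pairs of your $S''$ acting on it---a hypothesis of Lemma~\ref{lem:reductionImpliesContainment} that the paper's own proof leaves implicit.
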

\begin{proof}
\remie{
Let $S'$ be the subsequence of $S$ consisting of the elements that change something in $N'$, and let $N'_b$ be the unique binary network corresponding to $S'$. Then, by Lemma~\ref{lem:SBSFBinaryrefinement}, $N'_b$ is a binary refinement of $N'$ with minimal CPS $S'$. As $N$ is the binary network corresponding to $S$, $N'_b$ is a subnetwork of $N$ by Lemma~\ref{lem:reductionImpliesContainment}. Hence, $N'$ is contained in $N$.
}
\end{proof}

Unfortunately for two general non-binary CPNs, an analogue of Lemma~\ref{lem:reductionImpliesContainment} is not possible to obtain.
For example, the two networks of the CPN class~$(\ref{Const:CherryResolved}, \ref{Const:RCherryStack})$ in Figure~\ref{fig:CPNNonUniqueConstruction} are not contained in one another, yet there is a common CPS that reduce both networks.
% Luckily, we may extend Lemma~\ref{lem:reductionImpliesContainment} by considering networks of the same reconstructible CPN class, and by using the notion of containment rather than subnetwork.
Therefore, we need to use the more relaxed notion of containment, rather than subnetwork.

\begin{lemma}\label{lem:SBSFdispaysIsBinaryResolvedSubnetwork}
Let~$C$ be a reconstructible class of CPNs.
Let~$N$ and~$N'$ be networks in~$C$ on taxa set~$X$ and~$X'\subseteq X$, respectively.
Then~$N'$ is \remie{contained in}~$N$ if and only if there exist binary refinements~$N'_b$ of~$N'$ and~$N_b$ of~$N$ such that~$N'_b$ is a subnetwork of~$N_b$.
%\todoYuki{The if direction is not true. Come back to this, as Lemma 16 needs it. \textbf{RJ:} Now it says contained instead of subnetwork. I think it is correct now.\textbf{YM:} This is now to be proved for all classes of reconstrcutible networks!}
\end{lemma}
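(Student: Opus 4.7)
The plan is to prove both directions separately. The forward direction is relatively straightforward given the existence of an embedding guaranteed by containment; the reverse direction is more subtle and requires a careful contraction argument that exploits the structure of reconstructible CPN classes.

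For the forward direction ($\Rightarrow$), suppose $N'$ is contained in $N$. By definition there exists a refinement $N'_r$ of $N'$ that is a subnetwork of $N$, and by Lemma~\ref{obs:cleanup} we have an embedding $\iota_0 \colon N'_r \hookrightarrow N$. I would then binary-refine $N'_r$ to obtain a binary refinement $N'_b$ of $N'$ (resolving each multifurcation and multi-reticulation arbitrarily). The binary refinement $N_b$ of $N$ is constructed compatibly with $\iota_0$: at each multi-node of $N$ lying in the image of $\iota_0$, I resolve it so that the edges used by the embedding are grouped together in a way mirroring the chosen resolution of the corresponding node in $N'_b$, while any unused incident edges are separated off. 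This yields an extended embedding $N'_b \hookrightarrow N_b$ of the two binary refinements.

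For the reverse direction ($\Leftarrow$), suppose we have binary refinements $N'_b, N_b$ and an embedding $\iota \colon N'_b \hookrightarrow N_b$. Let $E_c \subseteq E(N_b)$ be the set of edges contracted to obtain $N$ from $N_b$, and let $E'_c \subseteq E(N'_b)$ be the set contracted to obtain $N'$ from $N'_b$. I would define
\[E''_c := \{ e \in E(N'_b) : \iota(e) \text{ is a path whose every edge lies in } E_c \},\]
and set $N'_r := N'_b / E''_c$. The embedding $\iota$ then descends to an embedding $N'_r \hookrightarrow N$: each edge of $N'_r$ corresponds to an edge of $N'_b$ whose $N_b$-image retains at least one non-contracted edge and thus survives as a non-trivial path in $N$, and distinct nodes of $N'_r$ correspond to groups of $N'_b$-nodes whose $\iota$-images remain distinct after contracting $E_c$.

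The main obstacle is showing $N'_r$ is itself a refinement of $N'$, which reduces to proving $E''_c \subseteq E'_c$. Here I would exploit the classification in Lemma~\ref{lem:Unique(A,B)Construction}: each reconstructible class $C$ is characterised by the absence of tt-edges, rr-edges, or both, and consequently both $E_c$ and $E'_c$ consist exclusively of edges of the forbidden types for $C$. Because embeddings between binary networks preserve node types (the outdegree/indegree conditions force tree nodes to map to tree nodes and reticulations to reticulations), any path in $N_b$ entirely contained in $E_c$ must be either a tt-path or an rr-path throughout. Consequently the preimage edge in $N'_b$ is itself a tt-edge or rr-edge, and so it lies in $E'_c$. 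This inclusion shows that $N'_r$ sits between $N'_b$ and $N'$ under edge contraction, so $N'_r$ is a refinement of $N'$ that embeds into $N$, completing the proof.
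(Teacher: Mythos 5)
Your overall strategy is the same as the paper's: the forward direction via binary resolutions of the multi-nodes of $N'_r$ mirrored along the embedding inside $N$ is essentially the paper's construction (the paper uses path-like resolutions, you use arbitrary mirrored ones, which also works), and in the reverse direction your set $E''_c$ --- edges of $N'_b$ whose entire image path lies in $E_c$ --- is exactly the set the paper contracts, namely the rr-edges (resp.\ tt-edges) of $N'_b$ that do not map to rtr-paths (resp.\ trt-paths). The genuine gap is in the descent step, where you assert that ``distinct nodes of $N'_r$ correspond to groups of $N'_b$-nodes whose $\iota$-images remain distinct after contracting $E_c$,'' i.e.\ that the induced node map from $N'_r=N'_b/E''_c$ to $N=N_b/E_c$ is injective, without any argument. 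This is precisely where the delicacy of the lemma sits: a priori two vertices $u\neq v$ of $N'_b$ that are not joined by $E''_c$-edges could have images $\iota(u),\iota(v)$ lying in the same $E_c$-contraction class of $N_b$ (the same maximal rr-path or tt-subtree), and then your quotient map would fail to be an embedding. The claim is in fact true, but proving it requires using the edge-disjointness of the embedding: for instance, no vertex in the interior of an image path $\iota(e)$ can itself be an image vertex (its unique out-edge or in-edge is already used by $\iota(e)$), and from this one argues that any two image vertices in a common $E_c$-class are linked in $N'_b$ by a chain of edges whose images are homogeneous all-$E_c$ paths, hence by $E''_c$-edges. The paper avoids this global verification altogether by contracting one rr-/tt-edge (or one image path) at a time and re-extending the embedding locally, which is why its proof is organized as a sequence of intermediate pairs $N'_i$, $N_i$ rather than a single quotient; if you keep the one-shot formulation, this injectivity argument must be supplied.

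A second, smaller slip: to get $E''_c\subseteq E'_c$ you need that \emph{every} rr-edge (resp.\ tt-edge) of $N'_b$ of the type relevant to the class is contracted when passing to $N'$, and this does not follow from your stated fact that ``$E'_c$ consists exclusively of edges of the forbidden types'' --- that is the wrong inclusion. The needed inclusion does hold, because $N'$, being in $C$, contains no edge of the forbidden type, so an uncontracted such edge of $N'_b$ would survive as a forbidden edge of $N'$; with this corrected and the injectivity argument added (or the contraction performed stepwise as in the paper), your proof goes through.
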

%\todoRemie{I think we need $N_b$ and $N'_b$ to be CPNs; I think they are in this proof, but we should add it to the statement. YM: Why do we need this? The proof still works even if we assume that~$N$ and~$N'$ are not CPNs. We only need the fact that those two networks belong to the same class, in which all tt-edges, rr-edges, or both are contracted. I've changed the second half of the proof, but I don't think we need that the binary refinements are also CPNs.}
\begin{proof}
Suppose first that~$N'$ is contained in~$N$.
Then there exists a refinement~$N'_f$ of~$N'$ that is a subnetwork of~$N$.
There exists an embedding~$\iota$ of~$N'_f$ into~$N$.

Observe that every reticulation vertex in~$N'_f$ is mapped to a reticulation vertex in~$N$ of the same or higher degree.
Let~$r'$ be a reticulation vertex in~$N'_f$, such that~$\iota(r') = r$ for some reticulation vertex~$r$ in~$N$.
Let~$e'_1,\ldots,e'_a$ denote all incoming edges of~$r'$, and let~$e_1,\ldots, e_b$ denote all incoming edges of~$r$ where~$a\leq b$, such that the paths~$\iota(e'_i)$ contains the edge~$e_i$ for all~$i\in[a]$.
Resolve~$r'$ as a single path of reticulation vertices~$r'_1\cdots r'_{a-1}$ such that the edge~$e'_1$ is incident to~$r'_1$, and the edges~$e'_{i+1}$ are incident to~$r'_{i}$ for all~$i\in[a-1]$.
Resolve~$r$ as a single path of reticulation vertices~$r_1\cdots r_{b-1}$, such that the edge~$e_1$ is incident to~$r_1$, and the edges~$e_{i+1}$ are incident to~$r_{i}$ for all~$i\in[b-1]$.
We now show that~$N'_f$ with~$r'$ refined in this manner is a subnetwork of~$N$ with~$r$ refined in this manner.
We do this by altering the mapping~$\iota$ as follows.
The edges~$e'_i$ are still mapped to the same paths containing the edges~$e_i$ for~$i\in[a]$.
The reticulation edges~$r'_ir'_{i+1}$ are mapped to~$r_ir_{i+1}$ for~$i\in[a-2]$.
Let~$c$ denote the child of~$r'$ in~$N'_f$.
The edge~$r'_{a-1}c$ is mapped to the path from~$r_{a-1}$ to~$\iota(c)$.
All other mappings remain unchanged.

A similar observation can be made for tree vertices.
Now, observe that binary refinements of these CPNs are obtained by either resolving either all multifurcations, all multi-reticulations, or both (depending on~$C$).
Then we may apply the above to all multifurcations / multi-reticulations as needed to obtain binary refinements~$N'_b$ of~$N'$ and~$N_b$ of~$N$ such that~$N'_b$ is a subnetwork of~$N_b$.
\medskip

Now suppose that there exist binary refinements~$N'_b$ of~$N'$ and~$N_b$ of~$N$ such that~$N'_b$ is a subnetwork of~$N_b$.
We note that~$N$ can be obtained from~$N_b$ by contracting all tt-edges, all rr-edges, or both depending on if~$N$ is a CPN of class~$(\ref{Const:CherryResolved},\ref{Const:RCherrySF}), (\ref{Const:CherryUnresolved},\ref{Const:RCherryStack}),$ or~$(\ref{Const:CherryUnresolved},\ref{Const:RCherryUnresolved})$.
Now, if~$N_b = N$, then we are done.
So suppose that~$N$ is not a binary network.

The plan is to contract the edges of~$N_b$ to obtain~$N$.
In the process, we choose to either contract or not contract `corresponding' edges in~$N'_b$, ensuring at each step that the resulting network is a subnetwork of some refinement of the main network.
We start by looking at contracting the rr-edges of~$N_b$.
% Then we must contract all rr-edges in~$N_b$ to obtain~$N$.
We claim that contracting all rr-edges in~$N'_b$ that do not map to an rtr-path gives a refinement of~$N'$ that is a subnetwork of~$N$.

Let~$e$ be an edge in~$N'_b$.
If~$e$ is an rr-edge, then it is mapped to some path~$P_e$ in~$N_b$.
Since reticulation vertices are mapped to reticulation vertices in the embedding, the path~$P_e$ is an rr-path or an rtr-path.
If~$P_e$ is an rr-path, then contract all edges of~$P_e$ in~$N_b$.
Contract~$e$ in~$N'_b$.
It is easy to see that by mapping the reticulation vertex obtained by contracting~$e$ to the reticulation vertex obtained by contracting~$P_e$, we may extend the embedding of~$N'_b$ into~$N_b$ in a natural manner, thereby showing that the newly obtained network is a subnetwork of the other.
Now suppose that~$P_e$ was an rtr-path.
Then we contract all rr-edges contained in~$P_e$.
Observe that~$N'_b$ is still embedded in this newly obtained graph, since we may extend the original embedding by simply changing the mapping of the edge~$e$ to the newly contracted rtr-path.
On the other hand, suppose that~$e$ was not an rr-edge.
% Now suppose that there exists a tr-edge or an rt-edge in~$N'_b$ that is mapped to a path containing rr-edges in~$N_b$.
If it is mapped to a path containing rr-edges in~$N_b$, then we may simply contract the rr-edges of the path, and update the embedding by changing the mapping of the edge to the newly contracted path.
Finally suppose that there exists an rr-edge in~$N_b$ that is not used in the embedding.
Then contracting such an edge has no effect on the embedding.

Similarly, contracting all tt-edges in~$N'_b$ that do not map to a path containing a trt-path gives a refinement of~$N'$ that is a subnetwork of~$N$.

%\todoRemie{All contractions are done in $N_b$ and $N'_b$, but you actually need to do contractions in partly contracted versions of these networks. It seems clear the arguments still work, but the proof is not correct as it is, now. \textbf{Changed presentation.}}

We now obtain a sequence of networks~$N'_b = N'_0, N'_1, \ldots, N'_k$ and~$N_b = N_0,N_1, \dots, N_k = N$ such that~$N'_i$ is a subnetwork of~$N_i$ for all~$i\in[k]$, and~$N'_i$ is obtained from~$N'_{i-1}$ by contracting an rr-edge that does not map to an rtr-path in~$N_{i-1}$ (or by contracting a tt-edge that does not map to a trt-path in~$N_{i-1}$, depending on the CPN class) for~$i\in[k]$. 
The networks~$N_i$ are obtained by contracting the edges of the rr-path or the rtr-path (or the tt-path or the trt-path) as outlined in the previous paragraph.
Note also that~$N'_i$ is a refinement of~$N'$.
Then we have that~$N'_k$, which is a refinement of~$N'$, is a subnetwork of~$N$.
Therefore~$N'$ is contained in~$N$.
% Observe that none of these contractions done above depend on the degree of the endvertices of~$e$.
% This means that by iterating over all edges of~$N'_b$, we can contract some of the rr-edges (and/or the tt-edges, depending on the CPN class) to obtain a network~$N'_f$, whilst contracting all rr-edges (and/or tt-edges) of~$N_b$ to obtain~$N$.
% While this is taking place, we have ensured that the smaller network remains a subnetwork of the larger one by updating the embedding.
% Thus we have that~$N'_f$, which is necessarily a refinement of~$N'$, is a subnetwork of~$N$.
% Therefore~$N'$ is contained in~$N$.
\end{proof}

The complication in this lemma arises from the fact that since the smaller network may contain fewer leaves than the larger network, there could be many more rr-edges in the binary refinement of the smaller network than in that of the larger network.
This implies that there is a tree vertex in~$N_b$ with a reticulation above and a reticulation below, that is not used in the embedding of~$N'_b$ into~$N_b$.
Naively contracting all the rr-edges in~$N'_b$ leads to an issue of `over-contracting' the edges, which would sometimes make it impossible to be embedded into~$N$.

Finally, we present the lemma analogous to Lemma~\ref{lem:reductionImpliesContainment} for two CPNs within the same reconstructible class.

\begin{lemma}\label{lem:reductionImpliesContainmentSF}
Let~$C$ be a reconstructible class of CPNs.
Let $N$ and $N'$ be CPNs in~$C$ on taxa set~$X$ and~$X'\subseteq X$ respectively, and suppose that a minimal CPS $S$ for $N$ also reduces~$N'$.
%, such that all elements of~$S$ which reduce something in~$N'$ also reduce something in~$N$.
Then $N'$ is \remie{contained in }%a subnetwork of 
$N$.
\end{lemma}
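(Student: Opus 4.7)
The plan is to reduce the problem to the binary setting, where the analogous statement is already available as Lemma~\ref{lem:reductionImpliesContainment}, and then to transfer the conclusion back to the non-binary setting using Lemma~\ref{lem:SBSFdispaysIsBinaryResolvedSubnetwork}. First, I would let $S'$ denote the subsequence of $S$ consisting precisely of those ordered pairs that change $N'$ upon application. By construction $S'$ is a minimal CPS for $N'$. Applying Lemma~\ref{lem:SBSFBinaryrefinement} twice, I then obtain a binary refinement $N_b$ of $N$ for which $S$ is minimal, and a binary refinement $N'_b$ of $N'$ for which $S'$ is minimal.

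The core technical step is to verify that $S$ itself (not merely $S'$) reduces $N'_b$, and that the elements of $S$ acting non-trivially on $N'_b$ are exactly those of $S'$. I would prove this by induction on the length of the prefix of $S$ applied to $N'_b$: after applying the first $k$ entries of $S$ to $N'_b$, the resulting network coincides with $N'_b$ reduced by only the elements of $S'$ occurring among $S_{[:k]}$, and it is a refinement of the corresponding intermediate $N'(S'\cap S_{[:k]})$. The refinement-preservation piece of this induction reuses the argument at the heart of the proof of Lemma~\ref{lem:CPNrefinement}.

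The main obstacle I anticipate is ruling out that an element $(x,y) \in S\setminus S'$ suddenly becomes reducible in the $N'_b$-intermediate, even though it is not reducible in the $N'$-intermediate at the corresponding step. Here I would exploit the reconstructibility of the class $C$: the binary refinement $N'_b$ is obtained from $N'$ by resolving only multifurcations and/or multi-reticulations, so contractions producing $N'$ from $N'_b$ involve only rr-edges and/or tt-edges. In particular, reticulations map to reticulations and tree vertices to tree vertices under the contraction, and the tr-edge that witnesses a reticulated cherry is never contracted. Consequently, every cherry or reticulated cherry in the refined intermediate is already a reducible pair in the intermediate of $N'$, which yields the required contrapositive.

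With the induction in hand, the hypotheses of Lemma~\ref{lem:reductionImpliesContainment} are satisfied: $S$ reduces both $N_b$ and $N'_b$, and every element of $S$ that reduces something in $N'_b$ lies in $S'\subseteq S$ and therefore reduces something in $N_b$, since $S$ is minimal for $N_b$. Lemma~\ref{lem:reductionImpliesContainment} then yields that $N'_b$ is a subnetwork of $N_b$, and Lemma~\ref{lem:SBSFdispaysIsBinaryResolvedSubnetwork} allows me to conclude that $N'$ is contained in $N$, as desired.
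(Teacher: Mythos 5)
Your proposal is correct and follows essentially the same route as the paper: define the subsequence $S'$ of pairs acting on $N'$, pass to the binary refinements $N_b$ and $N'_b$ given by Lemma~\ref{lem:SBSFBinaryrefinement} (the $(\ref{Const:CherryResolved},\ref{Const:RCherryResolved})$ construction), apply Lemma~\ref{lem:reductionImpliesContainment}, and conclude via Lemma~\ref{lem:SBSFdispaysIsBinaryResolvedSubnetwork}. The inductive verification you add (that $S$ itself reduces $N'_b$ and acts on it exactly through $S'$) fills in a step the paper leaves implicit; note only that the fact doing the work there is simply that a refinement of a valid network contracts only tt- and rr-edges, so reducible pairs of the refined intermediate are reducible in the contracted one---this holds for any refinement and does not really require reconstructibility of~$C$.
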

\begin{proof}
\remie{
Let $S'$ be the subsequence of $S$ consisting of the elements that change something in $N'$, and let $N_b$ and $N'_b$ be the unique binary CPNs obtained from $S$ and $S'$ using the~$(\ref{Const:CherryResolved}, \ref{Const:RCherryResolved})$ construction. Then, by Lemma~\ref{lem:SBSFBinaryrefinement}, $N_b$ is a binary refinement of $N$ with minimal CPS $S$, and $N'_b$ is a binary refinement of $N'$ with minimal CPS $S'$. By Lemma~\ref{lem:reductionImpliesContainment}, $N'_b$ is a subnetwork of $N_b$.
By Lemma~\ref{lem:SBSFdispaysIsBinaryResolvedSubnetwork},~$N'$ is contained in $N$.
}
\end{proof}
% \begin{proof}
% Find binary refinements~$N^b$ of~$N$ and~$N'^b$ of~$N'$ such that the same elements of the CPS~$S$ used to reduce~$N$ and~$N'$ is used to reduce both networks~$N^b$ and~$N'^b$, respectively.
% Such refinements exist by Lemma~\ref{lem:SBSFBinaryrefinement}.
% By applying Lemma~\ref{lem:reductionImpliesContainment} to the networks~$N^b$ and~$N'^b$, and to the sequence~$S$, we see that~$N'^b$ is a subnetwork of~$N^b$.
% By definition,~$N'$ is a subnetwork of~$N$.
% \end{proof}

Note that Lemma~\ref{lem:reductionImpliesContainmentSF} does not hold if we used subnetwork instead of containment (see Figure~\ref{fig:reductionImpliesContainmentSF}.
The converse of Lemmas~\ref{lem:reductionImpliesContainment} and~\ref{lem:reductionImpliesContainmentSF} do not hold for general CPNs, and we show this in the following subsection.

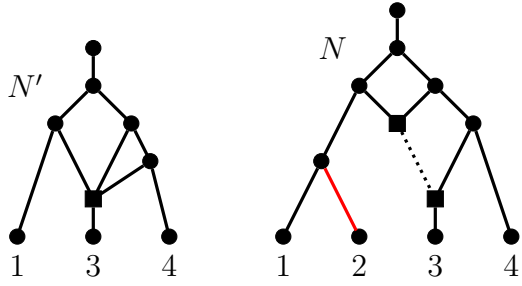
\begin{figure}
    \centering
    \begin{tikzpicture}[every node/.style = {draw, circle, fill, inner sep = 0pt, minimum size = 2mm},
square/.style = {regular polygon, regular polygon sides = 4, minimum size = 3 mm}]
    \tikzset {edge/.style = {very thick, shorten >= -0.5}}

        %Nodes
    
        \node[] (-1) at (0.0,-0.0) {};
        \node[] (0) at (0.0,-0.5) {};
        \node[] (5) at (-0.5,-1.0) {};
        \node[] (6) at (0.5,-1.0) {};
        \node[square] (7) at (0.0,-1.5) {};
        \node[] (9) at (1.0,-1.5) {};
        \node[] (8) at (-1,-2.0) {};
        \node[square] (10) at (0.5,-2.5) {};
        
        \node[draw=none, fill=none, left = 5mm of 0] {\large{$N$}};
        
        %Leaves
    
        \node[] (1) at (-1.5,-3.0) {};
        \node[draw=none, fill=none, below=1mm of 1] (leaf_1) {\large $1$};
        \node[] (2) at (-0.5,-3.0) {};
        \node[draw=none, fill=none, below=1mm of 2] (leaf_2) {\large $2$};
        \node[] (3) at (0.5,-3.0) {};
        \node[draw=none, fill=none, below=1mm of 3] (leaf_3) {\large $3$};
        \node[] (4) at (1.5,-3.0) {};
        \node[draw=none, fill=none, below=1mm of 4] (leaf_4) {\large $4$};
    
        %Edges
    
        \draw[edge] (-1) edge (0);
        \draw[edge] (0) edge (5);
        \draw[edge] (0) edge (6);
        \draw[edge] (5) edge (8);
        \draw[edge] (8) edge (1);
        \draw[edge] (5) edge (7);
        \draw[edge] (6) edge (7);
        \draw[edge] (6) edge (9);
        \draw[edge] (9) edge (10);
        \draw[edge] (9) edge (4);
        \draw[edge, red] (8) edge (2);
        \draw[edge, dotted] (7) edge (10);
        \draw[edge] (10) edge (3);
        
        \begin{scope}[shift={(-4,-0.5)}]
        %Nodes
    
        \node[] (-1) at (0.0,-0.0) {};
        \node[] (0) at (0.0,-0.5) {};
        \node[] (5) at (-0.5,-1.0) {};
        \node[] (6) at (0.5,-1.0) {};
        \node[] (7) at (0.75,-1.5) {};
        \node[square] (8) at (-0,-2.0) {};
        
     	\node[draw=none, fill=none, left = 5mm of 0] {\large{$N'$}};
        
        %Leaves
    
        \node[] (1) at (-1.0,-2.5) {};
        \node[draw=none, fill=none, below=1mm of 1] (leaf_1) {\large $1$};
        \node[] (2) at (0.0,-2.5) {};
        \node[draw=none, fill=none, below=1mm of 2] (leaf_2) {\large $3$};
        \node[] (3) at (1.0,-2.5) {};
        \node[draw=none, fill=none, below=1mm of 3] (leaf_3) {\large $4$};
    
        %Edges
    
        \draw[edge] (-1) edge (0);
        \draw[edge] (0) edge (5);
        \draw[edge] (0) edge (6);
        \draw[edge] (5) edge (1);
        \draw[edge] (5) edge (8);
        \draw[edge] (6) edge (8);
        \draw[edge] (6) edge (7);
        \draw[edge] (8) edge (2);
        \draw[edge] (7) edge (8);
        \draw[edge] (7) edge (3);
        \end{scope}
    \end{tikzpicture}
    \caption{An illustration of Lemma~\ref{lem:reductionImpliesContainmentSF}.
    The tree-child network~$N'$ is contained in the tree-child network~$N$, which can be seen by deleting leaf~$2$ and contracting the dotted edge in~$N$.
    The TCS~$(2,1),(3,4),(3,1),(3,4),(1,4)$ for~$N$ reduced~$N'$ as well.
    However,~$N'$ is clearly not a subnetwork of~$N$.}
    \label{fig:reductionImpliesContainmentSF}
\end{figure}

\subsection{Containment does not always imply reduction}\label{subsec:CounterExamples}

Following subsection~\ref{subsec:reductionimpliescontainment}, we give an example of a CPN~$N$ and a tree~$T$ on the same leaf-sets, for which~$T$ is a subnetwork of~$N$, such that there exists no minimal CPS for~$N$ that reduces~$T$ to a single leaf. This example is shown in Figure~\ref{fig:CPNTreeContainmentCounterExample}.

\begin{theorem}\label{thm:CPNTCFails}
There exists a binary CPN $N$ that contains a tree~$T$, such that no minimal CPS for $N$ reduces $T$. 
\end{theorem}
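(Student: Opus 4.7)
The plan is to exhibit an explicit small counterexample (depicted in Figure~\ref{fig:CPNTreeContainmentCounterExample}) and verify the two required properties. First, I would select a binary CPN $N$ that is \emph{not} tree-child; this is essential because Theorem~\ref{the:SubnetworkIffTCSReduces} will later establish the equivalence for tree-child networks, so any counterexample must lie outside that class. Natural candidates are networks containing a stack (an rr-edge) or a tree node all of whose children are reticulations, since such features severely constrain the set of reducible pairs available at each stage. Alongside $N$, I would pick a binary tree $T$ on the same leaf set obtained from $N$ by deleting a specific subset of reticulation edges and cleaning up. This choice automatically witnesses that $T$ is a subnetwork of $N$, and hence contained in $N$.

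The harder task is ruling out every minimal CPS of $N$. The approach is a case analysis over the possible first reducible pairs $S_1$ of $N$. Because of the non-tree-child structure, this list is short (often just the $(x,y)$ and $(y,x)$ for a single cherry or reticulated cherry). For each choice of $S_1$ I follow the forced continuations: if $S_1$ is a cherry then a leaf of $N$ is deleted, and if $S_1$ is a reticulated cherry then a reticulation edge is removed, potentially cascading further suppressions. The key observation is that once a leaf $\ell$ disappears from $N$, no subsequent pair $(\ell,\cdot)$ or $(\cdot,\ell)$ may appear in a \emph{minimal} CPS, since such a pair would fail to reduce the current network.

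I would then locate a cherry $\{a,b\}$ of $T$ (possibly of a partially reduced $T$) whose only means of resolution is the pair $(a,b)$ or $(b,a)$; this is always possible because $T$ is a tree and a leaf of $T$ can only be deleted by picking the cherry that contains it. For each branch of $N$'s reduction tree, I would verify that either $\{a,b\}$ never becomes a reducible pair in any intermediate network of $N$, or one of $a,b$ is removed from $N$ before the cherry $\{a,b\}$ currently present in $T$ could be picked. In either situation $T$ retains the sibling pair $\{a,b\}$ throughout the application of the sequence, so $TS$ has strictly more than one leaf and $S$ does not reduce $T$. The main obstacle is ensuring that the case analysis is exhaustive: the reduction of $N$ may branch at several steps, and one must confirm that \emph{every} path through the reduction tree leaves some cherry of $T$ unresolved. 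In a well-chosen small example, the branching is limited enough that this inspection can be carried out directly from the figure.
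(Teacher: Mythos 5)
Your overall strategy is the same as the paper's: take the explicit example of Figure~\ref{fig:CPNTreeContainmentCounterExample} (a binary, non-tree-child CPN~$N$ with an embedded tree~$T$) and rule out every minimal CPS of~$N$ by following the few possible branches of the reduction. Your preliminary observations are sound: the counterexample cannot be tree-child (a tree is tree-child, so Corollary~\ref{cor:ContainmentImpliesReductionTC} would otherwise give a reducing TCS, hence a minimal CPS), obtaining~$T$ from~$N$ by deleting reticulation edges automatically witnesses containment, and in a tree only cherry picks change anything. However, as written the proposal is a plan rather than a proof: for an existence theorem of this kind the entire mathematical content is the concrete verification for a specific~$N$ and~$T$, and you never carry it out --- you only assert that ``in a well-chosen small example'' the inspection can be read off the figure. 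You do not determine $\mathcal{C}(N)$, the forced initial picks, or the stuck cherry of~$T$ in each branch, so the statement is not actually established.

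More importantly, the verification template you propose would not suffice for the very example you point to. You claim that in each branch either the blocking pair $\{a,b\}$ of~$T$ never becomes reducible in the intermediate networks of~$N$, or one of $a,b$ is deleted from~$N$ before the cherry of~$T$ can be picked. In the paper's example, the branch beginning $(6,5),(5,6)$ fits neither alternative: $(5,7)$ is a cherry of the partially reduced~$T$, neither $5$ nor $7$ is ever removed from~$N$, and $(5,7)$ does eventually become reducible in~$N$ --- but only as the very last pair of any minimal CPS, since the only up-down path from $5$ to $7$ passes through the child of the root. The obstruction is one of timing: when $(5,7)$ can finally be picked, $T$ still has unreduced leaves on the other side of its root, so the sequence cannot reduce~$T$ to a single leaf. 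Similarly, in the branch beginning $(2,3),(3,2)$ the point is that one of $3$ or $4$ must be consumed in~$N$ before any pair involving~$1$ becomes available, stranding the cherry $(1,3)$ of $T(2,3)$. Your case analysis needs this third type of obstruction (the pair becomes reducible only too late, or only in an order incompatible with reducing the rest of~$T$); without it the analysis is not exhaustive and the argument does not go through.
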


\begin{proof}
We refer by~$N$ and~$T$ to the network and the tree of Figure~\ref{fig:CPNTreeContainmentCounterExample}.

Note first that $\mathcal{C}(N)=\{(2,3),(6,5)\}$. So initially, we are required to pick one of the reticulated cherries $(2,3)$ or $(6,5)$.

Picking $(2,3)$ first reduces both $T$ and $N$, and in the next step we have the option of picking one of the reticulated cherries $(3,2)$ or $(6,5)$.
Picking $(3,2)$ does not affect $T(2,3)$, and reduces the reticulated cherry $(3,2)$ in $N(2,3)$.
In $N(2,3),(3,2)$, $3$ is no longer a child of a reticulation, and the up-down path connecting the leaves $1$ and $3$ contains the parent of $4$.
This implies that one of $3$ or $4$ is picked by the time we can pick the leaf $1$, which ultimately means that $T$ is not reduced to a leaf with any CPS starting with $(2,3),(3,2)$.
So we pick $(6,5)$.

Picking $(6,5)$ first reduces both $T$ and $N$, and in the next step we have the option of picking one of the reticulated cherries $(2,3)$ or $(5,6)$.
Picking $(5,6)$ does not affect $T(6,5)$, and reduces the reticulated cherry $(5,6)$ in $N(6,5)$.
The cherry $(5,7)$ in $T(6,5),(5,6)$ can only be picked in $N(6,5),(5,6)$ as the final cherry since the only up-down path from $5$ to $7$ in $N(6,5),(5,6)$ passes the child of the root.
This implies that a minimal CPS for $N$ starting with $(6,5),(5,6)$ cannot reduce $T$ to a single leaf, since the cherry $(5,7)$ occurs on one side of the tree.

Thus, the CPS must start with either $(2,3),(6,5)$ or $(6,5),(2,3)$. Note that $T(2,3),(6,5) = T(6,5),(2,3)$ and $N(2,3),(6,5) = N(6,5),(2,3)$ and so the order in which we pick the two reticulated cherries do not matter.
In both cases, we have the choice of picking one of the reticulated cherries $(3,2)$ or $(5,6)$.
However, doing so results in a CPS that does not reduce $T$ to a single leaf, by the argument above.
Since every CPS of $N$ starts with these cherries, we have that $T$ is not reduced to a single leaf for any CPS of $N$. 
\end{proof}

Since this particular network is semi-binary stack-free, it serves as an example for when reduction does not imply containment for the~$(\ref{Const:CherryResolved},\ref{Const:RCherryResolved})$ and the~$(\ref{Const:CherryResolved},\ref{Const:RCherrySF})$ classes.
For the other two reconstructible classes, the claim may still hold true.
We speculate that similar results follow, which we discuss in Section~\ref{sec:Discussion}.
% \todoYuki{To come back to later. Either find a counter-example or prove it!}.

\begin{figure}
    \centering
    \begin{subfigure}[b]{0.49\textwidth}
    \centering
    \begin{tikzpicture}[every node/.style = {draw, circle, fill, inner sep = 0pt, minimum size = 2mm},
square/.style = {regular polygon, regular polygon sides = 4, minimum size = 3 mm}]
    \tikzset {edge/.style = {very thick, shorten >= -0.5 pt}}

        %Nodes

        \node[] (-1) at (0.0,-0.0) {};
        \node[] (0) at (0.0,-0.5) {};
        \node[] (8) at (-0.5,-1.0) {};
        \node[] (11) at (0.5,-1.0) {};
        \node[] (9) at (-1,-1.5) {};
        \node[] (12) at (0.5,-1.5) {};
        \node[] (10) at (-0.75,-2.0) {};

        \node[draw=none, fill=none, left = 5mm of 0] {\large{$T$}};
        %Leaves

        \node[] (1) at (-1.5,-2.5) {};
        \node[draw=none, fill=none, below=1mm of 1] (leaf_1) {\large $1$};
        \node[] (2) at (-1.0,-2.5) {};
        \node[draw=none, fill=none, below=1mm of 2] (leaf_2) {\large $2$};
        \node[] (3) at (-0.5,-2.5) {};
        \node[draw=none, fill=none, below=1mm of 3] (leaf_3) {\large $3$};
        \node[] (4) at (0.0,-2.5) {};
        \node[draw=none, fill=none, below=1mm of 4] (leaf_4) {\large $4$};
        \node[] (5) at (0.5,-2.5) {};
        \node[draw=none, fill=none, below=1mm of 5] (leaf_5) {\large $5$};
        \node[] (6) at (1.0,-2.5) {};
        \node[draw=none, fill=none, below=1mm of 6] (leaf_6) {\large $6$};
        \node[] (7) at (1.5,-2.5) {};
        \node[draw=none, fill=none, below=1mm of 7] (leaf_7) {\large $7$};

        %Edges

        \draw[edge] (-1) edge (0);
        \draw[edge] (0) edge (8);
        \draw[edge] (0) edge (11);
        \draw[edge] (8) edge (9);
        \draw[edge] (8) edge (4);
        \draw[edge] (9) edge (1);
        \draw[edge] (9) edge (10);
        \draw[edge] (10) edge (2);
        \draw[edge] (10) edge (3);
        \draw[edge] (11) edge (12);
        \draw[edge] (11) edge (7);
        \draw[edge] (12) edge (5);
        \draw[edge] (12) edge (6);
    \end{tikzpicture}
    \end{subfigure}%
    \hfill
    \begin{subfigure}[b]{0.49\textwidth}
    \centering
    \begin{tikzpicture}[every node/.style = {draw, circle, fill, inner sep = 0pt, minimum size = 2mm},
square/.style = {regular polygon, regular polygon sides = 4, minimum size = 3 mm}]
    \tikzset {edge/.style = {very thick, shorten >= -0.5 pt}}

        %Nodes

        \node[] (-1) at (0.0,-0.0) {};
        \node[] (0) at (0.0,-0.5) {};
        \node[] (8) at (-0.5,-1.0) {};
        \node[] (16) at (0.5,-1.0) {};
        \node[] (9) at (-0.75,-1.5) {};
        \node[] (11) at (-0.25,-1.5) {};
        \node[] (17) at (1,-2.25) {};
        \node[] (10) at (-1,-2.25) {};
        \node[] (12) at (0,-2.25) {};
        \node[square] (13) at (-0.5,-2.75) {};
        \node[square] (18) at (0.5,-2.75) {};
        \node[] (14) at (-0.5,-3.25) {};
        \node[] (19) at (0.5,-3.25) {};
        \node[square] (15) at (-1.25,-3.5) {};
        \node[square] (20) at (1.25,-3.5) {};

        \node[draw=none, fill=none, left = 5mm of 0] {\large{$N$}};
        %Leaves

        \node[] (1) at (-2.0,-4.0) {};
        \node[draw=none, fill=none, below=1mm of 1] (leaf_1) {\large $1$};
        \node[] (2) at (-1.25,-4.0) {};
        \node[draw=none, fill=none, below=1mm of 2] (leaf_2) {\large $2$};
        \node[] (3) at (-0.5,-4.0) {};
        \node[draw=none, fill=none, below=1mm of 3] (leaf_3) {\large $3$};
        \node[] (4) at (0.25,-1.75) {};
        \node[draw=none, fill=none, below=1mm of 4] (leaf_4) {\large $4$};
        \node[] (5) at (0.5,-4.0) {};
        \node[draw=none, fill=none, below=1mm of 5] (leaf_5) {\large $5$};
        \node[] (6) at (1.25,-4.0) {};
        \node[draw=none, fill=none, below=1mm of 6] (leaf_6) {\large $6$};
        \node[] (7) at (2.0,-4.0) {};
        \node[draw=none, fill=none, below=1mm of 7] (leaf_7) {\large $7$};

        %Edges

        \draw[edge, red] (-1) edge (0);
        \draw[edge, red] (0) edge (8);
        \draw[edge, red] (0) edge (16);
        \draw[edge, red] (8) edge (9);
        \draw[edge, red] (8) edge (11);
        \draw[edge, bend right, red] (9) edge (1);
        \draw[edge, red] (9) edge (10);
        \draw[edge] (10) edge (15);
        \draw[edge, red] (10) edge (13);
        \draw[edge, red] (15) edge (2);
        \draw[edge, red] (13) edge (14);
        \draw[edge, red] (14) edge (15);
        \draw[edge, red] (14) edge (3);
        \draw[edge, red] (11) edge (4);
        \draw[edge] (11) edge (12);
        \draw[edge] (12) edge (13);
        \draw[edge] (12) edge (18);
        \draw[edge, red] (18) edge (19);
        \draw[edge] (19) edge (20);
        \draw[edge, red] (19) edge (5);
        \draw[edge, red] (20) edge (6);
        \draw[edge, red] (16) edge (17);
        \draw[edge, bend left, red] (16) edge (7);
        \draw[edge, red] (17) edge (18);
        \draw[edge, red] (17) edge (20);
    \end{tikzpicture}
    \end{subfigure}
    \caption{The tree~$T$ is a subnetwork of a CPN~$N$, and one embedding of $T$ into $N$ is shown (red edges).
    There exists no minimal CPS of $N$ that reduces $T$ to a single leaf.}
    \label{fig:CPNTreeContainmentCounterExample}
\end{figure}
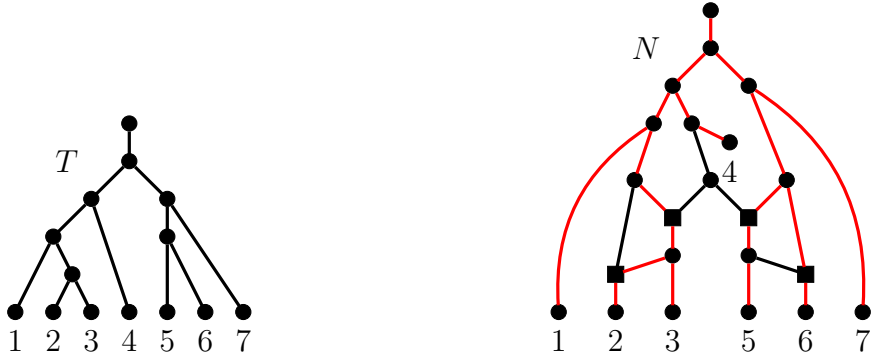

In what follows, we show that if we restrict our scope to the class of tree-child networks, then the converse does hold. That is, for those networks, containment implies reduction. %\todoRemie{TODO after making table: But only if they are in the same CPN class? (And we basically only focus on subnetworks instead of contained networks?) YM: subnetwork --> redux also holds for non-binary. It's just that in this case, redux --> containment doesn't hold.}
%converse of Lemmas~\ref{lem:reductionImpliesContainment} and~\ref{lem:reductionImpliesContainmentSF} do hold.

% \noindent Maybes:
% \begin{itemize}
%     \item Crown FPT algorithm; not possible anymore!
% \end{itemize}

% \noindent Potential problems:
% \begin{itemize}
%     \item Construct a network from CPS. Does reducing the cherries / ret. cherries in a different order cause any problems?---Solved by the lemma of moving cherries around.
% \end{itemize}

\section{Tree-child sequences}\label{sec:TCNetworkContainment}

Recall that a condition was imposed on a sequence of ordered pairs to define CPSs as those that can be used to construct networks.
We show here that imposing an additional condition on the CPSs can ensure the constructed network to be tree-child.
Recall that a network is tree-child if every non-leaf vertex has a child that is a tree vertex or a leaf.

In this section, we assume that we work in CPN classes that use the~$\ref{Const:RCherrySF}$ or the~$\ref{Const:RCherryUnresolved}$ reticulated cherry construction, to ensure that the network constructed from the sequences is stack-free.
Outside of stacks, the only forbidden structure of tree-child networks are tree vertices whose children are all reticulations.
To ensure these structures do not appear in the constructed networks, we impose a condition on our sequences, that the first coordinate of each pair does not appear as a second coordinate of another pair in the remainder of the sequence.
We will refer to this condition as the \emph{tree-child condition}.

\begin{definition}
A CPS is a \emph{tree-child sequence (TCS)} if every leaf appearing as the first coordinate does not appear as a second coordinate in the rest of the sequence.
\end{definition}

\begin{lemma}
Let~$S$ be a TCS.
Then each of the networks obtained by choosing a construction~$\ref{Const:RCherrySF}$ or~$\ref{Const:RCherryUnresolved}$ for each reticulated cherry is tree-child.
\end{lemma}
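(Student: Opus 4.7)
The plan is to proceed by induction on the length $n$ of $S$. The base case $n=0$ gives the single-leaf network, which is trivially tree-child. For the inductive step, write $S_1 = (x,y)$ and $S' = S_{[2:]}$; any suffix of a TCS is again a TCS, so the network $N'$ obtained from $S'$ by the same construction choices is tree-child by the induction hypothesis, and it suffices to show that adding $(x,y)$ to $N'$ using any allowed construction yields a tree-child network $N$. Stack-freeness is preserved immediately: cherry constructions create no rr-edges, and both $\ref{Const:RCherrySF}$ and $\ref{Const:RCherryUnresolved}$ contract the newly introduced edge $p_x p$ whenever $p_x$ is a reticulation, while if $p_x$ is a tree vertex then $p_x p$ is a tr-edge.

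For the second property (every non-leaf vertex has a tree-vertex or leaf child), the newly added vertices are fine: the tree vertex $q$ above $y$ has the leaf $y$ as a child, and the reticulation $p$ above $x$ (in the reticulated-cherry case) has the leaf $x$ as a child. The parent $p_y$ of $y$ in $N'$ either has $y$ replaced by the tree vertex $q$ as its direct child in that direction, or, under $\ref{Const:CherryUnresolved}$/$\ref{Const:RCherryUnresolved}$ with $p_y$ a tree vertex, absorbs the children of $q$ while keeping $y$ as a direct leaf child; in every case $p_y$ retains a tree-or-leaf child. For the parent $p_x$ of $x$ (only relevant in the reticulated-cherry case), if $p_x$ is a reticulation then $\ref{Const:RCherrySF}$/$\ref{Const:RCherryUnresolved}$ contracts $p_x p$, so $p_x$'s unique child remains $x$ (a leaf). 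This leaves the delicate case where $p_x$ is a tree vertex in $N'$: no contraction occurs and $x$ is replaced by $p$ as a direct child of $p_x$.

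The main obstacle is showing that in this last case $p_x$ still has a tree-or-leaf child other than $x$ in $N'$, and this is where the TCS condition is used essentially. I would trace back through the construction: since $x$ is a leaf in $N'$, it was introduced as the first coordinate of a cherry pair $S_j = (x, y_j)$ at some step $j > 1$, and at that moment $p_x$ was created either as a new tree vertex $q_j$ with children $x$ and $y_j$ (under $\ref{Const:CherryResolved}$) or as the existing tree vertex $p_{y_j}$ to which $x$ was added as a direct child (under $\ref{Const:CherryUnresolved}$); either way, $y_j$ was a direct leaf child of $p_x$ at that step. The TCS condition forbids $x$ from appearing as a second coordinate anywhere in $S$, so no vertex is ever inserted directly above $x$ in later construction steps; and the assumption that $p_x$ is still a tree vertex in $N'$ rules out $x$ appearing as a first coordinate of any reticulated cherry in $S_{[2:j-1]}$, because $\ref{Const:RCherrySF}$/$\ref{Const:RCherryUnresolved}$ would then have turned $p_x$ into a reticulation. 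Applying TCS once more to $y_j$: since $y_j$ is a second coordinate at position $j$, it cannot be a first coordinate at any position less than $j$, and hence no reticulation is ever inserted directly above $y_j$ in the subsequent construction steps---the only way the pendant edge at $y_j$ can be subdivided is via $y_j$ appearing as a second coordinate, which always inserts a tree vertex. Therefore $p_x$'s direct child in the $y_j$-direction in $N'$ is either $y_j$ itself (a leaf) or a tree vertex, giving $p_x$ the non-reticulation child besides $x$ required to conclude the inductive step.
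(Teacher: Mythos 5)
Your proof is correct, but it is organized differently from the paper's. The paper gives a single forward sweep over the construction with no induction: for each pair $S_i=(x,y)$ that is added, it considers the tree-vertex parent of the second coordinate $y$ right after the addition and observes that a reticulation could only be interposed between this vertex and $y$ by adding, at a later construction step (i.e.\ an earlier sequence position $j<i$), a pair with $y$ as first coordinate, which the tree-child condition forbids; this protects every tree vertex at once. You instead induct on $|S|$, peel off the last-performed addition $S_1=(x,y)$, and correctly recognize that the induction hypothesis alone does not suffice: adding a reticulated cherry with construction~\ref{Const:RCherrySF} or~\ref{Const:RCherryUnresolved} can destroy tree-childness precisely when $p_x$ is a tree vertex whose only tree-or-leaf child is $x$, so the global TCS structure must be imported into the inductive step. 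Your back-tracing to the cherry $S_j=(x,y_j)$ that introduced $x$, and the application of the TCS condition to $y_j$ (a second coordinate at position $j$, hence never a first coordinate before $j$, hence only tree vertices are ever inserted on the pendant edge at $y_j$), is exactly the paper's key observation, only invoked at the pair that introduced $x$ rather than at the pair currently being added. What your route buys is an explicit refutation of the naive one-step preservation statement and a self-contained inductive step; what it costs is a heavier case analysis and one small unstated point: the assertion that $x$ was introduced by a cherry pair $S_j$ with $j\ge 2$ tacitly excludes the possibility that $x$ is the starting leaf $y_{|S|}$. This is harmless---since $x$ is the first coordinate of $S_1$, the TCS condition forbids $x$ from occurring as a second coordinate in $S_{[2:]}$, so $x\neq y_{|S|}$; alternatively, if $x$ were the starting leaf, its parent in $N'$ could only be the root or a reticulation, contradicting your case assumption---but it deserves a sentence.
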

\begin{proof}
By construction, the networks obtained from~$S$ are CPNs and they are stack-free.
It remains to show that each tree vertex has at least one child that is a tree vertex or a leaf.

Let~$S_i=(x,y)$ be a reticulated cherry, and consider the network~$N$ after having just added~$S_i$.
In~$N$, the tree vertex parent~$p_y$ of~$y$ currently has at least one leaf child~$y$.
Suppose that all its other children were reticulations.
For this vertex~$p_y$ to have all reticulation children in the fully constructed network, we require some reticulation vertex to be inserted between~$p_y$ and~$y$, which can only happen if we add some ordered pair~$S_j = (y,z)$ where~$j<i$.
However, this would mean that~$y$ appears as a first coordinate of some pair~$S_j$ and also as a second coordinate of some pair~$S_i$ later on in the sequence, which contracts our tree-child condition.

Now, if~$S_i=(x,y)$ was a cherry, then the parent~$p$ of~$x$ cannot be a parent of only reticulations after adding more pairs to the network.
Indeed, this would imply that we have added some reducible pair~$(y,z)$ later on to the network---and hence it would appear earlier in the sequence---which again contradicts our tree-child condition.

Hence all tree vertices of a network obtained from a TCS has at least one child that is a tree vertex or a reticulation.
Therefore such a network is tree-child.
\end{proof}

% \begin{lemma}
% Let~$N$ be a non-binary TCN.
% Then there exists a minimal TCS that reduces it.
% \end{lemma}
% \begin{proof}
% \textbf{TODO}
% \end{proof}

Tree-child networks (TCNs) always contain a reducible pair, and after reducing one of these, we obtain a new tree-child network (Lemma~4.1 of \citep{bordewich2016determining}).
Naturally, this implies that TCNs are CPNs.
As we have seen in the previous section, given a CPN that contains a tree on the same set of taxa, there may not exist a minimal CPS of the network that reduces the tree (Theorem~\ref{thm:CPNTCFails}).
In this section, we make the switch from CPSs to TCSs, and show that this is no longer an issue for TCSs.
In fact, we prove a stronger result: within a reconstructible CPN class, a TCN contains another TCN on the same taxa if and only if every minimal TCS of the first TCN reduces the second TCN. 
If, in addition, the first TCN is binary, then if \emph{any} minimal TCS for the larger network reduces the smaller network, then the smaller network is a subnetwork of the larger network.
We also show that---similar to Proposition~\ref{prop:CPSorder} for CPNs---we may pick reducible pairs in any order for TCNs.
We start by showing that every TCN has a minimal TCS. 
This was shown implicitly for semi-binary TCNs in the proof of Lemma 3.4 of~\cite{linz2019attaching}; however we include it here for completeness and to generalize for non-binary TCNs.
% \todoRemie{TODO REMIE: Change text to say we actually do it here}
%I can't really find where they implicitly do this. Can we point to some spot in their paper? Otherwise, I think we should prove it ourselves. (Remove all reticulation arcs, then each component is a rooted tree with at least one leaf (as the network is tree-child). There is a partial order on these trees, where one is above the other if there is a reticulation edge from the one to the other. Remove the lowest (non-trivial) tree by cherry picking. If there is a trivial tree below it, this results in reticulated cherries that may be picked.))}

%%%%%%%%%%%%%%%%%%%%%%%%%%%%%%%%%%%%%%%%%%%%%%%%%
%%%%%%%%%%%%%%%%%%%%%%%%%%%%%%%%%%%%%%%%%%%%%%%%%
%%%%%%%%%%%%%%%%%%%%%%%%%%%%%%%%%%%%%%%%%%%%%%%%%
%%%%%%%%%%%%%%%%%%%%%%%%%%%%%%%%%%%%%%%%%%%%%%%%%
%%%%%%%%%%%%%%%%%%%%%%%%%%%%%%%%%%%%%%%%%%%%%%%%%
%%%%%%%%%%%%%%%%%%%%%%%%%%%%%%%%%%%%%%%%%%%%%%%%%
%%%%%%%%%%%%%%%%%%%%%%%%%%%%%%%%%%%%%%%%%%%%%%%%%

\begin{lemma}
\remiee{Let $N$ be a non-binary TCN. Then there is a TCS for $N$.}
\end{lemma}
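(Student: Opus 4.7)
The plan is to proceed by induction on $|N|$, the number of edges of $N$. The base case, a single-leaf network, is handled by the empty sequence, which is vacuously a TCS. So I may assume $N$ is a TCN with at least two leaves, and that the claim holds for every smaller TCN.

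For the inductive step, my first task is to locate a suitable reducible pair $(x,y)$ in $N$. I would argue that $N$ contains either a cherry or a reticulated cherry by tracing the tree-child descendants of a reticulation $\rho$ whose strict descendants contain no further reticulations (such a $\rho$ exists whenever $N$ has any reticulation; if $N$ has none, any cherry will do). The tree-child property then guarantees that a reducible pair can be found near the bottom of $N$. Once $(x,y)$ is fixed, reducing it gives a network $N(x,y)$ with strictly fewer edges; this $N(x,y)$ is again tree-child, because deleting a reticulation edge (or a leaf) and suppressing the resulting degree-$2$ nodes cannot create a tree vertex all of whose children are reticulations --- such a configuration would already have been present in $N$. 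By the induction hypothesis, there is a minimal TCS $S'$ for $N(x,y)$.

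It remains to show that $S := (x,y) S'$ is a TCS for $N$, i.e., that $x$ does not appear as a second coordinate anywhere in $S'$. If $(x,y)$ is a cherry, then $x$ is deleted in passing from $N$ to $N(x,y)$ and thus cannot appear in $S'$ at all, so the tree-child condition for the leading pair $(x,y)$ is trivial, and the remaining conditions are inherited from $S'$. The delicate case is when $(x,y)$ is a reticulated cherry, in which $x$ persists as a leaf of $N(x,y)$. My plan here is to choose $(x,y)$ carefully --- selecting $\rho$ so that, after reducing the edge between the parents of $x$ and $y$, the leaf $x$ ends up with a tree-vertex parent in $N(x,y)$ --- and then to invoke Theorem~\ref{thm:OrderDoesn'tMatter} to reorder $S'$ so that every reducible pair involving $x$ in an intermediate network is picked with $x$ as its first coordinate.

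The hard part will be ruling out configurations in which $x$ is forced to appear as the second coordinate of a reticulated cherry $(z,x)$ somewhere inside $S'$: in such a situation $(x,z)$ is not itself a reducible pair and no reordering can swap the two roles. I expect to overcome this by showing that any reticulation sitting below $x$'s (tree-vertex) parent with a leaf child can be peeled off first by its own reticulated cherry, leaving $x$ involved in cherries only by the time $x$ comes into play; for cherries involving $x$ the first-coordinate reordering is always valid, so combining the careful initial choice with the reordering will produce the desired TCS of $N$.
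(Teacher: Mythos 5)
Your overall strategy---induct on the size of $N$, reduce one pair $(x,y)$, and prepend it to a TCS of $N(x,y)$---breaks down exactly at the step you yourself flag as ``the hard part'', and that step is the entire content of the lemma. In the reticulated-cherry case you need not merely \emph{some} TCS of $N(x,y)$ but one in which $x$ never occurs as a second coordinate; the plain induction hypothesis does not supply this, and the tool you invoke to repair it, Theorem~\ref{thm:OrderDoesn'tMatter}, is a statement about CPSs only, so reordering via that theorem can destroy the tree-child property of the sequence. The genuine ``order doesn't matter'' results for TCSs (Proposition~\ref{prop:TCSOrder} and its non-binary counterpart) appear later in the paper and their proofs already presuppose the present lemma, so appealing to them here would be circular. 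Your ``careful initial choice'' also does not settle matters in the non-binary setting: if the reticulation above $x$ has indegree at least $3$, it survives the reduction, so $x$ keeps a reticulation parent; this blocks $x$ from being a second coordinate only temporarily, and the obstruction resurfaces once that reticulation is eventually suppressed further along the sequence. The closing sketch (peel off the reticulations below $x$'s tree-vertex parent first) is an idea rather than an argument, and it is precisely where a proof must do real work.

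The paper avoids this by proving a stronger statement maintained as an invariant of a greedy construction rather than a bare induction. Using the tree-child property, every node $v$ has a tree-path to a designated leaf $t(v)$; one extends a partial TCS $S$ while maintaining that (i) every node of $NS$ still reaches its $t(v)$ by a tree-path, and (ii) every leaf in $F(S)$, the set of leaves already used as first coordinates, is the tree-path target only of itself (and possibly of its reticulation parent). At each stage one takes a lowest tree node $p$ of $NS$ and reduces all reducible pairs at $p$ using $t(p)$ as the second coordinate; condition (ii) guarantees $t(p)\notin F(S)$, so the sequence stays tree-child, and the invariant is checked to persist. If you wish to keep your inductive format, you would need an analogous strengthening of the induction hypothesis (roughly: for any admissible set of ``forbidden'' leaves there is a TCS avoiding them as second coordinates). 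As written, the proposal has a genuine gap at its central step.
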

\begin{proof}
Let $N$ be a network on $X$. For a partial TCS $S$, denote by $F(S)\subseteq X$ the set of first elements of pairs of $S$. Because $N$ is tree-child, for each node $v$, there is a path to a leaf $t(v)$ in which all internal nodes are tree nodes, and $t(v)=t(c)$ for some child $c$ of $v$ if $v$ is not a leaf node.

Assume $S$ is a partial TCS such that $t(v)$ is still below $v$ via a tree-path for all nodes $v$ of $NS$, and for each $l\in F(S)$, the nodes $v$ for which $t(v)=l$, are $l$, and possibly the parent of $l$ if it is a reticulation. Suppose $N$ is not fully reduced by $S$, we show that we can find a longer sequence $S'$ starting with $S$ to which the conditions also apply.

Let $p$ be a lowest tree node in $NS$, then each node below $p$ is either a leaf, or a reticulation which is directly above a leaf. The leaf $t(p)$ is directly below $p$ (as there is a tree-path from $p$ to $t(p)$) and $t(p)\not\in F(S)$ by the assumption on $S$.
By reducing all cherries and reticulated cherries involving $p$ using $t(p)$ as second element, we obtain a new partial tree-child sequence $S'$. In $NS'$, there is still a tree-path from the parent $g$ of $p$ to $t(p)$, so, if $t(g)=t(p)$, there is still a tree-path from $g$ to $t(g)$; the tree paths for none of the other nodes are affected the reductions between $NS$ and $NS'$. Furthermore, each leaf $l\in F(S')$ is either directly below a reticulation, or directly below a tree node $v$ with $t(v)\neq l$, as in $NS$, $l$ was below a reticulation below $p$; or $l\in F(S)$ and $l$ was already below $v$, so $t(v)\neq l$ in $NS$ by assumption.

Starting with the empty partial TCS, we can repeat this process until $NS$ has no more tree nodes. When this is the case, we have found a TCS $S$ for $N$.
\end{proof}

%%%%%%%%%%%%%%%%%%%%%%%%%%%%%%%%%%%%%%%%%%%%%%%%%
%%%%%%%%%%%%%%%%%%%%%%%%%%%%%%%%%%%%%%%%%%%%%%%%%
%%%%%%%%%%%%%%%%%%%%%%%%%%%%%%%%%%%%%%%%%%%%%%%%%
%%%%%%%%%%%%%%%%%%%%%%%%%%%%%%%%%%%%%%%%%%%%%%%%%
%%%%%%%%%%%%%%%%%%%%%%%%%%%%%%%%%%%%%%%%%%%%%%%%%
%%%%%%%%%%%%%%%%%%%%%%%%%%%%%%%%%%%%%%%%%%%%%%%%%

\subsection{Subnetwork / Containment implies reduction}

As in the previous sections, we will try to keep the results as general as possible.
We first show results on reduction and subnetworks.

\begin{lemma}\label{lem:SubnetworkReduction}
Let $N$ be a \remie{non-binary} tree-child network, $N'$ a tree-child subnetwork of $N$ with the same leaf set, and $S$ a TCS. Then $N'S$ is a subnetwork of $NS$.
\end{lemma}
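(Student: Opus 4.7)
The plan is to proceed by induction on $|S|$. The base case $|S|=0$ is immediate, since $N'S = N'$ and $NS = N$. For the inductive step, let $c = S_1$ and $S' = S_{[2:]}$; note $S'$ is again a TCS. It suffices to show that $N'c$ is a tree-child subnetwork of $Nc$: the inductive hypothesis applied to $Nc$, $N'c$, and $S'$ then gives $N'S = (N'c)S'$ as a subnetwork of $(Nc)S' = NS$. The tree-child property of $N'c$ and $Nc$ is immediate from the standard fact that reducing a reducible pair of a tree-child network yields a tree-child network (Lemma~4.1 of~\citep{bordewich2016determining}); if $c$ is not reducible in the network in question, that network is unchanged and hence still tree-child.

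To establish $N'c \subseteq Nc$, I would use the characterization of Lemma~\ref{obs:cleanup}: $N' = N \setminus R$ after cleanup, for some set $R$ of reticulation edges of $N$. The argument proceeds by case analysis on the pair $c = (x,y)$. If $c$ is not reducible in $N$, then $Nc = N$, and by Lemma~\ref{lem:SubnetworkAfterReduction} together with transitivity of the subnetwork relation (obtained by composing embeddings), $N'c \subseteq N' \subseteq N = Nc$. If $c$ is a cherry in $N$ with common parent $p$, then $p$ is a tree vertex and $px, py$ are non-reticulation edges, so neither lies in $R$, and the outdegree of $p$ in $N \setminus R$ remains at least $2$; consequently $p$ is not suppressed in the cleanup, and $c$ is also a cherry in $N'$ with the same common parent. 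The cherry reduction then commutes with the $R$-deletion and cleanup elsewhere in the network, yielding $N'c \subseteq Nc$.

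The substantive case is when $c$ is a reticulated cherry in $N$, with $p_x$ the reticulation parent of $x$ and $p_y$ the tree vertex parent of $y$ (and of $p_x$). I would split according to what $c$ is in $N'$. If $c$ is also a reticulated cherry in $N'$, then the two reductions delete corresponding edges and the operations commute. If $c$ is not reducible in $N'$, I would argue that $p_y p_x \in R$: for if $p_y p_x \notin R$, then either $p_x$ survives as a reticulation in $N'$ (giving $c$ as a reticulated cherry in $N'$) or $p_x$ is suppressed with $p_y p_x$ as its only surviving incoming edge (giving $c$ as a cherry in $N'$ with common parent $p_y$). Since neither happens, $p_y p_x \in R$, so the embedding of $N'$ into $N$ does not use $p_y p_x$, and it descends to an embedding of $N'$ into $Nc$.

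The main obstacle is the remaining sub-case: $c$ is a reticulated cherry in $N$ but a cherry in $N'$. This occurs precisely when $p_y p_x \notin R$ while $p_x$ is suppressed in the cleanup of $N \setminus R$, so that $p_y$ becomes the new common parent of $x$ and $y$ in $N'$. Here $N'c$ deletes $x$ from $N'$, whereas $Nc$ deletes $p_y p_x$ from $N$, and the leaf sets of $N'c$ and $Nc$ differ by~$x$. To show $N'c \subseteq Nc$ in this sub-case, I would exhibit $N'c$ as the result of deleting the reticulation edges $R \setminus \{p_y p_x\}$ from $Nc$ and cleaning up with respect to $X \setminus \{x\}$; the cascading cleanup then removes $x$ together with the stranded path from the surviving parent of $p_x$ to~$x$. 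The delicate point is that the two reductions act structurally differently on the networks and the removal of $x$ must be tracked through cleanup with respect to a smaller leaf set, rather than via explicit edge deletion.
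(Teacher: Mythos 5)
Your induction is organized by peeling off the first pair $c=S_1$ and then invoking the lemma itself (the inductive hypothesis) for the new pair of networks $Nc$, $N'c$ and the shorter TCS $S_{[2:]}$. But the lemma's hypothesis requires the two networks to have the \emph{same leaf set}, and precisely in the sub-case you single out as the main obstacle ($c=(x,y)$ a reticulated cherry in $N$ but a cherry in $N'$) this fails: $N'c$ loses the leaf $x$ while $Nc$ keeps it. The same-leaf-set assumption is not cosmetic, so the appeal is not harmless: take $N$ the tree with cherry $(x,y)$ and a third leaf $z$, $N'$ its subnetwork on $\{x,z\}$, and the TCS $S=(x,y),(y,z)$; then $NS$ is the single-leaf network on $z$ while $N'S=N'$ still has two leaves, so $N'S$ is not a subnetwork of $NS$. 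Hence you cannot simply apply the inductive hypothesis to $(Nc,N'c)$; as written the argument is circular exactly in the case that matters. The same problem arises in a sub-case you did not flag: $c$ can be a cherry in $N'$ while not being reducible in $N$ at all (several vertices on the path from $p_y$ down to $x$ in $N$ may be suppressed in forming $N'$), and then again $N'c$ lacks $x$ but $Nc=N$ does not.

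The gap is repairable, but only by proving a stronger statement by induction, e.g.\ allowing $N'$ to be a tree-child subnetwork on $X'\subseteq X$ provided no leaf of $X\setminus X'$ occurs as a second coordinate of $S$; this invariant is maintained in your recursion because the leaf that disappears is always the first coordinate of $S_1$, which by the tree-child condition never reappears as a second coordinate in $S_{[2:]}$, and the extra case ``$x\notin X'$'' must then be handled by checking the embedding cannot use the edge deleted from $N$. The paper avoids this issue entirely by peeling off the \emph{last} pair instead: $N$ and $N'$ stay fixed throughout the induction, the hypothesis is only ever applied to this original same-leaf-set pair, and the divergence of the leaf sets of the partially reduced networks $NS'$ and $N'S'$ is handled directly inside the inductive step using that the second coordinate of the final pair cannot have appeared as a first coordinate earlier. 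Separately, your ``main obstacle'' sub-case is only sketched: the claim that $N'c$ is obtained from $Nc$ by deleting (the edges corresponding to) $R\setminus\{p_yp_x\}$ and cleaning up w.r.t.\ $X\setminus\{x\}$ still needs an argument, since edges of $R$ with head $p_x$ are merged with other edges when $p_x$ is suppressed in $Nc$; but this is secondary to the induction issue above.
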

\begin{proof}
We prove this fact inductively on the length of $S$. If $S$ is empty, then $N'S=N'$ and $NS=N$, so $N'S$ is a subnetwork of $NS$.

Now suppose that for any TCS $S'$ of length at most $j$, $N'S'$ is a subnetwork of $NS'$. We prove that for any TCS $S$ of length $j+1$, $N'S$ is a subnetwork of $NS$. Let us denote $S=S'(x,y)$. Note that $S'$ is of length at most~$j$.
Hence, by the induction hypothesis,~$N'S'$ is a subnetwork of~$NS'$.
We consider the following cases:
\begin{itemize}
\item \textbf{$\bm{N'S'}$ has only one of $\bm{x}$ and $\bm{y}$.} Because $S=S'(x,y)$ is a TCS, $y$ is not the first coordinate in any element of $S'$. Hence, $N'S'$ must still contain $y$, and $x$ must have been deleted from $N'$ by applying $S'$. This means the edge of $NS'$ deleted by applying $(x,y)$ is not used by the embedding of $N'S'$ into $NS'$, and $N'S=N'S'$ can still be embedded in $NS$.
\item \textbf{$\bm{N'S'}$ has both $\bm{x}$ and $\bm{y}$.} There are a few cases we must consider, depending on whether there are reducible pairs $(x,y)$ in $N'S'$ and $NS'$.
\begin{itemize}
    \item \textbf{$\bm{NS'}$ has a cherry $\bm{(x,y)}$.} As $N'S'$ also contains both leaves $x$ and $y$, $N'S'$ also has the cherry $(x,y)$, which is mapped to the corresponding cherry in $NS'$ by the embedding. The reduction of $(x,y)$ in both networks removes the pendant edge leading to $x$ in both networks, not changing the embedding otherwise.
    \item \textbf{$\bm{NS'}$ has a reticulated cherry $\bm{(x,y)}$.} 
    \begin{itemize}
        \item \textbf{The edge $\bm{p_yp_x}$ is used by the embedding of $\bm{N'S'}$ into $\bm{NS'}$.}
            First note that $N'S'$ must have either a cherry or a reticulated cherry $(x,y)$: if the edge $p_yp_x$ is used by the embedding, then the only way to reach $x$ and $y$ in $NS'$, is by using the edges $p_xx$ and $p_yy$, making $(x,y)$ either a cherry or a reticulated cherry in $N'S'$. Now applying $(x,y)$ to $N'S'$ deletes the edge using $p_yp_x$ of $NS'$ in the embedding. %\todoYuki{I think this should be a path, since for a cherry~$(x,y)$ its a length 2 path.}
            Hence, upon deleting both these edges by applying $(x,y)$ to both networks, we may naturally extend the embeddings.
        \item \textbf{Otherwise.} $N'S'$ can be embedded in $NS'$ without the edge $p_yp_x$. Hence, $N'S'$ is a subnetwork of $NS'$ after the removal of this edge $p_yp_x$ (i.e., the network $NS$). As $N'S$ is a subnetwork of $N'S'$ %\todoYuki{This hasn't been proven yet? It's Lemma 10.}\todoRemie{I think Lemma~10 says $NcS\subseteq NS$, whereas here we use $NSc\subseteq NS$. The first is non-trivial, whereas the second is trivial (reducing more can only give you a subnetwork).} 
        and $N'S'$ is a subnetwork of $NS$, $N'S$ is a subnetwork of $NS$. 
    \end{itemize}

    \item \textbf{Otherwise.} The network $NS'$ contains neither a cherry, nor a reticulated cherry on $x$ and $y$. This means $NS=NS'$. As $N'S$ is a subnetwork of $N'S'$, and $N'S'$ is a subnetwork of $NS'$, $N'S$ is a subnetwork of $NS (=NS')$.  
\end{itemize}
\end{itemize}
\end{proof}

The following corollary follows immediately, as subnetworks of single-leaf networks are single-leaf networks.

\begin{corollary}\label{cor:ContainmentImpliesReductionTC}
Let $N$ and $N'$ be non-binary tree-child networks on the same leaf set, with $N'$ a subnetwork of $N$. If a TCS $S$ reduces $N$, then $S$ also reduces $N'$.
\end{corollary}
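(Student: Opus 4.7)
The statement is essentially immediate from Lemma~\ref{lem:SubnetworkReduction}, so the plan is short. First I would apply Lemma~\ref{lem:SubnetworkReduction} directly: since $N$ and $N'$ are tree-child networks with the same leaf set, $N'$ is a tree-child subnetwork of $N$, and $S$ is a TCS, the lemma gives that $N'S$ is a subnetwork of $NS$.

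Next, I would use the assumption that $S$ reduces $N$: by definition this means $NS$ is a single-leaf network, consisting of a root $\rho$, a single leaf $\ell$, and the edge $\rho\ell$. Consequently any subnetwork of $NS$ can only consist of a subset of these two nodes and this edge. Since $N'S$ is a phylogenetic network (the sequence of reductions applied to $N'$ produces one at each step, as each reduction either does nothing or yields a smaller valid network) with leaf set a subset of $L(NS) = \{\ell\}$, the only possibility is that $N'S$ is itself the single-leaf network on $\ell$.

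To make step two fully rigorous, I would briefly justify that $\ell \in L(N'S)$. Since $L(N)=L(N')$, the leaf $\ell$ is present in $N'$. Leaves are deleted only when a cherry $(x,y)$ is reduced, and then it is the first coordinate $x$ that is removed. Because $\ell$ is the surviving leaf of $NS$, $\ell$ cannot appear as the first coordinate of any pair of $S$ that reduces a cherry in $N$; combined with the TCS property (so $\ell$ does not appear as a first coordinate followed by a second coordinate), one checks that $\ell$ is also never deleted by applying $S$ to $N'$. Hence $\ell \in L(N'S)$, ruling out the degenerate empty case and forcing $N'S$ to equal the single-leaf network on $\ell$.

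The only ``obstacle'' is this bookkeeping about the surviving leaf in step three; it is not deep, but it is the one place where we need to invoke the TCS structure directly rather than just quote Lemma~\ref{lem:SubnetworkReduction}. Once this is established, concluding that $S$ reduces $N'$ is immediate from the definition of a reducing sequence.
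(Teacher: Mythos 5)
Your proposal is correct and follows essentially the same route as the paper, which derives the corollary immediately from Lemma~\ref{lem:SubnetworkReduction} together with the observation that a subnetwork of a single-leaf network is a single-leaf network. Your extra bookkeeping about the surviving leaf $\ell$ is not really needed: since $N'S$ is itself a phylogenetic network (so has at least one leaf) and is a subnetwork of $NS$, its leaf set is a nonempty subset of $\{\ell\}$, which already forces $N'S$ to be the single-leaf network on $\ell$.
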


Observe that in the setting of Corollary~\ref{cor:ContainmentImpliesReductionTC}, the two networks are reduced to the same single-leaf network, with the same leaf label.

\begin{theorem}\label{the:SubnetworkIffTCSReduces}
Let~$C$ be a reconstructible class of CPNs.
Let $N$ and $N'$ be tree-child networks in~$C$ on the same leaf set.
Then $N'$ is contained in $N$ if and only if \emph{any} TCS of $N$ reduces $N'$.
\end{theorem}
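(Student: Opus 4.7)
The plan is to prove both directions of the biconditional. The backward direction (``any TCS of $N$ reduces $N'$'' implies $N'$ is contained in $N$) is essentially immediate: since $N$ is tree-child, at least one TCS of $N$ exists; so the hypothesis yields a CPS reducing both $N$ and $N'$, and Lemma~\ref{lem:reductionImpliesContainmentSF} (applied in the reconstructible class $C$) gives the containment.

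For the forward direction, fix any TCS $S$ of $N$ and, by the definition of containment, pick a refinement $N'_r$ of $N'$ that embeds as a subnetwork of $N$. Note $L(N'_r) = L(N') = L(N)$. I would first verify that $N'_r$ inherits the tree-child property: since no leaves are lost in the subnetwork extraction, every surviving tree node $v$ of $N$ retains a non-reticulation child (the edge from $v$ to its tree-or-leaf child is not a reticulation edge, hence not deleted); suppressing a degree-$2$ tree node merely replaces its tree-or-leaf child by a deeper tree-or-leaf descendant; and no rr-edge is introduced by suppression, because each reticulation of the stack-free $N$ has only tree-node parents. Having established that $N'_r$ is a tree-child subnetwork of $N$ with the same leaf set, Corollary~\ref{cor:ContainmentImpliesReductionTC} applied to the triple $(N, N'_r, S)$ yields that $S$ reduces $N'_r$ to a single-leaf network. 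Then, since $N'_r$ is a refinement of $N'$ and a CPN, Lemma~\ref{lem:CPNrefinement} transfers minimal CPSs from $N'_r$ to $N'$: the minimal subsequence $S^\ast$ of $S$ that acts on $N'_r$ is a minimal CPS of $N'_r$, and hence of $N'$, so $S^\ast$ reduces $N'$.

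The main technical obstacle is to upgrade ``$S^\ast$ reduces $N'$'' to ``$S$ reduces $N'$''. Although pairs of $S$ acting on $N'_r$ also act on $N'$ with the same type (by the argument in the proof of Lemma~\ref{lem:CPNrefinement}), some pairs that are no-ops on $N'_r$ may still act on $N'$, because contracting edges to pass from $N'_r$ down to $N'$ can merge distinct parents of leaves and thereby create cherries or reticulated cherries not present in $N'_r$. I would handle these ``extras'' by a step-by-step induction on $|S|$ that maintains the invariant that the current state of $N'$ is a contraction of (a subnetwork of) the current state of $N'_r$, with some leaves possibly already removed by previously acted extras. The order-independence of pair-picking guaranteed by Theorem~\ref{thm:OrderDoesn'tMatter} then allows each extra acting pair to be absorbed into the minimal reducing subsequence without disturbing the eventual single-leaf outcome, forcing $N'S$ to be the same single-leaf network as $N'_r S$ and thus completing the proof.
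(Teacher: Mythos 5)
Your overall skeleton matches the paper's (whose proof is a one-line citation of Corollary~\ref{cor:ContainmentImpliesReductionTC} and Lemma~\ref{lem:reductionImpliesContainmentSF}), and you correctly identify and treat the main point the paper glosses over: containment only hands you a refinement $N'_r$ of $N'$ as a subnetwork of $N$, so one must check that $N'_r$ is tree-child before invoking Corollary~\ref{cor:ContainmentImpliesReductionTC}, and then descend from ``$S$ reduces $N'_r$'' to ``$S$ reduces $N'$''. Your tree-child verification for $N'_r$ is sound (the tree-path from any surviving tree node to a leaf consists of non-reticulation edges, so it survives deletion and clean-up, and stack-freeness prevents suppression from creating rr-edges).

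The gap is in your final step. Having shown that the acting subsequence $S^\ast$ reduces $N'$, you need that the \emph{full} sequence $S$ does; your proposed fix --- an induction with the invariant that the current state of $N'$ is ``a contraction of a subnetwork of'' the current state of $N'_r$, closed off by an appeal to Theorem~\ref{thm:OrderDoesn'tMatter} --- does not close. Theorem~\ref{thm:OrderDoesn'tMatter} only says a partial CPS whose every step acts can be extended to a minimal CPS; it says nothing about whether interleaving extra acting pairs (cherries of $N'$ created by the contractions, absent from $N'_r$) preserves the single-leaf outcome, and the stated invariant is not precise enough to push through the induction. The paper already contains the tool that closes this in one line: $S^\ast$ is itself a TCS (it has minimal length $|X|+r'-1$, hence is a CPS by Lemma~\ref{lem:OptimumCPN}, and the tree-child condition is inherited by subsequences), so Corollary~\ref{cor:TCSubsequences} applied to the tree-child network $N'$ with the TCSs $S$ and $S^\ast$ gives that $N'S$ is a subnetwork of $N'S^\ast$, which is a single-leaf network; hence $N'S$ is a single-leaf network and $S$ reduces $N'$. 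With that substitution your argument is complete.
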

\begin{proof}
Follows immediately from Corollary~\ref{cor:ContainmentImpliesReductionTC}, Lemma~\ref{lem:reductionImpliesContainment}, and Lemma~\ref{lem:reductionImpliesContainmentSF}.
\end{proof}
Note that in Theorem~\ref{the:SubnetworkIffTCSReduces}, it is necessary for the two networks to be contained in the same reconstructible class of CPNs.
Consider the networks in the~(\ref{Const:CherryResolved}, \ref{Const:RCherryUnresolved}) and the~(\ref{Const:CherryUnresolved}, \ref{Const:RCherryUnresolved}) classes in Figure~\ref{fig:CPNClass}, which are both tree-child and can be reduced by the same tree-child sequence.
The latter network is not contained in the former.

For semi-binary TCNs, we show that the notions of containment and subnetwork are equivalent.

\begin{lemma}\label{lem:Contain=SubnetSBSF}
Let~$N,N'$ be both semi-binary TCNs (TCNs of the $(\ref{Const:CherryResolved},\ref{Const:RCherrySF})$-class) on the same leaf-set.
Then~$N$ contains~$N'$ if and only if~$N'$ is a subnetwork of~$N$.
\end{lemma}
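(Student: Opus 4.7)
First, the backward implication is immediate: if $N'$ is a subnetwork of $N$, then taking $N'$ itself as a (trivial) refinement witnesses that $N$ contains $N'$.

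For the forward implication, I would start by applying Lemma~\ref{lem:SBSFdispaysIsBinaryResolvedSubnetwork} to obtain binary refinements $N_b$ of $N$ and $N'_b$ of $N'$ together with an embedding $\iota$ of $N'_b$ into $N_b$. The key structural observation is that, since both $N$ and $N'$ lie in the $(\ref{Const:CherryResolved},\ref{Const:RCherrySF})$-class, they are semi-binary and stack-free, so their binary refinements differ from themselves only by the resolution of each multi-reticulation of in-degree $k$ into a chain of $k-1$ binary reticulations joined by $k-2$ rr-edges. Consequently, the rr-edges of $N_b$ and of $N'_b$ occur exactly in these refinement chains, and the natural contraction maps $\pi \colon N_b \to N$ and $\pi' \colon N'_b \to N'$ precisely collapse them.

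The plan is then to descend $\iota$ through these contractions to an embedding $\tilde\iota$ of $N'$ into $N$, defined on nodes by $\tilde\iota(\pi'(v)) := \pi(\iota(v))$ and on edges by contracting the image paths of $\iota$ through $\pi$. Its well-definedness on nodes, together with the injectivity and edge-disjointness needed for an embedding, will follow once I establish the central structural claim: for every multi-reticulation $r'$ of $N'$, whose refinement chain in $N'_b$ is $r'_1 \to r'_2 \to \cdots \to r'_{k-1}$, the images $\iota(r'_1), \ldots, \iota(r'_{k-1})$ all sit within a single refinement chain of $N_b$ and are therefore identified by $\pi$. Granted this, identifications produced by $\pi$ and $\pi'$ line up perfectly, so the induced $\tilde\iota$ is an embedding of $N'$ into $N$.

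The main obstacle will be proving this chain-preservation claim. My approach would be to examine each rr-edge $r'_i r'_{i+1}$ of the chain in $N'_b$: its image under $\iota$ is a directed path between the reticulations $\iota(r'_i)$ and $\iota(r'_{i+1})$ in $N_b$. Because $N$ is stack-free, the only rr-edges of $N_b$ live within refinement chains, so every such path either stays inside one such chain or leaves it via a tree node before re-entering a reticulation. A careful accounting, using edge-disjointness of the $k-1$ image paths arising from the chain's rr-edges together with the image of $r'_{k-1}$'s unique out-edge (which must terminate at a tree node or leaf below the chain in $N_b$), would rule out the latter scenario and force the sequence $\iota(r'_1), \ldots, \iota(r'_{k-1})$ to occupy consecutive positions inside a single chain of $N_b$, completing the proof.
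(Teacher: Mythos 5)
Your backward direction is fine and your overall plan is reasonable, but there is a genuine gap at exactly the step you flag as the main obstacle: the chain-preservation claim. The mechanism you propose for it---edge-disjointness of the image paths of the chain's rr-edges together with the image of the bottom reticulation's out-edge---cannot suffice, because it never invokes the tree-child property of $N$ nor the hypothesis that $N$ and $N'$ have the same leaf-set, and the claim (and the lemma itself) is false without these. Concretely, suppose $N$ has binary reticulations $R_1,R_2$ separated by a tree vertex $t$ (edges $R_1\to t$ and $t\to R_2$) whose other out-edge leads to a leaf $l$; then an rr-edge of a refinement chain of $N'$ can map onto the path $R_1\to t\to R_2$ while all chain-incident image paths remain pairwise edge-disjoint, so your accounting produces no contradiction. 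This is precisely the configuration behind the counterexamples showing the statement fails when $N$ is not tree-child or when $l\notin L(N')$. What actually kills this configuration under the lemma's hypotheses is the paper's argument: since $N$ is tree-child, the intermediate tree vertex $t$ on the rtr-path admits a tt-path down to some leaf $l$; since the leaf-sets coincide, $l$ is a leaf of $N'$, so the embedding must reach $l$; and because every vertex of that tt-path strictly below $t$ has indegree one, the image of the pendant edge of $l$ is forced up the tt-path and must eventually either use the in-edge of $t$ or map a node of $N'_b$ to $t$ whose incoming edge again needs the in-edge of $t$---clashing with the image of the rr-edge and contradicting edge-disjointness. Without this tree-child/leaf-set step your proof of the central claim does not go through.

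Separately, you can avoid the contraction-descent machinery entirely, as the paper does: by the definition of containment, some refinement $N'_f$ of $N'$ embeds into $N$ itself (no refinement of $N$ is needed). Since $N'$ is semi-binary, any proper refinement only resolves reticulations and hence creates an rr-edge, and since $N$ is stack-free such an rr-edge must map to an rtr-path of $N$; the tree-child/leaf-set argument above rules this out, so $N'_f=N'$ and $N'$ is already a subnetwork of $N$. This direct route replaces your Lemma~\ref{lem:SBSFdispaysIsBinaryResolvedSubnetwork} detour, the contraction maps, and the chain-preservation claim with a single contradiction argument.
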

\begin{proof}
One direction is clear by definition of containment and subnetwork, so suppose that~$N'$ is contained in~$N$.
Then there exists some refinement~$N'_f$ of~$N'$ that is a subnetwork of~$N$.
If~$N'_f = N'$, then we are done, so there must exist an rr-edge~$e$ of~$N'_f$ that is mapped to an rtr-path~$P_e$ of~$N$ in the embedding.
By definition of tree-child networks, there exist tt-paths from each vertex to some leaf in~$N$.
Let~$t$ be a tree vertex on~$P_e$, and let~$l$ denote the leaf that can be reached from~$t$ via a tt-path.
Since this tt-path is not used in the embedding, this inherently implies that~$l$ does not appear as a leaf in the network~$N'_f$ and therefore in~$N'$.
This is a contradiction as~$N$ and~$N'$ have the same leaf-sets.
It follows then that reticulations in~$N'$ need not be refined to obtain~$N'_f$.
However, since all tree vertices in~$N'$ are binary, all refinements of~$N'$ arise from refining its reticulation vertices.
Then~$N'_f = N'$, and therefore~$N'$ is a subnetwork of~$N$.
\end{proof}

This lemma does not hold in general, for all other network classes for which the smaller network is not binary (see Figure~\ref{fig:ContainNoImplySubnet}).
Furthermore the lemma does not hold for when the larger network~$N$ is not a TCN.

\begin{figure}
    \begin{subfigure}[b]{0.32\textwidth}
    \centering
    \begin{tikzpicture}[every node/.style = {draw, circle, fill, inner sep = 0pt, minimum size = 2mm},
square/.style = {regular polygon, regular polygon sides = 4, minimum size = 3 mm}]
    \tikzset {edge/.style = {very thick, shorten >= -0.5 pt}}

        %Nodes

        \node[] (-1) at (0.0,-0.0) {};
        \node[] (0) at (0.0,-0.5) {};

        \node[draw=none, fill=none, left = 5mm of 0] {\large{$T$}};
        %Leaves

        \node[] (1) at (-1.5,-2.0) {};
        \node[draw=none, fill=none, below=1mm of 1] (leaf_1) {\large $1$};
        \node[] (2) at (-0.5,-2.0) {};
        \node[draw=none, fill=none, below=1mm of 2] (leaf_2) {\large $2$};
        \node[] (3) at (0.5,-2.0) {};
        \node[draw=none, fill=none, below=1mm of 3] (leaf_3) {\large $3$};
        \node[] (4) at (1.5,-2.0) {};
        \node[draw=none, fill=none, below=1mm of 4] (leaf_4) {\large $4$};

        %Edges

        \draw[edge] (-1) edge (0);
        \draw[edge] (0) edge (1);
        \draw[edge] (0) edge (2);
        \draw[edge] (0) edge (3);
        \draw[edge] (0) edge (4);
    \end{tikzpicture}
    \end{subfigure}
    \begin{subfigure}[b]{0.32\textwidth}
    \centering
    \begin{tikzpicture}[every node/.style = {draw, circle, fill, inner sep = 0pt, minimum size = 2mm},
square/.style = {regular polygon, regular polygon sides = 4, minimum size = 3 mm}]
    \tikzset {edge/.style = {very thick, shorten >= -0.5 pt}}

        %Nodes

        \node[] (-1) at (0.0,-0.0) {};
        \node[] (0) at (0.0,-0.5) {};
        \node[] (5) at (-0.5,-1.0) {};

        \node[draw=none, fill=none, left = 5mm of 0] {\large{$T_f$}};
        %Leaves

        \node[] (1) at (-1.5,-2) {};
        \node[draw=none, fill=none, below=1mm of 1] (leaf_1) {\large $1$};
        \node[] (2) at (-0.5,-2) {};
        \node[draw=none, fill=none, below=1mm of 2] (leaf_2) {\large $2$};
        \node[] (3) at (0.5,-2) {};
        \node[draw=none, fill=none, below=1mm of 3] (leaf_3) {\large $3$};
        \node[] (4) at (1.5,-2) {};
        \node[draw=none, fill=none, below=1mm of 4] (leaf_4) {\large $4$};

        %Edges

        \draw[edge] (-1) edge (0);
        \draw[edge] (0) edge (5);
        \draw[edge] (0) edge (3);
        \draw[edge] (0) edge (4);
        \draw[edge] (5) edge (1);
        \draw[edge] (5) edge (2);
    \end{tikzpicture}
    \end{subfigure}
    \begin{subfigure}[b]{0.32\textwidth}
    \centering
    \begin{tikzpicture}[every node/.style = {draw, circle, fill, inner sep = 0pt, minimum size = 2mm},
    square/.style = {regular polygon, regular polygon sides = 4, minimum size = 3 mm}]
    \tikzset {edge/.style = {very thick, shorten >= -0.5 pt}}

        %Nodes

        \node[] (-1) at (0.0,-0.0) {};
        \node[] (0) at (0.0,-0.5) {};
        \node[] (5) at (-0.5,-1.0) {};
        \node[] (6) at (0.5,-1.0) {};
        \node[] (7) at (-0.5,-1.5) {};
        \node[] (8) at (0.5,-1.5) {};
        \node[square] (9) at (0.0,-2.0) {};

        \node[draw=none, fill=none, left = 5mm of 0] {\large{$N$}};
        %Leaves

        \node[] (1) at (-1.0,-2.5) {};
        \node[draw=none, fill=none, below=1mm of 1] (leaf_1) {\large $1$};
        \node[] (2) at (0.0,-2.5) {};
        \node[draw=none, fill=none, below=1mm of 2] (leaf_2) {\large $2$};
        \node[] (3) at (1.0,-2.5) {};
        \node[draw=none, fill=none, below=1mm of 3] (leaf_3) {\large $3$};
        \node[] (4) at (2.0,-2.5) {};
        \node[draw=none, fill=none, below=1mm of 4] (leaf_4) {\large $4$};

        %Edges

        \draw[edge, red] (-1) edge (0);
        \draw[edge, red, bend right = 30] (0) edge (5);
        \draw[edge, bend left = 30] (0) edge (5);
        \draw[edge, bend right = 30] (0) edge (6);
        \draw[edge, red, bend left = 30] (0) edge (6);
        \draw[edge, red] (5) edge (7);
        \draw[edge, red] (6) edge (8);
        \draw[edge, red] (7) edge (1);
        \draw[edge, red] (7) edge (9);
        \draw[edge, red] (9) edge (2);
        \draw[edge] (8) edge (9);
        \draw[edge, red] (8) edge (3);
        \draw[edge, red, bend left] (0) edge (4);
    \end{tikzpicture}

    \end{subfigure}
    \caption{The tree~$T$ is contained in the tree-child network~$N$ using the refinement~$T_f$, but it is not a subnetwork of~$N$.
    In particular,~$T$ and~$N$ belong to the~$(\ref{Const:CherryUnresolved}, \ref{Const:RCherryStack})$ and the~$(\ref{Const:CherryUnresolved}, \ref{Const:RCherryUnresolved})$ classes.}
    \label{fig:ContainNoImplySubnet}
\end{figure}
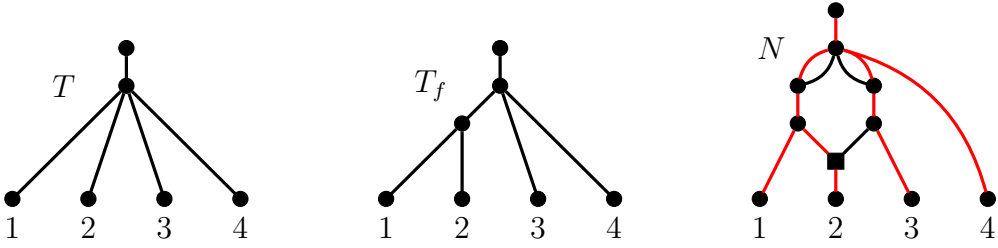

\begin{theorem}\label{the:SubnetworkIffTCSReduces3}
Let $N$ and $N'$ be semi-binary TCNs on the same leaf set.
Then $N'$ is a subnetwork of $N$ if and only if any TCS of $N$ reduces $N'$.
\end{theorem}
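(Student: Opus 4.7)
The plan is to obtain this theorem as a straightforward combination of results already established for semi-binary TCNs and for reconstructible CPN classes. The key observation is that the class of semi-binary TCNs sits inside the reconstructible class $(\ref{Const:CherryResolved},\ref{Const:RCherrySF})$ (cf.\ Corollary~\ref{cor:Unique(A,B)construction}), so Theorem~\ref{the:SubnetworkIffTCSReduces} already gives us the equivalence between \emph{containment} and reduction by any TCS. The only gap to bridge is the passage from containment to the stronger notion of subnetwork, which is exactly what Lemma~\ref{lem:Contain=SubnetSBSF} provides in the semi-binary TCN setting.

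For the forward direction, assume $N'$ is a subnetwork of $N$. Since $N$ and $N'$ are non-binary TCNs on the same leaf set, Corollary~\ref{cor:ContainmentImpliesReductionTC} directly yields that any TCS $S$ that reduces $N$ also reduces $N'$. Note that this step does not even require the semi-binary hypothesis; the hypothesis is used only in the other direction.

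For the reverse direction, assume that some (equivalently, by Theorem~\ref{the:SubnetworkIffTCSReduces}, any) TCS of $N$ reduces $N'$. Applying Theorem~\ref{the:SubnetworkIffTCSReduces} in the reconstructible class $(\ref{Const:CherryResolved},\ref{Const:RCherrySF})$ of semi-binary TCNs, we conclude that $N'$ is contained in $N$. Since $N$ and $N'$ are semi-binary TCNs on the same leaf set, Lemma~\ref{lem:Contain=SubnetSBSF} upgrades containment to the statement that $N'$ is a subnetwork of $N$, as desired.

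There is no real obstacle here: the theorem is essentially a corollary of Theorem~\ref{the:SubnetworkIffTCSReduces} and Lemma~\ref{lem:Contain=SubnetSBSF}. The only subtle point is to verify that semi-binary TCNs are closed under the relevant operations so that one may indeed invoke Theorem~\ref{the:SubnetworkIffTCSReduces} within the reconstructible class $(\ref{Const:CherryResolved},\ref{Const:RCherrySF})$; this is immediate from the definition of semi-binary and the fact that every tree-child network is stack-free. All the heavy lifting (the Lemma~\ref{lem:reductionImpliesContainmentSF} side for semi-binary networks with different leaf sets, and the subtle refinement argument of Lemma~\ref{lem:Contain=SubnetSBSF} which uses the tree-child structure to show that no reticulation need be refined when the leaf sets coincide) has already been carried out in the preceding sections.
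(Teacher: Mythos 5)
Your proof is correct and takes essentially the same route as the paper: the paper derives Theorem~\ref{the:SubnetworkIffTCSReduces3} directly from Theorem~\ref{the:SubnetworkIffTCSReduces} (applied in the reconstructible class $(\ref{Const:CherryResolved},\ref{Const:RCherrySF})$) together with Lemma~\ref{lem:Contain=SubnetSBSF}, exactly as you do, with your use of Corollary~\ref{cor:ContainmentImpliesReductionTC} for the forward direction being the same ingredient the paper already relies on inside Theorem~\ref{the:SubnetworkIffTCSReduces}.
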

\begin{proof}
Follows immediately from Theorem~\ref{the:SubnetworkIffTCSReduces} and Lemma~\ref{lem:Contain=SubnetSBSF}
\end{proof}
Note that we could assume $N'$ is non-binary, but to be a subnetwork of a semi-binary network, it has to be semi-binary as well, because $N$ and $N'$ are both TCNs on the same leaf set.
Note that Theorem~\ref{the:SubnetworkIffTCSReduces3} is no longer true if we allow for the networks to have different leaf-sets.
Let~$N'$ be the cherry on leaf-set~$\{1,2\}$, and let~$N$ be the balanced tree on leaf-set~$\{1,2,3,4\}$ with cherries~$(1,3)$ and~$(2,4)$.
Then both networks are semi-binary tree-child, and~$N'$ is a subnetwork of~$N$.
However, the TCS~$(1,3),(2,4),(3,4)$ of~$N$ does not reduce~$N'$.

\subsection{Order doesn't matter in TCSs}
Theorem~\ref{thm:OrderDoesn'tMatter} states that we may pick cherries from a CPN in any order and still obtain a minimal CPS. In this section, we show that this also holds for TCSs.

\begin{lemma}\label{lem:TCseqSubsequence}
Let $N$ be a non-binary TCN, $S$ a partial TCS and $c$ an ordered pair of leaves such that $cS$ is also a partial TCS. Then $NcS$ is a subnetwork of $NS$.
\end{lemma}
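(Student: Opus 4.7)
The plan is to induct on the length of $S$, using Lemma~\ref{lem:SubnetworkAfterReduction} as the base case and then extending the subnetwork relation one pair at a time. When $|S|=0$, the claim $NcS=Nc$ is a subnetwork of $N=NS$ is precisely Lemma~\ref{lem:SubnetworkAfterReduction}. For the inductive step, write $S=S'd$ where $d$ is the final pair; since $cS'$ is a prefix of the partial TCS $cS$, it is itself a partial TCS, and the inductive hypothesis yields that $NcS'$ is a subnetwork of $NS'$. It then suffices to show that applying $d$ to both networks preserves the subnetwork relation.

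To handle this step, I would adapt the case analysis in the proof of Lemma~\ref{lem:SubnetworkReduction} almost verbatim, with a single generalization: there both networks are assumed to share their leaf sets, whereas here $NcS'$ may have one fewer leaf than $NS'$ (precisely in the case that $c=(x,y)$ is a cherry in $N$, when leaf $x$ is removed by the reduction by $c$). The cases split according to whether $d$ forms a cherry or a reticulated cherry in each of the two networks, and, in the reticulated-cherry sub-case, on whether the reticulation edge $p_v p_u$ is used by the embedding; in each case, the embedding of $NcS'd$ into $NS'd$ is obtained by updating the embedding of $NcS'$ into $NS'$ in exactly the way described in the proof of Lemma~\ref{lem:SubnetworkReduction}.

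The only new subtlety, arising from the possibly differing leaf sets, is that the pair $d$ might involve the missing leaf $x$. Since $cS$ is a partial TCS and $x$ is the first coordinate of $c$, $x$ does not appear as a second coordinate anywhere in $S$; in particular $d$ is not of the form $(z,x)$. If on the other hand $d=(x,z)$, then $d$ does nothing to $NcS'$, while it may reduce a cherry or reticulated cherry of $NS'$ involving $x$. The tree-child condition on $cS$ ensures that no edge incident to the leaf $x$ in $NS'$ lies in the embedding of $NcS'$, so the edges removed from $NS'$ by $d$ are disjoint from the image of the embedding, and the embedding of $NcS'=NcS'd$ into $NS'd$ is inherited without change.

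The main obstacle is the bookkeeping in the reticulated-cherry sub-case, where one must track whether the embedding uses the reticulation edge being deleted; but this is exactly the subtlety already handled in Lemma~\ref{lem:SubnetworkReduction}, and the same fix (either the embedding also loses a corresponding edge and is updated accordingly, or it does not and is inherited unchanged) applies here.
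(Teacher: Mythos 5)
Your overall strategy---induct along the sequence and show that each single reduction preserves the subnetwork relation, using Lemma~\ref{lem:SubnetworkAfterReduction} as the base case and the case analysis of Lemma~\ref{lem:SubnetworkReduction} for the step---is the same as the paper's, but there is a genuine gap in the way you dispose of the differing leaf sets. You claim that $NcS'$ can be missing at most the single leaf $x$, and only when $c=(x,y)$ is a cherry in $N$. This is false: reducing a reticulated cherry $c=(x,y)$ can create a new cherry involving $y$ in $Nc$ that is not reducible in $N$ (for instance, suppressing the parent $p_y$ can make $y$ the sibling of a leaf $z$), and then a later pair $(y,z)$ of $S$ deletes $y$ on the $Nc$-side only; iterating, $L(NS')\setminus L(NcS')$ may contain several leaves other than $x$, without $c$ ever being a cherry. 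Consequently the dangerous case---$d=(u,v)$ with $u\in L(NcS')$ but $v\notin L(NcS')$, where $(u,v)$ is a cherry of $NS'$, or a reticulated cherry of $NS'$ whose reticulation edge is used by the embedding---is not covered: your patch only rules out a missing second coordinate equal to $x$, and the ``almost verbatim'' appeal to Lemma~\ref{lem:SubnetworkReduction} does not cover it either, because the first case of that proof explicitly uses the equal-leaf-set hypothesis to conclude that the second coordinate of the pair is still present in the smaller network. If this case could occur, reducing $d$ would remove from $NS'$ an edge used by the embedding without removing a corresponding edge of $NcS'$ (or would delete the leaf $u$ from $NS'$ while $NcS'$ keeps it), and the subnetwork relation would break.

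The repair is to generalize the observation you already make for $x$: a leaf can disappear from the smaller network only by being the first coordinate of a cherry reduction, i.e., the first coordinate of some earlier pair of $cS'$; since $cS=cS'd$ is a partial TCS, no such leaf can occur as the second coordinate of $d$. Hence the second coordinate of $d$ is always a leaf of $NcS'$, which is precisely the contradiction with the tree-child condition that the paper derives and which is the heart of its proof; with that fact in hand, the case analysis of Lemma~\ref{lem:SubnetworkReduction} does go through (your treatment of a missing first coordinate is fine, although the relevant point there is simply that no embedding path can use an edge incident to, or passing through the parent of, a leaf not in $L(NcS')$, which needs no tree-child condition).
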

\begin{proof}
We consider the networks $NcS_{[:j]}$ and $NS_{[:j]}$ for $j\geq 0$. We prove, using induction on~$j$, that $NcS_{[:j]}$ is a subnetwork of $NS_{[:j]}$. Obviously, this is true for $j=0$, as $Nc$ is a subnetwork of $N$.

Write $S_{j+1}=(x,y)$ for the~$j+1$-th element in the sequence. First note that if $x$ is not a leaf of $NcS_{[:j]}$, then $NcS_{[:j]}$ is still a subnetwork of $NS_{[:j]}(x,y)$, as the embedding of $NcS_{[:j]}$ in $NS_{[:j]}$ does not use the removed edge that leads only to $x$. Therefore $NcS_{[:j+1]}=NcS_{[:j]}$ is a subnetwork of $NS_{[:j+1]}= NS_{[:j]}(x,y)$.

Now assume $x$ is a leaf of $NcS_{[:j]}$. 
If reducing $NS_{[:j]}$ with $S_{j+1}=(x,y)$ does not remove an edge $e$ used by the embedding of $NcS_{[:j]}$, then $NcS_{[:j+1]}$ is still a subnetwork of $NS_{[:j+1]}$. 

So, we now assume reducing $S_{j+1}=(x,y)$ in $NS_{[:j]}$ removes an edge $e$ used by the embedding of $NcS_{[:j]}$.
Suppose for a contradiction that $NcS_{[:j+1]}$ is not a subnetwork of $NS_{[:j+1]}$.
For this to be true, the edge of $NcS_{[:j]}$ mapped to the path containing $e$ must not be removed by reducing $S_{j+1}$. This can only happen if~$NcS_{[:j]}$ does not contain the reducible pair~$(x,y)$, whereas~$NS_{[:j]}$ does have it.
The reducible pair $(x,y)$ in $NS_{[:j]}$ must be a reticulated cherry, as the only other option is that it is a cherry, but then, $NcS_{[:j]}$ also contains this cherry, as $x$ and $y$ are both part of $NcS_{[:j]}$ and $NcS_{[:j]}$ is displayed by $NS_{[:j]}$.
Hence,~$NcS_{[:j]}$ does not contain a cherry nor a reticulated cherry~$(x,y)$, and~$NS_{[:j]}$ contains a reticulated cherry~$(x,y)$.
As $NcS_{[:j]}$ is a subnetwork of $NS_{[:j]}$ whose embedding uses the reticulation edge of the reticulated cherry $(x,y)$, the leaf $y$ must have been deleted from $NcS_{[:j]}$ already. This means $y$ must be the first element of a pair in the partial TCS $cS_{[:j]}$. However, $cS$ is a partial TCS with $y$ as the second coordinate of $S_j$, so we have a contradiction. We conclude that if the reduction removes an edge from $NS_{[:j]}$, it also removes the corresponding edge (i.e., the edge of~$NcS_{[:j]}$ that was mapped to the path of~$NS_{[:j]}$ containing the edge) from $NcS_{[:j]}$. 

We conclude that $NcS_{[:j+1]}$ is a subnetwork of $NS_{[:j+1]}$.
%\delete{Now assume $x$ is a leaf of $NcS_{[:j]}$. Suppose for a contradiction that reducing $NS_{[:j]}$ with $S_{j+1}=(x,y)$ removes an edge used for the embedding of $NcS_{[:j]}$, but reducing $NcS_{[:j]}$ with $(x,y)$ does not remove the edge corresponding to the path including the removed edge of $NS_{[:j]}$. This means $NcS_{[:j]}$ must not have the reducible pair $(x,y)$, whereas $NS_{[:j]}$ does have it. As $NcS_{[:j]}$ is a subnetwork of $NS_{[:j]}$ whose embedding uses the reticulation edge of the reticulated cherry $(x,y)$, the leaf $y$ must have been deleted from $NcS_{[:j]}$ already. This means $y$ must be the first element of a pair in the partial TCS $cS_{[:j]}$. However, $cS$ is a partial TCS with $y$ as the second coordinate of $S_j$, so we have a contradiction. We conclude that if the reduction removes an edge from $NS_{[:j]}$, it also removes the corresponding edge from $NcS_{[:j]}$. This implies that $NcS_{[:j+1]}$ is a subnetwork of $NS_{[:j]}$.}
\end{proof}

\begin{lemma}\label{lem:intermediateTCSforSubsequence}%\todoRemie{I agree we need this, I'm not happy with using $T$ for a TCS, though. Can't we just use $S=S_0,\ldots,S_k=S'$? Or do we then get to many S's? YM: Yea I think we do get too many S's, especially when we would write~$S^{|S|-|S'|}$.}
Let~$S$ and~$S'$ be TCSs such that~$S'$ is a subsequence of~$S$.
Then there exist a sequence of TCSs~$S=\Sigma^0,\dots,\Sigma^{|S|-|S'|}=S'$ such that~$|\Sigma^i| = |\Sigma^{i-1}| - 1$ for~$i\in[|S|-|S'|]$.
\end{lemma}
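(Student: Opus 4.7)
I'll sketch a proof by induction on $|S|$.

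If $|S|=0$ or $S=S'$, the trivial chain of length $|S|-|S'|$ works. Otherwise we distinguish two cases according to whether $S_1\in S'$.

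\textbf{Case 1:} $S_1\notin S'$. I claim that $\tilde S := S_{[2:]}$ is a TCS. Indeed, both the CPS condition and the tree-child condition only constrain positions strictly later than the current one; consequently, deleting the first element of a TCS leaves a TCS. Since $S_1\notin S'$, $S'$ is still a subsequence of $\tilde S$, so the induction hypothesis applied to $(\tilde S, S')$ produces a chain $\tilde S = \Sigma^1, \Sigma^2,\ldots,\Sigma^{|S|-|S'|}=S'$. Prepending $\Sigma^0 := S$ completes the chain.

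\textbf{Case 2:} $S_1\in S'$, hence $S'_1 = S_1$. Write $T := S_{[2:]}$ and $T' := S'_{[2:]}$; both are TCSs with $T'\subseteq T$ and $|T|<|S|$. By the induction hypothesis, there is a chain $T = \Theta^0,\Theta^1,\ldots,\Theta^{|S|-|S'|}=T'$. I would define $\Sigma^i := S_1\Theta^i$; then $\Sigma^0 = S$, $\Sigma^{|S|-|S'|} = S'$, and lengths decrease by one at each step, so it remains to verify that each $\Sigma^i$ is a TCS. As in Case~1, the tree-child condition for $S_1$ is automatic since $x_1$ does not appear as a second coordinate anywhere in $T$, hence nowhere in $\Theta^i\subseteq T$; the conditions internal to $\Theta^i$ are inherited. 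The delicate check is the CPS requirement for $S_1$ inside $S_1\Theta^i$: $y_1$ must appear as a first coordinate in $\Theta^i$ or equal the second coordinate of the last pair of $\Theta^i$.

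The hard part will be guaranteeing this last condition at every intermediate $\Theta^i$, which may fail for an arbitrary chain produced by the induction hypothesis. I would therefore strengthen the induction hypothesis to track a distinguished leaf $\ell$ (here $\ell = y_1$) whose CPS-witness is present in both endpoints of the chain, and produce a chain preserving this witness throughout. At each removal step we avoid discarding the last first-coordinate provider of $\ell$ unless the second coordinate of the new last pair already equals $\ell$; such a safe choice always exists, because any first-coord provider of $\ell$ that lies in $S'$ persists in every intermediate $\Theta^i$ (since $S'\subseteq\Theta^i$), and if $\ell$ witnesses its CPS condition in $S'$ only via the last pair of $S'$, then the elements of $T\setminus T'$ whose positions in $S$ lie beyond the last element of $T'$ can be scheduled to be removed in decreasing order of position, so that the last pair of $\Theta^i$ becomes that of $T'$ before any provider in $T\setminus T'$ is deleted. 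Applying this strengthened statement with $\ell = y_1$ supplies the chain demanded in Case~2, completing the induction.
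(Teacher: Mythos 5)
Your Case~1 and the skeleton of Case~2 are sound, but the proof is incomplete exactly at the point you yourself flag as ``the hard part'', and the sketched repair does not close it. Two concrete problems. First, strengthening the induction hypothesis by a \emph{single} distinguished leaf does not propagate through your own recursion: in the subcall for $(T,T')$ with distinguished leaf $\ell=y_1$, if again $T_1=T'_1$ you peel once more, and the next subcall must preserve witnesses for $\ell$ \emph{and} for the new second coordinate simultaneously; after peeling a common prefix of length $k$, up to $k$ leaves need simultaneous witnesses, so the hypothesis would have to carry a whole set of leaves (equivalently, the untouched common prefix), and your ``a safe choice always exists'' argument never addresses simultaneous constraints. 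Second, the scheduling claim fails. Take $S_1=(p,b)$, $T=(a,b),(c,d),(b,d)$, $T'=(a,b)$, so $\ell=b$ is witnessed in $T'$ only via its last pair; deleting the tail elements beyond $T'$ in decreasing order of position removes $(b,d)$ first, and $S_1\Theta^1=(p,b),(a,b),(c,d)$ is not even a CPS ($b$ occurs as a second coordinate but neither as a later first coordinate nor as the second coordinate of the last pair), so the schedule destroys the very witness it was meant to protect; here the increasing order works. Conversely, for $T=(a,y),(b,y),(y,c),(d,c),(e,c)$ with $T'=(a,y),(b,y)$ the increasing order fails (deleting $(y,c)$ first orphans $y$) while the decreasing order works. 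Hence no fixed deletion order does the job; the order must be chosen adaptively, coordinating the ``second coordinate of the last pair'' clause across all affected leaves at once, and this adaptive argument is precisely what your proposal leaves unproven. So Case~2 is a genuine gap, not a routine verification.

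For comparison, the paper does not peel off the first element at all: it deletes the pairs of $S$ absent from $S'$ in left-to-right order and argues that any CPS violation created by a deletion would sit in a prefix of $S'$ and persist to $S'$. Note that the subtlety you identified---a violated pair being rescued later because some pair \emph{becomes} the last pair---is exactly the delicate point of that argument as well, so your instinct about where the difficulty lies is correct; but an instinct plus an unverified scheduling heuristic does not yet constitute a proof.
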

\begin{proof}
Let~$S''$ denote the sequence (not necessarily a CPS) obtained by taking the elements of~$S$ which do not occur in~$S'$ (in order).
For~$i\in [|S''|]$, let~$f(i)$ denote the position where~$\Sigma^{i-1}_{f(i)} = S_i''$, which is the element in the sequence that will be deleted to obtain~$\Sigma^{i+1}$.
% Let~$f(0) = 0$.
We claim that if~$\Sigma^{i}$ is a TCS, then we can obtain a TCS~$\Sigma^{i+1}$ by removing the element~$S''_i = \Sigma^{i-1}_{f(i)}$.
%which occurs for the first time in~$\Sigma^i_{[f(i-1):]}$.
Upon repeating this for all~$i$, we obtain the sequence of TCSs that we are after.

To show this, suppose for a contradiction that~$\Sigma^i$ is a TCS, but removing~$S''_i$ from~$\Sigma^i$ results in a sequence~$\Sigma^{i+1}$ that is not a TCS.
Note first that the `tree-child property' of the sequence is retained when deleting elements from TCSs: indeed, every leaf appearing as a first coordinate
still does not appear as a second coordinate in the rest of the sequence.
So we must have that~$\Sigma^{i+1}$ is not a CPS, that is, there exists a leaf that appears as a second coordinate, but not as a first coordinate in the remaining sequence.
%Let~$j$ denote the position in the TCS~$\Sigma^i$ where~$\Sigma^i_j = S''_i$.
Then the CPS property is violated for an element which occurs in~$\Sigma^i_{[:f(i)-1]}$.
Note that~$\Sigma^i_{[:f(i)-1]}$ forms the first~$f(i)-1$ elements of~$S'$, since we defined~$S''$ as the elements of~$S$ which do not occur in~$S'$ in order.
Furthermore, note that at each step we do not add elements to the sequence.
Then~$\Sigma^{|S|-|S'|} = S'$ cannot be a CPS, let alone a TCS, which contradicts our assumption that~$S'$ was a TCS.
Therefore~$\Sigma^{i+1}$ must be a TCS and the lemma follows.
\end{proof}

\begin{corollary}\label{cor:TCSubsequences}
Let $N$ be a \remie{non-binary} TCN with $S$ and $S'$ TCSs such that $S'$ is a subsequence of $S$. Then $NS$ is a subnetwork of $NS'$.
\end{corollary}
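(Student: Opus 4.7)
The plan is to reduce to the case of removing a single element, repeatedly, and then invoke Lemma~\ref{lem:TCseqSubsequence} to compare the resulting networks at each step.

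First, apply Lemma~\ref{lem:intermediateTCSforSubsequence} to obtain a chain of TCSs
\[ S = \Sigma^0, \Sigma^1, \ldots, \Sigma^{|S|-|S'|} = S', \]
in which $\Sigma^{i}$ is obtained from $\Sigma^{i-1}$ by deleting a single ordered pair at some position $k_i$. By transitivity of the subnetwork relation, it suffices to prove that $N\Sigma^{i}$ is a subnetwork of $N\Sigma^{i+1}$ for every $i$.

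Fix such an $i$, write $\Sigma = \Sigma^{i}$ and $\Sigma' = \Sigma^{i+1}$, and let $k=k_{i+1}$ be the index of the element removed, so that $\Sigma = \Sigma_{[:k-1]}\cdot (x,y) \cdot \Sigma_{[k+1:]}$ and $\Sigma' = \Sigma_{[:k-1]}\cdot \Sigma_{[k+1:]}$. Set $M := N\Sigma_{[:k-1]}$, which is a (non-binary) TCN since reducing a TCN by a reducible pair yields a TCN, and set $T := \Sigma_{[k+1:]}$ and $c := (x,y)$. Both $cT$ and $T$ are partial TCSs, because they are suffixes of $\Sigma$ and $\Sigma'$ respectively, and these are TCSs. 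Then Lemma~\ref{lem:TCseqSubsequence} applied to $M$, $T$, and $c$ yields that $McT = N\Sigma$ is a subnetwork of $MT = N\Sigma'$.

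Chaining these inclusions along $i = 0, 1, \ldots, |S|-|S'|-1$ and using that the subnetwork relation is transitive (a composition of embeddings is an embedding) gives that $NS = N\Sigma^{0}$ is a subnetwork of $N\Sigma^{|S|-|S'|} = NS'$, as required. The only step that required any work was Lemma~\ref{lem:TCseqSubsequence}; the present argument is essentially bookkeeping that lets us reduce removal of an arbitrary subset of the sequence to iterated removal of a single entry, while keeping both endpoints TCSs at every intermediate stage.
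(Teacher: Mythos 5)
Your proof is correct and follows essentially the same route as the paper: reduce to single-element deletions via Lemma~\ref{lem:intermediateTCSforSubsequence}, then apply Lemma~\ref{lem:TCseqSubsequence} at the deletion point (with the reduced network $N\Sigma_{[:k-1]}$ playing the role of the TCN) and chain by transitivity of the subnetwork relation. The only difference is that you spell out the transitivity bookkeeping and the tree-child status of the intermediate network, which the paper leaves implicit.
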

\begin{proof}
By Lemma~\ref{lem:intermediateTCSforSubsequence}, we only have to prove this for $S'$ of length one less than $S$, so suppose $S=S'_{[:i]}sS'_{[i+1:]}$, where the first or the last part may be empty.

We consider $NS=NS'_{[:i]}sS'_{[i+1:]}$, by applying the three parts of the sequence separately. First we note that by writing $N':=NS'_{[:i]}$, we have $NS=N'sS'_{[i+1:]}$ and $NS'=N'S'_{[i+1:]}$. Because $S$ is a TCS, the sequence $sS'_{[i+1:]}$ is in particular also a TCS. Hence, by Lemma~\ref{lem:TCseqSubsequence}, $NS$ is a subnetwork of $NS'$.
\end{proof}

The following proposition says that we can find a TCS for a TCN by picking an arbitrary reducible pair, reducing it, and repeating this process.
\begin{proposition}\label{prop:TCSOrder}
%\todoRemie{Can this be made to work for non-binary TCNs? YM: Probably, but Observation 4 is not true in general for non=binary. We may have to show that there exists some TCS that contains (x,y) from a large number of cases.}
Let $N$ be a \remie{semi-binary} TCN and $S$ a partial TCS with every element reducing something in $N$. Then there exists a minimal TCS for $N$ starting with $S$.  
\end{proposition}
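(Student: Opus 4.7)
The plan is to start with the observation that $NS$ is itself a semi-binary TCN, then to invoke the greedy construction from the existence proof of a TCS for a TCN, but initialised at $NS$ with the tree-path descendants $t(v)$ chosen to respect the constraints imposed by $F(S)$. The resulting extension $T$ will be a TCS of $NS$ whose second coordinates avoid $F(S) \cap L(NS)$, so that $ST$ is a TCS of $N$ starting with $S$. Minimality will be automatic since every element of $S$ reduces by assumption and every element of $T$ reduces in $NS$ by the greedy construction. Showing that $NS$ is semi-binary tree-child is straightforward: each element of $S$ is a reducible pair in the corresponding intermediate network (since every element of $S$ reduces), and reducing a reducible pair from a semi-binary TCN stays within this class.

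The crux is to verify that the invariant required by the existence construction can be satisfied at the state $NS$, namely that for each $l \in F(S)$ the only nodes $v$ with $t(v) = l$ are $l$ itself and (if applicable) the reticulation parent of $l$. For this I would establish by induction on $|S|$ the following structural claim about $NS$: every $l \in F(S) \cap L(NS)$ has either (a) a reticulation as its parent in $NS$, or (b) a tree vertex as its parent whose other child is a tree node or leaf. The base case $|S|=0$ is vacuous, and for the inductive step, writing $S = S''c$, the interesting case is a reticulated cherry $c = (x,y)$ with $p_x$ a binary reticulation: then $x$'s new parent in $NS$ is the former grandparent $g_x$ of $x$, and since $g_x$ had $p_x$ as a reticulation child in the semi-binary TCN $NS''$, tree-childness forces $g_x$'s other child to be a tree node or leaf, yielding (b) for $x$. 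For an older $l \in F(S'')$, the TCS condition on $S$ together with semi-binarity rules out $l$ being the second coordinate of $c$ or equal to $x$ or $y$, so $l$'s parent $w$ cannot be the suppressed vertex $p_y$ (nor, in the binary-reticulation subcase, $p_x$); a short case analysis then shows that any change to $l$'s sibling caused by $c$ (namely replacing $p_y$ or a binary $p_x$ by the leaf $y$ or $x$ respectively) preserves the sibling's status as a tree node or leaf, so the inductive hypothesis carries through.

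Given the structural claim, the invariant is satisfiable at $NS$: in case (a) the tree paths to $l$ cannot traverse $l$'s reticulation parent as an internal node, so only $l$ and its reticulation parent have $l$ as a tree-path descendant; in case (b) the tree-or-leaf sibling of $l$ supplies an alternative tree-path descendant, and this alternative can be propagated up through all ancestors of $l$ to define $t(v) \neq l$ for every $v$ other than $l$ itself. With the invariant in place, the greedy construction from the existence proof applies verbatim to $NS$ and produces the desired $T$; the concatenation $ST$ then satisfies the TCS condition since the condition holds within $S$ by assumption, within $T$ by construction, and across the boundary because no element of $F(S)$ appears as a second coordinate in $T$. The main obstacle is the induction in the structural claim: tracking how the immediate neighbourhood of each claimed leaf evolves under successive reductions, where the semi-binary and tree-child hypotheses are used essentially—semi-binarity to bound the arities involved in the sibling arguments, and tree-childness to force $g_x$'s remaining child to be a tree node or leaf after the suppression of a binary reticulation.
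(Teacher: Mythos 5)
Your strategy is genuinely different from the paper's: the paper proves this by induction on $|S|$, using Observation~\ref{obs:ChangingReducibleCherries} to locate $(x,y)$ or $(y,x)$ inside a TCS $SS'$ obtained from the induction hypothesis and then invoking Corollary~\ref{cor:TCSubsequences} to move it forward; you instead restart the greedy existence construction at the state $NS$ and try to re-establish its invariant from scratch. The strategy is plausible, but as written there is a gap at the step ``given the structural claim, the invariant is satisfiable at $NS$.'' The invariant requires a \emph{single} assignment $t$ with $t^{-1}(l)\subseteq\{l,\text{retic.\ parent of }l\}$ simultaneously for \emph{all} $l\in F(S)$. Your case (b) only guarantees that the sibling of $l$ is a tree node or a leaf; it does not guarantee that this sibling yields a tree-path to a leaf \emph{outside} $F(S)$. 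If the sibling is itself a leaf $l'\in F(S)$ (so a tree node $w$ has two leaf children in $F(S)$), then every tree-path from $w$ ends in $F(S)$ and no admissible $t(w)$ exists, so the greedy construction cannot be initialised. Your structural claim is perfectly consistent with this configuration --- both $l$ and $l'$ satisfy (b) --- and your own inductive step explicitly allows an $F(S'')$-leaf's sibling to be ``replaced by the leaf $x$,'' where $x$ is the first coordinate of the last pair and hence lies in $F(S)$. The same problem recurs when the sibling is a tree node: you would need to know it has a tree-path avoiding $F(S)$, which the claim does not provide.

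The gap is repairable but requires a strictly stronger invariant, e.g.\ ``(b$'$) the other child of $l$'s tree-node parent is a tree node or a leaf \emph{not in} $F(S)$,'' carried through the induction. (One can check it does go through: if $g_x$'s other child were a leaf of $F(S'')$, that leaf would have a reticulation sibling $p_x$ in $NS''$, contradicting the induction hypothesis; and a sibling replaced by $y$ is never in $F(S)$ because $y$ occurs as a second coordinate of the last pair.) Without this strengthening --- and without the accompanying argument that a tree-node sibling always reaches a leaf outside $F(S)$ --- the conclusion that the invariant of the existence lemma holds at $NS$ does not follow from what you have proved. By contrast, the paper's route avoids this bookkeeping entirely by manipulating an already-known TCS $SS'$ for $N$ rather than rebuilding one.
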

\begin{proof}
%THE thing to take care of in this proof is that we cannot just add any sequence to $S$, they must together be TC again.
We prove this using induction on the length $l$ of $S$. If $l$ is $0$, then, \remie{as each TCN has a minimal TCS,} there is a TCS $S'=SS'$ for $N$, which starts with $S$. Now suppose for any partial TCS $S$ of length $l<L$ reducing something in $N$ in every step, there is a minimal TCS $SS'$ of $N$. We prove that the same holds for any such sequence of length $L$.

Let $S(x,y)$ be such a sequence of length $L$ (where $(x,y)$ is the last element of this sequence). Because each element of $S(x,y)$ reduces something in $N$, we know in particular that $(x,y)\in\mathcal{C}(NS)$. By the induction hypothesis, there is a TCS $SS'$ for $N$ starting with $S$. The part $S'$ of this sequence must contain an element $(x,y)$ or $(y,x)$ as $N$ is semi-binary (Observation~\ref{obs:ChangingReducibleCherries}). Let $S'_i$ be the first occurrence of such an element. 

Each of the intermediate networks $NSS'_{[:j]}$ for $j< i$ has the reducible pair $(x,y)$. This means the only pairs involving $x$ reducing something in these networks have $x$ as first coordinate, or are equal to $(y,x)$. As $S'_i$ is the first occurrence of $(x,y)$ or $(y,x)$ in $S'$, all $S'_j$ with $j<i$ cannot have $x$ as second coordinate. This means that $S(x,y)S'$ is a TCS, and it reduces $N$ by Corollary~\ref{cor:TCSubsequences}. 

Note that each element of $S(x,y)$ reduces something in $N$. This means that there is an element of $S'$ that reduces nothing in $N$ in the sequence $S(x,y)S'$ (otherwise $N$ has $k$ as well as $k+1$ reticulations). Removing this element gives a minimal TCS for $N$ starting with~$S(x,y)$.
\end{proof}

\begin{proposition}
Let~$N$ be a non-binary TCN and~$S$ a partial TCS with every element reducing something in~$N$.
Then there exists a minimal TCS for~$N$ starting with~$S$.
\end{proposition}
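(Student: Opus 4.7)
The plan is to induct on the length $L$ of the partial TCS $S$, mirroring the semi-binary argument of Proposition~\ref{prop:TCSOrder} but treating the extra reducible pairs that appear in non-binary cherries. The base case $L=0$ reduces to the existence of a minimal TCS for any non-binary TCN, established earlier in this section. For the inductive step, write $S=S_0(x,y)$ with $|S_0|=L-1$ and apply the inductive hypothesis to the prefix $S_0$, obtaining a minimal TCS $S_0T$ for $N$. Since $(x,y)$ reduces something in $NS_0$, it lies in $\mathcal{C}(NS_0)$; let $i$ be the smallest index with $(x,y)\notin\mathcal{C}(NS_0T_{[:i]})$, so by Observation~\ref{obs:ChangingReducibleCherries} the element $T_i$ that first destroys the reducibility of $(x,y)$ belongs to $\{(y,x)\}\cup\{x\}\times X$.

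When $(x,y)$ is a reticulated cherry in $NS_0$, $x$'s parent is a reticulation and so no pair $(z,x)$ is reducible in any intermediate network $NS_0T_{[:j]}$ with $j<i$. This forces $T_i=(x,y)$ and guarantees that no $T_j$ with $j<i$ has $x$ as a second coordinate. The semi-binary argument then transports unchanged: move $(x,y)$ to position $L+1$, delete the now-redundant copy at position $i$, and prune any further elements that reduce nothing to recover minimality.

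The interesting subcase is when $(x,y)$ is a cherry in $NS_0$ sharing tree-vertex parent $p$. Here $T_j$ for $j<i$ may take the form $(z,x)$ with $z\ne y$, where $z$ is either another leaf child of $p$ or a leaf beneath a reticulation child of $p$. To resolve the resulting conflict upon inserting $(x,y)$ at the front, I replace each such $T_j=(z,x)$ by $T_j^\ast=(z,y)$; a direct inspection of the deleted edges shows that $(z,y)$ acts on the network exactly as $(z,x)$ did (removing $z$ in the cherry sub-case, deleting the reticulation edge from $p$ in the reticulated-cherry sub-case), and the identity $NS_0(x,y)(z,y) = NS_0(z,x)(x,y)$ expresses the commutation that keeps the overall sequence reducing $N$. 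The introduction of $y$ as an additional second coordinate causes no tree-child violation, because $y$ cannot have appeared as a first coordinate anywhere earlier in $S_0$ or $T_{[:i-1]}$: the partial-TCS condition on $S_0(x,y)$ blocks the former, while the persistence of $(x,y)$ as a reducible cherry throughout $T_{[:i-1]}$ blocks the latter. The sibling case $T_i=(y,x)$ is handled by first invoking Lemma~\ref{lem:reverseCherry} with the roles of $(x,y)$ and $(y,x)$ interchanged, which reduces it to the $T_i=(x,y)$ situation after a benign relabelling of $x$ and $y$ in $T_{[i+1:]}$.

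The principal obstacle lies in confirming that the doubly-modified sequence (substitutions in $T_{[:i-1]}$ together with the front-insertion of $(x,y)$) remains a TCS throughout and continues to reduce $N$; particular care is needed at indices $j>i$ where the original $T_j$ may still reference $x$ but reduces nothing in the modified intermediate networks, and these are discarded in the final minimality-pruning step via the same reticulation-count argument used in Proposition~\ref{prop:TCSOrder}. Once these verifications are completed, the resulting sequence is a minimal TCS of $N$ starting with $S$, closing the induction.
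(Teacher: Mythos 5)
Your overall route is genuinely different from the paper's: you attempt a direct induction on $|S|$, generalizing the sequence-surgery argument of Proposition~\ref{prop:TCSOrder}, whereas the paper avoids the non-binary complications entirely by building a semi-binary tree-child refinement $N'_{sb}$ of $N$ (via the $(\ref{Const:CherryResolved},\ref{Const:RCherrySF})$ construction, resolving each multifurcation so that its tree-path child sits lowest) on which $S$ still reduces something at every step, invoking Proposition~\ref{prop:TCSOrder} on $N'_{sb}$, and transferring the resulting minimal TCS back to $N$ through the refinement. Unfortunately, your direct argument has a concrete gap precisely at the point where the non-binary setting differs from the semi-binary one. You constrain the pair $T_i$ that first destroys the reducibility of $(x,y)$ to lie in $\{(y,x)\}\cup\{x\}\times X$, citing Observation~\ref{obs:ChangingReducibleCherries}, and then treat only $T_i=(x,y)$, $T_i=(y,x)$, and pairs of the form $(x,z)$. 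But in a non-binary TCN the pair $(x,y)$ can also be destroyed by reducing a cherry $T_i=(y,z)$ with $z\neq x$, which deletes the leaf $y$ outright: this happens whenever $p_y$ has a third leaf child $z$ (cherry case), or a leaf child $z$ in addition to the reticulation $p_x$ (reticulated-cherry case), and a minimal TCS for $NS_0$ may perfectly well pick $(y,z)$ at that point. This case lies outside $\{(y,x)\}\cup\{x\}\times X$ (the non-binary part of Observation~\ref{obs:ChangingReducibleCherries} does not cover it), so your assertion that the reticulated-cherry case ``forces $T_i=(x,y)$'' is false, and the cherry case is handled only for $T_i=(y,x)$ via Lemma~\ref{lem:reverseCherry}. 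The omission is not cosmetic: repairing it requires a further application of Lemma~\ref{lem:reverseCherry} (replacing $(y,z)$ by $(z,y)$ and relabelling $z\leftrightarrow y$ downstream, possibly iterated as in Lemma~\ref{lem:ReduciblePairMayAppear}), together with a verification that each relabelling preserves the tree-child condition — none of which appears in your write-up.

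Beyond this, the closing paragraph of your proof explicitly defers ``the principal obstacle'' (that the doubly-modified sequence remains a TCS and still reduces $N$) to unspecified verifications, so even in the cases you do treat the argument is not complete as stated. If you want a short, safe proof, the paper's reduction-to-semi-binary device is the cleaner option: all of the multi-cherry bookkeeping above is exactly what that refinement step is designed to eliminate.
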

\begin{proof}
Let~$N'$ denote the refinement of~$N$ obtained by applying the~$(\ref{Const:CherryResolved}, \ref{Const:RCherrySF})$ construction on~$NS$ for the sequence~$S$.
Observe that~$N'$ is a TCN since~$NS$ is tree-child.
Because of this, every multifurcation of~$N'$ has a child that is a leaf or a tree vertex, which we call~$t$.
By refining each multifurcation as a path in which the lowest vertex is the parent of~$t$, we obtain a semi-binary refinement~$N'_{sb}$ of~$N'$ that is tree-child.
In particular, the tree-vertices of the reducible pairs in~$N'_{sb}$ that are reduced by~$S$ are binary, because of the construction we have used to obtain~$N'$.
The way in which we have obtained~$N'_{sb}$ ensures that~$S$ is a partial TCS that reduces something in~$N'_{sb}$.
By Proposition~\ref{prop:TCSOrder}, there exists a minimal TCS~$S'$ for~$N'_{sb}$ starting with~$S$.
Since~$N'_{sb}$ is a refinement of~$N$, it follows then that~$S'$ must also reduce~$N$.
\end{proof}

\section{Computational aspects of containment problems}\label{sec:Computation}
{\sc Tree Containment} is a well studied problem, where one asks whether a tree is contained in a given network (with a common set of taxa). In this section, we look at the more general problem {\sc Network Containment}, where the aim is to determine whether a network is contained in another network. We restrict our attention to the problem where the input networks are both semi-binary, tree-child %, are either both semi-binary or both binary, \todoRemie{Check why assuming binary is interesting, it seems to be covered by semi-binary}
networks with the same leaf set. We will give an algorithm for this problem that runs in linear time.
% To prove the correctness, we need several structural results about the order of TCSs, like for CPSs in Section~\ref{sec:OrderCPS}.
We also show that it is possible to check in linear time whether two CPNs are isomorphic.\medskip

\fbox{
\parbox{0.8\linewidth}{
{\sc Network Containment}\\
{\bf Instance:} Two networks~$N$ and~$N'$ on the same leaf-set.\\
{\bf Question:} Does~$N$ contain~$N'$?}
}

\subsection{Tree-child Network Containment}
In this section, we give the linear time algorithm (Algorithm~\ref{alg:Subnetwork}) for {\sc Network Containment}. We use a few small subroutines (Algorithms~\ref{alg:CheckCherry},~\ref{alg:CheckRetCherryFirstCoordinate},~\ref{alg:ReducePair},~\ref{alg:FIndTCS},~\ref{alg:TCSReduceNetwork}). 
The idea of the algorithm follows from Theorem~\ref{the:SubnetworkIffTCSReduces} and Proposition~\ref{prop:TCSOrder}.
For two TCNs~$N$ and~$N'$, find a minimal TCS of~$N$ by picking reducible pairs in any order.
See if this TCS reduces~$N'$ to a network on a single leaf: if it does, then~$N$ contains~$N'$; otherwise,~$N$ does not contain~$N'$.
Assume in this subsection that every network is semi-binary stack-free, unless stated otherwise.

Within semi-binary networks, tree vertices are of outdegree-$2$.
This means that each leaf appears as a second coordinate in at most one reducible pair in a network.
Algorithm~\ref{alg:CheckCherry} finds such a reducible pair, if it exists, for a given leaf, in constant time.

\begin{algorithm}[H]\label{alg:CheckCherry}
 \KwData{A semi-binary network $N$ and a leaf $x$}
 \KwResult{The set with only element the reducible pair of $N$ having $x$ as second coordinate if it exists, $\emptyset$ otherwise.}
Let $p$ be the parent of $x$\;
\If{$p$ is a tree node}{
 let $c(p)$ be the child of $p$ that is not~$x$\;
 \uIf{$c(p)$ is a leaf}{
  \Return $\{(c(p),x)\}$\;
 }
 \If{$c(p)$ is a reticulation and the child $c(c(p))$ of $c(p)$ is a leaf}{
  \Return $\{(c(c(p)),x)\}$\;
 }
}
\Return $\emptyset$\;
\caption{{\sc FindRP2nd}$(N,x)$}
\end{algorithm}

\begin{lemma}\label{lem:AlgCheckCherry}
Let $N$ be semi-binary, and $x$ a leaf of $N$. If a reducible pair with $x$ as the second element of the pair exists, then Algorithm~\ref{alg:CheckCherry} finds this pair. Otherwise, it returns the empty set. The algorithm runs in constant time.
\end{lemma}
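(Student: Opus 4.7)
The plan is to verify correctness and runtime of Algorithm~\ref{alg:CheckCherry} by a direct case analysis on the structure of any reducible pair having $x$ as its second coordinate, exploiting the semi-binarity of $N$ to force uniqueness in each case.

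First I would observe that since $x$ is a leaf, it has a unique parent $p = p_x$. If a reducible pair $(y,x)$ exists in $N$, then by Definition~\ref{def:ReduciblePair} either $(y,x)$ is a cherry, in which case $p_y = p$ and in particular $p$ must be a tree node (since reticulations have outdegree $1$), or $(y,x)$ is a reticulated cherry, in which case $p_x = p$ is a tree node by definition. Hence, if $p$ is not a tree node, no reducible pair with $x$ as second coordinate can exist, and the algorithm correctly returns $\emptyset$ after the failed conditional.

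Next I would handle the case where $p$ is a tree node. By semi-binarity, $p$ has outdegree exactly $2$, so there is a unique other child $c(p) \neq x$ of $p$, which is precisely what the algorithm computes. If $(y,x)$ is a cherry then $y$ is a child of $p$ distinct from $x$, forcing $y = c(p)$, and $y$ must be a leaf; this matches the first conditional. If $(y,x)$ is a reticulated cherry then $p_y$ is a reticulation that is a child of $p$, forcing $p_y = c(p)$, and since reticulations have outdegree $1$, the leaf $y$ must equal the unique child $c(c(p))$; this matches the second conditional. Conversely, both conditions of the algorithm, when satisfied, produce by construction a cherry or a reticulated cherry with $x$ as second coordinate, so the returned pair is genuinely reducible. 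Thus the algorithm returns the (necessarily unique) reducible pair when it exists and $\emptyset$ otherwise.

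Finally, for the runtime, each operation in the algorithm --- retrieving the parent of a leaf, retrieving the sibling in a tree node of bounded outdegree, retrieving the child of a reticulation, and checking node types --- can be performed in $O(1)$ time assuming a standard adjacency-list representation with pointers to parents and children. Since the algorithm performs a bounded number of such operations, it runs in constant time. No step here is really an obstacle; the main thing to be careful about is ensuring the case analysis is exhaustive, which is guaranteed by the semi-binarity assumption that forces the sibling of $x$ to be unique.
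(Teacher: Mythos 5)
Your proof is correct and follows essentially the same approach as the paper: exploit semi-binarity so that a reducible pair with $x$ as second coordinate is unique and is found by following the unique path down from the sibling of $x$, with constant-time node-type checks via adjacency lists. Your version just spells out the case analysis (cherry versus reticulated cherry, and the non-tree-node parent case) more explicitly than the paper's terse argument.
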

\begin{proof}
Each leaf is a second coordinate of at most one reducible pair, and this pair is found by taking the unique path down from the parent of this leaf. This algorithm runs in constant time: using in- and out-adjacency lists, we can check in constant time whether a node is a tree node, a reticulation node, or a leaf, and the out-list of each node has size at most 2 (as tree nodes have outdegree~2, reticulations outdegree~1, and leaves outdegree~0). 
\end{proof}

Algorithm~\ref{alg:CheckRetCherryFirstCoordinate} on the other hand finds all reticulated cherries that contain a given leaf as the first coordinate of the reducible pair.
The running time for this algorithm depends on the indegree of the parent of the given leaf, as this gives the maximum possible number of such reticulated cherries.

\begin{algorithm}[H]\label{alg:CheckRetCherryFirstCoordinate}
 \KwData{A semi-binary network $N$ and a leaf $x$}
 \KwResult{The set $\{(l,k)\in\mathcal{C}(N): l=x\}$ of reticulated cherries in $N$ having $x$ as first coordinate}
Let $p$ be the parent of $x$\;
Set~$\mathcal{C}_r = \emptyset$\;
\If{$p$ is a reticulation}{
 \For{every parent~$g$ of~$p$}{
  let~$c(g)$ be the other child of~$g$\;
  \uIf{$c(g)$ is a leaf}{
   $\mathcal{C}_r = \mathcal{C}_r \cup \{(x,c(g))\}$
  }
 }
}
\Return $\mathcal{C}_r$\;
\caption{{\sc FindRC1st}$(N,x)$}
\end{algorithm}

\begin{lemma}
Let~$x$ be a leaf in a semi-binary network~$N$ and let~$p_x$ denote the parent of~$x$.
Let~$I$ denote the indegree of~$p_x$.
Algorithm~\ref{alg:CheckRetCherryFirstCoordinate} finds the set of all reticulated cherries with the reticulation on the leaf~$x$ in~$O(I)$ time.
%in constant time, if the indegree of the reticulation nodes is bounded. 
\end{lemma}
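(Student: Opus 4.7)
The plan is to verify both correctness and the running-time bound by walking through the cases the algorithm handles. For correctness, I would begin by invoking the definition of a reticulated cherry: a pair $(x,y)$ is a reticulated cherry in $N$ exactly when $p_x$ is a reticulation, $p_y$ is a tree vertex, and $p_y$ is a parent of $p_x$. So if $p_x$ is not a reticulation, no reticulated cherry has $x$ as its first coordinate, and the algorithm is correct to return $\emptyset$ in that case.

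Next, I would handle the case where $p_x$ is a reticulation. Here I would use the standing assumption in this subsection that $N$ is stack-free, which forces every parent $g$ of $p_x$ to be a tree vertex. Since $N$ is semi-binary, each such $g$ has outdegree exactly $2$, so the ``other child'' $c(g)$ is well-defined. A candidate reticulated cherry $(x,y)$ with first coordinate $x$ must then have $y=c(g)$ for some parent $g$ of $p_x$, with $y$ a leaf. The loop over parents of $p_x$ enumerates precisely these candidates, adding $(x,c(g))$ to the output exactly when $c(g)$ is a leaf; so the returned set is exactly $\{(l,k)\in\mathcal{C}(N):l=x\}$.

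For the running time, I would rely on a standard adjacency-list representation so that checking whether a given node is a reticulation, reading off its parents, reading off a tree node's other child, and testing whether a node is a leaf are all $O(1)$ operations (this is the same setup invoked for Algorithm~\ref{alg:CheckCherry} in Lemma~\ref{lem:AlgCheckCherry}). The only nonconstant work is the loop over the parents of $p_x$, which executes $I$ times and does $O(1)$ work per iteration, giving total time $O(I)$.

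The only subtlety I expect is making sure the stack-free assumption is used explicitly when claiming that each $g$ is a tree vertex with a well-defined ``other child''; without it, a parent of the reticulation $p_x$ could itself be a reticulation, and then $c(g)$ would not be defined as in the pseudocode. Since the subsection explicitly assumes stack-freeness, this is a one-line appeal and not a real obstacle.
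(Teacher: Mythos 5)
Your proof is correct and follows essentially the same route as the paper's: verify that only a reticulation parent $p_x$ can yield reticulated cherries with first coordinate $x$, enumerate the parents $g$ of $p_x$ checking whether the other child is a leaf, and charge $O(1)$ per iteration using the adjacency-list setup from Lemma~\ref{lem:AlgCheckCherry}, giving $O(I)$ overall. Your explicit appeal to the subsection's semi-binary stack-free assumption (so each $g$ is a tree vertex and $c(g)$ is well defined) is a welcome bit of extra care that the paper's terse proof glosses over, but it does not change the argument.
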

\begin{proof}
We simply check that the parent of~$x$ is a reticulation, and that~$(x,y)$ is a reticulated cherry if a grandparent of~$x$ is the parent of~$y$.
The for loop iterates at most~$I$ times.
The steps within the for loop runs in constant time, since we may use the in- and out-adjacency lists as stated in the proof of Lemma~\ref{lem:AlgCheckCherry}.
Therefore, Algorithm~\ref{alg:CheckRetCherryFirstCoordinate} runs in~$O(I)$ time.
%constant time if the indegree of the reticulation nodes is bounded.
\end{proof}

\begin{algorithm}[H]\label{alg:ReducePair}
 \KwData{A non-binary network $N$ and a pair $(x,y)$}
 \KwResult{The network $N(x,y)$}
 \uIf{$(x,y)$ is a cherry in $N$}{
  Let $p$ be the parent of $x$ and $y$\;
  Remove edge $px$ from $N$\; 
  Suppress $p$ (if necessary) and remove~$x$ in $N$\;
 }
 \If{$(x,y)$ is a reticulated cherry in $N$}{
  Let $p_x$ be the parent of $x$ and $p_y$ the parent of $y$\;
  Remove edge $p_yp_x$ from $N$\; 
  Suppress $p_y$ and~$p_x$ (if necessary) in $N$\;
 }
\Return $N$\;
\caption{{\sc ReducePair}$(N,(x,y))$}
\end{algorithm}

\begin{lemma}
Algorithm~\ref{alg:ReducePair} reduces a given reducible pair in a non-binary network $N$. 
%If $N$ is a semi-binary network, then 
The algorithm runs in constant time.
\end{lemma}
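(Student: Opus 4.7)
The plan is to verify Algorithm~\ref{alg:ReducePair} against Definition~\ref{def:Reducing} clause by clause, and then argue that every primitive operation used can be executed in constant time under a standard adjacency-list representation of~$N$ (with in-lists and out-lists for every node, together with a degree counter).

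First I would address correctness. There are three exhaustive cases for the input pair~$(x,y)$: it is a cherry in~$N$, a reticulated cherry in~$N$, or neither. If~$(x,y)$ is a cherry, the two leaves share a common parent~$p$, and Definition~\ref{def:Reducing} prescribes deleting~$x$ and suppressing~$p$; this is exactly what the first conditional of the algorithm performs after removing the edge~$px$ (the phrase ``suppress~$p$ if necessary'' accounts for the fact that in a non-binary network~$p$ may still have outdegree at least~$2$ after the deletion, in which case no suppression is needed). If~$(x,y)$ is a reticulated cherry, then~$p_x$ is a reticulation,~$p_y$ is a tree node, and~$p_y p_x$ is the unique reticulation edge between them; Definition~\ref{def:Reducing} prescribes removing this edge and suppressing any resulting degree-$2$ vertices, which matches the second conditional exactly (again, the suppression of~$p_x$ is only performed if its indegree has dropped to~$1$, and similarly for~$p_y$ if its outdegree has dropped to~$1$). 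If neither condition holds, both \texttt{if} blocks are skipped and~$N$ is returned unchanged, matching the ``doing nothing'' clause of Definition~\ref{def:Reducing}.

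Second, I would establish the running time. Testing whether~$(x,y)$ is a cherry or a reticulated cherry reduces to looking up~$p_x$ and~$p_y$ in the in-lists of~$x$ and~$y$, comparing them, and inspecting the node types of~$p_x$,~$p_y$, which are all $O(1)$ operations. Removing a specific edge and updating the corresponding adjacency lists and degree counters is $O(1)$. Suppressing a degree-$2$ node~$v$ consists of identifying its unique in-neighbour and unique out-neighbour and replacing the two incident edges by a single edge, again an~$O(1)$ operation; the algorithm only attempts this after verifying through the cached degree counter that~$v$ has indeed become degree~$2$.

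The only subtle point I expect is handling non-binary reticulations~$p_x$ whose indegree may be large: we must be careful to delete only the single edge~$p_y p_x$ and not inadvertently scan the entire in-list of~$p_x$. Assuming that the caller has already produced direct references to the endpoints of the edge to be removed (which is how Algorithms~\ref{alg:CheckCherry} and~\ref{alg:CheckRetCherryFirstCoordinate} detect reducible pairs), this deletion is~$O(1)$. Hence the total work is bounded by a constant, proving the claim.
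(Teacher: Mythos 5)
Your proof is correct and follows essentially the same route as the paper's: correctness by matching each branch of the algorithm to the clauses of Definition~\ref{def:Reducing}, and constant time by observing that the cherry/reticulated-cherry test amounts to parent lookups and that a single edge deletion plus (conditional) suppression is $O(1)$. Your extra care about large-indegree reticulations and having direct references to the edge to be deleted is a point the paper glosses over (it implicitly leans on the semi-binary assumption in force in that subsection), but it does not change the argument.
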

\begin{proof}
The algorithm takes exactly the steps which define a reduction of a pair in a network, so the algorithm is correct. Then for the running time: comparing the unique parents of the leaves, we can check whether a pair constitutes a cherry or a reticulated cherry in constant time. Indeed, if a pair $(x,y)$ forms a cherry, then the parents of $x$ and $y$ are the same; and if $(x,y)$ forms a reticulated cherry, then the unique other child of the parent of $y$ is the parent of $x$. The removal of the edge with subsequent suppression takes at most constant time. 
Hence, the subroutine runs in constant time. 
\end{proof}

% \begin{algorithm}[H]\label{alg:FIndTCS}
%  \KwData{A TCN $N$}
%  \KwResult{A TCS $S$ for $N$}
% Set $\mathcal{C}=\emptyset$ \;
% \For{$x\in L(N)$}{
%  $\mathcal{C}\cup ${\sc FindRP2nd}$(N,x)$\;
% }

% Let $S$ be an empty sequence\;
% Set $F=\emptyset$ the set of forbidden leaves\;
% \While{$\mathcal{C}\neq\emptyset$}{
%  Choose $(x,y)\in \mathcal{C}$ with $y\not\in F$\;
%  Set $S=S(x,y)$\;
%  Set $F=F\cup\{x\}$\;
%  $N=$ {\sc ReducePair}$(N,(x,y))$\;
%  \uIf{$(x,y)$ is a cherry in $N$}{
%   $\mathcal{C}=\mathcal{C}\setminus\{(x,y),(y,x)\}\cup${\sc FindRP2nd}$(N,y)\cup${\sc FindRC1st}$(N,y)$\;
%  }
%  \If{$(x,y)$ is a reticulated cherry in $N$}{
%   $\mathcal{C}=\mathcal{C}\setminus\{(x,y)\}\cup${\sc FindRC1st}$(N,x)\cup${\sc FindRP2nd}$(N,y)\cup${\sc FindRC1st}$(N,y)$\;
%  }
% }
% \Return $S$\;
% \caption{{\sc FindTCS}$(N)$}
% \end{algorithm}

\begin{algorithm}[H]\label{alg:FIndTCS}
 \KwData{A semi-binary TCN $N$}
 \KwResult{A minimal TCS $S$ for $N$}
Set $\mathcal{C}=\emptyset$ \;
\For{$x\in L(N)$}{
 $\mathcal{C}\cup ${\sc FindRP2nd}$(N,x)$\;
}

Let $S$ be an empty sequence\;
% Set $F=\emptyset$ the set of forbidden leaves\;
\While{$\mathcal{C}\neq\emptyset$}{
 Choose $(x,y)\in \mathcal{C}$\;
%  with $y\not\in F$\;
 Set $S=S(x,y)$\;
%  Set $F=F\cup\{x\}$\;
 $N'=$ {\sc ReducePair}$(N,(x,y))$\;
 \uIf{$(x,y)$ is a cherry in $N$}{
  $\mathcal{C}=\mathcal{C}\setminus\{(x,y),(y,x)\}\cup${\sc FindRP2nd}$(N',y)\cup${\sc FindRC1st}$(N',y)$\;
 }
 \If{$(x,y)$ is a reticulated cherry in $N$}{
  $\mathcal{C}=\mathcal{C}\setminus\{(x,y)\}\cup${\sc FindRP2nd}$(N',y)\cup${\sc FindRC1st}$(N',y)$\;
 }
 $N=N'$\;
}
\Return $S$\;
\caption{{\sc FindTCS}$(N)$}
\end{algorithm}

Recall that TCSs have the additional rule, on top of that of the CPSs, that any leaf appearing as a first coordinate of a pair in the sequence must not appear as a second coordinate of a pair later on in the sequence.
Therefore we may have a cherry~$(x,y)$ in the network that can only be picked as~$(x,y)$ and not~$(y,x)$.
We define a notion of when a reducible pair is \emph{forbidden} before proving the correctness of Algorithm~\ref{alg:FIndTCS}.

\begin{definition}
Given a partial TCS~$S$, a reducible pair~$(x,y)$ is \emph{forbidden} after $S$ if~$y$ has appeared as the first coordinate of a pair in~$S$.
\end{definition}
%\todoYuki{Cite TC-reconstruction paper.}

\begin{lemma}\label{lem:FindTCS}
Let~$N$ be a semi-binary TCN on taxa set~$X$ and reticulation number~$r$.
Algorithm~\ref{alg:FIndTCS} finds a minimal TCS for~$N$ in~$O(|X|+r)$ time.
\end{lemma}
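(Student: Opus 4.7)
The plan is to prove the lemma in two parts: correctness (the algorithm outputs a valid minimal TCS for $N$) and running time (bounded by $O(|X|+r)$). For correctness, I would maintain the loop invariant that, at the start of each iteration, the set $\mathcal{C}$ contains every reducible pair $(a,b)$ of the current network that is \emph{non-forbidden} with respect to the partial TCS $S$ built so far, meaning $b$ has not yet appeared as a first coordinate of $S$ (the condition needed for the sequence to remain a TCS).

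The first step is to verify the invariant after initialization: $S$ is empty, so every reducible pair is non-forbidden, and since each leaf of a semi-binary network is the second coordinate of at most one reducible pair (Lemma~\ref{lem:AlgCheckCherry}), the union of {\sc FindRP2nd}$(N,x)$ over all leaves captures them all. For the inductive step, after picking $(x,y)$ from $\mathcal{C}$ and reducing, Observation~\ref{obs:ChangingReducibleCherries} identifies exactly which reducible pairs vanish (and these are exactly what the algorithm removes from $\mathcal{C}$), while Lemma~\ref{lem:PossibleCherriesAfterReduction} tells me that new reducible pairs must involve $x$ or $y$. In the cherry case $x$ is deleted, and in the reticulated-cherry case $x$ becomes a first coordinate of $S$ so any new pair of the form $(\cdot,x)$ is forbidden; in both cases the only pairs that must be added are those with $y$ as second coordinate (found by {\sc FindRP2nd}$(N',y)$) or $y$ as first coordinate of a reticulated cherry (found by {\sc FindRC1st}$(N',y)$), matching the algorithm. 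Termination with a single-leaf network then follows from Proposition~\ref{prop:TCSOrder}: as long as the current network has more than one leaf, some non-forbidden reducible pair exists, so $\mathcal{C}$ is non-empty and the loop continues producing a minimal TCS.

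For the running time, Lemma~\ref{lem:OptimumCPN} bounds the number of while-loop iterations by $|S| = n+r-1$ with $n=|X|$, and initialization runs in $O(n)$ time. Inside each iteration, {\sc ReducePair}, {\sc FindRP2nd}, and the constant-size set updates on $\mathcal{C}$ all take $O(1)$ time when $\mathcal{C}$ is implemented as a hash set with constant-time insert and delete. The only per-iteration cost not obviously $O(1)$ is the call to {\sc FindRC1st}$(N',y)$, which costs $O(\mathrm{indeg}(p_y))$ where $p_y$ is the (possibly high-degree) parent of $y$ in $N'$.

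The hard part will be the amortized analysis of the total {\sc FindRC1st} cost. My plan is to charge each examination of a reticulation edge $(g,v)$ directly to that edge. A call {\sc FindRC1st}$(N',y)$ only examines $(g,v)$ when $v=p_y$, and a leaf $y$ can only be the second coordinate of a reducible pair when its parent is a tree node, so each reticulation edge is examined only during the brief window immediately following the suppression of an intermediate tree node that first makes $v$ the parent of $y$; before any subsequent call can examine $(g,v)$ again, either the edge itself must be removed by a reduction or $y$'s parent must change. A careful case analysis then shows each reticulation edge is examined $O(1)$ times, so the total {\sc FindRC1st} cost is $O(E)$, where $E$ is the number of reticulation edges of $N$. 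Since $E = R + r$ and a semi-binary stack-free network satisfies $R \le r$, this is $O(r)$, and combining with the iteration count yields the claimed $O(|X|+r)$ total runtime.
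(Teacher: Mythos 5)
Your running-time analysis is sound and in fact more explicit than the paper's: the paper disposes of the total cost of {\sc FindRC1st} with the one-line remark that its inner loop body runs at most $r$ times overall, whereas your charging scheme (a reticulation edge $gp$ can only be examined while $p$ is the parent of the just-exposed leaf $y$, and $y$ cannot become a second coordinate again until $p$ has been suppressed, at which point the edge no longer exists) makes the amortization precise; the bound on the number of reticulation edges, $R+r\le 2r$, is also correct.

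The correctness half, however, has genuine gaps, and they sit exactly where the tree-child hypothesis does real work. First, your loop invariant is false as stated: when a reduction creates a new cherry $\{y,z\}$, both $(y,z)$ and $(z,y)$ are non-forbidden reducible pairs, but the algorithm only adds $(z,y)$ (via {\sc FindRP2nd}$(N',y)$) and never calls {\sc FindRP2nd}$(N',z)$. So $\mathcal{C}$ does not contain \emph{every} non-forbidden reducible pair and your induction breaks; you need a weaker invariant (every element of $\mathcal{C}$ is a non-forbidden reducible pair, and $\mathcal{C}$ is nonempty until the network is a single leaf). Second, after reducing a reticulated cherry $(x,y)$ you dismiss new pairs involving $x$ by noting that pairs of the form $(\cdot,x)$ are forbidden, but you say nothing about new pairs of the form $(x,z)$, which are \emph{not} forbidden and which the algorithm deliberately does not search for. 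The paper's justification is that any reducible pair $(x,z)$ of $N(x,y)$ must already have been a reticulated cherry of $N$ (hence is already in $\mathcal{C}$), and that $x$ cannot acquire a tree-node parent whose other child is a reticulation, since that would make some tree node of $N$ a parent of two reticulations, contradicting tree-childness. Third, you do not verify that the pairs returned by {\sc FindRC1st}$(N',y)$ are themselves non-forbidden; the dangerous candidate is $(y,x)$, which the paper again rules out using the tree-child property. Without these points one cannot conclude that the algorithm never emits a forbidden pair and never stalls with a nonempty network, so the proof of correctness is incomplete.
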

\begin{proof}

The first for loop finds all reducible pairs of $N$, as they all have a unique second coordinate, and each leaf is a second coordinate in at most one pair in a binary network. Now by Proposition~\ref{prop:TCSOrder}, each choice of non-forbidden pair gives a partial TCS for $N$. Hence, the algorithm may choose any non-forbidden pair and reduce it, so suppose that we pick~$(x,y)$.
Regardless of whether~$(x,y)$ is a cherry or a reticulated cherry, the reduction takes constant time as~{\sc ReducePair}$(N,(x,y))$ runs in constant time.
If~$(x,y)$ is a cherry, then we remove the pairs~$(x,y),(y,x)$ from our list of pairs~$\mathcal{C}$. All other pairs of $\mathcal{C}$ are still reducible in $N(x,y)$ by Observation~\ref{obs:ChangingReducibleCherries}, and they are not forbidden because $x$ is not a leaf of $N(x,y)$, so clearly there is no reducible pair with $x$ as second coordinate.
Noting that~$x$ is no longer a leaf in~$N(x,y)$, the only new cherries in~$N(x,y)$ must involve~$y$.
Since~$y$ is not a forbidden leaf, any cherry pair involving~$y$ is not forbidden: therefore we update our list~$\mathcal{C}$ by appending both {\sc FindRP2nd}$(N,y)$ and {\sc FindRC1st}$(N,y)$.

On the other hand if~$(x,y)$ is a reticulated cherry, then the new cherry pairs in~$N(x,y)$ must involve $x$ or $y$,
% \todoRemie{Shit, they might involve $y$ as a first element of the pair!!!}, 
so removing the pairs $(x,y)$ from the current list $\mathcal{C}$ and then by adding the new pairs involving $x$ and $y$, we get the updated list of pairs $\mathcal{C}$ for the TCN~$N(x,y)$; we ensure that this updated list contains only non-forbidden pairs.

Observe that all reducible pairs of~$N(x,y)$ that involve~$x$ are already contained in the set~$\cal{C}$. Indeed, if~$x$ is involved in a cherry or a reticulated cherry~$(x,z)$, then~$(x,z)$ must have formed a reticulated cherry in~$N$, which would have been found in the previous iteration of the while loop or in the initial search for all reducible pairs with second coordinate~$z$.
Note that~$x$ cannot be involved in a reticulated cherry where its parent~$p_x$ is a tree node, as this would imply that~$p_x$ was the parent of two reticulations in~$N$, contradicting the tree-child property of~$N$.
Furthermore, since~$x$ has just been picked as a first element of some reducible pair, all pairs involving~$x$ as a second coordinate are now forbidden.
Therefore, all possible new cherry pairs not in~$\cal{C}$ that appear as a result of reducing~$(x,y)$ from~$N$ must involve the leaf~$y$.
These pairs are made up of those pairs contained in {\sc FindRP2nd}$(N,y)$ or in {\sc FindRC1st}$(N,y)$.
% We note that all possible new cherry pairs involving~$x$ or~$y$ are contained in one of the four sets {\sc FindRP2nd}$(N,x)$, {\sc FindRP2nd}$(N,y)$, {\sc FindRC1st}$(N,x)$, or {\sc FindRC1st}$(N,y)$.
% Now, the cherry contained in {\sc FindRP2nd}$(N,x)$ (if it exists) is forbidden since we have just picked~$x$ as the first element.
% The other three sets all contain cherries that are not forbidden.
The only potential problem that we may face is when~$(y,x) \in ${\sc FindRC1st}$(N,y)$.
However this is impossible; indeed, if~$N(x,y)$ contained a reticulated cherry~$(y,x)$, then it is immediately clear that~$N$ was not a tree-child network.
The grandparent of~$x$ that is not the parent of~$y$ is a tree node parent of two reticulations in~$N$.
Therefore appending the cherries from the two sets {\sc FindRP2nd}$(N,y)$ and {\sc FindRC1st}$(N,y)$ to~$\mathcal{C}$ ensures that there are no forbidden cherry pairs in the updated list~$\mathcal{C}$.
Furthermore, updating~$\cal{C}$ in this way ensures that we only look for each reducible pair once.

In the above cases, we must also make sure that the list~$\mathcal{C}$ is non-empty as long as the network has not been reduced to a single leaf.
But this is immediate by Proposition~\ref{prop:TCSOrder}, and since we add all non-forbidden newly possible reducible pairs to the list.
Therefore, Algorithm~\ref{alg:FIndTCS} is correct; repeating the while loop will eventually completely reduce the network, and give a TCS.
In particular, since all ordered pairs in the sequence reduce a cherry or a reticulated cherry, the output TCS is minimal.

We now compute the running time of the algorithm. Let~$|X|$ and~$r$ denote the number of leaves and the reticulation number of the network, respectively.
Each call of {\sc FindRP2nd} takes constant time, so the for loop takes~$O(|X|)$ time.
Within the while loop, the algorithms {\sc ReducePair}, {\sc FindRP2nd}, and {\sc FindRC1st} are called~$|X|+r-1$ times since the length of a minimal TCS is of length~$|X|+r-1$ by Lemma~\ref{lem:OptimumCPN}.
Each call of both {\sc ReducePair} and {\sc FindRP2nd} run in constant time.
Since every reducible pair is found at most once, Lines 5-7 of {\sc FindRC1st} are called at most~$r$ times in total.
This means that although {\sc FindRC1st} will be called~$|X|+r-1$ times, the part of the algorithm that is dependent on the indegree of the reticulation vertex will only run at most~$r$ times.
It follows then that the while loop runs in time at most~$O(|X|+r)$,
% The while loop is traversed $|X|+r-1$ times since the length of a minimal TCS is~$|X|+r-1$.
% %, where $X$ is the leaf set of $N$ and $k$ the reticulation number of $N$. 
% Within the while loop, the algorithm {\sc ReducePair} takes constant time.
% The algorithm {\sc FindRP2nd} also takes constant time; {\sc FindRC1st} on the other hand takes constant time, given that the indegree of the reticulation nodes is bounded.
% However, since every reducible pair is found exactly once in the algorithm, we have that {\sc FindRC1st} outputs a non-empty set at most~$k$ times.
% This inherently implies that
% each of the instructions within the while loop takes constant time,
 and therefore that Algorithm~\ref{alg:FIndTCS} runs in~$O(|X|+r)$ time.
%\todoRemie{What about choosing a non-forbidden pair? I think that might screw us?}
\end{proof}

% \begin{algorithm}[H]\label{alg:TCSReduceNetwork}
%  \KwData{A CPN $N$ and a CPS $S$}
%  \KwResult{Yes if $S$ reduces $N$, No otherwise.}
% Set $\mathcal{C}=\emptyset$ \;
% \For{$x\in L(N)$}{
%  $\mathcal{C}\cup ${\sc FindRP2nd}$(N,x)$\;
% }

% Let $S$ be an empty sequence\;
% \While{$c=(x,y)\in S$}{
%  \uIf{$(x,y)$ is a cherry in $N$}{
%   $N=${\sc ReducePair}$(N,(x,y))$\;
%   $\mathcal{C}=\mathcal{C}\setminus\{(x,y),(y,x)\}\cup${\sc FindRP2nd}$(N,y)$\;
%  }
%  \If{$(x,y)$ is a reticulated cherry in $N$}{
%   $N=$  {\sc ReducePair}$(N,(x,y))$\;
%   $\mathcal{C}=\mathcal{C}\setminus\{(x,y)\}\cup${\sc FindRP2nd}$(N,x)\cup${\sc FindRP2nd}$(N,y)$\;
%  }
% }
% \If{$\mathcal{C}=\emptyset$ \yuki{and~$N$ is a network on a single leaf}}{
%  \Return Yes\;
% }
% \Return No\;

% \caption{{\sc CPSReducesNetwork}$(N,S)$}
% \end{algorithm}
% \todoYuki{I think you can rewrite alg 5 like this. I've toggled the old alg 5.}
% \todoRemie{I think so, if we cannot get other networks with one leaf. But we don't because those are not CPNs, and reducing stuff in a CPN gives you a CPN. Conclusion: YES}
\begin{algorithm}[H]\label{alg:TCSReduceNetwork}
 \KwData{A CPN $N$ and a CPS $S$}
 \KwResult{Yes if $S$ reduces $N$, No otherwise.}

\For{$i = 1,\dots,|S|$}{
 $N =$ { \sc ReducePair}$(N,S_i)$
}
\If{$N$ is a network on a single leaf}{
 \Return Yes\;
}
\Return No\;

\caption{{\sc CPSReducesNetwork}$(N,S)$}
\end{algorithm}

\begin{lemma}
Let~$N$ be a CPS on taxa set~$X$ and reticulation number~$r$, and let~$S$ be a CPS of length~$|S|$.
Algorithm~\ref{alg:TCSReduceNetwork} correctly checks whether~$S$ reduces~$N$ and runs in~$O(|S|)$ time if $N$ is semi-binary.
\end{lemma}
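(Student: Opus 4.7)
The plan is to verify the two parts of the claim—correctness and running time—separately, as both follow closely from the definitions and from previously established subroutine bounds. The lemma concerns the straightforward sequential application of a CPS to a network, so the proof will be short, with the main care needed only in confirming that the semi-binarity assumption is preserved along the reduction so that \textsc{ReducePair} continues to run in constant time.

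For correctness, I would argue directly from the definitions. By Definition~\ref{def:Reducing}, applying $S_i$ to the current network yields $NS_{[:i]}$; by an easy induction on $i$, the variable $N$ at the end of iteration $i$ of the for loop holds exactly $NS_{[:i]}$. After the loop terminates, the value of $N$ is therefore $NS$. By definition, $S$ reduces $N$ precisely when $NS$ is a single-leaf network, which is exactly the condition the algorithm tests in its final \textbf{if}. Hence the algorithm returns Yes if and only if $S$ reduces $N$.

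For the running time, each iteration of the for loop consists of a single call to \textsc{ReducePair}, which runs in constant time on a semi-binary network as shown earlier. I would note that although $N$ is mutated during the loop, semi-binarity is preserved: a reduction only deletes a reticulation edge or a pendant edge and then suppresses degree-$2$ vertices, so no tree node ever increases its outdegree. Thus every subsequent call to \textsc{ReducePair} is still on a semi-binary network and still costs $O(1)$. The final check that $N$ is a single-leaf network can be performed in constant time by inspecting whether the root has a single leaf child (for instance via the adjacency lists used in the other subroutines). Summing over the $|S|$ iterations plus the final $O(1)$ test gives the claimed $O(|S|)$ bound.

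The main—and only—obstacle is the implicit invariant that semi-binarity is preserved under reduction; once this observation is recorded, both correctness and the running time are immediate consequences of the already-proved behaviour of \textsc{ReducePair}. No new combinatorial argument is required.
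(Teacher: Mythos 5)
Your proof is correct and follows essentially the same route as the paper: correctness directly from the definition of reducing a pair (formalized by your easy induction), and the $O(|S|)$ bound from $|S|$ constant-time calls to \textsc{ReducePair}. Your explicit check that semi-binarity is preserved along the reduction merely makes precise an invariant the paper leaves implicit, so no substantive difference remains.
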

\begin{proof}
The correctness of the algorithm follows straight from the definition reducing a pair from CPNs.
To compute the running time, note that we iterate over the for loop~$|S|$ times.
The step within the for loop, Algorithm~\ref{alg:ReducePair}, runs in constant time for semi-binary networks.
Hence, it is clear that the algorithm runs in~$O(|S|)$ time.
\end{proof}

\begin{algorithm}[H]\label{alg:Subnetwork}
 \KwData{Two semi-binary TCNs $N$ and $N'$ on the same set of taxa}
 \KwResult{Yes if $N$ contains $N'$, No otherwise.}
Set $S=${\sc FindTCS}$(N)$\;
\Return {\sc CPSReducesNetwork}$(N',S)$\;

\caption{{\sc TCNContains}$(N,N')$}
\end{algorithm}

\begin{theorem}
Given two semi-binary TCNs $N$ and $N'$ on taxa set~$X$ where the reticulation number of~$N$ is~$r$, it can be decided in time~$O(|X|+r)$ whether $N'$ is a subnetwork of $N$.
\end{theorem}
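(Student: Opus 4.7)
The plan is to prove both correctness and running time for Algorithm~\ref{alg:Subnetwork}, which consists of two stages: first compute a minimal TCS $S$ of $N$ via {\sc FindTCS}$(N)$, then test whether $S$ reduces $N'$ via {\sc CPSReducesNetwork}$(N',S)$.

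For correctness, I would simply invoke Theorem~\ref{the:SubnetworkIffTCSReduces3}: since both $N$ and $N'$ are semi-binary TCNs on the same taxa set, $N'$ is a subnetwork of $N$ if and only if \emph{any} TCS of $N$ reduces $N'$. The call {\sc FindTCS}$(N)$ returns some minimal TCS $S$ for $N$ by Lemma~\ref{lem:FindTCS}, and the call {\sc CPSReducesNetwork}$(N',S)$ then precisely tests the condition of Theorem~\ref{the:SubnetworkIffTCSReduces3}, so Algorithm~\ref{alg:Subnetwork} returns the right answer.

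For the running time, I would decompose the total cost across the two subroutines. By Lemma~\ref{lem:FindTCS}, the first call runs in $O(|X|+r)$ time and produces a minimal TCS $S$ of length $|X|+r-1$ by Lemma~\ref{lem:OptimumCPN}. Since $N'$ is a semi-binary TCN on the same leaf set as $N$, its reticulation number is at most $r$ (otherwise the TCS $S$ of $N$ could not possibly reduce it, and the algorithm would correctly answer No after reducing some prefix of length at most $|X|+r-1$). Thus {\sc CPSReducesNetwork}$(N',S)$ runs in $O(|S|) = O(|X|+r)$ time, since each call to {\sc ReducePair} on a semi-binary network takes constant time. Summing gives $O(|X|+r)$ overall.

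The main thing to be careful about is the second stage: we must argue that feeding a sequence $S$ of length $|X|+r-1$ into {\sc CPSReducesNetwork} on $N'$ really costs only $O(|X|+r)$, even when many elements of $S$ act trivially on $N'$ (recall {\sc ReducePair} does nothing to $N'$ if the ordered pair is not reducible in the current intermediate network). The constant-time cost per element relies on being able to check, in constant time, whether a pair $(x,y)$ is a cherry or reticulated cherry in the current $N'$, which is immediate via in- and out-adjacency lists in semi-binary networks as noted in the proof of {\sc ReducePair}. Once this is granted, the bound $O(|X|+r)$ follows by linearity, and the theorem is proved.
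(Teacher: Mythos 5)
Your proposal is correct and follows essentially the same route as the paper: run {\sc FindTCS}$(N)$, then {\sc CPSReducesNetwork}$(N',S)$, appeal to the subnetwork-iff-reduction equivalence for semi-binary TCNs, and sum the two $O(|X|+r)$ bounds (your citation of Theorem~\ref{the:SubnetworkIffTCSReduces3} is in fact the slightly more precise reference for the subnetwork formulation, where the paper cites Theorem~\ref{the:SubnetworkIffTCSReduces}). The aside bounding the reticulation number of $N'$ is harmless but unnecessary, since {\sc CPSReducesNetwork} costs $O(|S|)$ regardless, as you note yourself.
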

\begin{proof}
We prove the correctness of Algorithm~\ref{alg:Subnetwork} and show that it runs in linear time.
We use Algorithms~\ref{alg:FIndTCS} and~\ref{alg:TCSReduceNetwork}. 
First we find a TCS for $N$ in~$O(|X|+r)$ time with Algorithm~\ref{alg:FIndTCS}. Then, again in~$O(|X|+r)$ time, we see if this TCS reduces $N'$ using Algorithm~\ref{alg:TCSReduceNetwork}. 
If this TCS does indeed reduce~$N'$, then~$N'$ is a subnetwork of~$N$ by Theorem~\ref{the:SubnetworkIffTCSReduces}. 
Hence, combining the two algorithms, the second algorithm outputs yes precisely if $N'$ is a subnetwork of $N$.
Furthermore, the algorithm runs in linear time as the combination of the two algorithms run in $O(|X|+r)$ time.
\end{proof}

% If we let~$d$ denote the maximum indegree of a reticulation node in our network, Algorithm~\ref{alg:Subnetwork} runs in~$O((|X|+r)d)$ time.%\todoYuki{Do we want this? From Leo: I think I mentioned this before, but it would be nice to know what the running time is if you take the maximum indegree into account.}

The theorem has the following corollary regarding {\sc network Isomorphism}, which asks whether two given networks are isomorphic.
The problem for tree-child networks was previously shown to be solvable in~$O(|X|^2)$ time~\citep{cardona2009comparison}.
We present the first linear-time algorithm for checking whether two tree-child networks are isomorphic.%\todoYuki{Since we have this, we don't need the result about graph-isom-completeness of tree-sibling networks?}
% which may be surprising because this problem is Graph-Isomorphism-complete for tree-sibling networks \citep{cardona2014comparison}.  %This result is not new \citep{TCIsomorphismPaper}, but is proven differently here. 

% \todoRemie{Can't find any paper with this result for tree-child...}

\begin{corollary}
Given two semi-binary TCNs $N$ and $N'$ on taxa set~$X$ where the reticulation number of~$N$ is~$r$, it can be decided in~$O(|X|+r)$ time whether $N$ is isomorphic to $N'$.
\end{corollary}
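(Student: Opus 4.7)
The plan is to reduce the isomorphism problem to the subnetwork containment check provided by the preceding theorem. The key observation is that, for two tree-child networks on the same leaf set with the same reticulation number, being a subnetwork of the other is equivalent to being isomorphic. So my algorithm would first compare the reticulation numbers of $N$ and $N'$, returning No if they differ, and otherwise invoke Algorithm~\ref{alg:Subnetwork} to test whether $N'$ is a subnetwork of $N$, returning its answer. Both steps run in $O(|X|+r)$ time: reading the input already allows us to compute reticulation numbers in linear time, and the preceding theorem gives the same bound for the subnetwork check.

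The content of the proof is therefore the equivalence claim. One direction is immediate: if $N\cong N'$, then $N'$ trivially embeds in $N$ via the isomorphism, so $N'$ is a subnetwork of $N$. For the converse, I would show that whenever $N'$ is obtained from $N$ by deleting at least one reticulation edge and cleaning up with respect to $X$, the reticulation number strictly decreases. A short case analysis on the indegree $k$ of the head $v$ of the deleted edge does the job: if $k\geq 3$, then $v$ remains a reticulation and we simply lose one reticulation edge, so the reticulation number drops by one; if $k=2$, then $v$ is suppressed after the deletion, and accounting for the two reticulation edges that disappear and the loss of one reticulation gives a net decrease of one as well. Iterating shows that any proper subnetwork has strictly smaller reticulation number, so equality of reticulation numbers and subnetwork containment force $N'=N$ up to isomorphism.

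The main obstacle, such as it is, is being careful with the cleanup step: when the leaf sets coincide no outdegree-$0$ node is removed, and suppressing the unique affected degree-$2$ node after an indegree-$2$ deletion may alter which edges are classified as reticulation edges, but the overall count of reticulation edges minus reticulations still drops by exactly one. Once this bookkeeping is verified, the corollary follows from the preceding theorem with no further work, and the linear-time bound is inherited directly.
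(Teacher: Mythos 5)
Your proposal is correct. The paper states this corollary without an explicit proof, treating it as an immediate consequence of the preceding theorem, and your argument supplies precisely the detail that implication needs: reject if the reticulation numbers differ, otherwise run Algorithm~\ref{alg:Subnetwork}, justified by the fact that obtaining a subnetwork by deleting at least one reticulation edge from a semi-binary tree-child network strictly decreases the reticulation number, so a subnetwork with equal reticulation number must be the network itself. Your case analysis (head of indegree $\geq 3$ versus indegree $2$) is sound, and the tree-child/stack-free conditions indeed guarantee that the suppressed tail and head only merge edges whose new head is a tree node or leaf, so no other reticulation edges are created or destroyed; note also that tree-childness prevents any tree node from losing both out-edges, so the clean-up never cascades. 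An equally immediate alternative route, which the theorem also supports within the same $O(|X|+r)$ bound, is to run the containment check in both directions ($N$ contains $N'$ and $N'$ contains $N$); both reductions are essentially interchangeable here, and yours has the mild advantage of a single call to the containment algorithm after a linear-time count.
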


\subsection{Isomorphism results for CPNs}
%\todoYuki{Please read this subsection again. I rewrote the proofs a bit, and separated the algorithm for SBSF and binary.}

We mentioned in Section~\ref{subsec:Distinguishability} that we can distinguish CPNs within reconstructible classes by comparing their smallest CPSs.
Indeed, by altering Algorithm~\ref{alg:FIndTCS}, we can find the smallest CPS for a given CPN in 
%\remie{almost quadratic}\todoRemie{Or should we just say polynomial for simplicity here?}
polynomial time.%, given that the indegree of the reticulation nodes is bounded.
The change is simple: at the start of the while loop, simply choose the smallest cherry instead of choosing any arbitrary cherry.
Furthermore, every time we update the cherry list~$\mathcal{C}$, if we have just reduced a reticulated cherry~$(x,y)$ from~$N$, then we add another set {\sc FindRP2nd}$(N,x)$.%\todoYuki{We can also not include the Algorithm cuz it takes up space.}}

There is another thing to take into consideration.
Tree-child networks are defined to be stack-free; binary CPNs are not necessarily stack-free.
Because of this, upon reducing a reticulated cherry~$(x,y)$ from a binary CPN, it is possible that a new reticulated cherry with~$x$ as the first coordinate is in the reduced network.
This means that when updating the list of reducible pairs ($\mathcal{C}$ in the algorithms), we must also append elements in which~$x$ occurs as the first element, by calling the algorithm~{\sc FindRC1st}$(N,x)$.
Observe that we do not need to do this if the CPN is stack-free, due to the same argument as stated in the proof of Lemma~\ref{lem:FindTCS}.

We start by introducing an algorithm that finds a smallest CPS for a semi-binary stack-free CPN, and then show that simply changing one of the lines in the algorithm allows for an input of binary CPNs.
A slightly more involved change allows for an input of non-binary reconstructible CPNS.
% \todo{Why does it say stack-free in the algorithm? It looks like you fixed it so it works for CPNs with stacks. \textbf{RJ:} Maybe not: for stacks, we need to find more pairs with $x$ as first element after reducing ret cherry $(x,y)$.}

\begin{algorithm}[H]\label{alg:FindSmallestCPS}
 \KwData{A \yuki{semi-binary stack-free} CPN $N$, an ordering on the leaf sets of~$N$}
 \KwResult{A smallest CPS $S$ for $N$}
Set $\mathcal{C}=\emptyset$ \;
\For{$x\in L(N)$}{
 $\mathcal{C}\cup ${\sc FindRP2nd}$(N,x)$\;
}

Let $S$ be an empty sequence\;
% Set $F=\emptyset$ the set of forbidden leaves\;
\While{$\mathcal{C}\neq\emptyset$}{
 Choose $(x,y)\in \mathcal{C}$ such that~$(x,y)$ is smallest\;
%  with $y\not\in F$\;
 Set $S=S(x,y)$\;
%  Set $F=F\cup\{x\}$\;
 $N'=$ {\sc ReducePair}$(N,(x,y))$\;
 \uIf{$(x,y)$ is a cherry in $N$\label{algline:NBexchange}}{
  $\mathcal{C}=\mathcal{C}\setminus\{(x,y),(y,x)\}\cup${\sc FindRP2nd}$(N',y)\cup${\sc FindRC1st}$(N',y)$\;
 }
 \If{$(x,y)$ is a reticulated cherry in $N$}{
  $\mathcal{C}=\mathcal{C}\setminus\{(x,y)\}\cup${\sc FindRP2nd}$(N',x)\cup${\sc FindRP2nd}$(N',y)\cup${\sc FindRC1st}$(N',y)$\; \label{algline:SBBexchange}
 }
}
\Return $S$\;
\caption{{\sc FindCPS}$(N)$}
\end{algorithm}

\begin{lemma}\label{lem:FindSBCPS}
Let~$N$ be a \yuki{semi-binary stack-free} CPN on taxa set~$X$ with reticulation number~$r$.
Algorithm~\ref{alg:FindSmallestCPS} finds a smallest CPS for~$N$ in~$O((|X|+r)|X|\log(|X|))$ time, if an ordering is given on~$X$.
\end{lemma}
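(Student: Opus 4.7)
My plan is to adapt the correctness and running-time analysis of Lemma~\ref{lem:FindTCS} to this more general setting. First I would show, by induction on the iteration of the while loop, that $\mathcal{C}$ contains every reducible pair of the current reduced network. The initialization loop captures all cherries in both orderings, since iterating \textsc{FindRP2nd} over every leaf of $N$ returns $(a,b)$ via the call on $b$ and $(b,a)$ via the call on $a$, and it captures every reticulated cherry $(x,y)$ via the call on $y$. For the inductive step, Lemma~\ref{lem:PossibleCherriesAfterReduction} ensures any new reducible pair in $N(x,y)$ involves $x$ or $y$; in the cherry case $x$ is removed, so only $y$ can be involved, whereas in the reticulated-cherry case $x$ remains as a leaf, and because $N$ is stack-free but not necessarily tree-child, $x$'s parent may be suppressed and $x$ may acquire a new tree-vertex sibling. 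This is precisely why Line~\ref{algline:SBBexchange} additionally calls \textsc{FindRP2nd}$(N',x)$, unlike in Algorithm~\ref{alg:FIndTCS}. Combined with Observation~\ref{obs:ChangingReducibleCherries}, which certifies that existing reducible pairs not involving $x$ or $y$ are preserved, this yields the invariant.

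With the invariant in hand, correctness is immediate: by Theorem~\ref{thm:OrderDoesn'tMatter} and Proposition~\ref{prop:CPSorder}, picking any reducible pair at each step produces a minimal CPS for $N$, and always selecting the lexicographically smallest pair matches the definition of the smallest CPS from Section~\ref{subsec:Distinguishability}; Theorem~\ref{thm:UniqueCPNConstruction} guarantees this CPS is uniquely determined.

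For the running time, Lemma~\ref{lem:OptimumCPN} gives $|X|+r-1$ iterations of the while loop. I would maintain $\mathcal{C}$ as a balanced search tree sorted lexicographically, so that locating and removing the minimum takes $O(\log |\mathcal{C}|)$ time and each insertion or deletion costs $O(\log |\mathcal{C}|)$. A key auxiliary fact is that $|\mathcal{C}| = O(|X|)$ throughout: each leaf is the second coordinate of at most one reducible pair in a semi-binary network, and only leaves whose parent is a reticulation can be first coordinates of reticulated cherries. The per-call cost of \textsc{FindRC1st} is bounded by the in-degree of the queried reticulation, and summing this cost across the whole run telescopes to $O(r)$ by the amortization used in Lemma~\ref{lem:FindTCS}. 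Hence each iteration costs $O(|X| \log |X|)$ in the worst case, giving the claimed bound $O((|X|+r) |X| \log |X|)$.

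The main obstacle will be cleanly bounding $|\mathcal{C}|$ and the cumulative cost of the \textsc{FindRC1st} calls, since a single reticulation of large in-degree can produce many reticulated cherries at once. I would handle this by an amortization that charges each reticulated-cherry discovery to a distinct reticulation edge, so that the total discovery cost across the run is $O(r)$ rather than per-iteration, leaving only the $O(\log|X|)$ balanced-tree operations to be multiplied by $|X|$ within a single iteration.
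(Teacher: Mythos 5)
Your proposal is correct and takes essentially the same route as the paper's proof: the same invariant that $\mathcal{C}$ contains all reducible pairs of the current network (justified via Lemma~\ref{lem:PossibleCherriesAfterReduction} and Observation~\ref{obs:ChangingReducibleCherries}, including the role of the extra \textsc{FindRP2nd}$(N',x)$ call on Line~\ref{algline:SBBexchange}), correctness of greedily picking the lexicographically smallest pair via Proposition~\ref{prop:CPSorder}/Theorem~\ref{thm:OrderDoesn'tMatter}, and the same running-time accounting ($|X|+r-1$ iterations by Lemma~\ref{lem:OptimumCPN}, $|\mathcal{C}|=O(|X|)$, and amortized $O(r)$ total cost for \textsc{FindRC1st}). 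Your use of a balanced search tree instead of re-sorting $\mathcal{C}$ each iteration is only an implementation-level deviation that still (indeed more than) achieves the stated bound.
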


\begin{proof}

Initially, we find the full set of reducible pairs by noting that a leaf appears as a second coordinate in a reducible pair at most once (the first for loop).
Within the while loop, we always pick a smallest reducible pair.
After picking a reducible pair~$(x,y)$, we update the list of all possible reducible pairs by adhering to Lemma~\ref{lem:PossibleCherriesAfterReduction}: all new reducible pairs must contain one of the leaves~$x$ or~$y$.
As was the case for Algorithm~\ref{alg:FIndTCS}, there is no need to include the set of reducible pairs~$\textsc{FindRC1st}(N,x)$ as these pairs would have been found in one of the previous iterations.
This ensures that all possible reducible pairs are put into the set~$\mathcal{C}$.
Therefore, the algorithm iteratively picks the smallest reducible pair from a list of all possible reducible pairs---hence, it returns a smallest CPS of the input CPN (minimal in particular, since all the elements of the sequence reduces a cherry or a reticulated cherry in the CPN).

To compute the running time, observe first that the for loop takes~$O(|X|)$ time.
Within the while loop, we first choose a reducible pair that is smallest.
This takes~$O(|\mathcal{C}|\log(|\mathcal{C}|))$ time by running some sorting algorithm, and since any leaf can appear as a second coordinate of at most one reducible pair, we have that~$|\mathcal{C}| = O(|X|)$.
Therefore, choosing a smallest reducible pair takes at most~$O(|X|\log(|X|))$ time.
By the same argument used in the proof of Lemma~\ref{lem:FindTCS}, we have that all other steps within the while loop takes~$O(|X|+r)$ time.
% The while loop is still iterated~$|X| + k -1$ times as seen in the proof of Lemma~\ref{lem:FindTCS}.
% Within the while loop, choosing a smallest reducible pair takes~$O(|\mathcal{C}|\log(|\mathcal{C}|))$ time.
% Since any leaf can appear as a second coordinate of at most one reducible pair, we have that~$|\mathcal{C}| \leq |X|$.
%Then choosing a smallest CPS takes~$O(|X|\log(|X|))$ time. 
%The steps with {\sc FindRC1st} take $O(d)$ time, where $d$ is the maximal indegree of the reticulation nodes. As $d<k+1$, this adds a factor of $O(k)$ within the loop.
Therefore the algorithm runs in~$O((|X|+r)|X|\log(|X|))$ time.
\end{proof}

\yuki{For the algorithm to work on binary CPNs, simply exchange Line~\ref{algline:SBBexchange} of Algorithm~\ref{alg:FindSmallestCPS} by the following:}
\[  \mathcal{C}=\mathcal{C}\setminus\{(x,y)\}\cup \textsc{FindRP2nd}(N,x)\cup\textsc{ FindRC1st}(N,x)\cup\textsc{FindRP2nd}(N,y)\cup\textsc{FindRC1st}(N,y);\]

\begin{lemma}
Let~$N$ be a \yuki{binary} CPN on taxa set~$X$ with reticulation number~$r$.
Algorithm~\ref{alg:FindSmallestCPS} with the modification to Line~\ref{algline:SBBexchange} finds a smallest CPS for~$N$ in~$O((|X|+r)|X|\log(|X|))$ time, if an ordering is given on~$X$.
\end{lemma}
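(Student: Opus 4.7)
The plan is to follow the structure of the proof of Lemma~\ref{lem:FindSBCPS}, and just justify why the extra pairs added in the modified Line~\ref{algline:SBBexchange} suffice to maintain the invariant that $\mathcal{C}$ contains every reducible pair of the current network at each iteration. Correctness of the while loop (that picking the minimum at every step produces a smallest CPS) is identical to the semi-binary stack-free case, so the real content is verifying the update rule. By Lemma~\ref{lem:PossibleCherriesAfterReduction}, after reducing a reticulated cherry $(x,y)$ every new reducible pair of $N(x,y)$ involves $x$ or $y$; no element of the old $\mathcal{C}\setminus\{(x,y)\}$ becomes non-reducible by Observation~\ref{obs:ChangingReducibleCherries} for binary networks. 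Hence it is enough to check that $\textsc{FindRP2nd}(N',x)\cup\textsc{FindRC1st}(N',x)\cup\textsc{FindRP2nd}(N',y)\cup\textsc{FindRC1st}(N',y)$ captures every new pair.

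The step I would highlight as the substantive difference from the stack-free case is the need for $\textsc{FindRC1st}(N',x)$. In a binary CPN without stacks, reducing $(x,y)$ cannot create a new reticulated cherry with $x$ as first coordinate, because the parent of $x$ was already a reticulation and, after contraction, its new parent must be a tree vertex (by the stack-free assumption). In the binary (possibly stacked) case, the grandparent of $x$ may itself be a reticulation; then after suppressing the former parent $p_x$ (together with $p_y$) the leaf $x$ sits directly below a reticulation whose siblings in $N'$ may be leaves, producing brand-new reticulated cherries $(x,z)\in\mathcal{C}(N')$. These are exactly what $\textsc{FindRC1st}(N',x)$ finds. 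The call $\textsc{FindRP2nd}(N',x)$ handles the symmetric possibility that the suppression of $p_x$ places $x$ directly as a child of a tree vertex that already has a leaf child, creating a cherry with $x$ as second coordinate. Together with the two calls rooted at $y$, every newly reducible pair in $N'$ is added to $\mathcal{C}$. Thus the invariant is preserved, and by Theorem~\ref{thm:OrderDoesn'tMatter} picking the lex-smallest pair at every step yields a smallest CPS, which is minimal by construction.

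For the running time, the analysis of Lemma~\ref{lem:FindSBCPS} transfers verbatim. The initial for-loop costs $O(|X|)$. Inside the while loop, by Lemma~\ref{lem:OptimumCPN} there are $|X|+r-1$ iterations; selecting the minimum of $\mathcal{C}$ can be done in $O(|X|\log|X|)$ each round (or $O(\log|X|)$ amortized with a heap, but the weaker bound suffices). The calls to $\textsc{ReducePair}$ and $\textsc{FindRP2nd}$ are $O(1)$ each, and although $\textsc{FindRC1st}$ is called $O(|X|+r)$ times, the same charging argument as in the proof of Lemma~\ref{lem:FindTCS} shows that the total work it does across all iterations is $O(r)$: each reticulated cherry is discovered at most once, because once a pair enters $\mathcal{C}$ it stays there until reduced. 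Summing, the total running time is $O((|X|+r)|X|\log|X|)$, as claimed. The main obstacle I anticipate is formalising that charging argument in the presence of stacks, where the same reticulation may be traversed multiple times during successive reductions; this can be handled by noting that each reticulation edge is inspected by $\textsc{FindRC1st}$ only in the iteration that creates the reticulated cherry containing it, so the overall cost remains linear in the reticulation number.
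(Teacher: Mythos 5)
Your correctness argument matches the paper's: the whole point of the modified Line~\ref{algline:SBBexchange} is that, because stacks are allowed, reducing a reticulated cherry $(x,y)$ can leave $x$ directly below another reticulation, creating new reticulated cherries with $x$ as first coordinate, and $\textsc{FindRC1st}(N',x)$ (together with the other three calls) picks these up; combined with Lemma~\ref{lem:PossibleCherriesAfterReduction} and Observation~\ref{obs:ChangingReducibleCherries} this is exactly the paper's reasoning. The only real divergence is in the running-time bound for $\textsc{FindRC1st}$: you import the charging argument from Lemma~\ref{lem:FindTCS} ("each reticulated cherry is discovered at most once"), and you yourself flag that this is delicate in the presence of stacks --- indeed, with the extra $\textsc{FindRC1st}(N',x)$ calls it is not obvious that pairs cannot be re-examined, so the premise of that charging argument would need a separate proof. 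The paper avoids this entirely with a much simpler observation: in a \emph{binary} network every reticulation has indegree $2$, so each call to $\textsc{FindRC1st}$ runs in constant time, and over the $|X|+r-1$ iterations (Lemma~\ref{lem:OptimumCPN}) its total cost is $O(|X|+r)$, which is dominated by the $O(|X|\log|X|)$ per-iteration cost of selecting the smallest pair. Your final bound is therefore correct, but you would do better to drop the amortized argument and invoke the bounded-indegree fact directly; as written, the unproven "discovered at most once" claim is the one soft spot in an otherwise sound proof.
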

\begin{proof}
The correctness of the algorithm follows from the proof of Lemma~\ref{lem:FindSBCPS}, with the addition that upon reducing the network by a pair~$(x,y)$, the newly introduced reducible pairs may have~$x$ as a first coordinate (which appear due to stacks).
We resolve this by appending~$\textsc{FindRC1st}(x,y)$ to the list of possible reducible pairs (Line~\ref{algline:SBBexchange}).

For the running time analysis, observe first that the for loop runs in~$O(|X|)$ time.
The while loop is iterated~$O(|X|+r)$ times, as a minimal CPS for a network on~$|X|$ leaves and reticulation number~$r$ has length~$|X|+r-1$ by Lemma~\ref{lem:OptimumCPN}.
As shown in the proof of Lemma~\ref{lem:FindSBCPS}, the running time of finding the smallest reducible pair from~$\mathcal{C}$ takes~$O(|X|\log(|X|))$ time.
All other steps within the while loop take constant time: in particular,~$\textsc{FindRC1st}$ takes constant time as the indegree of the reticulation vertices is bounded in a binary network. 
Hence, the while loop runs in~$O((|X|+r)|X|\log(|X|))$ time, so the algorithm runs in~$O((|X|+r)|X|\log(|X|))$ time.
\end{proof}

In case of non-binary reconstructible CPNs, the operations require a slight care.
This precaution stems from two reasons. Firstly, tree vertices are now allowed to have
outdegree greater than~$2$. This means that every leaf no longer appears at most once as
a second coordinate of a reducible pair: they can be a second coordinate of a pair multiple
times. Therefore Algorithm~$\textsc{FindRP2nd}$ may now return a set containing more than one
element. This poses a potential problem in the original Algorithm~\ref{alg:FindSmallestCPS},
since upon reducing the network, we update the list of reducible pairs by adding
either~$\textsc{FindRP2nd}(N',x)$ or~$\textsc{FindRP2nd}(N',y)$ (or both).
Indeed, this could mean that we could obtain the same reducible pairs multiple times if 
there exist cherries or reticulated cherries with a multifurcation. To avoid this, we only invoke
these updates whenever the outdegree of the parent of~$x$ and~$y$ (in either the original
or the reduced network) is~$2$. This helps avoid double or triple counting of the same 
reducible pairs.
Secondly, as seen in Observation~\ref{obs:ChangingReducibleCherries} reducible 
pairs of the original network may not be reducible pairs in a reduced network, although the pair
itself was not reduced. For example if a network contains cherries~$(x,y)$ and~$(x,z)$,
then upon reducing by the pair~$(x,y)$, the new network no longer contains the 
cherry~$(x,z)$, since~$x$ is no longer a leaf in the network. Due to this, we must
replace the lines~\ref{algline:NBexchange} --~\ref{algline:SBBexchange} of 
Algorithm~\ref{alg:FindSmallestCPS} by the following.

\renewcommand{\thealgocf}{}

\begin{algorithm}[H]
\setcounter{AlgoLine}{9}
  \uIf{$(x,y)$ is a cherry in $N$}{
    \For{$l\in L(N)$}{
        \uIf{$(x,l)$ is a cherry in~$N$}{
            $\mathcal{C}=\mathcal{C}\setminus\{(x,l),(l,x)\}$\;
        }
    }
    $\mathcal{C}=\mathcal{C}\cup${\sc FindRC1st}$(N',y)$\;
    Let~$p_y$ denote the parent of~$y$ in~$N$\;
    \uIf{$outdegree(p_y) = 2$}{
        $\mathcal{C}=\mathcal{C}${\sc FindRP2nd}$(N',y)$\;
    }
  }
 \If{$(x,y)$ is a reticulated cherry in $N$}{
    \For{$l\in L(N)$}{
        \uIf{$(y,l)$ is a cherry in~$N$}{
            $\mathcal{C}=\mathcal{C}\setminus\{(x,l)\}$\;
        }
    }
    $\mathcal{C}=\mathcal{C}\setminus\{(x,y)\}\cup${\sc FindRC1st}$(N',y)$\;
    Let~$p_x$ denote the parent of~$x$ in~$N$\;
    Let~$p_y$ denote the parent of~$y$ in~$N$\;
    \uIf{$indegree(p_x)=2$}{
        $\mathcal{C} = \mathcal{C}\cup${\sc FindRC1st}$(N',x)\cup${\sc FindRP2nd}$(N',x)$\;
    }
    \uIf{$outdegree(p_y)=2$}{
        $\mathcal{C} = \mathcal{C}\cup${\sc FindRP2nd}$(N',y)$\;
    }
  }
\end{algorithm}

\begin{lemma}
Let~$N$ be a \yuki{non-binary} reconstructible CPN on taxa set~$X$ with reticulation number~$r$.
Algorithm~\ref{alg:FindSmallestCPS} with the modification to Lines~\ref{algline:NBexchange} --~\ref{algline:SBBexchange} finds a smallest CPS for~$N$ in~$O((|X|+r)|X|^2\log(|X|^2)$ time, if an ordering is given on~$X$.
\end{lemma}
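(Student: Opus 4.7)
The plan is to separate correctness from the running time analysis. The engine of the correctness argument is a loop invariant: at the start of every iteration of the while loop, $\mathcal{C}$ equals exactly $\mathcal{C}(N)$ for the current reduced network $N$. Given the invariant, picking the smallest element of $\mathcal{C}$ in each iteration is legitimate by Theorem~\ref{thm:OrderDoesn'tMatter}, which guarantees that \emph{any} choice of reducible pair extends to a minimal CPS, so the resulting sequence is by construction the unique smallest CPS. Theorem~\ref{thm:UniqueCPNConstruction} then ensures that, within a reconstructible class, this smallest CPS indeed reconstructs $N$.

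The bulk of the work is establishing the invariant. Initialisation is handled by the first for loop: every reducible pair has a distinguished second coordinate, so iterating $\textsc{FindRP2nd}$ over $L(N)$---understood as returning \emph{all} reducible pairs with a given second coordinate, a trivial extension needed because multifurcations allow several such pairs---collects $\mathcal{C}(N)$. For the update step, I would split on whether $(x,y)$ is a cherry or a reticulated cherry. Observation~\ref{obs:ChangingReducibleCherries} tells us which pairs must be dropped (only those with $x$ as first coordinate, or the flipped $(y,x)$), and the inner for loops achieve this by scanning leaves $l$ for which $(x,l)$ (respectively $(y,l)$) was a cherry in $N$ and deleting the corresponding entries. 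Lemma~\ref{lem:PossibleCherriesAfterReduction} tells us which pairs may need to be added (only those involving $x$ or $y$), and the calls to $\textsc{FindRC1st}$ and $\textsc{FindRP2nd}$ recover these. The outdegree/indegree guards in front of the $\textsc{FindRP2nd}$ calls are there to avoid re-adding pairs already in $\mathcal{C}$: if $p_y$ is a multifurcation in $N$, then after reducing $(x,y)$ the vertex $y$ remains attached at the same tree node, so the cherries at $p_y$ that involve $y$ are unchanged; only when $p_y$ is binary does $y$ get lifted to its grandparent and expose genuinely new reducible pairs. The symmetric argument with $p_x$ handles the reticulated cherry case.

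For the running time, the key observation is that $|\mathcal{C}|$ may now grow to $\Theta(|X|^2)$, because a leaf can appear as a second coordinate of a cherry with many siblings under a multifurcating parent and as a first coordinate of many reticulated cherries at a multi-reticulation. Selecting the smallest pair thus costs $O(|X|^2 \log |X|^2)$ per iteration. The inner for loops and the adjacency-list scans inside $\textsc{FindRP2nd}$ and $\textsc{FindRC1st}$ contribute only $O(|X|)$ time per iteration, since tree-node outdegrees and reticulation indegrees are each bounded by $|X|$. Combined with the $O(|X|+r)$ iterations of the while loop (the length of a minimal CPS, by Lemma~\ref{lem:OptimumCPN}), the total is $O((|X|+r)\,|X|^2 \log |X|^2)$, matching the claimed bound.

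The hardest part will be the bookkeeping in the invariant maintenance: proving that the guarded additions and the selective removals in the inner for loops jointly avoid both duplicates and omissions across all four reconstructible CPN classes simultaneously, especially when reducing a reticulated cherry whose parent tree node and parent reticulation are both non-binary. I expect this reduces to a short case analysis on whether $p_x$ or $p_y$ is incident to such non-binary structures, using the tightness of the containments in Observation~\ref{obs:ChangingReducibleCherries} and Lemma~\ref{lem:PossibleCherriesAfterReduction} to conclude that each new or lost pair is accounted for exactly once.
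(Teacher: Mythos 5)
Your correctness argument is essentially the one the paper gives: maintain the complete set of reducible pairs, repeatedly extract the lexicographically smallest one (legitimate by Theorem~\ref{thm:OrderDoesn'tMatter} and the definition of a smallest CPS), and use Observation~\ref{obs:ChangingReducibleCherries} and Lemma~\ref{lem:PossibleCherriesAfterReduction} to argue that the selective removals and the guarded calls to \textsc{FindRP2nd}/\textsc{FindRC1st} keep the list up to date. Your loop-invariant phrasing is just a cleaner packaging of what the paper's proof does informally, and your reading of the indegree/outdegree guards (new pairs at $y$ only arise when $p_y$ is suppressed, and symmetrically at $x$ when $p_x$ is suppressed) is the intended one.

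The running-time step, however, has a genuine flaw. You bound each call to \textsc{FindRP2nd} and \textsc{FindRC1st} by $O(|X|)$ per iteration on the grounds that ``tree-node outdegrees and reticulation indegrees are each bounded by $|X|$''. This is false for non-binary CPNs: these degrees are bounded only by $O(|X|+r)$, and $r$ is not bounded in terms of $|X|$. For example, in the class $(\ref{Const:CherryResolved},\ref{Const:RCherrySF})$ the CPS $(x,y),(x,y),\dots,(x,y)$ on two leaves produces a reticulation above $x$ of indegree $r+1$, and in the classes using construction~\ref{Const:CherryUnresolved} a tree node can acquire arbitrarily many reticulation children. With the correct per-call bound $O(|X|+r)$, your per-iteration cost becomes $O(|X|^2\log(|X|^2)+r)$, so the total picks up an extra term of order $(|X|+r)\,r$, which is not dominated by $O((|X|+r)|X|^2\log(|X|^2))$ once $r\gg|X|^2\log|X|$. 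The paper closes exactly this hole by amortizing rather than bounding per iteration: the guards ensure the degree-dependent loops are executed only when genuinely new reducible pairs can appear, so, since each reducible pair is found at most once over the entire run, the total degree-dependent work is $O\!\left(\binom{|X|}{2}\right)=O(|X|^2)$ overall. Replacing your degree bound by this amortized accounting (and keeping the rest of your argument) recovers the claimed bound.
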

\begin{proof}
The correctness of the algorithm follows from the proof of Lemma~\ref{lem:FindSBCPS}, with the changes outlined as above.
As before, the first for loop finds all reducible pairs within the networks.
Upon reducing the smallest reducible pair, we update the list of reducible pairs by using the above replacement of the algorithm.
Suppose that we have just reduced by a reticulated cherry~$(x,y)$.
Let~$N$ denote the original network and~$N'=N(x,y)$, as in the algorithm.
As mentioned before, there may be reducible pairs in~$N$ that are no longer reducible pairs in~$N'$.
These are all reticulated cherries of the form~$(x,z)$, for which the reticulation edge was between the parent of~$y$ and the parent of~$x$.
Then,~$y$ and~$z$ must have formed a cherry in~$N$---therefore we check for all such cherries (line 20), and remove the corresponding reducible pairs from the list.
When updating the list, all new reducible pairs involve either the leaf~$x$ or~$y$.
To avoid seeking for pairs that are already contained in the list, we only invoke the~\textsc{FindRP2nd} algorithm whenever new pairs arise in~$N'$, that is, whenever the reticulation parent of~$x$ is of indegree-2 or when the tree vertex parent of~$y$ is of outdegree-2 in~$N$.
This holds similarly for updating the list upon reducing cherries.
Therefore, we find each reducible pair exactly once in the algorithm.

For the running time, observe that the for loop takes at most~$O(|X|^2)$ time, as there can be at most~$\binom{|X|}{2}$ possible reducible pairs in a network.
Therefore~$|\mathcal{C}| = O(|X|^2)$.
The while loop is iterated~$O(|X|+r)$ times, as a minimal CPS for a network on~$|X|$ leaves and reticulation number $r$ has length~$|X|+r-1$ by Lemma~\ref{lem:OptimumCPN}.
Each call of \textsc{ReducePair} runs in constant time.
The running time for finding the smallest reducible pair from~$|\cal{C}|$ takes~$O(|X|^2\log(|X|^2))$ time.
Upon reducing~$(x,y)$ from a network~$N$, we enter a for loop that checks whether there are reducible pairs in the list that are no longer reducible pairs in the network.
This step takes~$O(|X|)$ time.
Since each reducible pair is found at most once, the part of the algorithms \textsc{FindRP2nd} and \textsc{FindRC1st} that has a dependency on the outdegree and the indegree, respectively will only run at most~$O(\binom{|X|}{2}) = O(|X|^2)$ times.
Therefore the steps within the while loop runs in time at most~$O(|X|^2\log(|X|^2)$ time, and therefore Algorithm~\ref{alg:FindSmallestCPS} with the proposed changes runs in~$O((|X|+r)|X|^2\log(|X|^2)$ time.
\end{proof}

\begin{theorem}\label{thm:CPNIsomPolyTime}
Suppose we are given an ordering on the taxa set~$X$. Let $N$ and $N'$ be two networks with $r$ reticulations within the same reconstructible class.
Then it can be decided in~$O((|X|+r)|X|^2\log(|X|^2)$ time whether they are isomorphic.
In particular if the classes are~$(\ref{Const:CherryResolved}, \ref{Const:RCherryResolved})$ (binary) or~$\ref{Const:CherryResolved}, \ref{Const:RCherrySF})$ (semi-binary stack-free), then this can be decided in~$O((|X|+r)|X|\log(|X|)$ time.
% , both from the class of binary networks, or the class of semi-binary, stack-free networks. Then it can be decided in~$O((|X|+r)|X|\log(|X|))$ time %\todoLeo{again, do we need that the indegree are bounded? can we leave this out if we let k be the number of reticulation edges? Yuki: I think it would be weird if we considered ret edge number here, and reticulation number above, so imo we should leave it like this.} 
% whether they are isomorphic.
\end{theorem}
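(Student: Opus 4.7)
The proof is essentially a direct application of Corollary~\ref{cor:CPNIsomorphism} combined with the algorithmic results established just above for computing smallest CPSs. The plan is to reduce the isomorphism test to computing and comparing smallest CPSs, then invoke the already-analyzed running times of the modified version of Algorithm~\ref{alg:FindSmallestCPS}.

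First, I would appeal to Corollary~\ref{cor:CPNIsomorphism}, which states that within a reconstructible CPN class, two networks on the ordered taxa set~$X$ are isomorphic if and only if they have the same smallest CPS. This immediately yields the algorithm: compute the smallest CPS~$S_N$ of~$N$, compute the smallest CPS~$S_{N'}$ of~$N'$, and return ``yes'' if and only if~$S_N = S_{N'}$ as sequences of ordered pairs.

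Next I would account for the running time by plugging in the correct bound for each class. For a general non-binary reconstructible CPN, the modified Algorithm~\ref{alg:FindSmallestCPS} computes a smallest CPS in $O((|X|+r)|X|^2\log(|X|^2))$ time; for binary CPNs (class~$(\ref{Const:CherryResolved},\ref{Const:RCherryResolved})$) and semi-binary stack-free CPNs (class~$(\ref{Const:CherryResolved},\ref{Const:RCherrySF})$) the corresponding variants run in $O((|X|+r)|X|\log(|X|))$ time. We run the algorithm twice, once on~$N$ and once on~$N'$, which only multiplies the running time by a constant factor.

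Finally I would handle the comparison step. By Lemma~\ref{lem:OptimumCPN}, a minimal CPS (and hence a smallest CPS) has length exactly~$|X|+r-1$, so~$S_N$ and~$S_{N'}$ can be compared element-wise in $O(|X|+r)$ time, which is dominated by the time to compute them. Combining these pieces gives the stated bounds. No step here is a real obstacle: the heavy lifting---both the characterization of isomorphism via smallest CPSs and the algorithmic analysis---has already been carried out, so the proof is essentially a one-paragraph synthesis citing Corollary~\ref{cor:CPNIsomorphism} and the three preceding lemmas on the running time of the smallest-CPS algorithm in each class.
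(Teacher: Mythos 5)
Your proposal is correct and follows essentially the same route as the paper: run the (appropriately modified) smallest-CPS algorithm on both networks and compare the resulting sequences, invoking Corollary~\ref{cor:CPNIsomorphism} for correctness and the per-class running-time lemmas for the bounds. The only addition is your explicit $O(|X|+r)$ comparison step, which the paper leaves implicit.
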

\begin{proof}
Use Algorithm~\ref{alg:FindSmallestCPS} twice to find the smallest CPSs for the two CPNs. This can be done in~$O((|X|+r)|X|^2\log(|X|^2))$ time (in~$O((|X|+r)|X|^2\log(|X|^2)$ for the classes~$(\ref{Const:CherryResolved}, \ref{Const:RCherryResolved})$ (binary) or~$\ref{Const:CherryResolved}, \ref{Const:RCherrySF})$ (semi-binary stack-free)).
By Corollary~\ref{cor:CPNIsomorphism}, these CPSs are the same if and only if the CPNs are isomorphic.
\end{proof}

\section{Implementation}\label{sec:Implementation}
Algorithm~\ref{alg:Subnetwork}, which checks whether a given tree-child network is a subnetwork of another, 
%presented in Section~\ref{sec:Computation} 
was implemented in Python to test the theoretical linear bound in practice. In this section, we present running time results of the implementation on a large randomly generated data set. We show that the theoretically proven linear running time is indeed achievable in practice. The tests were run on a Linux system with a quad-core Intel Xeon W3570 running at 1.7GHz and 24GB of DDR3 RAM clocked at 1333MHz. The operating system was Debian GNU/Linux 9 with a 4.19.46-64 Linux kernel. The software was written in Python version 3.7.3.

\subsection{Generating the datasets}
For the test data, we generated 131200 instances of the tree-child network containment problem: two yes-instances and two no-instances for all $n,r,r'\in\{25,50,\ldots,975,1000\}$ with $r'\leq r$, where $n$ is the number of leaves of both networks, $r$ is the reticulation number in the first network, and $r'$ the reticulation number in the second network.
Each instance consists of two semi-binary tree-child networks on the same leaf-set, for which we asked whether the first network contained the second network. 

For each instance, we generated the first network with $n$ leaves and reticulation number $r$ using Algorithm~\ref{alg:RandomTCS}. 
The second network was generated depending on whether it was a yes- or a no-instance.
If it was a yes-instance, a subnetwork with reticulation number $r'$ was obtained using Algorithm~\ref{alg:RandomSubTCS}; for a no-instance, a network on the same number of leaves and reticulation number $r'$ was randomly generated with the same process as the first network (using Algorithm~\ref{alg:RandomTCS}).

This way, each generated yes-instance is always a yes-instance for the {\sc Network Containment} problem. For the no-instances, however, the random generation of the second network could also give a subnetwork of the first network, but the probability of that event is very small, as the number of tree-child networks grows very quickly with the number of leaves and reticulations \citep{cardona2019generation}.
%Binary tree child networks on n leaves asymptotically: $2^{2n\log(n+O(n))}$

The dataset used for the experiment along with the code for generating random datasets, and the actual implementation of Algorithm~\ref{alg:Subnetwork} can be found on \url{https://github.com/RemieJanssen/Cherry-picking_TC_Network_Containment}.

\subsubsection{Generating random networks}\label{sec:GeneratingNetworks}
The tree-child networks were randomly generated as TCSs using Algorithm~\ref{alg:RandomTCS}. This algorithm takes two positive integers $n$ and $r$, and outputs a tree-child network with $n$ leaves and reticulation number $r$. It starts with the cherry $(1,2)$, and successively adds leaves as cherries, and reticulated cherries between two leaves that already exist in the network (respecting the tree-child condition). 

In the algorithm, this is achieved by building a tree-child sequence backwards. It chooses to add a reducible pair corresponding to a cherry or reticulated cherry uniformly at random until we have added the required number of leaves and reticulation number. To make sure the sequence is a tree-child sequence, we keep a list $NF$ of taxa that are `non-forbidden', which, in this case, means that the taxon is not currently the child of a tree node that has a reticulation as the other child (i.e., the leaf has not appeared as a second coordinate element of a reducible pair). If a taxon is in $NF$, it is possible to take this taxon as the first element of a pair appended at the start of the sequence. As a tree-child network always has a cherry or a reticulated cherry, $NF$ is never empty. This implies that the algorithm should never output False, but lines~$15$ and~$16$ are kept so that the algorithm can easily be adapted to return only binary tree-child networks. To achieve this, one only has to add the line ``$NF = NF\setminus\{\mathrm{first\_element}\}$'' between lines 21 and 22 in the algorithm.

Finally, the algorithm outputs a TCS, from which we can uniquely construct a TCN.

\begin{algorithm}[H]\label{alg:RandomTCS}
 \KwData{A set of taxa $X=\{1,\ldots,n\}$, and a reticulation number $r$.}
 \KwResult{A TCS $S$ on $X$ of weight $r$.}
 
 Initialize $Y = \{1,2\}$ the current set of taxa\;
 Initialize $S = (2,1)$ the current sequence\;
 Initialize $L = n-2$\;
 Initialize $R = r$\;
 Initialize $NF = \{2\}$\;
 \While{$L>0$ or $R>0$}
 {
    type\_added = None\;
    \uIf{$|NF|>0$ and $L>0$ and $R>0$}
    {
        With probability $\frac{L}{L+R}$, type\_added = L\;
        Otherwise, type\_added = R\;
    }
    \uElseIf{$|NF|>0$ and $R>0$}
    {
        type\_added = R\;
    }
    \uElseIf{$L>0$}
    {
        type\_added = L\;
    }
    \Else
    {
        \Return False\;
    }
    
    first\_element = None\;
    second\_element = None\;
    \uIf{type\_added = R}
    {
       Set first\_element an element of $NF$ chosen uniformly at random\;
       Set $R = R-1$\;
    }
    \Else
    {
       Set first\_element to the first element of $X\setminus Y$\;
       Set $L = L-1$\;
       Set $Y = Y\cup \{\text{first\_element}\}$\;
       Set $NF = NF\cup\{\text{first\_element}\}$\;
    }
    Set second\_element to an element of $Y\setminus\{\text{first\_element}\}$ chosen uniformly at random\;
    $NF = NF\setminus\{\text{second\_element}\}$\;
    $S = (\text{first\_element},\text{second\_element})S$\;
 }
\Return $S$\;
\caption{{\sc RandomTCS}$(X,r)$}
\end{algorithm}

Note that each tree-child network has positive probability of appearing for this process. In fact, each tree-child sequence ending with $(1,2)$ has positive probability.
\medskip

Let us now turn to the procedure to generate a tree-child subnetwork (i.e., generating the second network in a yes-instance). 
For this purpose, we again work with the representation of the networks as tree-child sequences. 

We first select ordered pairs from the sequence of the first network, such that the resulting subsequence corresponds to a tree. This is simply done by selecting a pair with first element $x$ for all $x\in X$ uniformly at random. Because the sequence we started with is a tree-child sequence, the subsequence consisting of the chosen pairs is a tree-child sequence as well: suppose $(x,y)$ and $(y,z)$ are selected. Then $(y,z)$ must appear after $(x,y)$, because otherwise $y$ appears as a first element after it has appeared as a second element in the original sequence. 

After selecting the pairs that form a \emph{base tree} of the network (a spanning tree of the network with leaf-set~$X$), we select $r'$ additional pairs that will form the $r'$ reticulations of the subnetwork. By a similar argument as for the base tree, this subsequence is a tree-child sequence. And as it is reduced by the subsequence, it is also reduced by the sequence of the original network. Hence, the network corresponding to the chosen pairs is a tree-child subnetwork of the original network.

\begin{algorithm}[H]\label{alg:RandomSubTCS}
 \KwData{A TCS $S$ on $X$ of length $|X|+r-1$, and a number $r'\leq r$.}
 \KwResult{A sub-TCS $S'$ of $S$ on $X$ of length $|X|+r'-1$}
    Let $S$ be indexed by $\{1,\ldots,|S|\}$\;
    Set $I_{S'}=\emptyset$\;
    \For{$x\in X$}{
        Let $I_x$ be the set of indices of pairs of $S$ with $x$ as first element\;
        Pick $i_x$ uniformly at random from $I_x$\;
        Set $I_{S'} = I_{S'}\cup\{i_x\}$\;
    }
    Randomly add $r'$ elements from $\{1,\ldots,|S|\}\setminus I_{S'}$ to $I_{S'}$\;
    Let $S'$ be the subsequence of $S$ consisting of the elements indexed by $I_{S'}$\;
    \Return $S'$\;
\caption{{\sc RandomSubTCS}$(S,r')$}
\end{algorithm}

\subsection{Results}
For all yes-instance tests in which the second network was a subnetwork of the first (i.e., the ones generated by Algorithm~\ref{alg:RandomSubTCS}), the algorithm correctly returned a true value. Similarly, for all no-instance tests in which the second network was generated randomly and independently from the first network, the algorithm correctly found that the second network was not a subnetwork of the first. This means that, even though there was a non-zero probability that the second network was a subnetwork, this did not happen in any of the instances.
We expected this, as the probability of this happening is extremely small.

Note that the largest test instances (1000 leaves, 1000 reticulations) had a running time of approximately $0.1$s. This is expected to scale well for even larger instances, as the linear fit of the data is very good. The $R^2$ values for the fits and the linear dependence of the running time on the number of leaves and reticulations can be found in Table~\ref{table:RunningTimes}. For this fit, we performed a standard linear regression with an intercept of 0 (i.e., forced through the origin), which makes sense because the running time should be zero for an empty instance.

Note that the fits become much better when we split the data in instances where the second network is or is not a subnetwork of the first (i.e., between the yes- and the no- instances), even though the dependence of the running time on the parameters does not change much after this split. The most striking difference we can see in this analysis, is the dependence on the reticulation number of the second network.

\begin{figure}
    \centering
    \includegraphics[width=.49\textwidth]{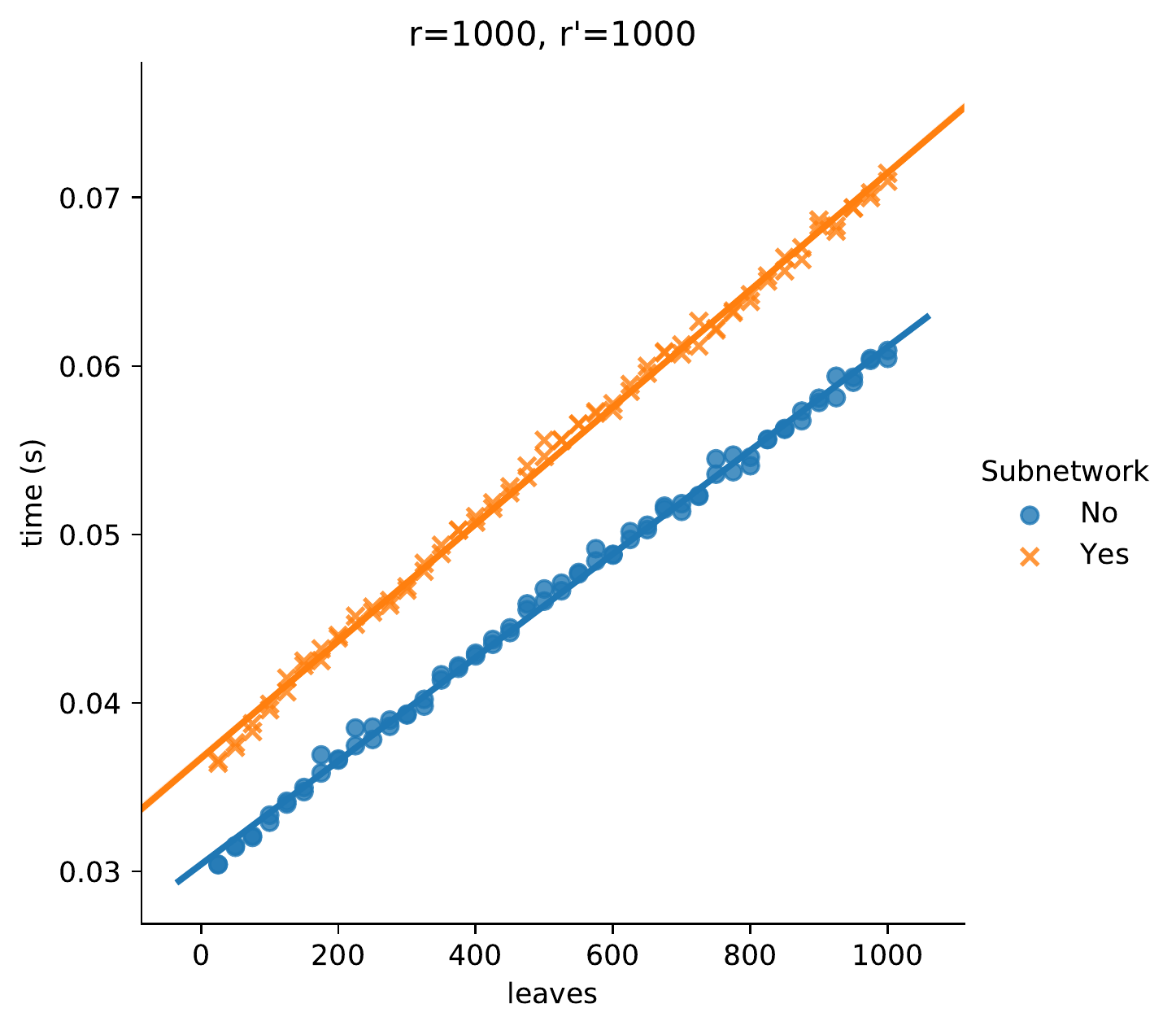}
    \includegraphics[width=.49\textwidth]{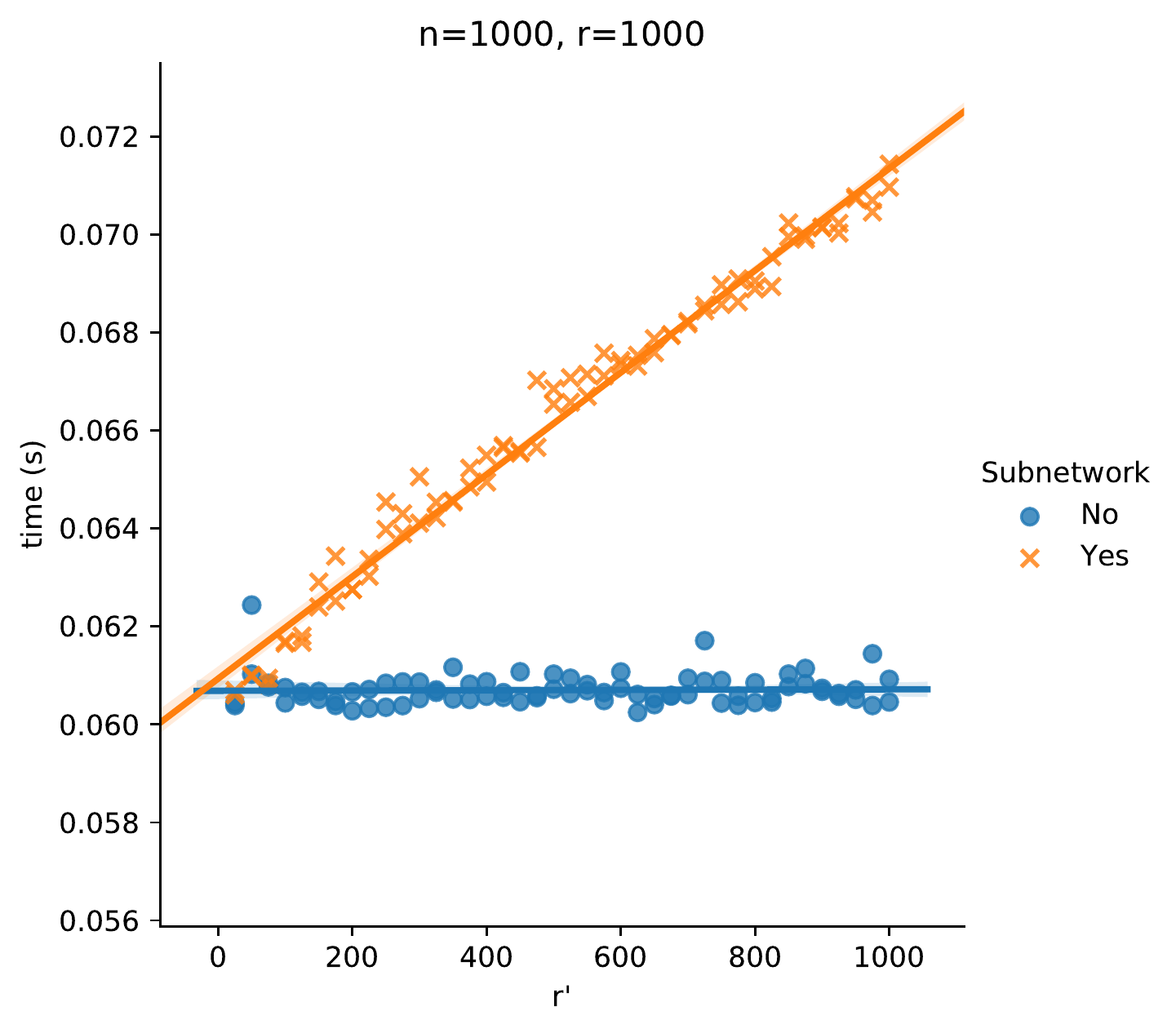}
    \caption{The dependence of the running time on the number of leaves $n$ (left) and the number of reticulations in the second network $r'$ (right). This was visualized by fixing the other parameters to a set value of 1000 in both plots.}
    \label{fig:DataYesNo}
\end{figure}

As shown in Figure~\ref{fig:DataYesNo}, the no-instances were consistently, and marginally, faster than the yes-instances.
For varying 

For varying leaf numbers, the instances where the second network was not a subnetwork (no-instances), were consistently, but marginally, faster than when the second network was a subnetwork (yes-instances) (Figure~\ref{fig:DataYesNo},~Left).
This was similarly true for when we varied the reticulation number~$r'$ of the second network.
The effect of varying~$r'$ on instances for when the second network was not a subnetwork (no-instances) was negligible.
This can be seen in the right figure of Figure~\ref{fig:DataYesNo}, but also in Table~\ref{table:RunningTimes}, where the order of the slope of~$r'$ for the no-instances is far smaller than all other slopes in all the instances.
For the yes-instances, the running time of the algorithm showed a linear dependence on~$r'$, which was in the same order as the other parameters.
% For varying reticulation number on the second network, when the second network was not a subnetwork, the effect of the reticulation number of the second network $r'$ on the running time was negligable, but when it is a subnetwork, the effect was linear in the same order as the other parameters (Figure~\ref{fig:DataYesNo},~Right). 
This can be explained as follows. When the second network is not a subnetwork, the second part of the algorithm (Algorithm~\ref{alg:TCSReduceNetwork}) will seldom need to reduce a pair: it will check whether the pair in the sequence is reducible in the second network. As the second network is randomly generated independently of the first network, it will not have many pairs in common with the first network, which means it will not have to reduce pairs often.

\begin{table}
\caption{Linear regression analysis for tree-child network containment on 131200 instances, for which half were yes-instances and the other half no-instances. The high~$R^2$ value indicates that the fit of the curve is essentially linear (where an~$R^2$ value of~$1$ indicates a perfect linear fit) and the slopes indicate the change in running time for every increase in the number of leaves, reticulation number~$r$, and reticulation number~$r'$.}
\label{table:RunningTimes}
\begin{tabular}{l|r|r|r|r|}
\cline{2-5}
                                       & \multicolumn{1}{c|}{\multirow{2}{*}{$R^2$}} & \multicolumn{3}{c|}{slope}                                                                                                      \\ \cline{3-5} 
                                       & \multicolumn{1}{c|}{}                                              & \multicolumn{1}{c|}{leaves (s/leaf)} & \multicolumn{1}{c|}{$r$ (s/reticulation)} & \multicolumn{1}{c|}{$r'$ (s/reticulation)} \\ \hline
\multicolumn{1}{|l|}{all data}        & $0.9725659$                                                 & $3.03328079\cdot10^{-5}$             & $2.99713310\cdot10^{-5}$                              & $4.75681146\cdot10^{-6}$                              \\ \hline
\multicolumn{1}{|l|}{subnetwork: YES} & $0.9966596$                                                  & $3.14405850\cdot10^{-5}$                       & $2.89850496\cdot10^{-5}$                              & $9.54505907\cdot10^{-6}$                              \\ \hline
\multicolumn{1}{|l|}{subnetwork: NO}  & $0.9976078$                                                 & $2.92250310\cdot10^{-5}$                       & $3.09576119\cdot10^{-5}$                              & $-3.14361016\cdot10^{-8}$                             \\ \hline
\end{tabular}
\end{table}

\section{Discussion}\label{sec:Discussion}
In this paper, we have looked at cherry-picking sequences and how they can be used to solve the {\sc Network Containment} problem for tree-child networks. 
We have introduced the class of cherry-picking networks and showed that, within reconstructible classes, they are uniquely determined by a smallest sequence that reduces them (given an ordering on the leaves).
This correspondence can be used to check whether two CPNs in the same reconstructible class are isomorphic in polynomial time.
Moreover, we showed that if a sequence for a network reduces another network, then the second network is contained in the first network.
For tree-child networks, we further showed that the converse was also true, that a tree-child network contained in another tree-child network will be reduced by any tree-child sequence of the second network, \yuki{given that the networks are in the same reconstructible class.}
%\todoRemie{TODO after making the table: But only if they are in the same reconstructible CPN class? YM: Yes, added. It works for general non-binary too but then we only get one direction (subnet ==> redux).}
%We showed that TCSs can be used to characterize network containment in a similar way as it characterizes tree containment.
On the computational side, we have shown that our Python implementation for solving {\sc Network Containment} for tree-child networks runs in linear time in the number of leaves and the reticulation number.

An apparent limitation of the CPS approach is that they cannot be used to characterize every phylogenetic network, but only CPNs.
The reason for this stems from the inability to pick a \emph{double-reticulated cherry}.
Two leaves~$x$ and~$y$ form a double-reticulated cherry if both parents~$p_x,p_y$ of~$x,y$ respectively are reticulations, and~$p_x$ and~$p_y$ share a common parent.
As seen in other literature, the double-reticulated cherry poses a challenge when trying to correctly add back a once removed reticulation edge~\citep{bordewich2016determining, murakami2019reconstructing}.
A CPS may contain a double-reticulated cherry, as long as there is an `escape' at one of the sides of the double-reticulated cherry: a reticulated cherry which can be reduced, and whose reduction turns the double-reticulated cherry into a regular reticulated cherry.
With no escapes however, there is currently no way of dealing with double-reticulated cherries.

One way of resolving this issue would be to consider leaf attachments as done by \citet{linz2019attaching}.
Given any phylogenetic network, add a minimal number of leaves to make it a CPN.
Recall that every reconstructible CPN has a unique smallest CPS (Theorem~\ref{thm:UniqueCPNConstruction}).
Is it possible to extend this result to general networks by attaching leaves?
% That is, we wish to extend our cherry-picking characterization to general networks by simply adding leaves.
If so, will the placement of the leaves affect the uniqueness of its smallest CPS?
Furthermore, what is the minimal number of additional leaves to turn a general network to one that is orchard?
To answer these questions, one could study the forbidden structures of an orchard network; currently, very little is known about these.
% Extend sequence characterization to general networks.
% How does the placement of the leaves affect the uniqueness of its smallest CPS?

% While these are interesting questions, adding leaves to a network to make it a CPN is, with the techniques presented in this paper, more complicated than it has to be.\todoRemie{What do you mean, more complicated than it has to be?}
% Since we may pick cherries in any order, we initially reduce the network as much as possible.
% Then, upon reaching a network (on at least two leaves) that can no longer be reduced, we add a leaf and continue.
% Such a process will indeed find the minimum number of leaves to add to a network to make it a CPN.\todoRemie{Why does it not depend on where you add a leaf each time you are stuck? \textbf{Add as conjecture.}}
% However, a more direct approach is favourable, one that incorporates the forbidden structures of a CPN.
% Unfortunately, not all forbidden structures of CPNs are known at this point.
Obviously, crowns---undirected cycles within a network whose bipartition into tree nodes and reticulation nodes makes it a bipartite subgraph---cannot be contained in CPNs.
Aside from this, we are not aware of other forbidden structures of CPNs.
In any case, we may gauge where the class of CPNs resides with respect to other prominent network classes.
The class of CPNs does not contain the class of tree-based networks~\citep{francis2015phylogenetic}, since tree-based networks may contain crowns (see Figure~\ref{fig:ObutnotTB}).
%We have seen earlier that double-reticulated cherries may be contained in CPNs. 
However, as there exists a CPS (in fact, a TCS) for every TCN, we conclude that class of CPNs strictly contains the class of TCNs.

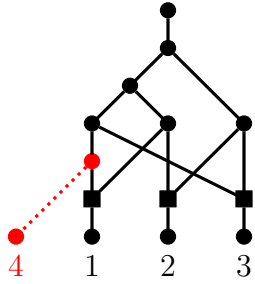
\begin{figure}
    \centering
    \begin{tikzpicture}[every node/.style = {draw, circle, fill, inner sep = 0pt, minimum size = 2mm},
    square/.style = {regular polygon, regular polygon sides = 4, minimum size = 3 mm}]
    \tikzset {edge/.style = {very thick, shorten >= -0.5 pt}}

        %Nodes
        \node[] (-1) at (0.0,-0.0) {};
        \node[] (0) at (0.0,-0.5) {};
        \node[] (4) at (-0.5,-1.0) {};
        \node[] (7) at (1,-1.5) {};
        \node[] (5) at (-1,-1.5) {};
        \node[] (6) at (0,-1.5) {};
        \node[square] (8) at (-1.0,-2.5) {};
        \node[square] (10) at (1.0,-2.5) {};
        \node[square] (9) at (0.0,-2.5) {};
        \node[red] (11) at (-1,-2) {};

        %Leaves
        \node[] (1) at (-1.0,-3) {};
        \node[draw=none, fill=none, below=1mm of 1] (leaf_1){\large $1$};
        \node[] (2) at (0.0,-3) {};
        \node[draw=none, fill=none, below=1mm of 2] (leaf_2){\large $2$};
        \node[] (3) at (1.0,-3) {};
        \node[draw=none, fill=none, below=1mm of 3] (leaf_3){\large $3$};
        \node[red] (12) at (-2.0,-3) {};
        \node[draw=none, fill=none, below=1mm of 12, red] (leaf_4){\large $4$};

        %Edges
        \draw[edge] (-1) edge (0);
        \draw[edge] (0) edge (4);
        \draw[edge] (0) edge (7);
        \draw[edge] (4) edge (5);
        \draw[edge] (4) edge (6);
        \draw[edge] (7) edge (9);
        \draw[edge] (7) edge (10);
        \draw[edge] (5) edge (11);
        \draw[edge] (11) edge (8);
        \draw[edge] (5) edge (10);
        \draw[edge] (6) edge (8);
        \draw[edge] (6) edge (9);
        \draw[edge] (8) edge (1);
        \draw[edge] (10) edge (3);
        \draw[edge] (9) edge (2);
        \draw[edge, dotted, red] (11) edge (12);
    \end{tikzpicture}
    \caption{A tree-based network on leaves~$\{1,2,3\}$ that is not orchard, since it contains a crown (the subgraph induced by the parents and the grandparents of the leaves~$1,2,3)$ without the red vertex). The network on~$\{1,2,3,4\}$ is orchard, since it provides a reticulated cherry~$(1,4)$, an `escape' for the crown.}
    \label{fig:ObutnotTB}
\end{figure}

We briefly comment on the correspondence between containment and reduction, and pose conjectures for remaining open questions.
Let~$N$ and~$N'$ be CPNs in the same reconstructible class on the same leaf-sets.
If a CPS~$S$ of~$N$ reduces~$N'$, then~$N'$ is contained in~$N$ (Lemma~\ref{lem:reductionImpliesContainmentSF}).
We have shown that the converse of this does not hold for the reconstructible classes~$(\ref{Const:CherryResolved}, \ref{Const:RCherryResolved})$ and~$(\ref{Const:CherryResolved}, \ref{Const:RCherrySF})$ (Theorem~\ref{thm:CPNTCFails}).
For the reconstructible classes~$(\ref{Const:CherryUnresolved}, \ref{Const:RCherryStack})$ and~$(\ref{Const:CherryUnresolved}, \ref{Const:RCherryUnresolved})$, the problem remains open.
Therefore we conjecture the following.
\begin{conjecture}
There exist CPNs $N$ and~$N'$ on the same leaf-set in the~$(\ref{Const:CherryUnresolved}, \ref{Const:RCherryStack})$ class where~$N'$ is contained in~$N$, but no CPS of~$N$ reduces~$N'$.
\end{conjecture}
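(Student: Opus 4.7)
The plan is to establish the conjecture by constructing an explicit counterexample, modelled on the counterexample used in the proof of Theorem~\ref{thm:CPNTCFails}. The approach has three steps: (i) produce a CPN $N$ in the class $(\ref{Const:CherryUnresolved}, \ref{Const:RCherryStack})$ whose set $\mathcal{C}(N)$ of initial reducible pairs is very small; (ii) produce a second CPN $N'$ in the same class, on the same leaf set, with $N'$ contained in $N$; and (iii) verify by exhaustive case analysis on $\mathcal{C}(N)$ that no CPS of $N$ reduces $N'$.

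For step (i), I would start from a CPS that constructs the binary counterexample $N$ of Figure~\ref{fig:CPNTreeContainmentCounterExample} via the $(\ref{Const:CherryResolved},\ref{Const:RCherryResolved})$-construction, and apply the $(\ref{Const:CherryUnresolved},\ref{Const:RCherryStack})$-construction to the same CPS. Because this class contracts all tt-edges, many tree vertices coalesce and multi-reticulations may appear, but the cherries and reticulated cherries survive the construction process, so the resulting CPN in the class retains the feature that it has only two or three initial reducible pairs. Any CPS of this new $N$ must still open with one of them, just as in the binary argument, so we inherit the branching structure used in the proof of Theorem~\ref{thm:CPNTCFails}. For step (ii), I would not try to port the tree $T$ directly (since a tree in the no-tt-edge class collapses to a star and a star is trivially reducible); instead, I would construct $N'$ as a small CPN in the class whose embedding into $N$ requires a specific combination of reticulation edges that is destroyed whenever the forced first reductions of $N$ are applied. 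A candidate is the network obtained from $T$ by attaching a thin reticulation structure beneath the "problematic" subtree so that the resulting object is itself a CPN in the class and is still contained in $N$ via a refinement embedding of the sort given by Lemma~\ref{lem:SBSFdispaysIsBinaryResolvedSubnetwork}.

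For step (iii), the verification proceeds by the same case split as in the proof of Theorem~\ref{thm:CPNTCFails}: enumerate the pairs in $\mathcal{C}(N)$, and for each choice trace forward the partial CPS, showing that at some stage either (a) a pair must be picked in $N$ that removes an edge used by every embedding of $N'$ (making $N'$ no longer contained in the reduced network, hence not reducible to a single leaf by completion of the CPS), or (b) a cherry appears in $N'$ between two leaves that in $N$ are separated by a cut vertex that has been consumed by earlier reductions. The tree-child-free structure of the class means that we do not enjoy the \emph{order doesn't matter} property that drove the TCN proofs of Section~\ref{sec:TCNetworkContainment}, which is precisely why the obstruction can survive.

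The hard part will be step (ii): the freedom offered by multifurcations and multi-reticulations in the $(\ref{Const:CherryUnresolved},\ref{Const:RCherryStack})$ class makes it much easier for $N'$ to be contained in $N$ via many different refinements, and correspondingly easier for \emph{some} CPS of $N$ to successfully reduce $N'$. Finding an $N'$ that is contained in $N$ but whose containment requires exactly the forbidden edge resources in every embedding is the crux, and I expect it to require a careful combinatorial search, potentially computer-assisted, over small candidate CPNs with up to about eight leaves and three reticulations. Once such $N'$ is exhibited, the verification in step (iii) is mechanical.
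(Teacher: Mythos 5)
There is a genuine gap here, and also a mismatch with the paper: the statement you are addressing is not proved in the paper at all --- it is stated as a \emph{conjecture} in Section~\ref{sec:Discussion}, precisely because the authors could not settle it for the classes $(\ref{Const:CherryUnresolved},\ref{Const:RCherryStack})$ and $(\ref{Const:CherryUnresolved},\ref{Const:RCherryUnresolved})$ (Theorem~\ref{thm:CPNTCFails} only covers $(\ref{Const:CherryResolved},\ref{Const:RCherryResolved})$ and $(\ref{Const:CherryResolved},\ref{Const:RCherrySF})$, since the counterexample network there is semi-binary and stack-free). Your proposal is a research plan rather than a proof: the decisive object --- an explicit pair $N$, $N'$ in the $(\ref{Const:CherryUnresolved},\ref{Const:RCherryStack})$ class with $N'$ contained in $N$ but irreducible by every CPS of $N$ --- is never exhibited, and you yourself flag step (ii) as the crux that may require a computer search. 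Until that pair is produced and the case analysis of step (iii) is actually carried out on it, nothing has been proved.

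Beyond the missing construction, one concrete step of your plan would fail as stated. In step (i) you claim that applying the $(\ref{Const:CherryUnresolved},\ref{Const:RCherryStack})$-construction to the CPS of the network $N$ of Figure~\ref{fig:CPNTreeContainmentCounterExample} yields a network that ``retains the feature that it has only two or three initial reducible pairs,'' so that the forced-opening argument of Theorem~\ref{thm:CPNTCFails} is inherited. This does not follow: the $\ref{Const:CherryUnresolved}$ and $\ref{Const:RCherryStack}$ constructions contract tt-edges, which merges chains of tree vertices into multifurcations, and leaves that were not siblings in the binary network (e.g.\ $1$, $4$ and $7$ in Figure~\ref{fig:CPNTreeContainmentCounterExample}, which hang off a tt-path near the root) become children of a common multifurcation. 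This creates new cherries, so $\mathcal{C}(N)$ for the contracted network is strictly larger than for the binary one, and the branching structure of the original proof is not inherited --- indeed the extra flexibility is exactly why the authors could not resolve this class and left it as a conjecture (they in fact conjecture the \emph{opposite} behaviour for the fully unresolved class $(\ref{Const:CherryUnresolved},\ref{Const:RCherryUnresolved})$). Any successful argument will have to engineer an obstruction that survives the tt-edge (and, for $\ref{Const:RCherryStack}$, the absence of rr-edge) contractions, which is a genuinely new difficulty rather than a routine adaptation of Theorem~\ref{thm:CPNTCFails}.
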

\begin{conjecture}
Let~$N$ and~$N'$ be CPNs on the same leaf-set in the~$(\ref{Const:CherryUnresolved}, \ref{Const:RCherryUnresolved})$ class, where~$N'$ is contained in~$N$.
Then there exists a CPS of~$N$ that reduces~$N'$.
\end{conjecture}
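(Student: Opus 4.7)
The plan is to proceed by strong induction on $|X| + r$, where $X$ is the common leaf set of $N$ and $N'$ and $r$ is the reticulation number of $N$. The single-leaf base case is vacuous. For the inductive step, I would reduce the task to finding a single reducible pair $c$ of $N$ with the property that $N'c$ remains contained in $Nc$; here I interpret $N'c$ as $N'$ when $c$ is not reducible in $N'$, and observe that $Nc$ and $N'c$ remain in the $(\ref{Const:CherryUnresolved},\ref{Const:RCherryUnresolved})$ class, since deleting reticulation edges and suppressing degree-$2$ nodes cannot create tt-edges or rr-edges. The inductive hypothesis then supplies a CPS $S$ of $Nc$ reducing $N'c$, and $cS$ is the desired CPS for $N$ reducing $N'$.

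To locate the compatible pair $c$ I would exploit the defining property of the class: networks have neither tt-edges nor rr-edges, so tree vertices and reticulations alternate strictly along every root-to-leaf path. Consequently any lowest tree vertex $t$ of $N$ has children consisting of leaves and/or reticulations whose own children are leaves, and hence $t$ is incident to at least one reducible pair---either a cherry or a reticulated cherry. By Lemma~\ref{lem:SBSFdispaysIsBinaryResolvedSubnetwork}, the containment hypothesis lifts to binary refinements $N'_b$ of $N'$ and $N_b$ of $N$ with $N'_b$ a subnetwork of $N_b$. I would fix an embedding $\iota$ of $N'_b$ into $N_b$ and trace it through the contraction map $N_b \to N$ near the vertex $t$; the goal is to argue that the reducible pair at $t$ pulls back to an analogous reducible pair of $N'_b$ (hence a reducible pair of $N'$), or else that the edge it deletes is outside the image of $\iota$, so reducing it in $N$ simply shrinks the ambient network without destroying the embedding.

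The main obstacle is precisely this existence argument. The binary counterexample of Theorem~\ref{thm:CPNTCFails} shows that for generic classes, containment alone does not yield a reducible pair whose reduction preserves the embedding: there the obstruction is a pair of reticulated cherries whose reductions force incompatible orderings further up the network. The hoped-for payoff of working in the maximally contracted class is that any such obstructing configuration in a binary refinement $N_b$ of $N$ must involve a tt-edge or rr-edge of $N_b$ that is contracted when passing to $N$, so the obstruction disappears after contraction. Making this rigorous seems to require a careful case analysis of the possible local structures around $t$ in $N_b$ together with their $\iota$-preimages in $N'_b$, verifying that in each case one can reroute the embedding across the contracted edges. I expect this verification to be the technical heart of the argument; should it fail, a refined counterexample in the class would likely emerge and refute the conjecture instead.
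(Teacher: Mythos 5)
This statement is posed in the paper only as a conjecture (Section~\ref{sec:Discussion}); the paper offers no proof of it, so there is no argument of the authors' to compare yours against --- any complete proof here would be new mathematics. Judged on its own terms, your proposal is a strategy rather than a proof, and the gap sits exactly where the conjecture's content lies: you need a reducible pair $c$ of $N$ such that either $c$ (or its effect) is matched in $N'$, or the deleted edge lies outside the embedding, so that $N'c$ stays contained in $Nc$. Theorem~\ref{thm:CPNTCFails} shows that in the classes $(\ref{Const:CherryResolved},\ref{Const:RCherryResolved})$ and $(\ref{Const:CherryResolved},\ref{Const:RCherrySF})$ such a pair need not exist, so the existence claim cannot be taken on faith; your sketch defers it to a ``careful case analysis'' around a lowest tree vertex that is never carried out, and you yourself concede the argument might instead collapse into a counterexample. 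That unperformed analysis is the whole problem, not a technicality.

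Two further points would need repair even if the case analysis were done. First, your inductive step silently assumes that when $c$ is reducible in both networks, containment is preserved under simultaneous reduction; the paper only proves such preservation for binary subnetworks (Lemma~\ref{lem:ReduceInBothSubnetworks}) and for tree-child networks under TCSs (Lemma~\ref{lem:SubnetworkReduction}), and containment in the $(\ref{Const:CherryUnresolved},\ref{Const:RCherryUnresolved})$ class involves edge contractions, so the embedding bookkeeping does not transfer verbatim --- contracted rtr- or trt-paths can interact badly with the suppression performed by a reduction. Second, the ``edge outside the image of $\iota$'' escape is not available when $c=(x,y)$ is a cherry of $N$ that is not reducible in $N'$: reducing it deletes the leaf $x$ from $N$ while $x$ remains a leaf of $N'$ (the conjecture assumes equal leaf sets), so $N'$ cannot be contained in $Nc$ and your induction hypothesis no longer applies; you would have to show such a configuration cannot occur in this class, which is again a nontrivial structural claim. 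As it stands the proposal identifies a plausible reason why the maximally contracted class might behave better, but it does not establish the conjecture.
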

Upon restricting~$N$ and~$N'$ to be tree-child, we have shown that the converse holds for all reconstructible classes: if~$N'$ and $N$ are both in the same reconstructible class and~$N'$ is contained in~$N$, then there exists a TCS of~$N$ that reduces~$N'$.
We believe that this still holds when~$N$ and~$N'$ are general non-binary tree-child networks, by requiring a stronger condition.
Note that it is easy to find an example of two non-binary tree-child networks that are not contained in one another, such that a TCS for one network reduces the other (see Figure~\ref{fig:CPNNonUniqueConstruction} d).
\begin{conjecture}
Let $N$ and $N'$ be non-binary tree-child networks on the same leaf set.
Then $N'$ is contained in $N$ if and only if \emph{all} TCSs of $N$ reduce $N'$.
\end{conjecture}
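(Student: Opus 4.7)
The plan is to reduce the conjecture to the already-established case of binary tree-child networks on the same leaf set (handled by Corollary~\ref{cor:ContainmentImpliesReductionTC} and Theorem~\ref{thm:BNBRedimpliesCon}), using binary refinements compatibly chosen with respect to a given TCS via Lemma~\ref{lem:SBSFBinaryrefinement}.

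For the forward direction, I would take an arbitrary TCS $S$ of $N$ and let $\hat{N}$ be the binary refinement of $N$ obtained by applying the $(\ref{Const:CherryResolved},\ref{Const:RCherryResolved})$-construction to $S$, so that $S$ is minimal for $\hat{N}$. The crux is to exhibit a binary refinement $\hat{N'}$ of $N'$ such that $\hat{N'}$ is a subnetwork of $\hat{N}$. Granted this, both $\hat{N}$ and $\hat{N'}$ are binary TCNs on the same leaf set, so by Corollary~\ref{cor:ContainmentImpliesReductionTC} the sequence $S$ reduces $\hat{N'}$; the active subsequence $S^{\ast}$ of $S$ for $\hat{N'}$ is a minimal CPS of $\hat{N'}$, and by Lemma~\ref{lem:CPNrefinement} also a minimal CPS of $N'$, whence $S$ itself reduces $N'$.

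For the backward direction I would take any single TCS $S$ of $N$ that reduces $N'$ and again lift to the binary refinement $\hat{N}$ induced by $S$. Theorem~\ref{thm:BNBRedimpliesCon} gives that $N'$ is contained in $\hat{N}$, say via a refinement $N'_f$ of $N'$ embedded in $\hat{N}$. I would then contract the refinement edges of $\hat{N}$ (the internal tt-edges of each multifurcation refinement and the internal rr-edges of each multi-reticulation refinement) one by one, synchronously contracting the corresponding edges of $N'_f$: precisely, if the edge of $\hat{N}$ being contracted is the length-$1$ image of some edge of $N'_f$, that edge is contracted in $N'_f$ as well, while in all other cases the relevant image path is simply shortened. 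The tree-child property forces multifurcations of $N'_f$ to be mapped into multifurcation-refinements of $\hat{N}$, and similarly for multi-reticulations, so the edges contracted inside $N'_f$ are exactly its refinement edges over $N'$; the resulting network is therefore a refinement of $N'$ that is a subnetwork of $N$, giving the containment.

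The main obstacle is the key subclaim in the forward direction: for an arbitrary binary refinement $\hat{N}$ of $N$, one must produce a binary refinement $\hat{N'}$ of $N'$ that sits as a subnetwork of $\hat{N}$. The forward direction of Lemma~\ref{lem:SBSFdispaysIsBinaryResolvedSubnetwork} only yields some pair $(N_b, N'_b)$ with the desired subnetwork relation, and $N_b$ need not be $\hat{N}$. I plan to resolve this by transporting the embedding combinatorially: for each multifurcation $v'$ of $N'$ mapped to a multifurcation $v$ of $N$, the set $C_{v'}$ of children of $v$ used by the image paths starting at the image of $v'$ is a well-defined invariant independent of how $v$ is resolved; setting $\hat{T}_{v'}$ to be the induced subtree of $\hat{T}_v$ on $C_{v'}$ (with resulting degree-$2$ nodes suppressed) gives a valid binary refinement of $v'$ embedded in $\hat{T}_v$. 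The analogous step for multi-reticulations, whose binary refinements are chains rather than trees, is the most delicate point: one must verify that restricting an arbitrary chain refinement $\hat{T}_v$ to the used parents and suppressing remnants still yields a valid chain, where the tree-child assumption is used to exclude pathological stackings that would obstruct this restriction.
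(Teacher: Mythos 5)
This statement is posed as an open conjecture in the paper, so there is no proof of it to compare against; judging your argument on its own merits, the reverse direction has a fatal gap. You derive containment of $N'$ in $N$ from the existence of a \emph{single} TCS of $N$ that reduces $N'$, never using the hypothesis that \emph{all} TCSs do so---but that single-sequence implication is false for general non-binary tree-child networks, which is precisely why the conjecture is quantified over all TCSs. Concretely, take $N$ to be the $(\ref{Const:CherryUnresolved},\ref{Const:RCherryUnresolved})$-network and $N'$ the $(\ref{Const:CherryResolved},\ref{Const:RCherryUnresolved})$-network of Figure~\ref{fig:CPNClass}: both are tree-child on $\{1,2,3,4\}$, the TCS $S=(3,2),(1,2),(3,2),(3,4),(2,4)$ is minimal for $N$ and reduces $N'$, yet $N'$ is not contained in $N$ (every refinement of $N'$ has at least $10$ edges, while every subnetwork of $N$ has at most $8$). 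Tracing your construction on this instance shows exactly where it breaks: $\hat{N}$ is the binary $(\ref{Const:CherryResolved},\ref{Const:RCherryResolved})$-network of that figure, the refinement $N'_f$ of $N'$ embedded in $\hat{N}$ is $\hat{N}$ itself, and the genuine tt-edges of $N'$ (from the root's child to the two internal tree nodes) are length-one images of refinement edges of $\hat{N}$ over $N$. Your synchronous contraction is therefore forced to contract edges of $N'_f$ that are \emph{not} refinement edges over $N'$, producing a contraction of $N'$ rather than a refinement; the claim that tree-childness rules this out is simply false here. A correct proof of this direction must exploit the failure of some \emph{other} TCS of $N$ to reduce $N'$ (in this example $(1,4),(2,4),(3,4),(3,4),(3,4)$ is such a sequence), which your argument never does.

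The forward direction also rests on an unjustified step: you assert that $\hat{N}$, the binary $(\ref{Const:CherryResolved},\ref{Const:RCherryResolved})$-refinement of $N$ induced by an arbitrary TCS $S$, is a binary TCN, and then invoke Corollary~\ref{cor:ContainmentImpliesReductionTC}. This fails whenever some leaf occurs as the first coordinate of two or more reticulated-cherry reductions in $S$: construction~\ref{Const:RCherryResolved} then stacks new reticulations directly above that leaf, creating rr-edges. The same figure illustrates this---the $(\ref{Const:CherryResolved},\ref{Const:RCherryResolved})$-network built from the TCS $S$ above contains a stack and is not tree-child---so the corollary does not apply as cited. This part may be repairable (the proof of Lemma~\ref{lem:SubnetworkReduction} does not obviously use tree-childness of the ambient network, only the TCS property of $S$), but it requires an argument; in any case the reverse direction is the substantive obstruction, and it is the one place where the universal quantifier over TCSs must enter.
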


% Another issue with the class of CPNs is that we do not have a characterization from their topological properties.
% The double-reticulated cherries mentioned above can be contained within some CPNs.
% Moreover, there exists a CPS that reduces a given TCN.
% Therefore, the class of CPNs strictly contain the class of TCNs.
% Crowns, which are undirected cycles within a network whose bipartition into tree nodes and reticulation nodes makes it a bipartite subgraph, cannot be contained in CPNs: therefore the class of CPNs do not contain tree-based networks \citep{francis2015phylogenetic}.

%\todoRemie{Shorten this part about {\sc Hybridization}? It seems unnecessary now we have a paper about that. YM: Deleted next paragraph about TC Hybridization. I think this paragraph we can keep, since the other paper talks mainly about TC hybridization and not the general Orch hybridization (except in the discussion).}
In Section~\ref{sec:Introduction} we briefly mentioned how cherry-picking sequences originated as a tool to tackle the problem of finding a `simple' network that contains a given set of trees.
This problem is called {\sc Hybridization}, and our results on cherry-picking sequences---in particular Lemma~\ref{lem:reductionImpliesContainment}---provide insight into how we may solve generalizations of this problem, where we have inputs consisting of networks.
% We have also looked at the fundamental differences between tree-child sequences and cherry-picking sequences. We could take Lemma~\ref{lem:reductionImpliesContainment} as indication that we might not need tree-child sequences when studying {\sc Hybridization}.
Indeed, if we find a CPS that reduces the input set of networks, then the network corresponding to this CPS contains the input set. However, it is not quite clear when this gives an optimal CPN with minimal reticulation number. For example, take the network~$N$ and the tree~$T$ in Figure~\ref{fig:CPNTreeContainmentCounterExample}. If a CPS reduces the tree and the network, the corresponding CPN must have at least one more reticulation than the optimal network, which is clearly the input network~$N$.
This means that given an input of networks, it may not be possible to find a CPS that corresponds to the CPN with minimal reticulation number that contains all inputs.
It would be interesting to see if this problem also occurs when the input consist solely of trees.
That is, given an input of trees, can we always find the CPS which corresponds to a CPN with minimal reticulation number contains all these trees?
This problem aside, the problem of finding a minimal CPS gives an upper bound for {\sc Hybridization}, and a lower bound for {\sc TC cherry-picking}, where~{\sc TC cherry-picking} asks to find a minimal TCS which reduces all trees in the input set. Hence, it might also be of merit to see what exactly happens when only trees are considered as the input: does the minimal CPS give useful information about any of these problems?

\paragraph{Acknowledgements} We would like to thank Leo van Iersel for providing feedback on multiple versions of our paper.
We would like to thank Robbert Huijsman for implementing and testing our pseudocode in Python.

\bibliographystyle{chicago}

\end{document}